\def\definetac{\newif\iftac}    
\else\usepackage{amsthm}\fi
\definecolor{darkgreen}{rgb}{0,0.45,0}
\let\setof\Set
\definecolor{shadecolor}{HTML}{EEEEEE}
\let\ea\expandafter
\def\mdef#1#2{\ea\ea\ea\gdef\ea\ea\noexpand#1\ea{\ea\ensuremath\ea{#2}\xspace}}
\def\alwaysmath#1{\ea\ea\ea\global\ea\ea\ea\let\ea\ea\csname your@#1\endcsname\csname #1\endcsname
  \ea\def\csname #1\endcsname{\ensuremath{\csname your@#1\endcsname}\xspace}}
\def\foreachletter#1#2#3{\foreachcount=#1
  \ea\loop\ea\ea\ea#3\@alph\foreachcount
  \advance\foreachcount by 1
  \ifnum\foreachcount<#2\repeat}
\def\foreachLetter#1#2#3{\foreachcount=#1
  \ea\loop\ea\ea\ea#3\@Alph\foreachcount
  \advance\foreachcount by 1
  \ifnum\foreachcount<#2\repeat}
\def\definescr#1{\ea\gdef\csname s#1\endcsname{\ensuremath{\mathscr{#1}}\xspace}}
\def\definecal#1{\ea\gdef\csname c#1\endcsname{\ensuremath{\mathcal{#1}}\xspace}}
\def\definebold#1{\ea\gdef\csname b#1\endcsname{\ensuremath{\mathbf{#1}}\xspace}}
\def\definebb#1{\ea\gdef\csname l#1\endcsname{\ensuremath{\mathbb{#1}}\xspace}}
\def\definefrak#1{\ea\gdef\csname f#1\endcsname{\ensuremath{\mathfrak{#1}}\xspace}}
\def\definebar#1{\ea\gdef\csname #1bar\endcsname{\ensuremath{\overline{#1}}\xspace}}
\def\definetil#1{\ea\gdef\csname #1til\endcsname{\ensuremath{\widetilde{#1}}\xspace}}
\def\definehat#1{\ea\gdef\csname #1hat\endcsname{\ensuremath{\widehat{#1}}\xspace}}
\def\definechk#1{\ea\gdef\csname #1chk\endcsname{\ensuremath{\check{#1}}\xspace}}
\def\defineul#1{\ea\gdef\csname u#1\endcsname{\ensuremath{\underline{#1}}\xspace}}
\def\autofmt@n#1\autofmt@end{\mathrm{#1}}
\def\autofmt@b#1\autofmt@end{\mathbf{#1}}
\def\autofmt@l#1#2\autofmt@end{\mathbb{#1}\mathsf{#2}}
\def\autofmt@c#1#2\autofmt@end{\mathcal{#1}\mathit{#2}}
\def\autofmt@s#1#2\autofmt@end{\mathscr{#1}\mathit{#2}}
\def\autofmt@f#1\autofmt@end{\mathfrak{#1}}
\def\autofmt@u#1\autofmt@end{\underline{\smash{\mathsf{#1}}}}
\def\autofmt@U#1\autofmt@end{\underline{\underline{\smash{\mathsf{#1}}}}}
\def\autofmt@uu#1\autofmt@end{\underline{\underline{\smash{\mathsf{#1}}}}}
\def\autofmt@h#1\autofmt@end{\widehat{#1}}
\def\autofmt@r#1\autofmt@end{\overline{#1}}
\def\autofmt@t#1\autofmt@end{\widetilde{#1}}
\def\autofmt@k#1\autofmt@end{\check{#1}}
\def\auto@drop#1{}
\def\autodef#1{\ea\ea\ea\@autodef\ea\ea\ea#1\ea\auto@drop\string#1\autodef@end}
\def\@autodef#1#2#3\autodef@end{%
  \ea\def\ea#1\ea{\ea\ensuremath\ea{\csname autofmt@#2\endcsname#3\autofmt@end}\xspace}}
\def\autodefs@end{blarg!}
\def\autodefs#1{\@autodefs#1\autodefs@end}
\def\@autodefs#1{\ifx#1\autodefs@end%
  \def\autodefs@next{}%
  \else%
  \def\autodefs@next{\autodef#1\@autodefs}%
  \fi\autodefs@next}
\DeclareSymbolFont{bbold}{U}{bbold}{m}{n}
\DeclareSymbolFontAlphabet{\mathbbb}{bbold}
\mdef\alhat{\widehat{\alpha}}
\let\del\partial
\mdef\delbar{\overline{\partial}}
\mdef\hf{\textstyle\frac12 }
\mdef\thrd{\textstyle\frac13 }
\mdef\qtr{\textstyle\frac14 }
\mdef\Id{\mathrm{Id}}
\mdef\id{\mathrm{id}}
\def\frc#1/#2.{\frac{#1}{#2}}   
\mdef\ten{\mathrel{\otimes}}
\mdef\sqten{\mathrel{\boxtimes}}
\def\pow(#1,#2){\mathop{\pitchfork}(#1,#2)} 
\DeclareRobustCommand\widecheck[1]{{\mathpalette\@widecheck{#1}}}
\def\@widecheck#1#2{%
    \setbox\z@\hbox{\m@th$#1#2$}%
    \setbox\tw@\hbox{\m@th$#1%
       \widehat{%
          \vrule\@width\z@\@height\ht\z@
          \vrule\@height\z@\@width\wd\z@}$}%
    \dp\tw@-\ht\z@
    \@tempdima\ht\z@ \advance\@tempdima2\ht\tw@ \divide\@tempdima\thr@@
    \setbox\tw@\hbox{%
       \raise\@tempdima\hbox{\scalebox{1}[-1]{\lower\@tempdima\box
\tw@}}}%
    {\ooalign{\box\tw@ \cr \box\z@}}}
\DeclareMathOperator\Hom{Hom}
\mdef{\sk}{\mathop{\mathsf{sk}}}
\mdef{\cosk}{\mathop{\mathsf{cosk}}}
\newcommand{\too}[1][]{\ensuremath{\overset{#1}{\longrightarrow}}}
\let\toto\rightrightarrows
\let\into\hookrightarrow
\mdef\we{\overset{\sim}{\longrightarrow}}
\mdef\leftwe{\overset{\sim}{\longleftarrow}}
\let\mono\rightarrowtail
\def\rightarrowtailfill@{\arrowfill@{\Yright\joinrel\relbar}\relbar\rightarrow}
\newcommand\xrightarrowtail[2][]{\ext@arrow 0055{\rightarrowtailfill@}{#1}{#2}}
\def\twoheadrightarrowfill@{\arrowfill@{\relbar\joinrel\relbar}\relbar\twoheadrightarrow}
\newcommand\xtwoheadrightarrow[2][]{\ext@arrow 0055{\twoheadrightarrowfill@}{#1}{#2}}
\def\slashedarrowfill@#1#2#3#4#5{%
  $\m@th\thickmuskip0mu\medmuskip\thickmuskip\thinmuskip\thickmuskip
   \relax#5#1\mkern-7mu%
   \cleaders\hbox{$#5\mkern-2mu#2\mkern-2mu$}\hfill
   \mathclap{#3}\mathclap{#2}%
   \cleaders\hbox{$#5\mkern-2mu#2\mkern-2mu$}\hfill
   \mkern-7mu#4$%
}
\def\rightslashedarrowfill@{%
  \slashedarrowfill@\relbar\relbar\mapstochar\rightarrow}
\newcommand\xslashedrightarrow[2][]{%
  \ext@arrow 0055{\rightslashedarrowfill@}{#1}{#2}}
\mdef\hto{\xslashedrightarrow{}}
\mdef\htoo{\xslashedrightarrow{\quad}}
\def\defthm#1#2#3{%
  \newaliascnt{#1}{thm}
  \newtheorem{#1}[#1]{#2}
  \aliascntresetthe{#1}
  \crefname{#1}{#2}{#3}
}
\newtheorem{thm}{Theorem}[section]
\crefname{thm}{Theorem}{Theorems}
\theoremstyle{definition}
\theoremstyle{remark}
\crefname{figure}{Figure}{Figures}
\def\thmqedhere{\expandafter\csname\csname @currenvir\endcsname @qed\endcsname}
  \let\c@equation\c@subsection
  \let\c@equation\c@thm
\numberwithin{equation}{section}
\newcommand{\drpullback}[1][dr]{\ar[#1,phantom,near start,"\lrcorner"]}
\newlength\oldleftmargini       
\newlength\oldleftmarginii
\newlength\oldleftmarginiii
\newlength\oldleftmarginiv
\newlength\oldleftmarginv
\newlength\oldleftmarginvi
\newif\ifkillspacing
\def\@adjust@enum@labelwidth{%
  \advance\@listdepth by 1\relax
  \ifkillspacing                
    \csname c@\@enumctr\endcsname\maxenum
    \settowidth{\@tempdima}{%
      \csname label\@enumctr\endcsname\hspace{\labelsep}}%
    \csname leftmargin\romannumeral\@listdepth\endcsname
      \@tempdima
  \else                         
    \csname fixspacing\romannumeral\@listdepth\endcsname
  \fi
  \advance\@listdepth by -1\relax}
\def\fixspacingi{\ifnum\oldleftmargini=0\setlength\oldleftmargini\leftmargini\else\setlength\leftmargini\oldleftmargini\fi}
\def\fixspacingii{\ifnum\oldleftmarginii=0\setlength\oldleftmarginii\leftmarginii\else\setlength\leftmarginii\oldleftmarginii\fi}
\def\fixspacingiii{\ifnum\oldleftmarginiii=0\setlength\oldleftmarginiii\leftmarginiii\else\setlength\leftmarginiii\oldleftmarginiii\fi}
\def\fixspacingiv{\ifnum\oldleftmarginiv=0\setlength\oldleftmarginiv\leftmarginiv\else\setlength\leftmarginiv\oldleftmarginiv\fi}
\def\fixspacingv{\ifnum\oldleftmarginv=0\setlength\oldleftmarginv\leftmarginv\else\setlength\leftmarginv\oldleftmarginv\fi}
\def\fixspacingvi{\ifnum\oldleftmarginvi=0\setlength\oldleftmarginvi\leftmarginvi\else\setlength\leftmarginvi\oldleftmarginvi\fi}
\def\pl@label#1#2{%
  \edef\pl@the{\noexpand#1{\@enumctr}}%
  \pl@lab\expandafter{\the\pl@lab\csname yourthe\@enumctr\endcsname}%
  \advance\@tempcnta1
  \pl@loop}
\def\@enumlabel@#1[#2]{%
  \@plmylabeltrue
  \@tempcnta0
  \pl@lab{}%
  \let\pl@the\pl@qmark
  \expandafter\pl@loop\@gobble#2\@@@
  \ifnum\@tempcnta=1\else
    \PackageWarning{paralist}{Incorrect label; no or multiple
      counters.\MessageBreak The label is: \@gobble#2}%
  \fi
  \expandafter\edef\csname label\@enumctr\endcsname{\the\pl@lab}%
  \expandafter\edef\csname the\@enumctr\endcsname{\the\pl@lab}%
  \expandafter\let\csname yourthe\@enumctr\endcsname\pl@the
  #1}
\mdef\ep{\varepsilon}
\mdef\ph{\varphi}
\let\th\theta
\title{Doubly weak double categories}
\author{Aaron David Fairbanks \and Michael Shulman}
\thanks{The second author was supported by the Air Force Office of Scientific Research under award number FA9550-21-1-0009.}
\mdef\ovz{{1\mathord{\vee}0}}
\mdef\zvo{{0\mathord{\vee}1}}
\mdef\ovo{{1\mathord{\vee}1}}
\mdef\wovo{{\mathbf{w}1\mathord{\vee}1}}
\mdef\obo{\mathbf{d}}
\mdef\ovocat{\ovo\text{-}\bCat}
\mdef\tcat{2\text{-}\bCat}
\mdef\ocat{1\text{-}\bCat}
\mdef\bBicat{\mathbf{W}\text{-}2\text{-}\bCat}
\mdef\bDblBicat{\mathbf{DblBicat}}
\mdef\bDblBicatst{\mathbf{DblBicat}_{\mathbf{st}}}
\mdef\bWDblCatst{\bWDblCat_{\mathbf{st}}}
\mdef\cWDblCatst{\mathscr{W\!D}\mathit{bl}\mathscr{C}\!\mathit{at}_{\mathit{st}}}
\mdef\cWDblCat{\mathscr{W\!D}\mathit{bl}\mathscr{C}\!\mathit{at}}
\mdef\bPsDblCatst{\bPsDblCat_{\mathbf{st}}}
\mdef\bBicatst{\bBicat_{\mathbf{st}}}
\mdef\cBicatst{\mathscr{W}\!\text{-}2\text{-}\mathscr{C}\!\mathit{at}_{\mathit{st}}}
\mdef\cBicat{\mathscr{W}\!\text{-}2\text{-}\mathscr{C}\!\mathit{at}}
\mdef\bDblBicattidy{\bDblBicat_{\mathbf{st,tidy}}}
\mdef\ovocptd{\ovo\text{-}\bCptd}
\let\obocptd\bDblCptd
\mdef\tcptd{2\text{-}\bCptd}
\mdef\ocptd{1\text{-}\bCptd}
\mdef\tgph{2\text{-}\bGph}
\mdef\ocattgph{\ocat\text{-}\tgph}
\mdef\ovocatbDblGph{\ovocat\bDblGph}
\mdef\ordblgph{\mathbf{1RDblGph}}
\mdef\sq{\square}
\mdef\SQ{\blacksquare}
\mdef\uovo{\cU_{\ovo}}
\mdef\fovo{\cF_{\ovo}}
\mdef\tovo{T_{\ovo}}
\mdef\d{\mathbf{d}}
\mdef\mono{\mathbf{m}}
\mdef\dw{\mathbf{wd}}
\mdef\wt{{\mathbf{w}2}}
\mdef\wo{{\mathbf{w}1}}
\mdef\dpl{{\mathbf{d}+}}
\mdef\wdpl{{\mathbf{wd}+}}
\mdef\tflat{\iota_2}
\mdef\dflat{\iota_\d}
\mdef\tz{0}
\mdef\ton{1}
\mdef\th{\ton^H}
\mdef\tv{\ton^V}
\mdef\ttw{2}
\def\td#1#2#3#4{\ensuremath{\ttw^{#1,#2}_{#3,#4}}}
\mdef\simd{\backsim}
\mdef\dblcatpl{\bDblCat_+}
\mdef\wdblcatpl{\bWDblCat_+}
\mdef\posd{{\mathbf{posd}}}
\mdef\posdpl{{\mathbf{posd}+}}
\mdef\leovo{\le 1}
\mdef\choseniso{\cong}
\DeclareFontFamily{U}{dmjhira}{}
\DeclareFontShape{U}{dmjhira}{m}{n}{ <-> dmjhira }{}
\mdef\bcdots{\hspace{-.01cm}\cdots} 
\mdef\vcdots{\rotatebox{90}{\hspace{.05cm}$\cdots$}} 
\mdef\dcdots{\rotatebox{-45}{\hspace{.05cm}$\cdots$}}
\mdef\icdots{\rotatebox{45}{\hspace{.05cm}$\cdots$}}
\mdef\NEarrow{\mathrel{\rotatebox[origin=c]{45}{$\Rightarrow$}}}
\mdef\NWarrow{\mathrel{\rotatebox[origin=c]{135}{$\Rightarrow$}}}
\mdef\SWarrow{\mathrel{\rotatebox[origin=c]{-135}{$\Rightarrow$}}}
\mdef\SEarrow{\mathrel{\rotatebox[origin=c]{-45}{$\Rightarrow$}}}
\def\grpair#1#2{\ensuremath{(#1,#2)}}
\mdef\grempty{()}
\def\justi{1}
\DeclareMathOperator\st{st}
\def\ubcom#1#2{\ensuremath{#1 #2}}
\def\bcom#1#2{\ensuremath{(\ubcom{#1}{#2})}}
\def\bi#1{\ensuremath{\justi_{#1}}}
\def\dcom#1#2{\bcom{#1}{#2}}
\def\udcom#1#2{\ubcom{#1}{#2}}
\def\di#1{\bi{#1}}
\def\hcom#1#2{\bcom{#1}{#2}}
\def\uhcom#1#2{\ubcom{#1}{#2}}
\def\hi#1{\bi{#1}}
\def\uvcom#1#2{\ensuremath{\rotatebox{-90}{$\rotatebox{90}{\!$#1$}\;\rotatebox{90}{\!$#2$}$}\!}}
\def\vcoms#1#2{\ensuremath{\rotatebox{-90}{$(\hspace{.08em}\rotatebox{90}{\!$#1$}\;\rotatebox{90}{\!$#2$}\hspace{.08em})$}\!}}
\def\uvcoms#1#2{\ensuremath{\rotatebox{-90}{$\rotatebox{90}{\!$#1$}\;\rotatebox{90}{\!$#2$}$}\!}}
\def\ti#1{\bi{#1}}
\mdef\Homl{\Hom_\mathbf{co/lax}}
\def\tproj{\scalebox{.7}{\rotatebox{45}{\SQ}}}
\def\oproj{\scalebox{.7}{\rotatebox{45}{\sq}}}
\newcommand{\pb}{\ar@{}[dr]|<<*+{\lrcorner}}
\newcommand{\po}{\ar@{}[dr]|>>*+{\ulcorner}}
\def\cat#1{\ensuremath{\mathbf{#1}}}
\def\bicat#1{\ensuremath{\mathcal{#1}}}
\def\bicatf#1{\ensuremath{\mathcal{#1}}}
\mdef\paral{\toto}
\mdef\paralfill{\Downarrow}
\mdef\dom{\mathop{\text{dom}}}
\mdef\cod{\mathop{\text{cod}}}
\mdef\hdom{\mathop{\overset{H}{\text{dom}}}}
\mdef\hcod{\mathop{\overset{H}{\text{cod}}}}
\mdef\vdom{\mathop{\overset{V}{\text{dom}}}}
\mdef\vcod{\mathop{\overset{V}{\text{cod}}}}
\mdef\hcomp{\mathrel{\overset{H}{\cdot}}}
\mdef\vcomp{\mathrel{\overset{V}{\cdot}}}
\mdef\hid{\mathop{\overset{H}{\id}}}
\mdef\vid{\mathop{\overset{V}{\id}}}
\mdef\hbicomp{\mathrel{\underset{\text{bi}}{\overset{H}{\cdot}}}}
\mdef\vbicomp{\mathrel{\underset{\text{bi}}{\overset{V}{\cdot}}}}
\mdef\hsqcomp{\mathrel{\underset{\text{sq}}{\overset{H}{\cdot}}}}
\mdef\vsqcomp{\mathrel{\underset{\text{sq}}{\overset{V}{\cdot}}}}
\mdef\hbiid{\mathop{\underset{\text{bi}}{\overset{H}{\id}}}}
\mdef\vbiid{\mathop{\underset{\text{bi}}{\overset{V}{\id}}}}
\mdef\hsqid{\mathop{\underset{\text{sq}}{\overset{H}{\id}}}}
\mdef\vsqid{\mathop{\underset{\text{sq}}{\overset{V}{\id}}}}
\mdef\hbia{\underset{\text{bi}}{\overset{H}{a}}}
\mdef\vbia{\underset{\text{bi}}{\overset{V}{a}}}
\mdef\hbil{\underset{\text{bi}}{\overset{H}{l}}}
\mdef\vbil{\underset{\text{bi}}{\overset{V}{l}}}
\mdef\hbi{\mathop{\overset{H}{\text{bi}}}}
\mdef\vbi{\mathop{\overset{V}{\text{bi}}}}
\mdef\hsq{\mathop{\overset{H}{\text{sq}}}}
\mdef\vsq{\mathop{\overset{V}{\text{sq}}}}
\def\aan{an}
\def\Aan{An}
\def\adjective{implicit}
\def\doubleword{{\adjective} double category}
\def\doublewords{{\adjective} double categories}
\def\twoword{{\adjective} 2-category}
\def\twowords{{\adjective} 2-categories}
\mdef\ad{\mathbf{I}}
\mdef\wk{\mathbf{w}}
\mdef{\bDblWord}{\mathbf{IDblCat}}
\mdef{\cDblWord}{\mathscr{I\!D}\mathit{bl}\mathscr{C}\!\mathit{at}}
\mdef\tword{\mathbf{I}\text{-}2\text{-}\bCat}
\mdef\ctword{\mathscr{I}\!\text{-}2\text{-}\mathscr{C}\!\mathit{at}}
\mdef{\cCat}{\mathscr{C}\!\mathit{at}}
\tikzset{every picture/.style={>=stealth,baseline={([yshift=-.8ex]current bounding box.center)}}}
\tikzset{arr/.style={auto,font=\scriptsize}} 
\tikzset{e/.style={fill=white,inner sep=1pt}}
\tikzset{ed/.style={rectangle, rounded corners,fill=white,inner sep=1pt,font=\scriptsize}} 
\tikzset{nated/.style={rectangle, rounded corners,fill=white,inner sep=1pt}}
\tikzset{rect/.style={rounded corners}} 
\tikzset{ov/.style={coordinate}} 
\tikzset{iv/.style={circle,draw,inner sep=1pt,font=\scriptsize}} 
\tikzset{ivs/.style={regular polygon,regular polygon sides=4,draw,minimum size=8pt,inner sep=1pt,font=\scriptsize}} 
\tikzset{ivo/.style={regular polygon,regular polygon sides=8,border rotate=22.5,draw,minimum size=8pt,inner sep=1pt,font=\scriptsize}} 
\tikzset{fun/.style={pattern color=gray}} 
\tikzset{ivf/.style={circle,fill=white,draw=white,double=black,very thick,inner sep=1pt,font=\scriptsize}} 
\tikzset{edf/.style={draw=white,double=black,very thick}} 
\tikzset{nat/.style={draw=white,double=black,very thick,decorate,decoration={zigzag,segment length=1mm,amplitude=.5mm},thick}} 
\tikzset{mod/.style={diamond,fill=white,draw=white,double=black,very thick,inner sep=1pt,font=\scriptsize}}
\tikzset{modc/.style={iv,fill=white,draw=white,double=black,very thick}} 
\tikzset{mods/.style={ivs,fill=white,draw=white,double=black,very thick}} 
\tikzset{modo/.style={regular polygon,regular polygon sides=8,shape border rotate=22.5,fill=white,draw=white,double=black,very thick,minimum size=8pt,inner sep=1pt}} 
\DeclareFontFamily{U}{FdSymbolC}{}
\DeclareFontShape{U}{FdSymbolC}{m}{n}{<-> s * FdSymbolC-Book}{}
\DeclareSymbolFont{fdarrows}{U}{FdSymbolC}{m}{n}
\DeclareMathSymbol{\Nearrow}{\mathrel}{fdarrows}{12}
\DeclareMathSymbol{\Nwarrow}{\mathrel}{fdarrows}{13}
\DeclareMathSymbol{\Swarrow}{\mathrel}{fdarrows}{14}
\DeclareMathSymbol{\Searrow}{\mathrel}{fdarrows}{15}
\begin{document}

\begin{abstract}
  We propose a definition of double categories whose composition of
  1-cells is weak in both directions.  Namely, a doubly weak double
  category is a double computad --- a structure with 2-cells of all
  possible double-categorical shapes --- equipped with all possible
  composition operations, coherently.  We also characterize them using
  ``implicit'' double categories, which are double computads having
  all possible compositions of 2-cells, but no compositions of
  1-cells; doubly weak double categories are then obtained by a simple
  representability criterion.  We also show that they are equivalent
  to Verity's double bicategories satisfying a simple additional
  condition that has appeared previously in the literature, and to a
  similar enhancement of Garner's cubical bicategories.
\end{abstract}

\maketitle
\setcounter{tocdepth}{1}
\tableofcontents

\killspacingtrue

\section{Introduction}
\label{sec:introduction}

\subsection{The problem of doubly weak double categories}
\label{sec:doubly-weak-double}

A double category is a structure like a 2-category but with two different sorts of 1-cells, \emph{horizontal} and \emph{vertical}, and 2-cells shaped like squares (with two 1-cells of each sort on their boundaries):
\[
  \begin{tikzpicture}[->, scale=.4]
    \node (a) at (-1, 1) {};
    \node (b) at (1, 1) {};
    \node (c) at (-1, -1) {};
    \node (d) at (1, -1) {};
    \node at (a) {$\cdot$};
    \node at (b) {$\cdot$};
    \node at (c) {$\cdot$};
    \node at (d) {$\cdot$};
    \draw (a) -- (b);
    \draw (b) -- (d);
    \draw (a) -- (c);
    \draw (c) -- (d);
    \node at (0,0) {$\alpha$};
  \end{tikzpicture}
\]

Just as there are strict and weak versions of 2-categories, there are
strict and weak versions of double categories.  Strict double
categories are easy to define, as internal categories in the category
\cat{Cat} of categories (whereas 2-categories are \emph{enriched}
categories in \cat{Cat}). The two different sorts of 1-cell are then,
respectively, the morphisms in the category-of-objects and the objects
in the category-of-morphisms.  Now just as a bicategory is a ``weakly
enriched category'' in the 2-category \cCat of categories, the
definition of internal category can be weakened so that it satisfies
the usual associativity and unit laws only up to coherent isomorphism
(a so-called ``internal pseudo-category''
\cite{ferreira:pseudo}). This results in the \emph{pseudo double
  categories} from~\cite{gp:double-limits}.

However, pseudo double categories are weak in only one direction:
composition of morphisms in the category-of-objects is still
strict. Many of the weak double categories arising naturally do
satisfy this constraint (e.g.\ the double category of categories,
whose two sorts of 1-cells are functors, which compose in a strict
way, and profunctors, which do not). But there are some situations in
which one would like a notion of double category where composition is
weak in both directions.  For example:
\begin{itemize}
\item Every strict 2-category $\bicat{C}$ has a strict double category of ``squares'' a.k.a.\ ``quintets'',\footnote{This unlovely term arises from the fact that to determine a 2-cell in this double category requires five data: a 2-cell in $\bicat{C}$ and four 1-cells in $\bicat{C}$ that form its boundary (the decomposition of its source and target as composites not being determined by the 2-cell itself).}
  where both sorts of 1-cells are those of $\bicat{C}$, and the squares are 2-cells in $\bicat{C}$ of the form \[
    \begin{tikzpicture}[->, scale=.4]
      \node (a) at (-1, 1) {};
      \node (b) at (1, 1) {};
      \node (c) at (-1, -1) {};
      \node (d) at (1, -1) {};
      \node at (a) {$\cdot$};
      \node at (b) {$\cdot$};
      \node at (c) {$\cdot$};
      \node at (d) {$\cdot$};
      \draw (a) -- (b);
      \draw (b) -- (d);
      \draw (a) -- (c);
      \draw (c) -- (d);
      \node at (0,0) {$\Swarrow$};
    \end{tikzpicture}
  \]
  But if $\bicat{C}$ is a \emph{bicategory}, then this would have to be a double category that is weak in both directions.
\item As shown in~\cite{brown:homotopy}, any topological space has a fundamental double groupoid consisting of points as 0-cells, continuous paths as both kinds of 1-cells, and homotopy classes of homotopies as 2-cells.
  The double groupoid constructed in~\cite{brown:homotopy} is made strict by quotienting the paths by ``thin homotopy'', but it would be more natural to have weak composition in both directions, since concatenation of paths is not strictly associative.
\item A \emph{proarrow equipment}~\cite{wood:pro1} can be defined as a pseudofunctor of bicategories $\bicat{C} \to \bicat{D}$ that is bijective on objects, locally full and faithful, and such that every 1-cell in its image is a left adjoint.
  This is intended as an abstraction of examples such as the pseudofunctor $\cCat \to \cProf$ assigning to each functor its representable profunctor.
  As observed in~\cite{verity:base-change,shulman:frbi}, a proarrow equipment gives rise to a double category, whose objects are those shared by $\bicat{C}$ and $\bicat{D}$, whose two sorts of 1-cell are those of $\bicat{C}$ and $\bicat{D}$ respectively, and whose 2-cells come from $\bicat{D}$.
  However, this is only a pseudo double category if $\bicat{C}$ is a strict 2-category.
  When $\bicat{C}$ and $\bicat{D}$ are both bicategories, this double category should be weak in both directions.

  In practice, often $\bicat{C}$ is strict, but not always.
  Two examples where it is not are the inclusion $\cSpan(\cat{E}) \to \cPoly(\cat{E})$ of the bicategory of spans in the bicategory of polynomials~\cite{kg:polynomials,weber:poly-pb}, for any locally cartesian closed category $\cat{E}$; and the inclusion $\cCatAna(\cat{E}) \to \cProf(\cat{E})$ of internal anafunctors~\cite{bartels:hgt,roberts:ana} into internal profunctors, for any topos $\cat{E}$.
\item A special case of an equipment is when the 1-cells of $\bicat{C}$ are defined to be adjunctions in $\bicat{D}$ (pointing in the direction of the left adjoints; these are sometimes called \emph{maps}).
  The resulting double category was used in~\cite{ks:r2cats} to formalize the functoriality of the ``mates'' correspondence in $\bicat{D}$.
  To do the same when $\bicat{D}$ is a bicategory would require a doubly weak double category.
\item If $\bicat{C}$ and $\bicat{D}$ are strict 2-categories, there is a strict double category that we denote $\Homl(\bicat{C}, \bicat{D})$ whose objects are functors $\bicat{C}\to\bicat{D}$, whose horizontal and vertical 1-cells are \emph{lax} and \emph{colax} transformations respectively, and whose 2-cells are a general notion of modification.
  This should also be true if $\bicat{C}$ and $\bicat{D}$ are bicategories, but in that case this double category would be weak in both directions.
\item Similarly, if $T$ is a 2-monad on a 2-category $\bicat{C}$, there is a strict double category whose objects are $T$-algebras and whose horizontal and vertical 1-cells are lax and colax $T$-morphisms respectively.
  (Such double categories were first considered by~\cite{gp:double-adjoints}.)
  This should also be true if $T$ is a pseudomonad on a bicategory, but in that case this double category would again be weak in both directions.
\end{itemize}

We evidently cannot define doubly weak double categories as any sort
of internal category in categories (since the arrows of a category
compose strictly associatively). But we can write out the definition
of a double category explicitly, with sets of 0-cells, vertical and
horizontal 1-cells, and squares, and then try to insert coherence
isomorphisms relating compositions of 1-cells.  However, it is
surprisingly tricky to make this work, for the following reason.

Note first that the usual associativity and unit constraint isomorphisms in a bicategory are \emph{globular}:\[
  \begin{tikzpicture}[scale=.6]
    \node[inner sep=1] (x1) at (-1,0) {$\cdot$};
    \node[inner sep=1] (x2) at (1,0) {$\cdot$};
    \draw[->,shorten <=.5pt] (x1) to[bend left=50] node[above] {\small$\uhcom{\justi}{f}$} (x2);
    \draw[->,shorten <=.5pt] (x1) to[bend right=50] node[below] {\small$f$} (x2);
    \node at (0,0) {$\lambda$};
  \end{tikzpicture}
  \qquad\qquad\qquad
  \begin{tikzpicture}[scale=.6]
    \node[inner sep=1] (x1) at (-1,0) {$\cdot$};
    \node[inner sep=1] (x2) at (1,0) {$\cdot$};
    \draw[->,shorten <=.5pt] (x1) to[bend left=50] node[above] {\small$\uhcom{f}{\justi}$} (x2);
    \draw[->,shorten <=.5pt] (x1) to[bend right=50] node[below] {\small$f$} (x2);
    \node at (0,0) {$\rho$};
  \end{tikzpicture}
  \qquad\qquad\qquad
  \begin{tikzpicture}[scale=.6]
    \node[inner sep=1] (x1) at (-1,0) {$\cdot$};
    \node[inner sep=1] (x2) at (1,0) {$\cdot$};
    \draw[->,shorten <=.5pt] (x1) to[bend left=50] node[above] {\small$\uhcom{f}{\hcom{g}{h}}$} (x2);
    \draw[->,shorten <=.5pt] (x1) to[bend right=50] node[below] {\small$\uhcom{\hcom{f}{g}}{h}$} (x2);
    \node at (0,0) {$\alpha$};
  \end{tikzpicture}
\]
In a pseudo double category, and presumptively in a doubly weak double category, the corresponding requirement would be that they are squares bordered by
vertical identity 1-cells, simulating globular 2-cells:
\[
  \begin{tikzpicture}[scale=.6]
    \node[inner sep=1] (a1) at (-1,1) {$\cdot$};
    \node[inner sep=1] (b1) at (1,1) {$\cdot$};
    \node[inner sep=1] (c1) at (-1,-1) {$\cdot$};
    \node[inner sep=1] (d1) at (1,-1) {$\cdot$};
    \draw[->,shorten <=.5pt] (a1) -- node[above] {\small$\uhcom{\justi}{f}$} (b1);
    \draw[->,shorten <=.5pt] (c1) -- node[below] {\small$f$} (d1);
    \draw[->,shorten <=.5pt] (a1) -- node[left] {\small$\justi$} (c1);
    \draw[->,shorten <=.5pt] (b1) -- node[right] {\small$\justi$} (d1);
    \node at (0,0) {$\lambda$};
  \end{tikzpicture}
  \qquad\qquad\,
  \begin{tikzpicture}[scale=.6]
    \node[inner sep=1] (a1) at (-1,1) {$\cdot$};
    \node[inner sep=1] (b1) at (1,1) {$\cdot$};
    \node[inner sep=1] (c1) at (-1,-1) {$\cdot$};
    \node[inner sep=1] (d1) at (1,-1) {$\cdot$};
    \draw[->,shorten <=.5pt] (a1) -- node[above] {\small$\uhcom{f}{\justi}$} (b1);
    \draw[->,shorten <=.5pt] (c1) -- node[below] {\small$f$} (d1);
    \draw[->,shorten <=.5pt] (a1) -- node[left] {\small$\justi$} (c1);
    \draw[->,shorten <=.5pt] (b1) -- node[right] {\small$\justi$} (d1);
    \node at (0,0) {$\rho$};
  \end{tikzpicture}
  \qquad\qquad\,
  \begin{tikzpicture}[scale=.6]
    \node[inner sep=1] (a1) at (-1,1) {$\cdot$};
    \node[inner sep=1] (b1) at (1,1) {$\cdot$};
    \node[inner sep=1] (c1) at (-1,-1) {$\cdot$};
    \node[inner sep=1] (d1) at (1,-1) {$\cdot$};
    \draw[->,shorten <=.5pt] (a1) -- node[above] {\small$\uhcom{f}{\hcom{g}{h}}$} (b1);
    \draw[->,shorten <=.5pt] (c1) -- node[below] {\small$\uhcom{\hcom{f}{g}}{h}$} (d1);
    \draw[->,shorten <=.5pt] (a1) -- node[left] {\small$\justi$} (c1);
    \draw[->,shorten <=.5pt] (b1) -- node[right] {\small$\justi$} (d1);
    \node at (0,0) {$\alpha$};
  \end{tikzpicture}
\]
In order to state the usual coherence conditions that these globular
2-cells should satisfy, we must be able to compose them. But when
\emph{vertical} composition of 1-cells is not strictly unital,
vertical composition of squares takes squares that are bordered by
vertical identities to squares that are not; thus the usual coherence
conditions on these squares are not well-typed (the vertical
boundaries of the two sides of the equation are not equal).
\[
  \begin{tikzpicture}[xscale=.8,yscale=.5]
    \begin{scope}[shift={(0,1)}]
      \node[inner sep=1] (a1) at (-1,1) {$\cdot$};m
      \node[inner sep=1] (b1) at (1,1) {$\cdot$};
      \node[inner sep=1] (c1) at (-1,-1) {$\cdot$};
      \node[inner sep=1] (d1) at (1,-1) {$\cdot$};
      \draw[->,shorten <=.5pt] (a1) -- node[above] {\scriptsize$\ubcom{f}{\bcom{\justi}{g}}$} (b1);
      \draw[->,shorten <=.5pt] (c1) -- node[below] {\scriptsize$\ubcom{f}{g}$} (d1);
      \draw[->,shorten <=.5pt] (a1) -- node[left] {\small$\justi$} (c1);
      \draw[->,shorten <=.5pt] (b1) -- node[right] {\small$\justi$} (d1);
      \node at (0,0) {$\ubcom{1}{\lambda}$};
    \end{scope}
  \end{tikzpicture}
  \stackrel{?}{=}
  \begin{tikzpicture}[xscale=.8,yscale=.5]
    \begin{scope}[shift={(0,1)}]
      \node[inner sep=1] (a1) at (-1,1) {$\cdot$};
      \node[inner sep=1] (b1) at (1,1) {$\cdot$};
      \node[inner sep=1] (c1) at (-1,-1) {$\cdot$};
      \node[inner sep=1] (d1) at (1,-1) {$\cdot$};
      \draw[->,shorten <=.5pt] (a1) -- node[above] {\scriptsize$\ubcom{f}{\bcom{\justi}{g}}$} (b1);
      \draw[->,shorten <=.5pt] (c1) -- node[e] {\scriptsize$\ubcom{\bcom{f}{\justi}}{g}$} (d1);
      \draw[->,shorten <=.5pt] (a1) -- node[left] {\small$\justi$} (c1);
      \draw[->,shorten <=.5pt] (b1) -- node[right] {\small$\justi$} (d1);
      \node at (0,0) {$\alpha$};
    \end{scope}
    \begin{scope}[shift={(0,-1)}]
      \node[inner sep=1] (c) at (-1,-1) {$\cdot$};
      \node[inner sep=1] (d) at (1,-1) {$\cdot$};
      \draw[->,shorten <=.5pt] (c) -- node[below] {\scriptsize$\ubcom{f}{g}$} (d);
      \draw[->,shorten <=.5pt] (c1) -- node[left] {\small$\justi$} (c);
      \draw[->,shorten <=.5pt] (d1) -- node[right] {\small$\justi$} (d);
      \node at (0,0) {$\ubcom{\rho}{\justi}$};
    \end{scope}
  \end{tikzpicture}
  \qquad\quad\qquad
  \begin{tikzpicture}[xscale=.8,yscale=.5]
    \begin{scope}[shift={(0,1)}]
      \node[inner sep=1] (a1) at (-1,1) {$\cdot$};
      \node[inner sep=1] (b1) at (1,1) {$\cdot$};
      \node[inner sep=1] (c1) at (-1,-1) {$\cdot$};
      \node[inner sep=1] (d1) at (1,-1) {$\cdot$};
      \draw[->,shorten <=.5pt] (a1) -- node[above] {\scriptsize$\ubcom{f}{\bcom{g}{\bcom{h}{k}}}$} (b1);
      \draw[->,shorten <=.5pt] (c1) -- node[e] {\scriptsize$\ubcom{\bcom{f}{g}}{\bcom{h}{k}}$} (d1);
      \draw[->,shorten <=.5pt] (a1) -- node[left] {\small$\justi$} (c1);
      \draw[->,shorten <=.5pt] (b1) -- node[right] {\small$\justi$} (d1);
      \node at (0,0) {$\alpha$};
    \end{scope}
    \begin{scope}[shift={(0,-1)}]
      \node[inner sep=1] (c) at (-1,-1) {$\cdot$};
      \node[inner sep=1] (d) at (1,-1) {$\cdot$};
      \draw[->,shorten <=.5pt] (c) -- node[below] {\scriptsize$\ubcom{\bcom{\bcom{f}{g}}{h}}{k}$} (d);
      \draw[->,shorten <=.5pt] (c1) -- node[left] {\small$\justi$} (c);
      \draw[->,shorten <=.5pt] (d1) -- node[right] {\small$\justi$} (d);
      \node at (0,0) {$\alpha$};
    \end{scope}
  \end{tikzpicture}
  \stackrel{?}{=}
  \begin{tikzpicture}[xscale=.8,yscale=.5]
    \begin{scope}[shift={(0,1)}]
      \node[inner sep=1] (a1) at (-1,1) {$\cdot$};
      \node[inner sep=1] (b1) at (1,1) {$\cdot$};
      \node[inner sep=1] (c1) at (-1,-1) {$\cdot$};
      \node[inner sep=1] (d1) at (1,-1) {$\cdot$};
      \draw[->,shorten <=.5pt] (a1) -- node[above] {\scriptsize$\ubcom{f}{\bcom{g}{\bcom{h}{k}}}$} (b1);
      \draw[->,shorten <=.5pt] (c1) -- node[e] {\scriptsize$\ubcom{f}{\bcom{\bcom{g}{h}}{k}}$} (d1);
      \draw[->,shorten <=.5pt] (a1) -- node[left] {\small$\justi$} (c1);
      \draw[->,shorten <=.5pt] (b1) -- node[right] {\small$\justi$} (d1);
      \node at (0,0) {$\ubcom{\justi}{\alpha}$};
    \end{scope}
    \begin{scope}[shift={(0,-1)}]
      \node[inner sep=1] (c) at (-1,-1) {$\cdot$};
      \node[inner sep=1] (d) at (1,-1) {$\cdot$};
      \draw[->,shorten <=.5pt] (c) -- node[e] {\scriptsize$\ubcom{\bcom{f}{\bcom{g}{h}}}{k}$} (d);
      \draw[->,shorten <=.5pt] (c1) -- node[left] {\small$\justi$} (c);
      \draw[->,shorten <=.5pt] (d1) -- node[right] {\small$\justi$} (d);
      \node at (0,0) {$\alpha$};
    \end{scope}
    \begin{scope}[shift={(0,-3)}]
      \node[inner sep=1] (c2) at (-1,-1) {$\cdot$};
      \node[inner sep=1] (d2) at (1,-1) {$\cdot$};
      \draw[->,shorten <=.5pt] (c2) -- node[below] {\scriptsize$\ubcom{\bcom{\bcom{f}{g}}{h}}{k}$} (d2);
      \draw[->,shorten <=.5pt] (c) -- node[left] {\small$\justi$} (c2);
      \draw[->,shorten <=.5pt] (d) -- node[right] {\small$\justi$} (d2);
      \node at (0,0) {$\ubcom{\alpha}{\justi}$};
    \end{scope}
  \end{tikzpicture}
\]

We might
try to correct this by horizontally composing with vertical unitors,
but this in turn affects the bordering horizontal 1-cells; and so on,
\textit{ad infinitum}.  For instance, we cannot even compose a
putative isomorphism $\alpha$ with its putative inverse and other
coherence cells to yield an identity on the source or target:

\[\!\!\!
  \begin{tikzpicture}[scale=.5]
    \begin{scope}[shift={(0,1)}]
      \node[inner sep=1] (a1) at (-1,1) {$\cdot$};m
      \node[inner sep=1] (b1) at (1,1) {$\cdot$};
      \node[inner sep=1] (c1) at (-1,-1) {$\cdot$};
      \node[inner sep=1] (d1) at (1,-1) {$\cdot$};
      \draw[->,shorten <=.5pt] (a1) -- node[above] {\small$s$} (b1);
      \draw[->,shorten <=.5pt] (c1) -- node[e] {\small$t$} (d1);
      \draw[->,shorten <=.5pt] (a1) -- node[left] {\scriptsize$\justi$} (c1);
      \draw[->,shorten <=.5pt] (b1) -- node[right] {\scriptsize$\justi$} (d1);
      \node at (0,0) {$\alpha$};
    \end{scope}
    \begin{scope}[shift={(0,-1)}]
      \node[inner sep=1] (c) at (-1,-1) {$\cdot$};
      \node[inner sep=1] (d) at (1,-1) {$\cdot$};
      \draw[->,shorten <=.5pt] (c) -- node[below] {\small$s$} (d);
      \draw[->,shorten <=.5pt] (c1) -- node[left] {\scriptsize$\justi$} (c);
      \draw[->,shorten <=.5pt] (d1) -- node[right] {\scriptsize$\justi$} (d);
      \node at (0,0) {$\alpha\scalebox{.75}{${}^{-1}$}$};
    \end{scope}
  \end{tikzpicture}
  \!\mapsto\;
  \begin{tikzpicture}[xscale=.5,yscale=.5]
    \node[inner sep=1] (a1) at (-1,1) {$\cdot$};
    \node[inner sep=1] (b1) at (1,1) {$\cdot$};
    \node[inner sep=1] (c1) at (-1,-1) {$\cdot$};
    \node[inner sep=1] (d1) at (1,-1) {$\cdot$};
    \draw[->,shorten <=.5pt] (a1) -- node[above] {\small$s$} (b1);
    \draw[->,shorten <=.5pt] (c1) -- node[below] {\small$s$} (d1);
    \draw[->,shorten <=.5pt] (a1) -- node[left] {\tiny$\uvcom{\!\justi}{\!\justi}$} (c1);
    \draw[->,shorten <=.5pt] (b1) -- node[right] {\,\tiny$\uvcom{\!\justi}{\!\justi}$} (d1);
    \node at (0,0) {\tiny\;\;$\uvcom{\alpha}{\!\alpha\scalebox{.75}{${}^{-1}$}}$};
  \end{tikzpicture}
  \qquad\quad
  \begin{tikzpicture}[xscale=.5,yscale=.5]
    \node[inner sep=1] (a1) at (-1,1) {$\cdot$};
    \node[inner sep=1] (b1) at (1,1) {$\cdot$};
    \node[inner sep=1] (c1) at (-1,-1) {$\cdot$};
    \node[inner sep=1] (d1) at (1,-1) {$\cdot$};
    \draw[->,shorten <=.5pt] (a1) -- node[above] {\small$s$} (b1);
    \draw[->,shorten <=.5pt] (c1) -- node[below] {\small$s$} (d1);
    \draw[->,shorten <=.5pt] (a1) -- node[left] {\tiny$\uvcom{\!\justi}{\!\justi}$} (c1);
    \draw[->,shorten <=.5pt] (b1) -- node[e] {\tiny$\uvcom{\!\justi}{\!\justi}$} (d1);
    \node at (0,0) {\tiny$\uvcom{\alpha}{\!\alpha\scalebox{.75}{${}^{-1}$}}$};
    \begin{scope}[shift={(2,0)}]
      \node[inner sep=1] (b2) at (1,1) {$\cdot$};
      \node[inner sep=1] (d2) at (1,-1) {$\cdot$};
      \draw[->,shorten <=.5pt] (b1) -- node[above] {\tiny$\justi$} (b2);
      \draw[->,shorten <=.5pt] (d1) -- node[below] {\tiny$\justi$} (d2);
      \draw[->,shorten <=.5pt] (b2) -- node[right] {\tiny$\justi$} (d2);
      \node at (0,0) {$\lambda$};
    \end{scope}
  \end{tikzpicture}
  \mapsto\;
  \begin{tikzpicture}[xscale=.5,yscale=.5]
    \node[inner sep=1] (a1) at (-1,1) {$\cdot$};
    \node[inner sep=1] (b1) at (1,1) {$\cdot$};
    \node[inner sep=1] (c1) at (-1,-1) {$\cdot$};
    \node[inner sep=1] (d1) at (1,-1) {$\cdot$};
    \draw[->,shorten <=.5pt] (a1) -- node[above] {\tiny$\uhcom{s}{\justi}$} (b1);
    \draw[->,shorten <=.5pt] (c1) -- node[below] {\tiny$\uhcom{s}{\justi}$} (d1);
    \draw[->,shorten <=.5pt] (a1) -- node[left] {\tiny$\uvcom{\!\justi}{\!\justi}$} (c1);
    \draw[->,shorten <=.5pt] (b1) -- node[right] {\tiny$\justi$} (d1);
    \node at (-.15,-.01) {\tiny$\uvcom{\alpha}{\!\alpha\scalebox{.75}{${}^{-1}$}}$};
    \node at (-.75,0) {\footnotesize$($};
    \node at (.3,0) {\footnotesize$)$};
    \node at (.65,0) {\footnotesize$\lambda$};
  \end{tikzpicture}
  \quad\;\;\cdots
\]

At least two ways around this problem have been proposed to date.
\begin{itemize}
\item In~\cite{verity:base-change} Verity defined a \emph{double
    bicategory} to consist of horizontal and vertical bicategories
  with the same set of objects, together with sets of squares that are
  acted on by the 2-cells of the bicategories and can be composed with
  each other horizontally and vertically.

  This includes the important examples, but it does not quite capture
  all their structure, since nothing in a double bicategory allows us
  to identify the 2-cells in the horizontal and vertical bicategories
  with the squares bordered by identities, whereas in examples these
  two are always the same.

  It is possible to correct this problem by assuming an additional
  axiom.  This axiom was already mentioned by Verity~\cite[Lemma
  1.4.9]{verity:base-change}, and was called ``saturation''
  in~\cite{rwan:univalent-doublecats}.  We will discuss this further
  in \cref{sec:computads,sec:tidy}.

\item In~\cite{garner:cubical-bicats} Garner proposed a definition of
  \emph{cubical bicategory} that consists of the data of a double
  category (objects, horizontal and vertical 1-cells, and squares)
  with 1-cell composition and identities (satisfying no axioms), plus
  a way to compose any grid of squares along any way of composing up
  its boundaries, satisfying appropriate coherence axioms.

  This also describes the important examples, but also does not
  capture all of their structure.  In particular, with this definition
  there is no obvious way to extract (say) a horizontal bicategory
  consisting of objects, horizontal arrows, and squares bordered by
  vertical identities.
\end{itemize}

In this paper we propose a new definition of doubly weak double
category, which is closely related to the above approaches but is not
subject to either of their problems.  We will show that our doubly
weak double categories are equivalent to double bicategories
satisfying the ``saturation'' condition (which we call ``tidiness''),
and also to cubical bicategories satisfying an analogous condition.
Furthermore, from a certain perspective, our doubly weak double
categories are simply the double-categorical analogue of bicategories,
as we will explain next.


\subsection{Implicit structures}
\label{sec:implicit-structures}

Bicategories are typically regarded as more complicated than strict
2-categories. But from another point of view, bicategories are simpler
than strict 2-categories. Roughly, a bicategory is like a strict
2-category but \emph{without} equalities between compositions of 1-cells.


From this perspective, just like a group has ``fewer ingredients''
than a ring, a bicategory has ``fewer ingredients'' than a strict
2-category. In particular, when a definition of a 2-categorical
shape (e.g.\ the shape of an adjunction, a monad, or a module) makes
no reference to equality between compositions of 1-cells, it actually
belongs in the more general setting of bicategories.

Let us make this more precise. We start with a \textbf{2-computad}
(introduced by Street in~\cite{street:catval-2lim}\footnote{Street's
  computads were later generalized to $n$- and $\infty$-computads by
  Burroni~\cite{burroni:polygraphs} (who introduced them independently, calling them
  \emph{polygraphs}), Batanin~\cite{batanin:cptd-fin,batanin:cptd-slices}, and
  Makkai~\cite{hmz:computads-multitopes}.}), a ``2-category without
composition''.  Explicitly, this consists of
\begin{itemize}
\item
  a collection of 0-cells,
\item
  a collection of 1-cells, each with a source and a target
  0-cell, and
\item a collection of 2-cells, each with a source and a target string
  of 1-cells (where these 1-cells match along 0-cells as appropriate).
\end{itemize}

A 2-computad is the sort of structure that generates a free
2-category, just as a directed graph (a.k.a.\ 1-computad, a ``category
without composition'') is the sort of structure that generates a free
category; indeed, Street observed in~\cite{street:catval-2lim} that
2-categories are monadic over 2-computads. We can draw a 2-cell either
as a pasting diagram or a string diagram (the topological dual):
\begin{center}
  \begin{tikzpicture}[->, scale=.25]
    \node (a) at (-6, 0) {};
    \node (b) at (6, 0) {};
    \node at (a) {$\cdot$};
    \node at (b) {$\cdot$};
    \node (z) at (0, 0) {$\!\!\phantom{{}_\alpha}\Downarrow_\alpha$};
    \node (s1) at (-1, 2.5) {};
    \node (s2) at (1, 2.5) {};
    \node (t1) at (-1, -2.5) {};
    \node (t2) at (1, -2.5) {};
    \node (sh) at (0,2.5) {$\bcdots$};
    \node (th) at (0,-2.5) {$\bcdots$};
    \draw (a) [bend left=15] to node [auto,anchor=south,pos=.4] {$s_1$} (s1);
    \draw (s2) [bend left=15] to node [auto,anchor=south,pos=.6] {$s_m$} (b);
    \draw (a) [bend right=15] to node [auto,anchor=north,pos=.4] {$t_1$} (t1);
    \draw (t2) [bend right=15] to node [auto,anchor=north,pos=.6] {$t_n$} (b);
  \end{tikzpicture}
  \qquad a.k.a.\ \qquad\quad
  \begin{tikzpicture}[scale=.2]
    \draw [rect] (-5,-5) rectangle (5,5);
    \node [ov] (u) at (-3, 5) {};
    \node [ov] (v) at (3, 5) {};
    \node [iv] (a) at (0, 0) {$\alpha$};
    \node [ov] (x) at (-3, -5) {};
    \node [ov] (y) at (3, -5) {};
    \draw (u) [bend right=20] to node [ed] {$s_1$} (a);
    \draw (v) [bend left=20] to node [ed] {$s_m$} (a);
    \draw (a) [bend right=20] to node [ed] {$t_1$} (x);
    \draw (a) [bend left=20] to node [ed] {$t_n$} (y);
    \node [ed] at (0,-2.75) {$\bcdots$};
    \node [ed] at (0,2.75) {$\bcdots$};
    \node [ed] at (0,5) {$\bcdots$};
    \node [ed] at (0,-5) {$\bcdots$};
  \end{tikzpicture}
\end{center}

There is also an intermediate notion between a 2-computad and a
2-category: a structure in which the 2-cells can be composed, but the
1-cells cannot. We call this essentially algebraic structure {\aan}
\textbf{{\twoword}}.  It consists of
\begin{itemize}
\item a 2-computad,
\item 2-cell composition and identity operations (horizontal and vertical), and
\item associativity, unit, and interchange
  laws.
\end{itemize}
In other words, it has 0-cells, 1-cells, 2-cells with composition, and equalities between
compositions of 2-cells.
The compositions of 2-cells can be drawn for example as follows:


\begin{center}
  \begin{tikzpicture}[->,shorten <=-2pt,shorten >=-3pt,xscale=.15,yscale=.2,font=\tiny]
    \node (a) at (-10, 0) {$\cdot$};
    \node (b) at (2, 0) {$\cdot$};
    \node (c) at (10, 0) {$\cdot$};
    \node (s) at (-4,3) {$\cdot$};
    \node (m1) at (-6,0) {$\cdot$};
    \node (m2) at (-2,0) {$\cdot$};
    \node (t1) at (-2,-3) {$\cdot$};
    \node (t2) at (6,-1.5) {$\cdot$};
    \draw (a) [bend left=15] to node [auto,anchor=south,pos=.4] {$f$} (s);
    \draw (s) [bend left=15] to node [auto,anchor=south,pos=.6] {$g$} (b);
    \draw (a) -- node [auto,anchor=north] {$h$} (m1);
    \draw (m1) -- node [fill=white,inner sep=.75,pos=.4] {$i$} (m2);
    \draw (m2) -- node [fill=white,inner sep=.75,pos=.4] {$j$} (b);
    \draw (m1) [bend right=30] to node [auto,anchor=north,pos=.6] {$k$} (t1);
    \draw (t1) [bend right=30] to node [auto,anchor=north,pos=.6] {$l$} (b);
    \draw (b) [bend left=30] to node [auto,anchor=south] {$m$} (c);
    \draw (b) [bend right=10] to node [auto,anchor=north,pos=.4] {$n$} (t2);
    \draw (t2) [bend right=10] to node [auto,anchor=north,pos=.6] {$o$} (c);
    \node at (-4, 1.5) {\small$\!\!\phantom{{}_\alpha}\Downarrow_\alpha$};
    \node at (-2, -1.5) {\small$\!\!\phantom{{}_\beta}\Downarrow_\beta$};
    \node at (6, 0) {\small$\!\!\phantom{{}_\gamma}\Downarrow_\gamma$};
  \end{tikzpicture}
  $=$
  \begin{tikzpicture}[->,shorten <=-2pt,shorten >=-3pt,xscale=.1,yscale=.15,font=\tiny]
    \node (a) at (-10, 0) {$\cdot$};
    \node (c) at (10, 0) {$\cdot$};
    \node (s1) at (-3.5, 2.5) {$\cdot$};
    \node (s2) at (3.5, 2.5) {$\cdot$};
    \node (t1) at (-6.5, -2) {$\cdot$};
    \node (t2) at (-2.25, -3) {$\cdot$};
    \node (t3) at (2.25, -3) {$\cdot$};
    \node (t4) at (6.5, -2) {$\cdot$};
    \draw (a) [bend left=5] to node [auto,anchor=south,pos=.4] {$f$} (s1);
    \draw (s1) [bend left=5] to node [auto,anchor=south] {$g$} (s2);
    \draw (s2) [bend left=5] to node [auto,anchor=south,pos=.6] {$m$} (c);
    \draw (a) [bend right=5] to node [auto,anchor=north] {$h$} (t1);
    \draw (t1) [bend right=5] to node [auto,anchor=north] {$k$} (t2);
    \draw (t2) [bend right=5] to node [auto,anchor=north] {$l$} (t3);
    \draw (t3) [bend right=5] to node [auto,anchor=north] {$n$} (t4);
    \draw (t4) [bend right=5] to node [auto,anchor=north] {$o$} (c);
    \node at (0, 0) {\small$\!\!\phantom{{}_\delta}\Downarrow_\delta$};
  \end{tikzpicture}
  \qquad\qquad
  \begin{tikzpicture}[scale=.175]
    \draw [rect] (-6,-6) rectangle (6,6);
    \node [ov] (u) at (-3, 6) {};
    \node [ov] (v) at (0, 6) {};
    \node [ov] (w) at (3, 6) {};
    \node [iv] (a) at (-1.5, 2.5) {$\alpha$};
    \node [iv] (b) at (-1, -2.5) {$\beta$};
    \node [iv] (d) at (3, 0) {$\gamma$};
    \node [ov] (xx) at (-4, -6) {};
    \node [ov] (x) at (-2, -6) {};
    \node [ov] (y) at (0, -6) {};
    \node [ov] (z) at (2, -6) {};
    \node [ov] (zz) at (4, -6) {};
    \draw (u) [bend right=15] to node [ed] {$f$} (a);
    \draw (v) [bend left=15] to node [ed] {$g$} (a);
    \draw (a) [bend right=15] to node [ed] {$h$} (xx);
    \draw (a) [bend right=5] to node [ed] {$i$} (b);
    \draw (a) [bend left=55] to node [ed] {$j$} (b);
    \draw (b) [bend right=15] to node [ed] {$k$} (x);
    \draw (b) [bend left=15] to node [ed] {$l$} (y);
    \draw (w) -- node [ed] {$m$} (d);
    \draw (d) [bend right=10] to node [ed] {$n$} (z);
    \draw (d) [bend left=10] to node [ed] {$o$} (zz);
  \end{tikzpicture}
  \;$=$\;
  \begin{tikzpicture}[scale=.175]
    \draw [rect] (-6,-6) rectangle (6,6);
    \node [ov] (u) at (-3, 6) {};
    \node [ov] (v) at (0, 6) {};
    \node [ov] (w) at (3, 6) {};
    \node [iv] (a) at (0, 0) {$\delta$};
    \node [ov] (xx) at (-4, -6) {};
    \node [ov] (x) at (-2, -6) {};
    \node [ov] (y) at (0, -6) {};
    \node [ov] (z) at (2, -6) {};
    \node [ov] (zz) at (4, -6) {};
    \draw (u) [bend right=15] to node [ed] {$f$} (a);
    \draw (v) -- node [ed] {$g$} (a);
    \draw (w) [bend left=15] to node [ed] {$m$} (a);
    \draw (a) [bend right=30] to node [ed] {$h$} (xx);
    \draw (a) [bend right=15] to node [ed] {$k$} (x);
    \draw (a) -- node [ed] {$l$} (y);
    \draw (a) [bend left=15] to node [ed] {$n$} (z);
    \draw (a) [bend left=30] to node [ed] {$o$} (zz);
  \end{tikzpicture}
\end{center}

Equivalently, an implicit 2-category can be defined as a \emph{strict}
2-category whose underlying 1-category is freely generated; the
1-cells of the implicit 2-category then being the \emph{generating}
1-cells of this free category.

An implicit 2-category is already quite close to a bicategory, but one more detail is
required. {\Aan} {\twoword} is called
\textbf{representable}\footnote{This usage of ``representable'' traces
  back to the representable  multicategories of~\cite{hermida}.} if each
string of compatible 1-cells is isomorphic to a single 1-cell. (It is
sufficient to require this for binary and nullary strings.) This
allows the 1-cells to be ``composed'', where a ``composite'' 1-cell is
defined up to isomorphism only.

\begin{center}
  \begin{tikzpicture}[->,shorten <=.5pt, scale=.15]
    \node (a) at (-6, 0) {};
    \node (b) at (6, 0) {};
    \node (s) at (0,2.5) {};
    \node at (a) {$\cdot$};
    \node at (b) {$\cdot$};
    \node at (s) {$\cdot$};
    \node (z) at (0, 0) {$\cong$};
    \draw (a) -- node [auto,anchor=south,pos=.4] {$f$} (s);
    \draw (s) -- node [auto,anchor=south,pos=.6] {$g$} (b);
    \draw (a) [bend right=35] to node [auto,anchor=north] {$\ubcom{f}{g}$} (b);
  \end{tikzpicture}
  \qquad\qquad\qquad
  \begin{tikzpicture}[scale=.175]
    \draw [rect] (-5,-5) rectangle (5,5);
    \node [ov] (u) at (-2.5, 5) {};
    \node [ov] (v) at (2.5, 5) {};
    \node [iv] (a) at (0, 0) {$\cong$};
    \node [ov] (x) at (0, -5) {};
    \draw (u) [bend right=20] to node [ed] {$f$} (a);
    \draw (v) [bend left=20] to node [ed] {$g$} (a);
    \draw (a) -- node [ed] {$\ubcom{f}{g}$} (x);
  \end{tikzpicture}
\end{center}

In \cref{sec:twowords} we will show that the category of bicategories
and pseudofunctors is equivalent to that of representable {\twowords}
and {\twoword} functors (homomorphisms of the essentially
algebraic structure).  This alternative definition of bicategory is
appealing for several reasons. First of all, there are no coherence
axioms. Secondly, there is no extraneous structure present that is not
respected by isomorphism of bicategories; it is not possible to even
express equality between compositions of 1-cells, which is
conceptually clarifying.





Having considered the situation for 2-categories, we proceed to treat
double categories in just the same way.
A \textbf{double computad} is the sort of structure that generates a
free double category: it has 0-cells, horizontal and vertical 1-cells,
and 2-cells bordered by strings of compatible 1-cells.
We can draw 2-cells in a double computad either as
pasting diagrams or string diagrams (string diagrams for double
categories are discussed in~\cite{myers:string-dbl}):
\begin{center}
  \begin{tikzpicture}[->, scale=.2]
    \node (a) at (-5, 5) {};
    \node (b) at (5, 5) {};
    \node (c) at (-5, -5) {};
    \node (d) at (5, -5) {};
    \node at (a) {$\cdot$};
    \node at (b) {$\cdot$};
    \node at (c) {$\cdot$};
    \node at (d) {$\cdot$};
    \node (z) at (0, 0) {$\alpha$};
    \node (sh) at (0,5) {$\bcdots$};
    \node (th) at (0,-5) {$\bcdots$};
    \node (tv) at (5,0) {$\vcdots$};
    \node (sv) at (-5,0) {$\vcdots$};
    \draw (a) -- node [auto,anchor=south] {$s^H_1$} (sh);
    \draw (sh) -- node [auto,anchor=south] {$s^H_a$} (b);
    \draw (b) -- node [auto,anchor=west] {$t^V_1$} (tv);
    \draw (tv) -- node [auto,anchor=west] {$t^V_b$} (d);
    \draw (a) -- node [auto,anchor=east] {$s^V_1$} (sv);
    \draw (sv) -- node [auto,anchor=east] {$s^V_c$} (c);
    \draw (c) -- node [auto,anchor=north] {$t^H_1$} (th);
    \draw (th) -- node [auto,anchor=north] {$t^H_d$} (d);
  \end{tikzpicture}
  \qquad a.k.a.\ \quad\qquad
  \begin{tikzpicture}[scale=.25]
    \draw [rect] (-5,-5) rectangle (5,5);
    \node [ov] (sh1) at (-2, 5) {};
    \node [ov] (sh2) at (2, 5) {};
    \node [ov] (th1) at (-2, -5) {};
    \node [ov] (th2) at (2, -5) {};
    \node [ov] (tv2) at (5, -2) {};
    \node [ov] (tv1) at (5, 2) {};
    \node [ov] (sv2) at (-5, -2) {};
    \node [ov] (sv1) at (-5, 2) {};
    \node [ivs] (a) at (0, 0) {$\alpha$};
    \draw (a) [bend left=10,pos=0.75] to node [ed] {$s^H_1$} (sh1);
    \draw (a) [bend right=10,pos=0.75] to node [ed] {$s^H_a$} (sh2);
    \draw (a) [bend right=10,pos=0.75] to node [ed] {$t^H_1$} (th1);
    \draw (a) [bend left=10,pos=0.75] to node [ed] {$t^H_b$} (th2);
    \draw (a) [bend right=10,pos=0.75] to node [ed] {$s^V_1$} (sv1);
    \draw (a) [bend left=10,pos=0.75] to node [ed] {$s^V_c$} (sv2);
    \draw (a) [bend left=10,pos=0.75] to node [ed] {$t^V_1$} (tv1);
    \draw (a) [bend right=10,pos=0.75] to node [ed] {$t^V_d$} (tv2);
    \node [ed,font=\tiny] at (0,-3.75) {$\bcdots$};
    \node [ed,font=\tiny] at (0,3.75) {$\bcdots$};
    \node [ed,font=\tiny] at (0,5) {$\bcdots$};
    \node [ed,font=\tiny] at (0,-5) {$\bcdots$};
    \node [ed,font=\tiny] at (-3.75,0) {$\vcdots$};
    \node [ed,font=\tiny] at (3.75,0) {$\vcdots$};
    \node [ed,font=\tiny] at (5,0) {$\vcdots$};
    \node [ed,font=\tiny] at (-5,0) {$\vcdots$};
  \end{tikzpicture}
\end{center}
{\Aan}
\textbf{\doubleword} is then a double computad with composition operations on 2-cells like in a double category, but \emph{without}
any composition of 1-cells (neither horizontal nor vertical).
We can then define a \textbf{doubly weak double category} to be {\aan}
{\doubleword} that is representable, i.e.\ every string of compatible
1-cells (horizontal or vertical) has a composite.  Thus defined,
doubly weak double categories are the algebras for a finitary monad on
double computads.


\begin{rmk}
  Implicit structures are related to the \emph{virtual} structures
  of~\cite{cruttwell-shulman} (generalized multicategories). For
  instance, a \emph{virtual 2-category} is like an implicit 2-category
  but requires the targets of all 2-cells to be length-1 paths (and
  restricts compositions to those that preserve this property).  A
  \emph{virtual double category} likewise restricts the lower
  boundaries of 2-cells to be length-1 paths, but as with pseudo
  double categories, the vertical 1-cells compose strictly, breaking
  the horizontal/vertical symmetry.

  In fact, after this paper was completed, we learned that Keisuke
  Hoshino has characterized virtual double categories, and also
  augmented virtual double categories~\cite{koudenburg:aug-vdc} (which
  also allow length-0 paths as lower boundaries), as implicit double
  categories with appropriate composition and decomposition
  operations.

  Like implicit structures, virtual structures allow the
  characterization of weak structures without explicit coherence
  axioms, but there are two main differences. Firstly, in {\aan} {\twoword} we can
  define composites simply in terms of isomorphisms internal to the
  structure, whereas in a virtual 2-category composites must be
  defined by way of universal properties (since an inverse of a
  many-to-one morphism would be a one-to-many morphism, which virtual
  structures do not have). Secondly, both representable {\twowords}
  and representable virtual 2-categories can be identified with
  bicategories; however, the maps of {\twowords} (using the definition
  of homomorphism that is automatic from the essentially algebraic
  presentation) correspond to pseudofunctors, whereas the maps of
  virtual 2-categories correspond to \emph{lax} functors.
  
  Moreover, virtual structures apparently cannot be used to define
  doubly weak double categories, since there does not seem to be a
  sensible notion of a double category that is both horizontally and
  vertically virtual.
\end{rmk}

\subsection{Other definitions}
\label{sec:computads}

The presentation outlined in \cref{sec:implicit-structures} gives a
monad on double computads whose algebras are doubly weak double
categories (with chosen composites). But we may also describe the
algebras of this monad more directly, without factoring through the
intermediate step of implicit double categories: a doubly weak double
category is a double computad equipped with 1-cell composition and
identities (satisfying no axioms), plus a way of composing any formal
diagram of 2-cells\footnote{Here we refer only to formal diagrams that
  ``ought to be composable in a double category''.  There are other
  formal diagrams, such as
  ``pinwheels''~\cite{dp:gencomp-dbl,dawson:compdiag-dbl}, that are
  not even composable in a strict double category, and these are not
  composable in our doubly weak double categories either.  (Although
  one can also consider (strict or weak) double categories in which
  such diagrams are composable, e.g.~\cite{lr:pinwheel-dbl}.)}
along any way of composing up its boundaries, satisfying appropriate
coherence axioms (see \cref{cor:freedescription}).

This is similar to Garner's definition of cubical bicategory as
described above in \cref{sec:doubly-weak-double}; the only difference
is that our definition uses a double computad, whereas the 2-cells in
Garner's definition are all \emph{squares}, i.e.\ the horizontal and
vertical sources and targets are length-1 paths.  Indeed, we will show
that Garner's and Verity's definitions both can be derived from ours
by simply ignoring some of the structure of a double computad.

More precisely, the forgetful functor from doubly weak
double categories to \emph{double graphs} (double computads consisting
of only 0-cells, 1-cells, and squares) induces a monad whose algebras
are precisely Garner's cubical bicategories.  Likewise, the forgetful
functor to \emph{double graphs with bigons} (double computads
consisting of only 0-cells, 1-cells, squares, and horizontal and vertical bigons\footnote{By a ``bigon'' we mean a globular 2-cell, having two opposite boundary paths of length 1 and the other two opposite paths of length 0.}) induces a
monad whose algebras are precisely Verity's double bicategories.

In particular, our doubly weak double categories are \emph{not}
monadic over double graphs or double graphs with bigons; additional
shapes featured in a double computad are necessary. (This is perhaps
surprising, since bicategories \emph{are} monadic over 2-graphs,
a.k.a.\ 2-globular sets.)  However, these forgetful functors are ``the
next best thing'' to monadic: they are ``of descent type'', which in
this case means that the comparison functors from doubly weak double
categories to double bicategories \emph{and} to cubical bicategories
are fully faithful.  Thus we can indeed describe a doubly weak double
category as \emph{structure} on a double graph with bigons, or on a
double graph, though these structures are not monadic.

We refer to the resulting equivalent notions of doubly weak double
category respectively as \emph{tidy double bicategories} and
\emph{tidy cubical bicategories}.  Tidiness in both cases is a similar
condition: it says that the operations of composing a square or bigon
with an identity square are bijections.  As noted above, tidiness for double
bicategories is not a new condition; it already appeared without a
name in Verity's thesis~\cite[Lemma 1.4.9]{verity:base-change}, and
in~\cite{rwan:univalent-doublecats} it is called \emph{saturation}.
Our general theory shows that this apparently \emph{ad hoc} condition
does indeed yield a ``correct'' definition, in any reasonable sense.

With that said, an advantage of tidy double bicategories is that they
yield an entirely \emph{finite} presentation of doubly weak double
categories, which we will show can be reduced to a double graph with
binary composition and identity operations, and associator and unitor
coherence squares, and appropriate axioms.  This is perhaps the
simplest definition, and the most amenable to checking all the pieces
by hand in an example.

Finally, we give one last equivalent finite presentation, exhibiting
doubly weak double categories as \emph{monadic} over the category of
double computads containing only 0-cells, 1-cells, squares, and all
four kinds of \emph{monogons}.

\subsection{Outline}
\label{sec:outline}

The structure of the paper is as follows.  In \cref{sec:twowords}, we
spell out in detail the correspondence between bicategories and
representable {\twowords}, using a quick definition of {\twowords} as
strict 2-categories with free underlying 1-category.  Then in
\cref{sec:shortcut}, we by analogy quickly define implicit double
categories, doubly weak double categories, and pseudofunctors between
them, and give some examples (one with proofs postponed to
\cref{sec:twowords-hom}).

Then we move on to the computadic definitions.  In
\cref{sec:double-computads}, we introduce double computads.  In
\cref{sec:doublewords}, we present implicit structures, weak
structures, and strict structures as monads on computads.  And in
\cref{sec:2-monads}, we upgrade the categories of implicit and weak
structures to 2-categories, upgrade the monads to 2-monads, and prove
coherence theorems.

Finally we consider alternative definitions and finite presentations: we discuss tidy double
bicategories in \cref{sec:tidy}, tidy cubical bicategories in
\cref{sec:cubical}, and monogons in \cref{sec:finax}.


\subsection{Acknowledgments}
\label{sec:acknowledgments}

We are grateful to Nathanael Arkor for a careful reading and several
helpful suggestions and to Bob Par\'e for helpful discussions.


\section{Bicategories}
\label{sec:twowords}

We first spell out the equivalence between bicategories and
representable {\twowords}, alluded to in the introduction
(\cref{sec:implicit-structures}).  Although it is helpful to view
{\twowords} as \emph{prior} to 2-categories, to get the main ideas
across as quickly as possible, we start with a definition of
{\twowords} in terms of strict 2-categories.  Later we will give an
alternative definition without reference to strict 2-categories, and
describe 2-categories as extra structure on top of it.

\begin{defn}\label{defn:twoword}
  {\Aan} \textbf{{\twoword}} is a strict 2-category whose
  category of 1-cells is free (i.e.\ freely generated by a directed
  graph).

  We call the generating 1-cells simply \textbf{1-cells}, and we do
  not use this word for their compositions, which we rather call
  \textbf{paths of 1-cells}. The arrows and strings
  in our pasting diagrams and string diagrams always refer to
  generating 1-cells, and we draw these arrows with a distinguished arrowhead
  \tikz{ \draw[->] (0,0) -- (.5,0); }.
  We call a 2-cell whose source and target are both length 1 paths a
  \textbf{bigon}.
\end{defn}



A \textbf{functor} of {\twowords} is a strict 2-functor \emph{that
 sends 1-cells to 1-cells}.  We write \tword for the
category of {\twowords} and such functors. 

For clarity, we may call the strict 2-category associated to {\aan}
{\twoword} its \textbf{path 2-category}. (1-cells in the path
2-category are paths of 1-cells in the {\twoword}.)

When a path of 1-cells is isomorphic to a single 1-cell, we call the
latter a \textbf{composite} of the path. We call {\aan} {\twoword}
\textbf{representable} if each path of 1-cells has a composite.
\begin{center}
  $\forall$
  \begin{tikzpicture}[->, scale=.15]
    \node (a) at (-6, 0) {};
    \node (b) at (6, 0) {};
    \node at (a) {$\cdot$};
    \node at (b) {$\cdot$};
    \node (s1) at (-1, 0) {};
    \node (s2) at (1, 0) {};
    \node (sh) at (0,0) {$\bcdots$};
    \draw (a) -- node [auto,anchor=south] {$f_1$} (s1);
    \draw (s2) -- node [auto,anchor=south] {$f_a$} (b);
    \path (0,6) -- (0, -6);
  \end{tikzpicture}\!\!,
  \;\;$\exists$
  \begin{tikzpicture}[->, scale=.15]
    \node (a) at (-6, 0) {};
    \node (b) at (6, 0) {};
    \node at (a) {$\cdot$};
    \node at (b) {$\cdot$};
    \node (z) at (0, 0) {$\cong$};
    \node (s1) at (-1, 2.5) {};
    \node (s2) at (1, 2.5) {};
    \node (sh) at (0,2.25) {$\bcdots$};
    \draw (a) -- node [auto,anchor=south,pos=.4] {$f_1$} (s1);
    \draw (s2) -- node [auto,anchor=south,pos=.6] {$f_a$} (b);
    \draw (a) [bend right=35] to node [auto,anchor=north] {$f$} (b);
    \path (0,6) -- (0, -6);
  \end{tikzpicture}
  \;\; a.k.a.\ \;\;\;
  $\forall$
  \begin{tikzpicture}[scale=.175]
    \draw [rect] (-5,-5) rectangle (5,5);
    \node [ov] (u) at (-3, 5) {};
    \node [ov] (v) at (3, 5) {};
    \node [ov] (x) at (-3, -5) {};
    \node [ov] (y) at (3, -5) {};
    \draw (u) -- node [ed] {$f_1$} (x);
    \draw (v) -- node [ed] {$f_n$} (y);
    \node [ed] at (0,5) {$\bcdots$};
    \node [ed] at (0,0) {$\bcdots$};
    \node [ed] at (0,-5) {$\bcdots$};
  \end{tikzpicture}\;,
  \;\; $\exists$
  \begin{tikzpicture}[scale=.175]
    \draw [rect] (-5,-5) rectangle (5,5);
    \node [ov] (u) at (-3, 5) {};
    \node [ov] (v) at (3, 5) {};
    \node [iv] (a) at (0, 0) {$\cong$};
    \node [ov] (x) at (0, -5) {};
    \draw (u) [bend right=20] to node [ed] {$f_1$} (a);
    \draw (v) [bend left=20] to node [ed] {$f_n$} (a);
    \draw (a) -- node [ed] {$f$} (x);
    \node [ed] at (0,5) {$\bcdots$};
    \node [ed] at (0,2.75) {$\bcdots$};
  \end{tikzpicture}\;.
\end{center}

\begin{rmk} {\Aan} {\twoword} with one 0-cell and one 1-cell is known
  elsewhere as a \textbf{PRO}; {\aan} {\twoword} with one 0-cell
  (which we might call {\aan} ``{\adjective} monoidal category'') is
  often called a \textbf{colored PRO}.


  The result to be shown, that bicategories are equivalent to
  representable {\twowords}, specializes to that monoidal categories
  are equivalent to representable colored PROs.
\end{rmk}

\begin{defn}
  {\Aan} {\twoword} is \textbf{represented} if it has a
  \emph{chosen} isomorphism between each length 2 or 0 path of 1-cells
  and a composite 1-cell.
  
  It follows that every path of 1-cells has a composite (i.e.\
  represented implies representable).
  We denote the chosen composite of 1-cells $f \colon A \to B$ and
  $g \colon B \to C$ by $\ubcom{f}{g} \colon A \to C$ and we denote the
  chosen nullary composite at the 0-cell $A$ by
  $\bi{A} \colon A \to A$. A functor between represented {\twowords} is
  called \textbf{strict} if it preserves the chosen composition
  isomorphisms.
\end{defn}

\begin{rmk}
  One could alternatively suppose a chosen
  composition isomorphism for \emph{every} path of 1-cells, instead of
  just binary and nullary paths. This would be equivalent to an
  \emph{unbiased} bicategory.
\end{rmk}

To translate from bicategories to represented {\twowords} is the
construction known as \emph{strictification}. (Strictification of
bicategories is typically described as a functor $\bBicat \to \tcat$,
but it may be described slightly more precisely as a functor
$\bBicat \to \tword$.) 
For proof that this indeed defines a functor,
we refer to e.g.~\cite[Chapter 2]{gurski};\footnote{The definition of
  strictification in \cite{gurski} makes choices of parenthesizations
  whereas we use ``cliques'' of parenthesizations following
  \cite{nlab:strict}; this makes no essential difference but we find
  the presentation with cliques helpful.} showing this from the
definitions below amounts to a series of straightforward
verifications.


By a \emph{bracketing} of a path of 1-cells in a bicategory, we mean a
single 1-cell produced from the path by composing and introducing
units. For example, a path $f,g,h$ could be bracketed as $(fg)h$,
$(f1)(gh)$, $((fg)h)(11)$, or infinitely many other ways. By the
coherence theorem for bicategories, for any two bracketings of a path
of 1-cells there is a canonical \emph{rebracketing} isomorphism
between them built from coherence isomorphism.

\begin{prop}\label{prop:bitotwo}
  Given a bicategory $\bicat{C}$, the following data amount to a
  represented {\twoword}:
  \begin{itemize}
  \item The 0-cells and 1-cells are as in $\bicat{C}$.
  \item A 2-cell from $s_1, \ldots, s_m$ to $t_1, \ldots, t_n$ is a
    family consisting of a 2-cell in $\bicat{B}$ for every possible
    bracketing of the source and target, such that these 2-cells are
    related by composing with the appropriate rebracketing coherence
    isomorphisms (a.k.a.\ a \emph{clique morphism}).
  \item Composition of 2-cells (including identities) is induced by
    composition of 2-cells in $\bicat{C}$.
  \item The composition isomorphisms are given by identities.
  \end{itemize}
\end{prop}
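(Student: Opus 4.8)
The plan is to verify the three ingredients of a represented {\twoword}: that the stated data assemble into a strict 2-category, that its underlying 1-category is free, and that the prescribed binary and nullary composition isomorphisms exhibit it as represented. Freeness is immediate once the 2-category is in hand, since by construction the 1-cells are those of $\bicat{C}$ and a morphism of the underlying 1-category is a finite composable string $s_1,\dots,s_m$ of them, composed by concatenation; this is precisely the free category on the directed graph of 0- and 1-cells of $\bicat{C}$. The real work is therefore the 2-categorical structure.

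The essential input is the coherence theorem for bicategories: for any composable string $s_1,\dots,s_m$, the 1-cells of $\bicat{C}$ obtained from its various bracketings (including those with identity 1-cells inserted), together with the unique structural isomorphism built from $a$, $l$, $r$ between any two of them, form a \emph{clique}, the uniqueness and cocycle conditions being exactly coherence (see \cite{gurski,nlab:strict}). I would first record the consequence that a 2-cell of the proposed structure --- a clique morphism between the bracketing-cliques of $s_1,\dots,s_m$ and of $t_1,\dots,t_n$ --- is determined by, and may be freely specified as, a single 2-cell of $\bicat{C}$ between one chosen pair of bracketings, giving a natural bijection between the proposed hom-sets and hom-sets of $\bicat{C}$. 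Vertical composition and identities, taken componentwise from $\bicat{C}$, then make each hom a category, inheriting the category axioms directly.

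Next I would define horizontal composition. On 1-cells (strings) it is concatenation, hence strictly associative and strictly unital with the empty string as unit, so the horizontal structure on 1-cells is already strict. On 2-cells I would define the horizontal composite of two clique morphisms by juxtaposing bracketings and applying horizontal composition in $\bicat{C}$ componentwise, then propagating to all bracketings of the concatenated string through the clique isomorphisms; well-definedness (independence of the bracketing choices and compatibility with the clique isomorphisms) follows from naturality of $a$, $l$, $r$ together with coherence. Interchange is then inherited componentwise from interchange in $\bicat{C}$. I expect the \emph{main obstacle} to be strict associativity and strict unitality of this horizontal composition of 2-cells: in $\bicat{C}$ these hold only up to $a$, $l$, $r$, but here I would argue that those very structural 2-cells are the clique isomorphisms linking the bracketings $(bc)d$ and $b(cd)$, respectively a bracketing to the one obtained by inserting an identity 1-cell, inside the clique of the relevant concatenated string. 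Since clique morphisms are by definition identified under conjugation by clique isomorphisms, the two iterated composites (and the composite with an identity) are \emph{equal}, not merely isomorphic. This absorption of $a$, $l$, $r$ into the clique structure is exactly the content of strictification and is where coherence does all the work.

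Finally, for the represented structure I would observe that a length-$2$ string $f,g$ has a single bracketing, namely the composite of $f$ and $g$ in $\bicat{C}$, so its clique coincides with that of the length-$1$ string $\ubcom{f}{g}$ defined to be that composite; likewise the empty string at $A$ has the unique bracketing given by the identity 1-cell of $\bicat{C}$ at $A$, which we take as $\bi{A}$, and its clique coincides with that of the length-$1$ string $\bi{A}$. In each case the two cliques are literally identified, so the chosen composition isomorphism may be taken to be the identity 2-cell, as asserted. This exhibits the structure as represented and completes the verification.
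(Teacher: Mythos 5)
Your proposal is correct and follows essentially the same route as the paper's proof, which invokes the coherence theorem to show that each 2-cell between chosen bracketings extends uniquely across all rebracketings (so composition is well-defined because rebracketing commutes with composing) and lets the axioms follow from coherence and the bicategory axioms; your write-up is simply a fuller expansion of these steps, matching the standard strictification argument the paper cites from \cite{gurski}. The only loose phrase is that the cliques of the path $f,g$ and of the 1-cell $\ubcom{f}{g}$ are ``literally identified''---they merely share the common vertex $f\otimes g$, at which the identity 2-cell supplies the chosen composition isomorphism---but this does not affect the conclusion.
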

\begin{proof}
  The coherence theorem for bicategories guarantees that each
    2-cell from a bracketed form of $s_1 \cdots s_m$ to a bracketed
    form of $t_1 \cdots t_n$ determines, by composing with coherence
    isomorphisms, a unique corresponding 2-cell for every rebracketing
    of the source and target.
    Thus composition is well-defined, since rebracketing then
    composing 2-cells is the same as composing then rebracketing as
    appropriate.
    The axioms follow from coherence and the bicategory axioms.
\end{proof}


We call this the ``underlying {\twoword}'' of a bicategory.
Similarly, using coherence for pseudofunctors, we have:

\begin{prop}\label{prop:pstofun}
  A pseudofunctor between bicategories $\bicatf{F} \colon \bicat{C} \to \bicat{D}$
  induces a functor (\emph{not} necessarily preserving chosen
  composition isomorphisms) between the underlying {\twowords} as
  follows:
  \begin{itemize}
  \item The maps of 0-cells and 1-cells are as in $\bicatf{F}$.
  \item The map on 2-cells is by applying $\bicatf{F}$ and composing
    with pseudofunctor coherence isomorphisms. (2-cells in $\bicat{C}$
    between $\bicat{C}$-bracketed paths of 1-cells map to 2-cells in
    $\bicat{D}$ between $\bicat{D}$-bracketed paths of corresponding
    1-cells.)
  \end{itemize}

  Moreover, this defines a functor $\bBicat \to \tword$.\qed
\end{prop}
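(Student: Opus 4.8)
The plan is to construct the induced functor of \twowords one cell-dimension at a time, and then verify functoriality of the whole assignment. On $0$-cells and $1$-cells there is nothing to choose: \bicatf{F} already sends $0$-cells to $0$-cells and (generating) $1$-cells to $1$-cells, and I extend this to paths by $f_1,\dots,f_m \mapsto \bicatf{F}f_1,\dots,\bicatf{F}f_m$. This commutes strictly with concatenation of paths, so on the path $2$-categories the map is strict on $1$-cells and sends generating $1$-cells to generating $1$-cells, as is required of a functor of \twowords.

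First I would pin down the action on $2$-cells. By \cref{prop:bitotwo}, a $2$-cell of the underlying \twoword of \bicat{C} from a path $s_1\cdots s_m$ to a path $t_1\cdots t_n$ is a clique morphism: a coherent family of $2$-cells of \bicat{C}, one for each pair of bracketings of source and target. Given a representative $\alpha$ whose source is a bracketing of $s_1\cdots s_m$ and whose target is a bracketing of $t_1\cdots t_n$, I apply \bicatf{F}, obtaining a $2$-cell between \bicatf{F} of those two bracketed composites, and then paste on the canonical isomorphisms assembled from the coherence isomorphisms of \bicatf{F} --- comparing \bicatf{F} of a bracketed composite with the matching bracketing of the \bicatf{F}-images. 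The result is a $2$-cell of \bicat{D} whose source and target are the corresponding \bicat{D}-bracketings of $\bicatf{F}s_1\cdots\bicatf{F}s_m$ and $\bicatf{F}t_1\cdots\bicatf{F}t_n$; letting the bracketings vary gives a candidate family in \bicat{D}.

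The hard part will be the well-definedness of this family: that it is again a clique morphism (its members are related by the \bicat{D}-rebracketing coherence isomorphisms) and that it is independent of the representative $\alpha$ chosen from the source clique. Both reduce to a single coherence statement about \bicatf{F}: for each rebracketing, the square whose sides are \bicatf{F} applied to a rebracketing coherence isomorphism of \bicat{C}, the corresponding rebracketing isomorphism of \bicat{D}, and the coherence isomorphisms of \bicatf{F} along the two remaining edges, must commute. This is precisely the compatibility of the constraints of \bicatf{F} with the associativity and unit isomorphisms, i.e.\ the pseudofunctor axioms; by coherence for pseudofunctors any two constraint-built isomorphisms with the same source and target agree, so every such square commutes and the candidate family is a well-defined clique morphism, independent of the chosen representative.

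It then remains to check that the assignment is a functor of \twowords and is functorial in \bicatf{F}, and I expect both to be routine once the previous paragraph is in hand. Preservation of vertical and horizontal composition of $2$-cells and of identities holds because \bicatf{F} preserves vertical composition strictly and horizontal composition up to its constraints: the inserted coherence isomorphisms either cancel at a shared boundary path (the vertical case) or are reconciled by the associativity coherence of the constraints (the horizontal case). Together with strictness on paths this exhibits a functor of \twowords, which is in general not strict since \bicatf{F} need not preserve horizontal composition on the nose. Finally, for functoriality $\bBicat\to\tword$ the identity pseudofunctor has identity constraints and induces the identity, while the constraints of a composite $\bicatf{G}\circ\bicatf{F}$ are the pasted constraints of \bicatf{F} and \bicatf{G}, so the induced functor of the composite agrees with the composite of induced functors by the same coherence bookkeeping. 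The sole genuine obstacle is the coherence verification of the third paragraph; everything else is diagram-chasing licensed by it.
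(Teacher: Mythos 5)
Your proposal is correct and takes essentially the same route as the paper, whose proof of \cref{prop:pstofun} consists precisely of invoking coherence for pseudofunctors (with the remaining ``series of straightforward verifications'' deferred to Gurski, following the clique-morphism pattern of \cref{prop:bitotwo}). Your three paragraphs simply spell out those verifications --- the constraint square reducing well-definedness on cliques to pseudofunctor coherence, cancellation of constraints at shared boundaries for composition, and pasted constraints for functoriality in $\bicatf{F}$ --- all of which are accurate.
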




Next we see this functor $\bBicat \to \tword$ is fully faithful, and
its image consists of the representable {\twowords}.

\begin{prop}\label{prop:twotobi}
  Given a represented {\twoword} $\cat{C}$, the following data amount
  to a bicategory:
  \begin{itemize}
  \item The 0-cells are the 0-cells in $\cat{C}$.
  \item The category $\Hom(A, B)$ is the category of bigons between $A$
    and $B$ in $\cat{C}$.
  \item Composition and identity for 1-cells is as in $\cat{C}$.
  \item Horizontal composition of 2-cells is by horizontally composing
    bigons in $\cat{C}$, and converting to a bigon (by vertically
    composing with composition isomorphisms):
    \[
      \begin{tikzpicture}[->,xscale=.75,yscale=.65]
        \node (a) at (-2,0) {\small$A$};
        \node (b) at (0,0) {\small$B$};
        \node (c) at (2,0) {\small$C$};
        \node at (-1,0) {\large$\alpha_1$};
        \node at (1,0) {\large$\alpha_2$};
        \node at (0,1.15) {$\choseniso$};
        \node at (0,-1.15) {$\choseniso$};
        \draw (a) [bend left=19] .. controls +(.75,.75) and +(-.75,.75) .. node [above] {\tiny$s_1$} (b);
        \draw (b) [bend left=19] .. controls +(.75,.75) and +(-.75,.75) .. node [above] {\tiny$s_2$} (c);
        \draw (a) [bend right=19] .. controls +(.75,-.75) and +(-.75,-.75) .. node [below] {\tiny$t_1$} (b);
        \draw (b) [bend right=19] .. controls +(.75,-.75) and +(-.75,-.75) .. node [below] {\tiny$t_2$} (c);
        \draw (a) .. controls +(.5,1.5) and +(-.75,0) .. (0,2) node [above] {\tiny$\ubcom{s_1}{s_2}$} .. controls +(.75,0) and +(-.5,1.5) .. (c);
        \draw (a) .. controls +(.5,-1.5) and +(-.75,0) .. (0,-2) node [below] {\tiny$\ubcom{t_1}{t_2}$} .. controls +(.75,0) and +(-.5,-1.5) .. (c);
      \end{tikzpicture}
      \qquad
      \qquad
      \begin{tikzpicture}[scale=.4,looseness=.75]
        \node (a) at (-3,0) {\small$A$};
        \node (b) at (0,0) {\small$B$};
        \node (c) at (3,0) {\small$C$};
        \draw [rect] (-4.25,-4.25) rectangle (4.25,4.25);
        \node [ov] (top) at (0, 4.25) {};
        \node [ov] (bottom) at (0, -4.25) {};
        \node [iv] (a) at (0, 2.25) {$\choseniso$};
        \node [iv] (b) at (0, -2.25) {$\choseniso$};
        \node [iv] (2cella) at (-1.5, 0) {$\alpha_1$};
        \node [iv] (2cellb) at (1.5, 0) {$\alpha_2$};
        \draw (top) -- node [ed] {$\ubcom{s_1}{s_2}$} (a);
        \draw (bottom) -- node [ed] {$\ubcom{t_1}{t_2}$} (b);
        \draw (a) [out=-135, in=90] to node [ed] {$s_1$} (2cella);
        \draw (2cella) [out=-90, in=135] to node [ed] {$t_1$} (b);
        \draw (a) [out=-45, in=90] to node [ed] {$s_2$} (2cellb);
        \draw (2cellb) [out=-90, in=45] to node [ed] {$t_2$} (b);
      \end{tikzpicture}
    \]
  \item The components of left and right unitors and associators are
    induced by the composition isomorphisms (by de-composing then
    re-composing):
    \[
      \begin{tikzpicture}[->,rotate=-90,scale=.9]
        \begin{scope}[xscale=.3]
          \node (a) at (0,0) {\small$A$};
          \node (b) at (0,2) {\small$B$};
          \node at (0,1.4) {$\choseniso$};
          \begin{scope}[yscale=.8]
            \node at (0,.7) {$\choseniso$};
            \draw (a) .. controls +(-1.25,.65) and +(-1,0) .. (0,1.25) node [pos=.45,above right=-3] {\tiny$\bi{A}$} .. controls +(1,0) and +(1.25,.65) .. (a);
          \end{scope}
          \draw (a) .. controls +(-1.35,.25) and +(0,-.45) .. (-2.5,1) node [above] {\tiny$\ubcom{\bi{A}}{f}$} .. controls +(0,.45) and +(-1.35,-.25) .. (b);
          \draw (a) .. controls +(1.35,.25) and +(0,-.45) .. (2.5,1) node [below] {\tiny$f$} .. controls +(0,.45) and +(1.35,-.25) .. (b);
        \end{scope}
      \end{tikzpicture}
      \begin{tikzpicture}[<-,rotate=90,scale=.9]
        \begin{scope}[xscale=.3]
          \node (a) at (0,0) {\small$B$};
          \node (b) at (0,2) {\small$A$};
          \node at (0,1.4) {$\choseniso$};
          \begin{scope}[yscale=.8]
            \node at (0,.7) {$\choseniso$};
            \draw [->] (a) .. controls +(-1.25,.65) and +(-1,0) .. (0,1.25) .. controls +(1,0) and +(1.25,.65) .. node [pos=.6,above left=-2] {\tiny$\bi{A}$} (a);
          \end{scope}
          \draw (a) .. controls +(-1.35,.25) and +(0,-.45) .. (-2.5,1) node [below] {\tiny$f$} .. controls +(0,.45) and +(-1.35,-.25) .. (b);
          \draw (a) .. controls +(1.35,.25) and +(0,-.45) .. (2.5,1) node [above] {\tiny$\ubcom{f}{\bi{A}}$} .. controls +(0,.45) and +(1.35,-.25) .. (b);
        \end{scope}
      \end{tikzpicture}
      \qquad
      \begin{tikzpicture}[scale=.175]
        \draw [rect] (-6,-7) rectangle (6,7);
        \node [ov] (v) at (2.5, -7) {};
        \node (0cella) at (-3.5, 2) {$A$};
        \node (0cellb) at (3.5, 2) {$B$};
        \node [iv] (a) at (-2, -3) {$\choseniso$};
        \node [iv] (c) at (0, 2) {$\choseniso$};
        \node [ov] (y) at (0, 7) {};
        \draw (v) [bend right=15] to node [ed] {$f$} (c);
        \draw (a) [bend left=10] to node [ed] {$\bi{A}$} (c);
        \draw (c) -- node [ed] {$\ubcom{\bi{A}}{f}$} (y);
      \end{tikzpicture}
      \;\;\;
      \begin{tikzpicture}[scale=.175]
        \draw [rect] (-6,-7) rectangle (6,7);
        \node [ov] (v) at (-2.5, -7) {};
        \node (0cella) at (-3.5, 2) {$A$};
        \node (0cellb) at (3.5, 2) {$B$};
        \node [iv] (a) at (2, -3) {$\choseniso$};
        \node [iv] (c) at (0, 2) {$\choseniso$};
        \node [ov] (y) at (0, 7) {};
        \draw (v) [bend left=15] to node [ed] {$f$} (c);
        \draw (a) [bend right=10] to node [ed] {$\bi{B}$} (c);
        \draw (c) -- node [ed] {$\ubcom{f}{\bi{B}}$} (y);
      \end{tikzpicture}
    \]
    \[\hspace{-.5em}
      \begin{tikzpicture}[->,xscale=.55,yscale=-.5]
        \node (a) at (-3,0) {\small$A$};
        \node (b) at (-1,0) {\small$B$};
        \node (c) at (1,0) {\small$C$};
        \node (d) at (3,0) {\small$D$};
        \node at (1,1.4) {$\choseniso$};
        \node at (-1,.7) {$\choseniso$};
        \node at (1,-.7) {$\choseniso$};
        \node at (-1,-1.4) {$\choseniso$};
        \draw (a) -- node [above] {\tiny$f$} (b);
        \draw (b) -- node [above] {\tiny$g$} (c);
        \draw (c) -- node [above] {\tiny$h$} (d);
        \draw (a) .. controls +(.375,.8) and +(-2,0) .. (0,2.75) node [below] {\tiny$\ubcom{\bcom{f}{g}}{h}$} .. controls +(2,0) and +(-.375,.8) .. (d);
        \draw (a) .. controls +(.5,.5) and +(-1,0) .. (-1,1.25) node [below] {\tiny$\ubcom{f}{g}$} .. controls +(1,0) and +(-.5,.5) .. (c);
        \draw (b) .. controls +(.5,-.5) and +(-1,0) .. (1,-1.25) node [above] {\tiny$\ubcom{g}{h}$} .. controls +(1,0) and +(-.5,-.5) .. (d);
        \draw (a) .. controls +(.375,-.8) and +(-2,0) .. (0,-2.75) node [above] {\tiny$\ubcom{f}{\bcom{g}{h}}$} .. controls +(2,0) and +(-.375,-.8) .. (d);
      \end{tikzpicture}
      \qquad
      \,\hspace{.5em}
      \qquad
      \begin{tikzpicture}[xscale=.275,yscale=-.275]
        \draw [rect] (-6,-7.5) rectangle (6,7.5);
        \node [ov] (v) at (0, 7.5) {};
        \node (0cella) at (-3.5, -5) {$A$};
        \node (0cellb) at (0, -1.75) {$B$};
        \node (0cellc) at (0, 1.75) {$C$};
        \node (0celld) at (3.5, 5) {$D$};
        \node [iv] (a) at (0, 4.5) {$\choseniso$};
        \node [iv] (b) at (-2, 1.5) {$\choseniso$};
        \node [iv] (c) at (2, -1.5) {$\choseniso$};
        \node [iv] (d) at (0, -4.5) {$\choseniso$};
        \node [ov] (y) at (0, -7.5) {};
        \draw (v) -- node [ed] {$\ubcom{\bcom{f}{g}}{h}$} (a);
        \draw (a) [bend right=10] to node [ed] {$\ubcom{f}{g}$} (b);
        \draw (a) [bend left=25] to node [ed] {$h$} (c);
        \draw (b) -- node [ed] {$g$} (c);
        \draw (b) [bend right=25] to node [ed] {$f$} (d);
        \draw (c) [bend left=10] to node [ed] {$\ubcom{g}{h}$} (d);
        \draw (d) -- node [ed] {$\ubcom{f}{\bcom{g}{h}}$} (y);
      \end{tikzpicture}
    \]
  \end{itemize}
\end{prop}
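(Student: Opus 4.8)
The plan is to verify the bicategory axioms by working entirely inside the \emph{path 2-category} of \cat{C}, which is strict: there horizontal and vertical composition of 2-cells are strictly associative, strictly unital, and satisfy the interchange law, and the chosen composition isomorphisms are genuine invertible 2-cells. First I would dispatch the easy points. Each $\Hom(A,B)$ is a category because the vertical composite of two bigons is again a bigon and vertical composition is strictly associative and unital in the path 2-category. For the horizontal composition of 2-cells I would check well-definedness --- the conjugate, by the chosen composition isomorphisms, of the path-2-category horizontal composite of two bigons is again a bigon, from $\ubcom{s_1}{s_2}$ to $\ubcom{t_1}{t_2}$ --- and then functoriality: writing this horizontal composite as $\chi\cdot(\al_2\ast\al_1)\cdot\chi^{-1}$ (with $\chi$ denoting the relevant chosen composition isomorphisms and $\ast,\cdot$ the horizontal and vertical composition of the path 2-category), functoriality falls out because in a vertical composite of two such expressions the inner $\chi^{-1}\cdot\chi$ cancels, while interchange converts $(\be_2\ast\be_1)\cdot(\al_2\ast\al_1)$ into $(\be_2\cdot\al_2)\ast(\be_1\cdot\al_1)$. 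Preservation of identities is immediate.

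Next I would treat the associators and unitors. Each is invertible because it is a vertical composite of chosen isomorphisms and whiskerings thereof, all invertible in the path 2-category. Naturality of each structural cell again follows from interchange together with the fact that the chosen isomorphisms are honest 2-cells: unpacking both sides of a naturality square into the path 2-category and sliding the bigons past the (whiskered) chosen isomorphisms via interchange makes the two sides coincide.

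The crux is the pentagon and triangle. Here the decisive observation is that every 1-cell of \bicat{B} built by iterated horizontal composition and identities carries a canonical \emph{decomposition isomorphism} $D$, internal to the path 2-category, to the underlying flat path of generating 1-cells (obtained by deleting the chosen brackets and the units $\bi{A}$); $D$ is assembled from the chosen isomorphisms by induction on the bracketing, and is invertible. I would show that each associator and unitor equals $D_{\mathrm{tgt}}^{-1}\cdot D_{\mathrm{src}}$ --- this is precisely what ``de-compose then re-compose'' means, and it uses the \emph{strict} associativity and unitality of path composition to identify the flat paths of source and target (for instance $\ubcom{\ubcom{f}{g}}{h}$ and $\ubcom{f}{\ubcom{g}{h}}$ both decompose to the single flat path of $f,g,h$). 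Since the decomposition isomorphisms are compatible with horizontal composition and whiskering, a whiskered structural cell is again of this form. Consequently every edge of the pentagon (resp.\ triangle) has the shape $D^{-1}\cdot D$, the intermediate decomposition isomorphisms telescope, and both sides of the axiom reduce to the single map $D_{\mathrm{tgt}}^{-1}\cdot D_{\mathrm{src}}$ between the outer vertices. Thus the coherence axioms hold automatically, inherited from strictness of the path 2-category.

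I expect the bookkeeping in this last step --- verifying that $D$ is natural and multiplicative under $\circ$ and whiskering, so that each whiskered associator in the pentagon genuinely composes to the claimed normal form --- to be the one place demanding real care. Everything else is a direct consequence of interchange and strictness, and indeed this construction is exactly strictification (\cref{prop:bitotwo}) run in reverse.
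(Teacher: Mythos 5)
Your proposal is correct and matches the paper's argument, whose entire proof reads ``Functoriality, naturality, pentagon, and triangle follow from composition isomorphisms cancelling with their inverses'': your decomposition isomorphisms $D$ and the telescoping $D^{-1}\cdot D$ normal form are precisely a systematic organization of that cancellation inside the strict path 2-category. The only point to keep straight is that $D$ is attached to the formal bracketed expression rather than to the bare 1-cell (distinct bracketings may happen to yield the same chosen composite), which you handle implicitly and which suffices for the pentagon and triangle.
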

\begin{proof}
  Functoriality, naturality, pentagon, and triangle follow from
  composition isomorphisms cancelling with their inverses.
\end{proof}

We call this the ``underlying bicategory'' of a represented
{\twoword}.

\begin{prop}\label{prop:funtops}
  A functor between represented {\twowords}
  $F \colon \cat{C} \to \cat{D}$ (\emph{not} necessarily preserving
  the chosen composition isomorphisms) induces a pseudofunctor between
  the underlying bicategories as follows:
  \begin{itemize}
  \item The functor is $F$ on 0-cells, 1-cells, and 2-cells
    (bigons).
  \item The coherence isomorphisms $\bi{FA} \to F\bi{A}$ and
    $\ubcom{(Ff)}{(Fg)} \to F\bcom{f}{g}$ are built from the chosen
    composition isomorphisms (by de-composing in $\cat{D}$ and
    re-composing in $\cat{C}$):
    \[
      \quad\quad\;\,
      \begin{tikzpicture}[->,xscale=.5,yscale=.85]
        \node (a) at (0,0) {\small$FA$};
        \node at (0,.7) {$\choseniso$};
        \node at (0,-.7) {\small$F(\choseniso)$};
        \draw (a) .. controls +(-1.25,.65) and +(-1,0) .. (0,1.25) node [above] {\tiny$\bi{FA}$} .. controls +(1,0) and +(1.25,.65) .. (a);
        \draw (a) .. controls +(-1.25,-.65) and +(-1,0) .. (0,-1.25) node [below] {\tiny$F\bi{A}$} .. controls +(1,0) and +(1.25,-.65) .. (a);
      \end{tikzpicture}
      \;\qquad\qquad\qquad\qquad
      \begin{tikzpicture}[scale=.325]
        \node (0cell) at (0, 0) {$A$};
        \node [iv] (2cell) at (0, -1.5) {$\choseniso$};
        \node [ov] (1cell) at (0, -4) {};
        \draw (2cell) -- node [ed] {$\bi{A}$} (1cell);
        \draw [rect, fun, pattern=north east lines] (-4,-4) rectangle (4,4);
        \node [ov] (1cellout) at (0, 4) {};
        \node [ivf] (2cellout) at (0, 1.5) {$\choseniso$};
        \draw [edf] (1cellout) -- node [ed] {$\bi{FA}$} (2cellout);
      \end{tikzpicture}
    \]
    \[
      \begin{tikzpicture}[->,xscale=.55,yscale=.75]
        \node (a) at (-2,0) {\small$FA$};
        \node (b) at (0,0) {\small$FB$};
        \node (c) at (2,0) {\small$FC$};
        \node at (0,.7) {$\choseniso$};
        \node at (0,-.7) {\small$F(\choseniso)$};
        \draw (a) -- node [above] {\tiny$Ff$} (b);
        \draw (b) -- node [above] {\tiny$Fg$} (c);
        \draw (a) .. controls +(.25,.5) and +(-1.25,0) .. (0,1.25) node [above] {\tiny$\ubcom{(Ff)}{(Fg)}$} .. controls +(1.25,0) and +(-.25,.5) .. (c);
        \draw (a) .. controls +(.25,-.5) and +(-1.25,0) .. (0,-1.25) node [below] {\tiny$F\bcom{f}{g}$} .. controls +(1.25,0) and +(-.25,-.5) .. (c);
      \end{tikzpicture}
      \qquad\qquad\qquad
      \begin{tikzpicture}[scale=.325]
        \node at (-2.25, 0) {$A$};
        \node at (0, 0) {$B$};
        \node at (2.25, 0) {$C$};
        \node [ov] (y) at (0, -4) {};
        \node [iv] (b) at (0, -1.5) {$\choseniso$};
        \draw (y) -- node [ed] {$\ubcom{f}{g}$} (b);
        \node (a) at (0, 1.5) {};
        \draw (a) [bend right=60] to node [ed] {$f$} (b);
        \draw (a) [bend left=60] to node [ed] {$g$} (b);
        \draw [rect, fun, pattern=north east lines](-4,-4) rectangle (4,4);
        \node [ov] (v) at (0, 4) {};
        \node [ivf] (a) at (0, 1.5) {$\choseniso$};
        \draw [edf] (v) -- node [ed] {$\ubcom{(Ff)}{(Fg)}$}(a);
      \end{tikzpicture}
    \]
  \end{itemize}
\end{prop}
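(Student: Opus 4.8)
The plan is to check that the listed data satisfy the pseudofunctor axioms, with every verification reduced to a single principle: any 2-cell assembled by vertically and horizontally composing chosen composition isomorphisms, their inverses, and their $F$-images is determined by its source and target path, because such isomorphisms cancel with their inverses and horizontal composition is functorial (interchange). Throughout, write $c_{p,q}\colon(p,q)\to\ubcom{p}{q}$ and $c_A\colon()\to\bi{A}$ for the chosen composition isomorphisms, $\sm$ for horizontal composition in the path 2-category, and $*$ for horizontal composition in the underlying bicategories of \cref{prop:twotobi}.

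First I would dispatch the structural points. Since $F$ is a strict 2-functor sending 1-cells to 1-cells, it sends bigons to bigons and preserves their vertical composites and identities on the nose, so it restricts to a functor $\Hom(A,B)\to\Hom(FA,FB)$ on the bigon-categories, which are the hom-categories of the underlying bicategories. The coherence cells are $\phi_{f,g}=F(c_{f,g})\cdot c_{Ff,Fg}^{-1}$ and $\phi_A=F(c_A)\cdot c_{FA}^{-1}$ (de-compose in $\cat{D}$, then re-compose in $\cat{C}$); as $F$ preserves inverses these are composites of isomorphisms, hence invertible. For naturality of $\phi_{f,g}$ in a pair of bigons $\alpha\colon f\to f'$ and $\beta\colon g\to g'$, I would expand both $F(\alpha*\beta)$ and $(F\alpha)*(F\beta)$ using the definition of $*$ from \cref{prop:twotobi} together with the strictness of $F$; the intermediate $c\cdot c^{-1}$ pairs cancel and both collapse to $F(c_{f',g'})\cdot(F\alpha\sm F\beta)\cdot c_{Ff,Fg}^{-1}$, establishing the square.

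The heart of the argument, and the step I expect to demand the most care, is the associativity coherence axiom, whose two sides are 2-cells $((Ff)(Fg))(Fh)\to F(f(gh))$. I would expand each side in full: the associators $a^{\bicat{C}}$ and $a^{\bicat{D}}$ are composites of chosen isomorphisms by \cref{prop:twotobi}, the bicategory horizontal composites $\phi_{f,g}*1_{Fh}$ and $1_{Ff}*\phi_{g,h}$ unpack (again by \cref{prop:twotobi}) into whiskered raw composites bracketed by chosen isomorphisms of $\cat{D}$, and $F$ is pushed through every operation by strictness. Then the chosen isomorphisms of $\cat{D}$ cancel against their inverses exactly at the junctions where the $\phi$'s meet the associators and the unpacked horizontal composites, while the whiskered pair $(F(c_{f,g})^{-1}\sm 1)\cdot(F(c_{f,g})\sm 1)$ collapses by functoriality of $\sm$. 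I would check that both sides then reduce to the identical canonical cell $F(c_{f,gh})\cdot(1_{Ff}\sm F(c_{g,h}))\cdot(c_{Ff,Fg}^{-1}\sm 1_{Fh})\cdot c_{(Ff)(Fg),Fh}^{-1}$, which de-composes the source in $\cat{D}$ down to the flat path $(Ff,Fg,Fh)$ and then applies $F$ to the chosen re-composition of $(f,g,h)$ into $\ubcom{f}{\bcom{g}{h}}$ in $\cat{C}$. The only genuine labor is the bookkeeping of which of the two {\twowords} each chosen isomorphism belongs to, needed to see that the cancelling pairs truly coincide.

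Finally I would treat the two unit axioms by the same method but with far less bookkeeping: on each side a single $c\cdot c^{-1}$ pair cancels and, after pushing $F$ inside, what remains is $F$ of the corresponding unitor of the underlying bicategory of $\cat{C}$ precomposed with a de-composition in $\cat{D}$; since the unitors of both bicategories are themselves built from chosen isomorphisms (\cref{prop:twotobi}), this collapses to the unitor of the underlying bicategory of $\cat{D}$. Together with the associativity axiom, this shows the data define a pseudofunctor.
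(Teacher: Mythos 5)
Your proof is correct and takes the same route as the paper: the paper's entire proof of this proposition is the one-line remark that ``naturality and coherence axioms follow from composition isomorphisms cancelling with their inverses,'' and your explicit computations --- defining $\phi_{f,g}=F(c_{f,g})\cdot c_{Ff,Fg}^{-1}$, collapsing both sides of naturality to $F(c_{f',g'})\cdot(F\alpha\sm F\beta)\cdot c_{Ff,Fg}^{-1}$, and reducing both sides of the associativity hexagon to the canonical cell that fully de-composes in $\cat{D}$ and re-composes via the $F$-images of the choices in $\cat{C}$ --- are exactly the cancellations the paper leaves implicit. There is nothing to add.
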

\begin{proof}
  Naturality and coherence axioms follow from composition isomorphisms
  cancelling with their inverses.
\end{proof}


Any represented {\twoword} is canonically identified with the
underlying {\twoword} of its underlying bicategory: by composing with
chosen isomorphisms, the 2-cells with arbitrary boundary are in
composition-respecting correspondence with bracketed bigons. Likewise,
any {\twoword} functor is recovered from its underlying pseudofunctor:
the underlying {\twoword} functor is defined in the same way on bigons
and composition isomorphisms, and therefore on all 2-cells. Hence, we
obtain:

\begin{prop}\label{prop:main-two}
  The category of bicategories (and pseudofunctors) is equivalent to the
  category of representable {\twowords} (and {\twoword} functors).\qed
\end{prop}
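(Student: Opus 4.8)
The plan is to show that the functor $\Phi \colon \bBicat \to \tword$ assembled from \cref{prop:bitotwo,prop:pstofun} is fully faithful with essential image exactly the representable {\twowords}; the asserted equivalence is then immediate. By \cref{prop:bitotwo} the underlying {\twoword} of any bicategory is represented, and in particular representable, so the image of $\Phi$ indeed lies among the representable {\twowords}.

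For essential surjectivity, I would use the identification recorded just before the statement. Given a representable {\twoword} $\cat{C}$, representability lets me choose a composition isomorphism for each binary and nullary path, making $\cat{C}$ represented; then \cref{prop:twotobi} produces an underlying bicategory, whose underlying {\twoword} I claim is isomorphic to $\cat{C}$ in $\tword$. The isomorphism carries a 2-cell of $\cat{C}$ to the family of its rebracketings obtained by pre- and post-composing with the chosen composition isomorphisms, and it is a bijection because every 2-cell of $\cat{C}$ with arbitrary boundary is determined by, and determines, any single bracketed bigon representative. That this bijection respects all composition and identity operations is immediate, since the chosen isomorphisms cancel with their inverses.

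For full faithfulness, I would check that the passage from a pseudofunctor to a {\twoword} functor (\cref{prop:pstofun}) and the passage back (\cref{prop:funtops}) are mutually inverse on hom-sets. A {\twoword} functor and its induced pseudofunctor agree on $0$-cells, $1$-cells, and bigons, and the coherence isomorphisms of the latter are built from the chosen composition isomorphisms; re-applying \cref{prop:pstofun} therefore recovers the original functor on bigons, and hence, after composing with composition isomorphisms, on every 2-cell. The reverse round trip is the identity for the same reason.

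The step I expect to be most delicate is keeping straight the distinction between \emph{represented} and \emph{representable} {\twowords}. The backward constructions of \cref{prop:twotobi,prop:funtops} require a choice of composition isomorphisms, whereas the statement concerns representable {\twowords} and general (not strict) {\twoword} functors. What makes this harmless is precisely that {\twoword} functors need \emph{not} preserve the chosen isomorphisms: consequently the underlying bicategory and the induced pseudofunctors depend on the representing isomorphisms only up to canonical isomorphism, so $\Phi$ is an equivalence no matter how the choices are made. Confirming this independence is the one genuinely substantive point; the remaining verifications all reduce, as in \cref{prop:twotobi,prop:funtops}, to composition isomorphisms cancelling with their inverses.
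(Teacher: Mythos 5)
Your proposal is correct and follows essentially the same route as the paper: the paper also establishes \cref{prop:main-two} by combining \cref{prop:bitotwo,prop:pstofun,prop:twotobi,prop:funtops}, noting that any represented {\twoword} is canonically identified (by composing with chosen isomorphisms) with the underlying {\twoword} of its underlying bicategory, and that any {\twoword} functor is recovered from its underlying pseudofunctor in the same way. Your explicit attention to the represented-versus-representable distinction, and to the independence of the constructions from the choice of representing isomorphisms, makes precise a point the paper leaves implicit but introduces nothing genuinely different.
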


Moreover, by construction, a pseudofunctor having identities as the
coherence isomorphisms corresponds to {\aan} {\twoword} functor preserving
chosen composition isomorphisms on the nose, so we also obtain:

\begin{cor}\label{cor:stricttwo}
  The category of bicategories and strict functors is equivalent to
  the category of represented {\twowords} and strict functors (functors that
  preserve the chosen composition isomorphisms).\qed
\end{cor}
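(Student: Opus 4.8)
The plan is to obtain the claim by restricting the equivalence of \cref{prop:main-two} to the wide subcategories of strict functors on each side. Write $\Phi \colon \bBicat \to \tword$ for the functor of \cref{prop:pstofun} and $\Psi$ for its quasi-inverse supplied by \cref{prop:funtops} (defined on the representable {\twowords}); these are mutually inverse on objects up to the canonical identifications of \cref{prop:main-two}. Since the objects are untouched, it suffices to check two things: that the mutually inverse bijection on morphisms carries strict functors to strict functors in both directions, and that the unit and counit of the equivalence consist of strict functors.

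For the morphism correspondence I would argue directly from the earlier constructions. By \cref{prop:funtops}, the coherence isomorphism of the pseudofunctor $\Psi F$ induced by a {\twoword} functor $F \colon \cat{C} \to \cat{D}$ is, at a composable pair $(f,g)$, the composite of the inverse of the chosen composition isomorphism of $\cat{D}$ with the $F$-image of the chosen composition isomorphism of $\cat{C}$ (and similarly in the nullary case). This composite is an identity exactly when $F$ sends the chosen composition isomorphism of $\cat{C}$ to that of $\cat{D}$, that is, exactly when $F$ is strict. Conversely, by \cref{prop:pstofun} a strict pseudofunctor---one whose coherence isomorphisms are identities---induces the {\twoword} functor that merely applies the pseudofunctor on bigons; and since by \cref{prop:bitotwo} the chosen composition isomorphisms of the underlying {\twoword} of a bicategory are identities, this induced functor automatically preserves them and so is strict. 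As the two passages are mutually inverse by \cref{prop:main-two}, they restrict to a bijection between strict pseudofunctors $\bicat{C} \to \bicat{D}$ and strict {\twoword} functors between the underlying {\twowords}.

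It remains to treat the unit and counit. The unit at a bicategory $\bicat{C}$ is the identity: the round trip $\Psi\Phi\bicat{C}$ returns $\bicat{C}$ on the nose, since the associator and unitors manufactured in \cref{prop:twotobi} out of the identity chosen isomorphisms of \cref{prop:bitotwo} reduce precisely to the rebracketing coherence cells of $\bicat{C}$ itself. The counit at a representable {\twoword} $\cat{C}$ is the canonical identification $\Phi\Psi\cat{C} \to \cat{C}$, and this is where the one genuine verification lies: that identification is defined by composing with the chosen composition isomorphisms of $\cat{C}$, and one must check that it thereby carries the (identity) chosen composition isomorphism of $\Phi\Psi\cat{C}$ to the chosen composition isomorphism of $\cat{C}$, so that the counit too is strict. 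Granting this, both directions of the morphism bijection and both natural isomorphisms restrict to strict functors, and the equivalence of \cref{prop:main-two} restricts to the asserted one; everything except the strictness of the counit is a bare unwinding of the earlier propositions together with the fact that a strict pseudofunctor has trivial coherence data.
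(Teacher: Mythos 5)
Your proposal is correct and follows essentially the same route as the paper, whose entire proof is the remark that, under the equivalence of \cref{prop:main-two}, a pseudofunctor with identity coherence isomorphisms corresponds by construction (via \cref{prop:pstofun,prop:funtops}) to a {\twoword} functor preserving the chosen composition isomorphisms. The single step you defer---that the canonical identification of a represented {\twoword} with the underlying {\twoword} of its underlying bicategory is itself strict---is immediate from its definition (it is given by composing with the chosen isomorphisms, so it sends the identity chosen isomorphisms to the chosen ones), and the paper leaves even this implicit.
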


\begin{rmk}\label{rmk:comonadic}
 

  

  Other characterizations of {\twowords} as structure on 2-categories
  are as follows: they are the flexible algebras of the strict
  2-category 2-monad on $\bCat$-enriched graphs (this can be deduced
  from~\cite[Theorem 4.8]{lack:model-2}); they are also the ``pie''
  algebras of this 2-monad in the terminology of~\cite{bourke-garner};
  and they are the cofibrant objects in the canonical model structure
  on 2-categories from~\cite{lack:model-2,lack:model-bi}. Moreover the
  evident (path 2-category) functor $\tword \to \tcat$ is comonadic,
  as shown in~\cite[Proposition 2.5]{amar}.  In particular,
  pseudofunctors are \emph{weak maps} of 2-categories in the sense
  of~\cite{garner:hom-hcat,bg:awfs-ii}.

  We also note that results analogous to those in this section appear
  in~\cite[Section 5]{hadzihasanovic} about a structure similar to an
  implicit 2-category, except not allowing 2-cells with nullary inputs
  or outputs or parallel composition, and with a different treatment
  of nullary composites. Results similar to our
  \cref{sec:twowords-hom} (in which we discuss transformations and
  modifications) are covered there as well in the same context.
\end{rmk} 


\section{Doubly weak double categories}
\label{sec:shortcut}

Now we quickly define doubly weak double categories, using strict double categories by analogy to \cref{sec:twowords}.
(Later in \cref{sec:double-computads} and \cref{sec:doublewords} we will use a more systematic approach, building the essentially algebraic implicit structures from the ground up.)

\begin{defn}\label{defn:impl-dbl}
  {\Aan} \textbf{\doubleword} is a strict double category
  whose horizontal and vertical categories of 1-cells are free (i.e.\
  each is freely generated by a directed graph).

  We call the generating 1-cells simply \textbf{1-cells}, and we do
  not use this word for their compositions, which we rather call
  \textbf{paths of 1-cells}. (In particular, a length zero path of
  1-cells consists of an object.)  The arrows and strings shown
  in our pasting diagrams and string diagrams always refer to
  1-cells.

  We call a 2-cell whose horizontal and vertical sources and targets are
  all length 1 paths a \textbf{square}.
  If its horizontal sources and targets are length 1 and its vertical ones are length 0, we call it a \textbf{horizontal bigon}; dually we have \textbf{vertical bigons}.
\end{defn}

%


A \textbf{functor} of {\doublewords} is a strict double functor
\emph{that moreover sends 1-cells to 1-cells}. We write \bDblWord for
the category of {\doublewords} and such functors.

When a path of 1-cells (horizontal or vertical) is isomorphic to a
single 1-cell, we call the latter a \textbf{composite} of the path.

\begin{defn}
  A \textbf{doubly weak double category} is {\aan} {\doubleword} in which
  each path of 1-cells (horizontal or vertical) has a
  composite.

  We also use the adjective \textbf{representable} to describe such
  {\doublewords}. We write \bWDblCat for this full subcategory of
  \bDblWord.
\end{defn}



We will often additionally assume our doubly weak double categories
are equipped with specific choices of composites, just as it is
customary to assume bicategories are equipped with specific choices of
composites:

\begin{defn}
  {\Aan} {\doubleword} is \textbf{represented} when it is equipped with a
  \emph{chosen} isomorphism between each horizontal or vertical length 2
  or 0 path of 1-cells and a single composite 1-cell.
  It follows that every path of 1-cells has a composite (i.e.\
  represented implies representable). We will refer to this too as
  simply a \textbf{doubly weak double category} where it is clear from
  context that we intend to have chosen composites.

  We denote the chosen composite of 1-cells $f \colon A \to B$ and
  $g \colon B \to C$ by $\udcom{f}{g} \colon A \to C$ (diagrammatic
  order). We denote the chosen nullary composite at the 0-cell $A$ by
  $\di{A} \colon A \to A$ (and it will be clear from context whether
  we mean the horizontal or vertical one).
  
  A functor between doubly weak double categories is
  \textbf{horizontally strict} if it preserves chosen horizontal
  composition isomorphisms. Similarly, it is \textbf{vertically
    strict} if it preserves chosen vertical composition isomorphisms,
  and it is simply \textbf{strict} if it preserves both.  We denote by
  \bWDblCatst the category of doubly weak double categories and strict
  functors.
\end{defn}

\begin{rmk}
  One could give an alternative definition that supposes a chosen
  composition isomorphism for every path of 1-cells, instead
  of just binary and nullary paths. This would provide an
  \emph{unbiased} definition of doubly weak double category, analogous
  to unbiased definitions of monoidal category or bicategory.
\end{rmk}

\begin{rmk}\label{rmk:dbl-bi}
  Just as every strict double category has underlying horizontal and
  vertical strict 2-categories (comprising the 2-cells that are
  respectively vertically and horizontally degenerate), every
  {\doubleword} has underlying horizontal and vertical
  {\twowords}. Note that if {\aan} {\doubleword} is representable,
  then so are its underlying {\twowords}. Hence every doubly weak
  double category has underlying horizontal and vertical bicategories.
\end{rmk}

\begin{eg}\label{eg:bi-dbl}
  In the other direction, just as every strict 2-category has an
  associated double category of squares --- a.k.a.\ the quintet
  construction --- every {\twoword} has an associated {\doubleword}.
  (Indeed, if a strict 2-category has a free underlying 1-category,
  then its strict double category of quintets also has free underlying
  1-categories.)  If {\aan} {\twoword} is representable, then so is
  its associated {\doubleword}. Hence every bicategory has an
  associated doubly weak double category of squares/quintets.
\end{eg}

\begin{eg}\label{eg:fundamental}
  Let $X$ be a topological space. There is an associated doubly weak
  double category, the \emph{fundamental (doubly weak) double
    groupoid} of $X$. The 0-cells are points, 1-cells are continuous
  paths\footnote{In this example we use ``path'' with the topological
    meaning, rather than the categorical one of \cref{defn:impl-dbl}.}
  $p:[0,1] \to X$, and the 2-cells with a given boundary loop
  correspond to relative homotopy classes of disks with that boundary.
  More precisely, given the boundary of a 2-cell, we compose each of the
  four sequences of paths to get a single path defined on $[0,1]$, and
  then a 2-cell with that boundary is a homotopy class of continuous
  maps $[0,1]\times [0,1] \to X$ relative to those four paths as the
  boundary.  (Composing an empty sequence of paths yields a
  constant path.)  Composing 2-cells is done as usual, plus we have to
  compose with reparametrizing homotopies to make the boundaries
  correct. Later we will construct this example in a more finitary
  way, in terms of composition of square 2-cells only, in
  \cref{eg:fundamental2}.

  Note that a doubly weak double category is more directly fitted to
  describing this structure than a strict double category (as
  in~\cite{brown:homotopy}), since composition of paths in a
  topological space is not strictly associative. Note also that
  although this example can be seen as a special case of squares in a
  bicategory, describing the composition of squares in a topological
  space is arguably simpler than describing the composition of 2-cells
  of globular shape (bigons), as discussed in~\cite{brown:homotopy}.
\end{eg}

\begin{eg}
  Given any strict double category, for each symmetry of the square we
  obtain a related strict double category. In particular, we obtain
  the \emph{horizontal opposite} by interchanging the horizontal
  sources and targets of cells, the \emph{vertical opposite} by
  interchanging the vertical sources and targets of cells, and the
  \emph{transpose} by interchanging horizontal and vertical cells.
  Likewise, {\doublewords} and doubly weak double categories are
  closed under these constructions. This makes the theory symmetric,
  so that any concept defined for horizontal arrows also makes sense
  for vertical arrows and vice versa.

  In contrast, the traditional notion of (singly) weak double
  category, a.k.a.\ pseudo double category~\cite{gp:double-limits}, is
  asymmetric: it has strict composition in one direction but weak
  composition in the other. Hence traditionally, a weak double
  category has no transpose. However, as we will see soon in
  \cref{prop:main}, a pseudo double category is a special case of a
  doubly weak double category, so its transpose exists in the form of
  another doubly weak double category.
\end{eg}

\begin{eg}
  Suppose $F\colon\cat{C}\to \cat{D}$ is a functor of implicit 2-categories
  that is bijective on objects.  Then there is an implicit double
  category whose horizontal 1-cells are those of $\cat{D}$, whose
  vertical 1-cells are those of $\cat{C}$, and whose 2-cells are those
  of $\cat{D}$ with $F$ applied to their vertical boundaries.  Indeed,
  this construction can be performed on strict 2-categories and strict
  double categories, and preserves freeness of 1-cells.  And if $\cat{C}$ and
  $\cat{D}$ are representable, so is the resulting implicit double category.

  In particular, a \textbf{(proarrow)
    equipment}~\cite{wood:pro1,wood:pro2} is a bijective on objects
  and locally full and faithful pseudofunctor of bicategories
  $\bicat{C} \to \bicat{D}$ such that every 1-cell in the image is a
  left adjoint.  This serves as an abstraction of e.g.\
  \begin{itemize}
  \item sets, functions, and relations;
  \item rings, homomorphisms, and bimodules; and
  \item categories, functors, and profunctors.
  \end{itemize}
  Thus, any proarrow equipment gives rise to a doubly weak double
  category.  Analogous results were shown in~\cite{verity:base-change}
  using double bicategories, and in~\cite{shulman:frbi} using pseudo
  double categories which requires $\bicat{C}$ to be a strict
  2-category.  As in the latter case, the doubly weak double
  categories arising from equipments can be characterized as those
  where each vertical 1-cell has a horizontal \emph{companion} and
  \emph{conjoint}.
\end{eg}

\begin{eg}
  For any strict 2-category $\bicat{C}$, there are two double
  categories $\bAdj(\bicat{C})$ and $\bAdj'(\bicat{C})$ both of whose objects
  and horizontal 1-cells are those of $\bicat{C}$ and both of whose vertical
  1-cells are adjunctions $f^* \dashv f_*$ in $\bicat{C}$ pointing in
  the direction of the left adjoint.  The 2-cells in the two cases are
  as shown below, one involving the left adjoints and the other the
  right adjoints:
  \[
    \begin{tikzcd}[arrow style=tikz]
      A \ar[r,"f"] \ar[d,"g^*"'] \ar[dr,phantom,"\Swarrow"] & B \ar[d,"h^*"]\\
      C \ar[r,"k"'] & D
    \end{tikzcd}
    \hspace{2cm}
    \begin{tikzcd}[arrow style=tikz]
      A \ar[r,"f"] \ar[dr,phantom,"\Searrow"] & B \\
      C \ar[r,"k"']  \ar[u,"g_*"] & D\ar[u,"h_*"']
    \end{tikzcd}
  \]
  The \emph{mates correspondence}~\cite{ks:r2cats} then yields an
  isomorphism $\bAdj(\bicat{C})\cong \bAdj'(\bicat{C})$ that is the
  identity on 0-cells and 1-cells.

  If instead $\cat{C}$ is an implicit 2-category, we have implicit
  double categories $\bAdj(\cat{C})$ and $\bAdj'(\cat{C})$ and an
  isomorphism between them defined in the same way, using the fact
  that adjunctions in a 2-category compose.  And, if $\cat{C}$ is
  representable, so are $\bAdj(\cat{C})$ and $\bAdj'(\cat{C})$.  Thus,
  we obtain a formalization of the mates correspondence for
  bicategories using double weak double categories.
\end{eg}

\begin{eg}
  In \cref{sec:twowords-hom}, we will show that for any two
  bicategories $\bicat{C}$ and $\bicat{D}$, there is a doubly weak
  double category $\Homl(\bicat{C}, \bicat{D})$ in which the objects
  are functors from $\bicat{C}$ to $\bicat{D}$, the horizontal and
  vertical 1-cells are lax and colax transformations, and the 2-cells
  are an appropriate kind of modification. More generally, for any two
  {\twowords} $\cat{C}$ and $\cat{D}$, there is {\aan} {\doubleword}
  $\Homl(\cat{C}, \cat{D})$, which is representable if $\cat{D}$ is.

  Special cases of this general construction produce more
  examples. Taking $\cat{C}$ to be freely generated by a 1-cell, we
  obtain a doubly weak double category where the 1-cells are lax and
  colax squares in the bicategory $\bicat{D}$. Taking $\cat{C}$ to be
  freely generated by a monad, we obtain a doubly weak double category
  of monads in $\bicat{D}$, where the 1-cells are lax and colax monad
  maps.
\end{eg}





\begin{defn}
  A doubly weak double category is \textbf{horizontally strict} if its
  underlying horizontal bicategory is a strict 2-category.

  Equivalently, for all horizontal $f \colon A \to B$,
  $g \colon B \to C$, and $h \colon A \to B$, we have
  $\udcom{\dcom{f}{g}}{h} = \udcom{f}{\dcom{g}{h}}$ and
  $\udcom{\hi{A}}{f} = f = \udcom{f}{\hi{B}}$, and likewise
  \[
    \begin{tikzpicture}[xscale=.15,yscale=.225]
      \draw [rect] (-6.5,-6) rectangle (6.5,6);
      \node [ov] (u) at (-4, 6) {};
      \node [ov] (v) at (0, 6) {};
      \node [ov] (w) at (4, 6) {};
      \node [ivs, inner sep=0] (a) at (-2, 2) {$\choseniso$};
      \node [ivs, inner sep=0] (b) at (0, -2) {$\choseniso$};
      \node [ov] (x) at (0, -6) {};
      \draw (u) [bend right=20] to node [ed] {$f$} (a);
      \draw (v) [bend left=20] to node [ed] {$g$} (a);
      \draw (w) [bend left=20] to node [ed] {$h$} (b);
      \draw (a) [bend right=20] to node [ed] {$\udcom{f}{g}$} (b);
      \draw (b) -- node [ed] {$\udcom{\dcom{f}{g}}{h}$} (x);
    \end{tikzpicture}
    \; = \;
    \begin{tikzpicture}[xscale=.15,yscale=.225]
      \draw [rect] (-6.5,-6) rectangle (6.5,6);
      \node [ov] (u) at (-4, 6) {};
      \node [ov] (v) at (0, 6) {};
      \node [ov] (w) at (4, 6) {};
      \node [ivs, inner sep=0] (a) at (2, 2) {$\choseniso$};
      \node [ivs, inner sep=0] (b) at (0, -2) {$\choseniso$};
      \node [ov] (x) at (0, -6) {};
      \draw (u) [bend right=20] to node [ed] {$f$} (b);
      \draw (v) [bend right=20] to node [ed] {$g$} (a);
      \draw (w) [bend left=20] to node [ed] {$h$} (a);
      \draw (a) [bend left=20] to node [ed] {$\udcom{g}{h}$} (b);
      \draw (b) -- node [ed] {$\udcom{f}{\dcom{g}{h}}$} (x);
    \end{tikzpicture}
    \qquad \text{and} \qquad
    \begin{tikzpicture}[xscale=.15,yscale=-.225]
      \draw [rect] (-5,-6) rectangle (5,6);
      \node [ov] (v) at (2.5, -6) {};
      \node [ivs, inner sep=0] (a) at (-2, -2) {$\choseniso$};
      \node [ivs, inner sep=0] (c) at (0, 2) {$\choseniso$};
      \node [ov] (y) at (0, 6) {};
      \draw (v) [bend right=15] to node [ed] {$f$} (c);
      \draw (a) [bend left=10] to node [ed] {$\hi{A}$} (c);
      \draw (c) -- node [ed] {$\ubcom{\hi{A}}{f}$} (y);
    \end{tikzpicture}
    \; = \;
    \begin{tikzpicture}[xscale=.15,yscale=-.225]
      \draw [rect] (-5,-6) rectangle (5,6);
      \node [ov] (v) at (0, -6) {};
      \node [ov] (y) at (0, 6) {};
      \draw (v) -- node [ed] {$f$} (y);
    \end{tikzpicture}
    \; = \;
    \begin{tikzpicture}[xscale=.15,yscale=-.225]
      \draw [rect] (-5,-6) rectangle (5,6);
      \node [ov] (v) at (-2.5, -6) {};
      \node [ivs, inner sep=0] (a) at (2, -2) {$\choseniso$};
      \node [ivs, inner sep=0] (c) at (0, 2) {$\choseniso$};
      \node [ov] (y) at (0, 6) {};
      \draw (v) [bend left=15] to node [ed] {$f$} (c);
      \draw (a) [bend right=10] to node [ed] {$\hi{B}$} (c);
      \draw (c) -- node [ed] {$\ubcom{f}{\hi{B}}$} (y);
    \end{tikzpicture}\,.
  \]
  
  Similarly, it is \textbf{vertically strict} if its underlying
  vertical bicategory is strict, and it is \textbf{strict} if it is
  both horizontally and vertically strict.
\end{defn}

\begin{prop}\label{prop:main}
  The category of vertically strict doubly weak double categories and
  vertically strict functors (resp.\ strict functors) is equivalent to
  the category of pseudo double categories and pseudofunctors (resp.\
  strict functors).
\end{prop}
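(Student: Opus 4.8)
The plan is to run the argument of \cref{sec:twowords} one direction at a time, exploiting the fact that vertical strictness turns the vertical 1-cells into the honest category-of-objects of an internal pseudo-category. First I would build, from a vertically strict doubly weak double category $\mathbb{D}$, a pseudo double category as follows. Let $\cat{D}_0$ be the category whose objects are the 0-cells and whose morphisms are the vertical 1-cells, with the chosen vertical composites and units; this is a genuine strict category precisely because $\mathbb{D}$ is vertically strict. Let $\cat{D}_1$ be the category whose objects are the horizontal 1-cells and whose morphisms are the squares (2-cells all of whose boundaries are length $1$), composed vertically; this too is strict since vertical composition of squares is strict. The source and target functors $\cat{D}_1 \rightrightarrows \cat{D}_0$ read off the endpoints of the horizontal boundary and the left/right vertical-boundary 1-cells, and are functorial because vertical composition of squares induces vertical composition of their vertical boundaries. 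Horizontal units and the horizontal composition functor $\cat{D}_1 \times_{\cat{D}_0} \cat{D}_1 \to \cat{D}_1$ come from horizontal identities and from horizontally composing squares and then normalizing along the chosen horizontal composition isomorphisms, exactly as in the horizontal composition of bigons in \cref{prop:twotobi}.

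The crucial point is that the associativity and unit constraints of this internal pseudo-category are \emph{well-typed}. In a general doubly weak double category the globular squares $\lambda,\rho,\alpha$ bordered by vertical identities cannot satisfy the usual coherence laws, because vertical composition is not strictly unital --- precisely the obstruction identified in \cref{sec:doubly-weak-double}. Under vertical strictness these squares are honestly globular 2-cells in $\cat{D}_1$, their composites have matching vertical boundaries, and the pentagon and triangle reduce to coherence in the underlying horizontal bicategory (\cref{rmk:dbl-bi}); so this step is \cref{prop:twotobi} with the strict vertical boundaries carried along. On morphisms, a vertically strict functor preserves chosen vertical composites on the nose, hence induces a genuine functor $\cat{D}_0 \to \cat{D}_0'$ and a functor $\cat{D}_1 \to \cat{D}_1'$, with the horizontal composition isomorphisms supplying the pseudo-double-functor coherence data as in \cref{prop:funtops}; strict functors go to strict functors.

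Conversely, from a pseudo double category I would build a represented vertically strict doubly weak double category following \cref{prop:bitotwo}: its vertical 1-cells are the morphisms of $\cat{D}_0$, generating the free vertical category, with the strict representation given by the canonical functor from that free category onto $\cat{D}_0$ (this is exactly what forces vertical strictness); its horizontal 1-cells are the objects of $\cat{D}_1$; and a 2-cell with horizontal source $s_1,\dots,s_m$, horizontal target $t_1,\dots,t_n$, and single vertical-boundary 1-cells is a clique consisting of one square for each bracketing of $s_1\cdots s_m$ and $t_1\cdots t_n$, related by the horizontal associativity and unit isomorphisms of the pseudo double category. Composition of 2-cells is induced from $\cat{D}_1$ and its horizontal composition functor, the chosen horizontal composition isomorphisms are the identity cliques, and vertical composition is strict by construction, so the result is vertically strict and representable.

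Finally, as in the discussion preceding \cref{prop:main-two}, the two constructions are mutually inverse up to canonical isomorphism: every represented vertically strict doubly weak double category is recovered from its pseudo double category, since composing with the chosen horizontal composition isomorphisms puts 2-cells of arbitrary horizontal boundary into composition-respecting bijection with cliques of bracketed squares, and every functor is recovered from its underlying pseudo double functor; restricting to strict functors on each side yields the parenthetical equivalence. I expect the main obstacle to be purely bookkeeping: one must check that the horizontal bracketing/clique data interact correctly with the source and target functors into $\cat{D}_0$ and with vertical composition of squares. Because vertical strictness eliminates every vertical coherence issue, each such check reduces one horizontal layer at a time to the bicategorical coherence already established in \cref{sec:twowords}; the one genuinely new verification is naturality of the horizontal constraints with respect to vertical composition of squares, which follows from the interchange law together with vertical strictness.
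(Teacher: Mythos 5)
Your proposal is correct and follows essentially the same route as the paper's proof: from a pseudo double category one builds the implicit double category whose 2-cells are cliques of horizontally bracketed squares with composition isomorphisms given by identities (as in \cref{prop:bitotwo} and \cref{prop:pstofun}), and conversely a vertically strict doubly weak double category yields a pseudo double category on its 0-cells, 1-cells, and \emph{square} 2-cells, with compositions normalized and coherence data built from the chosen composition isomorphisms (as in \cref{prop:twotobi} and \cref{prop:funtops}), the strict case corresponding as in \cref{cor:stricttwo}; your internal-category packaging via $\cat{D}_0 \rightrightarrows \cat{D}_1$ is a cosmetic reorganization of the same argument. The one detail to patch is that your clique construction defines only 2-cells whose vertical boundaries are single 1-cells, whereas an implicit double category needs 2-cells of \emph{all} shapes $\td{a}{b}{c}{d}$, including length-zero (bigon) and longer vertical boundary paths: since vertical composition in a pseudo double category is strict, a 2-cell bordered by vertical paths is simply a clique of squares whose vertical boundaries are the canonical composites in $\cat{D}_0$ (empty paths composing to identities), which is what the paper's phrase ``a 2-cell with any boundary'' supplies and what your canonical functor from the free vertical category onto $\cat{D}_0$ already provides the mechanism for.
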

\begin{proof}
  The proof follows the same blueprint as \cref{prop:main-two},
  which we walk through again in this case.

  Every pseudo double category $\bicat{C}$ has an underlying
  vertically strict doubly weak double category with the same 0-cells
  and 1-cells, and where a 2-cell with any boundary is a family
  consisting of a choice of square in $\bicat{C}$ for every possible
  bracketing of the source and target in the weak (horizontal)
  direction, such that these squares are related by composing with the
  relevant coherence isomorphisms (a.k.a.\ a \emph{clique
    morphism}). Composition is as in $\bicat{C}$, and composition
  isomorphisms are given by identities, as in \cref{prop:bitotwo}.

  Likewise every pseudo double functor $\bicat{F}$ has an underlying
  vertically strict functor of {\doublewords}, defined as $\bicat{F}$
  on 0-cells and 1-cells, and with the map on 2-cells induced by
  composing with pseudofunctor coherence isomorphisms, as in
  \cref{prop:pstofun}. (Note that coherence for pseudofunctors of
  bicategories 
  applies just as
  well here, since a pseudo double functor in particular includes
  pseudofunctors between underlying bicategories.)

  Conversely, every vertically strict doubly weak double category
  $\cat{C}$ has an underlying pseudo double category with the same
  0-cells, 1-cells, and \emph{square} 2-cells (those bordered by
  length one paths), and with identities and compositions derived from
  those in $\cat{C}$:
  \[
    \begin{tikzpicture}[->,xscale=.75,yscale=.65]
      \node (au) at (-2,1) {$\cdot$};
      \node (bu) at (0,1) {$\cdot$};
      \node (cu) at (2,1) {$\cdot$};
      \node (ad) at (-2,-1) {$\cdot$};
      \node (bd) at (0,-1) {$\cdot$};
      \node (cd) at (2,-1) {$\cdot$};
      \node at (-1,0) {\large$\alpha_1$};
      \node at (1,0) {\large$\alpha_2$};
      \node at (0,1.5) {$\choseniso$};
      \node at (0,-1.5) {$\choseniso$};
      \draw (au) -- node [above] {\footnotesize$s_1$} (bu);
      \draw (bu) -- node [above] {\footnotesize$s_2$} (cu);
      \draw (ad) -- node [below] {\footnotesize$t_1$} (bd);
      \draw (bd) -- node [below] {\footnotesize$t_2$} (cd);
      \draw (au) -- node [left] {\footnotesize$f$} (ad);
      \draw (bu) -- node [left] {\footnotesize$g$} (bd);
      \draw (cu) -- node [right] {\footnotesize$h$} (cd);
      \draw (au) .. controls +(.5,.75) and +(-.75,0) .. (0,2) node [above] {\scriptsize$\ubcom{s_1}{s_2}$} .. controls +(.75,0) and +(-.5,.75) .. (cu);
      \draw (ad) .. controls +(.5,-.75) and +(-.75,0) .. (0,-2) node [below] {\scriptsize$\ubcom{t_1}{t_2}$} .. controls +(.75,0) and +(-.5,-.75) .. (cd);
    \end{tikzpicture}
    \qquad
    \qquad
    \qquad
    \begin{tikzpicture}[scale=.425]
      \draw [rect] (-4,-4) rectangle (4,4);
      \node [ov] (top) at (0, 4) {};
      \node [ov] (bottom) at (0, -4) {};
      \node [ov] (left) at (-4, 0) {};
      \node [ov] (right) at (4, 0) {};
      \node [ivs] (a) at (0, 2.25) {$\choseniso$};
      \node [ivs] (b) at (0, -2.25) {$\choseniso$};
      \node [ivs] (2cella) at (-1.5, 0) {$\alpha_1$};
      \node [ivs] (2cellb) at (1.5, 0) {$\alpha_2$};
      \draw (top) -- node [ed] {$\ubcom{s_1}{s_2}$} (a);
      \draw (bottom) -- node [ed] {$\ubcom{t_1}{t_2}$} (b);
      \draw (left) -- node [ed] {$f$} (2cella);
      \draw (2cella) -- node [ed] {$g$} (2cellb);
      \draw (2cellb) -- node [ed] {$h$} (right);
      \draw (a) [out=-110, in=90] to node [ed] {$s_1$} (2cella);
      \draw (2cella) [out=-90, in=110] to node [ed] {$t_1$} (b);
      \draw (a) [out=-70, in=90] to node [ed] {$s_2$} (2cellb);
      \draw (2cellb) [out=-90, in=70] to node [ed] {$t_2$} (b);
    \end{tikzpicture}
  \]
  \[
    \qquad\;\;\;
    \begin{tikzpicture}[->,xscale=.3,yscale=.45,shorten <=-1.5pt,shorten >=-1.5pt]
      \node (a) at (0,1) {$\cdot$};
      \node (b) at (0,-1) {$\cdot$};
      \node at (0,1.7) {$\choseniso$};
      \node at (0,-1.7) {$\choseniso$};
      \draw (a) -- node [left] {\footnotesize$f$} (b);
      \draw (a) .. controls +(-1.25,.65) and +(-1,0) .. (0,2.25) node [above] {\footnotesize$\justi$} .. controls +(1,0) and +(1.25,.65) .. (a);
      \draw (b) .. controls +(-1.25,-.65) and +(-1,0) .. (0,-2.25) node [below] {\footnotesize$\justi$} .. controls +(1,0) and +(1.25,-.65) .. (b);
    \end{tikzpicture}
    \quad\;\;\,
    \qquad
    \qquad
    \qquad
    \qquad
    \qquad
    \begin{tikzpicture}[scale=.3]
      \draw [rect] (-4,-4) rectangle (4,4);
      \node [ov] (top) at (0, 4) {};
      \node [ov] (bottom) at (0, -4) {};
      \node [ov] (left) at (-4, 0) {};
      \node [ov] (right) at (4, 0) {};
      \node [ivs] (up) at (0, 2) {$\choseniso$};
      \node [ivs] (down) at (0, -2) {$\choseniso$};
      \draw (top) -- node [ed] {$\justi$} (up);
      \draw (down) -- node [ed] {$\justi$} (bottom);
      \draw (left) -- node [ed] {$f$} (right);
    \end{tikzpicture}
  \]
  (There are analogous diagrams for vertical identities and
  compositions.) The coherence data are built from the chosen
  composition isomorphisms just as in \cref{prop:twotobi}.

  Likewise every vertically strict functor $F$ between vertically
  strict doubly weak double categories has an underlying pseudo double
  functor (see~\cite{gp:double-limits} for a precise definition of
  pseudo double functor), defined as $F$ on all cells, and with
  coherence data built from the chosen composition isomorphisms, just
  as in \cref{prop:funtops}.

  That these assignments constitute an equivalence of categories, as
  in \cref{prop:main-two}, is a series of straightforward
  verifications. Moreover, strict functors of doubly weak double
  categories correspond to strict functors of pseudo double categories
  because preservation of chosen composition isomorphisms amounts to
  triviality of coherence isomorphisms, as in \cref{cor:stricttwo}.
\end{proof}

\begin{cor}\label{cor:strict-double}
  The category of strict doubly weak double categories and strict
  functors is equivalent to the category of strict double categories.\qed
\end{cor}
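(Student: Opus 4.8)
The plan is to deduce this from \cref{prop:main} in exactly the way \cref{cor:stricttwo} was deduced from \cref{prop:main-two}. A strict doubly weak double category is in particular \emph{vertically} strict, so \cref{prop:main} already supplies a fully faithful correspondence between it and a pseudo double category; the only remaining task is to identify which extra condition on the pseudo-double-category side corresponds to \emph{horizontal} strictness, and to check that this condition cuts out exactly the strict double categories.

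First I would recall that a strict double category is the same thing as a pseudo double category whose horizontal composition is strictly associative and unital --- i.e.\ whose horizontal coherence isomorphisms (associators and unitors) are all identities --- since in that case the internal-pseudo-category data degenerate to an honest internal category in \cat{Cat}. So the content to verify is: under the equivalence of \cref{prop:main}, a vertically strict doubly weak double category \cat{C} is horizontally strict if and only if its associated pseudo double category has trivial horizontal coherence isomorphisms.

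This follows by unwinding the construction in the proof of \cref{prop:main}. There the associators and unitors of the associated pseudo double category are built from the chosen horizontal composition isomorphisms of \cat{C} by de-composing and re-composing, exactly as in \cref{prop:twotobi}. Hence these coherence isomorphisms are identities precisely when the chosen horizontal composition isomorphisms of \cat{C} witness strict associativity and unitality --- that is, precisely when the underlying horizontal bicategory of \cat{C} is a strict 2-category, which is the definition of horizontal strictness. The same bookkeeping in reverse shows that a strict double category, viewed as a horizontally strict pseudo double category, gives rise to a horizontally (and vertically) strict doubly weak double category.

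It remains to match the functors. A strict functor of doubly weak double categories preserves \emph{both} the horizontal and the vertical chosen composition isomorphisms; by the argument already used for \cref{cor:stricttwo}, preservation of chosen composition isomorphisms amounts to triviality of the induced coherence isomorphisms of the associated pseudo double functor. Since a functor between strict double categories carries no nontrivial coherence data --- it is simply a strict double functor --- the strict functors correspond on the two sides, and the equivalence of \cref{prop:main} restricts to the claimed equivalence. The main obstacle, such as it is, is the verification in the previous paragraph that horizontal strictness of \cat{C} is equivalent to triviality of the horizontal coherence isomorphisms on the pseudo-double side; but this is a direct unwinding of the formulas in \cref{prop:twotobi} and \cref{prop:main}, introducing no ideas beyond those already used for \cref{cor:stricttwo}.
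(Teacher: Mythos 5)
Your proposal is correct and matches the paper's (implicit) argument: the corollary is stated with no separate proof precisely because it is the restriction of the equivalence of \cref{prop:main} to the horizontally strict objects, with horizontal strictness of a doubly weak double category corresponding to triviality of the horizontal coherence data of the associated pseudo double category, and strictness of functors handled exactly as in \cref{cor:stricttwo}. Your unwinding of the \cref{prop:twotobi}-style formulas to verify this correspondence is the intended bookkeeping.
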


\begin{rmk}
  Keisuke Hoshino has shown that there is an analogue of
  \cref{rmk:comonadic} for double categories as well.  That is, the
  category of implicit double categories is comonadic over that of
  strict double categories, with the comonad being a cofibrant
  replacement; thus double pseudofunctors are the \emph{weak maps} of
  double categories in the sense of~\cite{garner:hom-hcat,bg:awfs-ii}.
\end{rmk}

\section{Double computads}
\label{sec:double-computads}

We next embark on a more algebraic treatment of implicit and doubly
weak double categories, starting with the definition of double
computads.
For comparison and later use, we first recall some details about computads
for 1-categories and 2-categories. By a \textbf{1-computad} we will
mean simply a directed (multi)graph, a.k.a.\ quiver. The category \ocptd of
1-computads is a functor category $[\lC_1,\bSet]$ with domain $\lC_1$
given by the category
\[
  \ton \toto \tz.
\]

The category \ocat of (small) 1-categories is monadic over
1-computads, via an adjunction which we write
\[
  \vcenter{\xymatrix@C=12mm{\ocptd \ar@{}[r]|{\;\;\bot}
      \ar@<2mm>[r]^-{\cF_{1}} & \ocat \ar@<2mm>[l]^-{\cU_{1}}}}
\]
with induced monad $T_1 = \cU_1 \cF_1$. When $X$ is a 1-computad, the
0-cells in $T_1 X$ are the same as in $X$, and the 1-cells in $T_1 X$
are paths in $X$.  We denote by \paral the 1-computad containing two
objects and two parallel arrows between them.


\begin{defn}
  A \textbf{2-computad} consists of a 1-computad $X_{\le 1}$, together
  with a set $X_2$ of \emph{2-cells} and a function $\del$ sending each
  2-cell to a parallel pair of paths in $X_{\le 1}$ (its boundary):
  \[\del\colon X_2 \too \ocptd(\paral, T_1 X_{\le 1}). \]

  We denote by \tcptd the category of 2-computads, defined as the comma
  category of \bSet over $\ocptd(\paral,T_1 -)$.
\end{defn}

The following theorem allows us to quickly deduce that \tcptd is
itself a presheaf category.\footnote{This fact was apparently first
  observed by Schanuel, as mentioned in~\cite{cj:clfrag}.}  Recall
that a functor $G\colon \bC\to \bD$ is a \textbf{parametric right
  adjoint} if \bC has a terminal object $1$ and the induced
$\Ghat\colon \bC \to \bD/G1$ has a left adjoint.
\begin{thm}[\cite{cj:clfrag}]\label{thm:prafunctors}
  Given a functor between presheaf categories
  $G\colon [\lC,\bSet] \to [\lD,\bSet]$, the comma category (a.k.a.\
  Artin gluing) $([\lD,\bSet] / G)$ is again a presheaf category
  $[\lE,\bSet]$ if and only if $G\colon [\lC,\bSet] \to [\lD,\bSet]$
  is a parametric right adjoint.
\end{thm}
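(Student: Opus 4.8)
The plan is to apply the standard recognition theorem for presheaf categories: a cocomplete category $\mathcal{E}$ is (equivalent to) a presheaf category if and only if it admits a small, dense, full subcategory of \emph{atoms} --- objects $A$ for which $\Hom_{\mathcal{E}}(A,-)$ preserves all small colimits. Since colimits in the comma category $\mathcal{E}:=([\lD,\bSet]/G)$ are computed componentwise --- an object is a triple $(X,Y,\phi\colon X\to GY)$ with $X\in[\lD,\bSet]$, $Y\in[\lC,\bSet]$, and the colimit of such triples is $(\colim X_i,\colim Y_i,\phi)$ with $\phi$ induced by the universal property --- the category $\mathcal{E}$ is always cocomplete and the forgetful functor to $[\lD,\bSet]\times[\lC,\bSet]$ creates colimits. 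So in both directions the entire content is the comparison between the ``small dense family of atoms'' condition on $\mathcal{E}$ and the parametric-right-adjoint condition on $G$. The bridge I would use is the reformulation of the p.r.a.\ condition as \emph{familial representability}: $G$ is a parametric right adjoint exactly when, for every $d\in\lD$, the functor $Y\mapsto (GY)(d)$ is a small coproduct of representables, i.e.\ there are objects $F(d,x)\in[\lC,\bSet]$ indexed by $x\in (G1)(d)$ with $(GY)(d)\iso\coprod_{x\in(G1)(d)}[\lC,\bSet](F(d,x),Y)$ naturally in $Y$. Unwinding the definition of $\hat G\colon[\lC,\bSet]\to[\lD,\bSet]/G1$ shows that $F(d,x)$ is precisely the value at the slice-representable $(\yo d\xto{x}G1)$ of the sought left adjoint.

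For the direction ``$G$ a p.r.a.\ $\Rightarrow\mathcal{E}$ a presheaf category'' I would exhibit an explicit small family of atoms and prove it dense. The projection $\pi_C\colon\mathcal{E}\to[\lC,\bSet]$ has the left adjoint $Y\mapsto(0,Y,0\to GY)$, so each $L\yo c:=(0,\yo c,!)$ is an atom, its hom-functor being $\mathrm{ev}_c\circ\pi_C$, a composite of colimit-preserving functors. For the $\lD$-side, to each $d\in\lD$ and $x\in(G1)(d)$ I attach the object $A_{d,x}:=(\yo d,\,F(d,x),\,\phi_{d,x})$, where $\phi_{d,x}\colon\yo d\to GF(d,x)$ is the universal element witnessing representability. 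A short computation identifies $\Hom_{\mathcal{E}}(A_{d,x},(X,Y,\phi))$ with the fibre over $x$ of the classifying map $\psi(d)\colon X(d)\to(G1)(d)$, where $\psi=G(!_Y)\circ\phi$; since the classifying map of a colimit is the colimit of the classifying maps, and since taking that fibre is pullback along $x\colon 1\to(G1)(d)$ in $\bSet$ --- a functor preserving all colimits, as pullback functors in any topos have right adjoints --- $A_{d,x}$ is an atom. Finally I would check density: using density of representables in each factor one writes every $(X,Y,\phi)$ as the canonical colimit of the $L\yo c$ and $A_{d,x}$ mapping into it (the $L\yo c$ rebuild $Y$, and the fibres of $\psi$ over the various $x$ rebuild $X$ compatibly with $\phi$). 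The recognition theorem then gives $\mathcal{E}\simeq[\lE,\bSet]$ with $\lE$ the (opposite of the) full subcategory on these atoms.

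For the converse ``$\mathcal{E}$ a presheaf category $\Rightarrow G$ a p.r.a.'' --- which I expect to be the main obstacle --- I would run the analysis backwards. Now $\mathcal{E}$ is a presheaf topos, hence comes with a canonical small dense family of atoms, and I must extract from it the representing objects $F(d,x)$. The terminal object of $\mathcal{E}$ is $R(1)=(G1,1,\mathrm{id})$ (with $R$ the right adjoint of $\pi_C$), and $\hat G$ is recovered as $\pi_D$ sliced over $R(1)$. The key lemma I would prove is that any atom $A$ of $\mathcal{E}$ with $\pi_C A$ not initial forces $\pi_D A$ to be an atom of $[\lD,\bSet]$ and pins $\Hom_{\mathcal{E}}(A,-)$ down to a single fibre of a classifying map; density of the atoms then supplies, for each $d$ and each $x\in(G1)(d)$, an essentially unique such atom whose hom-functor is $Y\mapsto\{a\in X(d):\psi(d)(a)=x\}$, and its $\lC$-component is the desired $F(d,x)$, yielding $(GY)(d)\iso\coprod_{x}[\lC,\bSet](F(d,x),Y)$, i.e.\ familial representability. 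The delicate point --- and the real obstacle --- is that a presheaf category supplies only \emph{some} small dense family of atoms, not \emph{our} preferred $A_{d,x}$; so the work is to show that the atoms of a gluing of this form are necessarily of the constrained shape above (in particular that an indecomposable projective of $\mathcal{E}$ must project to indecomposable projectives of the two factors, with its $\lD$-component concentrated over a single $x$), and only then can $F(d,x)$ be read off. Carefully controlling how the two projections interact with tininess is where the argument needs the most care, and is essentially the content of the Carboni--Johnstone theorem being cited.
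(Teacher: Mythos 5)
First, a framing point: the paper does not prove this theorem at all --- it imports it from \cite{cj:clfrag}, and later uses only the constructive content of the forward implication (the explicit description of the index category, as in the presentation of $\lC_2$ and $\lC_{\obo}$). So your proposal has to be measured against the Carboni--Johnstone argument itself. Your forward direction is correct and reproduces their construction: the atoms $L\yo c = (0,\yo c,!)$ and $A_{d,x} = (\yo d, F(d,x), \phi_{d,x})$ are exactly the objects of $\lE$ as the paper uses them (e.g.\ the shapes $\ttw^m_n$ are the $A_{d,x}$ with $x$ ranging over the elements of $G1 = \ocptd(\paral, T_1 1)$), and your computation that $\Hom_{\mathcal{E}}(A_{d,x},(X,Y,\phi))$ is the fibre of the classifying map over $x$ is right. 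One step you should make explicit in the density check: every map $\yo c \to F(d,x)$ lifts to a morphism $L\yo c \to A_{d,x}$ in $\mathcal{E}$ (the compatibility square is automatic because the $\lD$-component of $L\yo c$ is initial); this finality observation is what guarantees that the canonical colimit of atoms over $(X,Y,\phi)$ has $\lC$-component exactly $Y$, rather than a larger colimit to which the $F(d,x)$-legs contribute freely.

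The converse, which you rightly identify as the main obstacle, is where the proposal has a genuine gap. First, your key lemma is misstated: with your own notation ($\pi_C A = Y_A$, $\pi_D A = X_A$) the hypothesis must be that $\pi_D A$ is not initial, since $L\yo c$ is itself an atom whose $\pi_C$-component $\yo c$ is not initial but whose $\pi_D$-component is $0$, and $0$ is not an atom of $[\lD,\bSet]$ --- its hom-functor is constant at the point and fails to preserve coproducts. Second, and more seriously, no argument is offered for the (corrected) lemma, and it is not routine: cocontinuity of $\Hom_{\mathcal{E}}(A,-)$ does not transfer along $\pi_D$ in any obvious way, since neither projection admits an evident cocontinuous section through which tininess could be transported; and the assertion that the hom-functor of such an atom is a \emph{single} fibre of a classifying map quietly presupposes the decomposition of $(GY)(d)$ into a coproduct of representables --- precisely what the converse must establish, so as written the argument is circular. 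Your closing concession that this step ``is essentially the content of the Carboni--Johnstone theorem being cited'' confirms that this direction is a restatement rather than a proof: one must actually show (as Carboni--Johnstone do, via the interaction of the two subtoposes of the gluing with indecomposable projectives, or equivalently via preservation of wide pullbacks/connected limits) that every atom of $(\bSet^{\lD}/G)$ has the constrained shape before $F(d,x)$ can be read off. Until then the equivalence is only established in one direction.
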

For functors between well-behaved categories such as presheaf
categories $\bC = [\lC, \bSet]$ and $\bD = [\lD, \bSet]$, parametric
right adjoints are equivalently the functors preserving connected
limits. When moreover $\bD = \bSet$, parametric right adjoints are
simply coproducts of representable functors.

Indeed, $T_1$ and $\ocptd(\paral, -)$ are both parametric right
adjoints, thus so is their composite; hence by \cref{thm:prafunctors}
there is a category $\lC_2$ such that $\tcptd \cong [\lC_2,
\bSet]$. Moreover the proof of this theorem in~\cite{cj:clfrag} tells
us how to explicitly describe the domain category, giving us the
definition of $\lC_2$ written below and shown graphically in \cref{fig:C2}. (It is also not difficult to verify
directly from the definition that functors $\lC_2 \to \bSet$ are
identified with 2-computads.)

\begin{figure}
  \begin{tikzpicture}[scale=.75]
    \begin{scope}[shift={(-7.5,-1.5)},scale=.75]
      \node(x1) at (0,0) {$\cdot$};
    \end{scope}
    \begin{scope}[shift={(-5.5,-1.5)},scale=.75]
      \node[inner sep=0] (x1) at (-.5,0) {$\cdot$};
      \node[inner sep=0](x2) at (.5,0) {$\cdot$};
      \draw[->,shorten <=.5pt] (x1) -- (x2);
    \end{scope}
    \begin{scope}[shift={(-.2,-.2)},xscale=-.9,rotate=90]
      \node (x1) at (0,0) {$\cdot$};
      \draw[-implies,double equal sign distance,shorten <=-1.5pt,shorten >=-3pt](.15,.135)arc(-45:210:.2);
    \end{scope}
    \begin{scope}[shift={(-1,-1)},xscale=-.75, yscale=.75]
      \node (x1) at (0,-.5) {$\cdot$};
      \draw[->,shorten <=2.5pt](x1.center)arc(-90:255:.5);
      \node at (0,0) {$\Downarrow$};
    \end{scope}
    \begin{scope}[shift={(1,-1)},xscale=-.75, yscale=-.75]
      \node (x1) at (0,-.5) {$\cdot$};
      \draw[->,shorten <=2.5pt](x1.center)arc(-90:255:.5);
      \node at (0,0) {$\Downarrow$};
    \end{scope}
    \begin{scope}[shift={(0,-2)},scale=.75]
      \node[inner sep=0] (x1) at (-.75,0) {$\cdot$};
      \node[inner sep=0] (x2) at (.75,0) {$\cdot$};
      \draw[->,shorten <=.5pt] (x1) to[bend left=50] (x2);
      \draw[->,shorten <=.5pt] (x1) to[bend right=50] (x2);
      \node at (0,0) {$\Downarrow$};
    \end{scope}
    \begin{scope}[shift={(-2,-2)},scale=.75]
      \node[inner sep=0] (x1) at (0,-.5) {$\cdot$};
      \node[inner sep=0] (x2) at (0,.5) {$\cdot$};
      \draw[->,shorten <=.5pt](x1) to[out=135, in=180,looseness=2] (x2);
      \draw[->,shorten <=.5pt](x2) to[out=0, in=45,looseness=2] (x1);
      \node at (0,.05) {$\Downarrow$};
    \end{scope}
    \begin{scope}[shift={(2,-2)},scale=.75]
      \node[inner sep=0] (x1) at (0,.5) {$\cdot$};
      \node[inner sep=0] (x2) at (0,-.5) {$\cdot$};
      \draw[->,shorten <=.5pt](x1) to[out=-135, in=180,looseness=2] (x2);
      \draw[->,shorten <=.5pt](x2) to[out=0, in=-45,looseness=2] (x1);
      \node at (0,-.05) {$\Downarrow$};
    \end{scope}
    \begin{scope}[shift={(-3,-3)},scale=.75]
      \node[inner sep=0] (x1) at (0,-.5) {$\cdot$};
      \node[inner sep=0] (x2) at (-.4,.5) {$\cdot$};
      \node[inner sep=0] (x3) at (.4,.5) {$\cdot$};
      \draw[->,shorten <=.5pt,shorten >=-.5pt](x1) to[out=155, in=-140] (x2);
      \draw[->,shorten <=.5pt](x2) to[bend left=10] (x3);
      \draw[->](x3) to[out=-40, in=35] (x1);
      \node at (0,.075) {$\Downarrow$};
    \end{scope}
    \begin{scope}[shift={(3,-3)},scale=.75]
      \node[inner sep=0] (x1) at (0,.5) {$\cdot$};
      \node[inner sep=0] (x2) at (-.4,-.5) {$\cdot$};
      \node[inner sep=0] (x3) at (.4,-.5) {$\cdot$};
      \draw[->,shorten <=.5pt,shorten >=-.5pt](x1) to[out=-155, in=140] (x2);
      \draw[->,shorten <=.5pt](x2) to[bend right=10] (x3);
      \draw[->](x3) to[out=40, in=-35] (x1);
      \node at (0,-.075) {$\Downarrow$};
    \end{scope}
    \begin{scope}[shift={(-1,-3)},scale=.75]
      \node[inner sep=0] (x1) at (-.75,0) {$\cdot$};
      \node[inner sep=0] (x2) at (0,.45) {$\cdot$};
      \node[inner sep=0] (x3) at (.75,0) {$\cdot$};
      \draw[->,shorten <=.5pt] (x1) to[bend left=20] (x2);
      \draw[->,shorten <=.5pt] (x2) to[bend left=20] (x3);
      \draw[->,shorten <=.5pt] (x1) to[bend right=50] (x3);
      \node at (0,0) {$\Downarrow$};
    \end{scope}
    \begin{scope}[shift={(1,-3)},scale=.75]
      \node[inner sep=0] (x1) at (-.75,0) {$\cdot$};
      \node[inner sep=0] (x2) at (0,-.45) {$\cdot$};
      \node[inner sep=0] (x3) at (.75,0) {$\cdot$};
      \draw[->,shorten <=.5pt] (x1) to[bend right=20] (x2);
      \draw[->,shorten <=.5pt] (x2) to[bend right=20] (x3);
      \draw[->,shorten <=.5pt] (x1) to[bend left=50] (x3);
      \node at (0,0) {$\Downarrow$};
    \end{scope}
    \node at (0,-4) {$\vdots$};
  \end{tikzpicture}
  \caption{$\lC_2$ consists of the ``shapes of cell'' in a 2-computad.}
  \label{fig:C2}
\end{figure}

The category $\lC_2$ has objects $\tz$, $\ton$, and $\ttw^m_n$
for natural numbers $m,n \in \mathbb{N}$, and the morphisms are as
follows:
\begin{itemize}
\item The full subcategory of objects $\tz$ and $\ton$ is $\lC_1$.
\item The only arrows into the objects $\ttw^m_n$ are identities.
\item For each $m,n \in \mathbb{N}$, the homsets from $\ttw^m_n$ into
  $\tz$ and $\ton$, acted on by composing arrows in $\lC_1$, determine
  the 1-computad representing a pair of parallel paths of
  lengths $m$ and $n$: \[
    \begin{tikzpicture}[scale=.5]
      \begin{scope}[->,shorten <=-2pt,shorten >=-3pt]
        \node (x1) at (0,0) {$\cdot$};
        \node (x5) at (5,0) {$\cdot$};
        \begin{scope}
          \node (x2) at (1.2,.5)  {$\cdot$};
          \draw (x1) -- (x2);
          \node (x3) at (2.5,.7)  {$\bcdots$};
          \draw (x2) -- (x3);
          \node (x4) at  (3.8,.5) {$\cdot$};
          \draw (x3) -- (x4);
          \draw (x4) -- (x5);
        \end{scope}
        \begin{scope}[yscale=-1]
          \node (x2) at (1.2,.5)  {$\cdot$};
          \draw (x1) -- (x2);
          \node (x3) at (2.5,.7)  {$\bcdots$};
          \draw (x2) -- (x3);
          \node (x4) at  (3.8,.5) {$\cdot$};
          \draw (x3) -- (x4);
          \draw (x4) -- (x5);
        \end{scope}
      \end{scope}
      \draw [decorate,decoration={brace,amplitude=10pt,raise=4ex}] (x1) -- (x5) node[midway,yshift=3.5em]{$m$};
      \draw [decorate,decoration={brace,amplitude=10pt,mirror,raise=4ex}] (x1) -- (x5) node[midway,yshift=-3.5em]{$n$};
    \end{tikzpicture}
  \]
\end{itemize}



As in \cref{sec:twowords}, we refer to 2-cells of shape $\ttw^1_1$
as \textbf{bigons}:
\[
  \begin{tikzpicture}[->, scale=.65]
    \node (a) at (-1, 0) {};
    \node (b) at (1, 0) {};
    \node at (a) {$\cdot$};
    \node at (b) {$\cdot$};
    \node (z) at (0, 0) {$\Downarrow$};
    \draw (a) [bend left=45] to (b);
    \draw (a) [bend right=45] to (b);
  \end{tikzpicture}
\]
A 2-computad in which all 2-cells are bigons is called a
\textbf{2-graph} (a.k.a.\ 2-globular set). We denote this full
subcategory of \tcptd by \tgph, also a functor category with domain a
full subcategory of $\lC_2$:
\[
  \ttw \toto \ton \toto \tz.
\]
(composition laws as in $\lC_2$, where $\ttw \coloneqq \ttw^1_1$).

The category \tgph is also a comma category
$(\bSet / \ocptd(\paral, -))$, so we have a functor
from $\tcptd = (\bSet / \ocptd(\paral, T_1-))$ to $\tgph$ given by applying
$T_1$ to the 1-cells, which reinterprets all of the 2-cells in a 2-computad as bigons
between paths.
  \[
    \begin{tikzpicture}[->, scale=.2]
      \node (a) at (-6, 0) {};
      \node (b) at (6, 0) {};
      \node at (a) {$\cdot$};
      \node at (b) {$\cdot$};
      \node (z) at (0, 0) {$\!\!\phantom{{}_\alpha}\Downarrow_\alpha$};
      \node (s1) at (-1, 2.5) {};
      \node (s2) at (1, 2.5) {};
      \node (t1) at (-1, -2.5) {};
      \node (t2) at (1, -2.5) {};
      \node (sh) at (0,2.25) {$\bcdots$};
      \node (th) at (0,-2.25) {$\bcdots$};
      \draw (a) -- node [auto,anchor=south,pos=.4] {$s_1$} (s1);
      \draw (s2) -- node [auto,anchor=south,pos=.6] {$s_m$} (b);
      \draw (a) -- node [auto,anchor=north,pos=.4] {$t_1$} (t1);
      \draw (t2) -- node [auto,anchor=north,pos=.6] {$t_n$} (b);
    \end{tikzpicture}
    \qquad
    \mapsto
    \qquad
    \begin{tikzpicture}[->, scale=.2]
      \node (a) at (-6, 0) {};
      \node (b) at (6, 0) {};
      \node at (a) {$\cdot$};
      \node at (b) {$\cdot$};
      \node (z) at (0, 0) {$\!\!\phantom{{}_\alpha}\Downarrow_\alpha$};
      \draw (a) [bend left=35] to node [auto,anchor=south] {$s_1\cdots s_m$} (b);
      \draw (a) [bend right=35] to node [auto,anchor=north] {$t_1 \cdots t_n$} (b);
    \end{tikzpicture}
  \]
This is more precisely a functor $\tflat \colon \tcptd \to \ocattgph$
where the codomain is 2-graphs equipped with 1-category structure on
1-cells. Note that this category \ocattgph is evidently monadic over
\tgph.

The functor \tflat is pseudomonic; its image consists of 2-graphs
equipped with \emph{free} 1-category structure and maps sending
generating 1-cells to generating 1-cells. Thus 2-computads are equivalently such structured 2-graphs.

The category \tcat of (small, strict) 2-categories is also monadic
over \tgph, essentially by definition (as a 2-graph equipped with
various operations). The forgetful right adjoint evidently factors
through an intermediate right adjoint $\tcat \to \ocattgph$, which is
also monadic by the following lemma.
\begin{lem}[{\cite[Propositions 4 and 5]{bourn:ldgeochoice}}]\label{thm:mnd-cancel}
  If $G_3 = G_2\circ G_1$, where $G_2$ and $G_3$ are monadic and all
  three functors have left adjoints, then $G_1$ is also monadic.\qed
\end{lem}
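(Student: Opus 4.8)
The plan is to verify Beck's monadicity theorem for $G_1$. I will use the form stating that a functor is monadic precisely when it has a left adjoint, is conservative (reflects isomorphisms), and admits and preserves coequalizers of those parallel pairs that it sends to split coequalizers (the $G_1$-split pairs); conservativity promotes ``exists and is preserved'' to the creation demanded by the strict statement. A left adjoint to $G_1$ is assumed, so it remains to check conservativity and the coequalizer condition, both of which I would deduce from the monadicity of $G_3$ together with the elementary fact that $G_2$, being a functor, preserves isomorphisms and (absolute) split coequalizers.

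Conservativity is the quick part. If $G_1 f$ is invertible in the middle category, then applying $G_2$ shows $G_3 f = G_2 G_1 f$ is invertible; since $G_3$ is monadic it is conservative, so $f$ is invertible. Hence $G_1$ reflects isomorphisms.

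For the coequalizer condition, suppose $(f,g)$ is a $G_1$-split pair, so that $(G_1 f, G_1 g)$ has a split coequalizer $e$ in the middle category. Because split coequalizers are absolute, applying $G_2$ exhibits a split coequalizer of $(G_3 f, G_3 g)$, so $(f,g)$ is \emph{also} a $G_3$-split pair. Monadicity of $G_3$ then supplies a coequalizer $q$ of $(f,g)$ in the top category that $G_3$ preserves, and it remains only to see that $G_1$ preserves this same $q$, i.e.\ that $G_1 q$ is again a coequalizer of $(G_1 f, G_1 g)$. Since $e$ is a (split, hence absolute) coequalizer and $G_1 q$ coequalizes the pair, there is a unique comparison $u$ with $u \cdot e = G_1 q$; I would then apply $G_2$ and compare the two coequalizers $G_2 e$ and $G_3 q = G_2 G_1 q$ of $(G_3 f, G_3 g)$ to identify $G_2 u$ with the canonical isomorphism between them. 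As $G_2$ reflects isomorphisms, $u$ is itself invertible, so $G_1 q$ is isomorphic to the coequalizer $e$ and is therefore a coequalizer. This yields existence and preservation, completing the Beck conditions.

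The one point demanding care---and the likely main obstacle---is the bookkeeping around the precise form of Beck's theorem: the monadicity of $G_3$ only produces a coequalizer that $G_3$ preserves, not an on-the-nose strict creation \emph{through} $G_1$, so preservation by $G_1$ must be extracted separately via the comparison-of-coequalizers argument above rather than by directly transporting a lift. Adopting the conservative-plus-preservation formulation of Beck (in place of strict creation) is exactly what keeps this clean, and it is precisely the step where conservativity of $G_2$---used to conclude that $u$ is invertible---enters.
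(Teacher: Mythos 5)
Your proof is correct. The one thing to note up front is that the paper does not contain a proof of this lemma at all: it is stated with a \qed and the justification is entirely the citation to Bourn's Propositions 4 and 5, so there is no internal argument to compare yours against. What you have written is a clean, self-contained derivation via the precise form of Beck's monadicity theorem (left adjoint, conservativity, existence and preservation of coequalizers of $G_1$-split pairs), and every step checks out: conservativity of $G_1$ follows from conservativity of $G_3$; a $G_1$-split pair becomes $G_3$-split because split coequalizers are absolute; monadicity of $G_3$ supplies the coequalizer $q$ with $G_3 q$ a coequalizer; and the comparison $u$ with $u \cdot e = G_1 q$ is invertible because $G_2 u$ is, by uniqueness of factorizations through the coequalizer $G_2 e$, the canonical comparison between two coequalizers of $(G_3 f, G_3 g)$, and $G_2$ (being monadic) reflects isomorphisms. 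This last use of conservativity of $G_2$ is exactly where the hypothesis that $G_2$ is monadic (rather than merely a right adjoint) is needed, and you have placed it correctly. One small wording caveat: your parenthetical claim that conservativity ``promotes'' existence-plus-preservation to strict creation is not quite right --- the conservative formulation of Beck's theorem yields monadicity up to equivalence of categories, not an isomorphism of categories --- but since ``monadic'' in this paper (as standardly) means the comparison functor is an equivalence, this does not affect the validity of your argument.
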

In the next section we will see that \tcat is monadic over \tcptd as
well, but this is less straightforward.
(Street~\cite{street:catval-2lim} asserted this by a monadicity
theorem, but it seems nontrivial to verify the hypotheses.)

It is time to move on to double computads.  Here the roles of
1-computads and 1-categories are played by structures which we call
\ovo-computads and \ovo-categories; these are like double categories
but without any 2-cells.

\begin{defn}
  A \textbf{\ovo-computad} $X$ consists of two 1-computads (directed
  graphs) with the same set of 0-cells (vertices) $X_0$. We refer to the two kinds of 1-cell
  as \emph{horizontal} and \emph{vertical} and draw
  them accordingly. The category \ovocptd of \ovo-computads is a
  functor category $[\lC_{\ovo},\bSet]$, with domain $\lC_{\ovo}$
  given by the category
  \[
    \th\! \toto \tz \leftleftarrows \tv.
  \]
\end{defn}

\begin{rmk}\label{thm:ovocptd-slice}
  This category $\lC_{\ovo}$ is the category of elements of the
  1-computad $A\colon \lC_1\to \bSet$ defined by $A(0) = \{ 0 \}$ and
  $A(1) = \{ \th,\tv \}$. Thus 
  we can also
  write $\ovocptd = \ocptd / A$. There are hence projection functors
  \[\oproj \colon \lC_{\ovo} \to \lC_1\qquad\text{and}\qquad\oproj_!
    \colon \ovocptd \to \ocptd\] which forget the distinction between
  horizontal and vertical arrows.
\end{rmk}

Similarly, a \textbf{\ovo-category} consists of two categories with
the same set of objects; \ovo-categories are monadic over
\ovo-computads via an adjunction
\[
  \vcenter{\xymatrix@C=12mm{\ovocptd \ar@{}[r]|{\;\;\bot}
      \ar@<2mm>[r]^-{\cF_{\ovo}} & \ovocat \ar@<2mm>[l]^-{\cU_{\ovo}}}}
\]
with induced monad $T_{\ovo}$.  Let \sq denote the \ovo-computad with
four objects 
and two arrows of each sort,
forming a square:
\[
  \begin{tikzpicture}[->, scale=.4]
    \node (a) at (-1, 1) {};
    \node (b) at (1, 1) {};
    \node (c) at (-1, -1) {};
    \node (d) at (1, -1) {};
    \node at (a) {$\cdot$};
    \node at (b) {$\cdot$};
    \node at (c) {$\cdot$};
    \node at (d) {$\cdot$};
    \draw (a) -- (b);
    \draw (b) -- (d);
    \draw (a) -- (c);
    \draw (c) -- (d);
  \end{tikzpicture}
\]


\begin{defn}\label{defn:dblcptd}
  A \textbf{double computad} consists of a \ovo-computad $X_{\leovo}$,
  together with a set $X_2$ of \emph{2-cells} and a function $\del$
  sending each 2-cell to a square of paths in $X_{\le 1}$ (its
  boundary):
  \[\del \colon X_2 \too \ovocptd\big(\sq,\,T_{\ovo} X_{\leovo}\big). \]
  We write \obocptd for the category of double computads, the comma
  category of \bSet over $\ovocptd(\sq,\,T_{\ovo} -)$.
\end{defn}


Like $T_1$, the monad $T_{\ovo}$ is a parametric right adjoint. Thus,
by \cref{thm:prafunctors}, \obocptd is also a functor category
$[\lC_{\obo},\bSet]$. We describe $\lC_{\obo}$ by the same process
we used to describe $\lC_2$. We find
that the objects are $\tz$, $\th$, $\tv$, and $\td{a}{b}{c}{d}$ for
natural numbers $a,b,c,d \in \mathbb{N}$, and the morphisms are as
follows:
\begin{itemize}
\item The full subcategory of objects $\tz$, $\th$, and $\tv$ is $\lC_{\ovo}$.
\item The only arrows into the objects $\td{a}{b}{c}{d}$ are identities.
\item For $a,b,c,d \in \mathbb{N}$, the homsets from
  $\td{a}{b}{c}{d}$ into $\tz$, $\th$, and $\tv$, acted on by
  composing arrows in $\lC_{\obo}$, determine the $\ovo$-computad
  representing a square of paths of lengths $a$ (top), $b$
  (right), $c$ (left), and $d$ (bottom):
  \[
    \begin{tikzpicture}[scale=.28]
      \node (a) at (-2.5, 2.5) {$\cdot$};
      \node (b) at (2.5, 2.5) {$\cdot$};
      \node (c) at (-2.5, -2.5) {$\cdot$};
      \node (d) at (2.5, -2.5) {$\cdot$};
      \node (uu) at (0,2.5) {$\bcdots$};
      \node (rr) at (2.5,0) {$\vcdots$};
      \node (ll) at (-2.5,0) {$\vcdots$};
      \node (dd) at (0,-2.5) {$\bcdots$};
      \begin{scope}[->,shorten <=-2pt,shorten >=-3pt]
        \draw (a) -- (uu);
        \draw (uu) -- (b);
        \draw (b) -- (rr);
        \draw (rr) -- (d);
        \draw (a) -- (ll);
        \draw (ll) -- (c);
        \draw (c) -- (dd);
        \draw (dd) -- (d);
      \end{scope}
      \draw [decorate,decoration={brace,amplitude=5,raise=.625em}] (a) -- (b) node[midway,yshift=1.75em]{$a$};
      \draw [decorate,decoration={brace,amplitude=5,raise=.625em}] (b) -- (d) node[midway,xshift=1.75em]{$b$};
      \draw [decorate,decoration={brace,amplitude=5,mirror,raise=.625em}] (a) -- (c) node[midway,xshift=-1.75em]{$c$};
      \draw [decorate,decoration={brace,amplitude=5,mirror,raise=.625em}] (c) -- (d) node[midway,yshift=-1.75em]{$d$};
    \end{tikzpicture}
  \]
\end{itemize}



\begin{rmk}
  We also have that $\lC_\d$ is the category of elements of a certain
  2-computad $B \colon \lC_2\to\bSet$, which we can see in the
  following way.

  Composing $\oproj_! \colon \ovocptd \to \ocptd$ from
  \cref{thm:ovocptd-slice} with
  $\ocptd(\paral, T_1 -) : \ocptd \to \bSet$ yields a functor
  $\ovocptd \to \bSet$, which sends a \ovo-computad to the set of
  pairs of parallel paths of 1-cells of either sort.  We also have the
  functor $\ovocptd(\sq, T_{\ovo} -)$, which 
  sends a \ovo-computad to the set of parallel pairs of paths where
  the first consists of horizontal 1-cells followed by vertical
  1-cells and the second consists of vertical 1-cells followed by
  horizontal 1-cells.

  Forgetting this requirement on the pairs of paths yields a natural
  transformation
  $\alpha \colon \ovocptd(\sq, T_{\ovo} -) \hookrightarrow
  \ocptd(\paral, T_1 \oproj_! -)$.  This transformation is
  \emph{cartesian}, i.e.\ its naturality squares are pullbacks. In this
  case, cartesianness corresponds to the fact that whether an element
  of $\ocptd(\paral, T_1 \oproj_!X)$ lifts to
  $\ovocptd(\sq, T_{\ovo} X)$ is determined solely by its ``shape'',
  i.e.\ the induced element of $\ocptd(\paral, T_1 \oproj_!1)$ (a pair
  of sequences of the values \th and \tv).

  By the following lemma, we have $\obocptd= \tcptd/B$,
  where $B$ is the 2-computad in $\ovocptd= \ocptd/A$ corresponding to
  $\alpha_1 \colon \ovocptd(\sq, T_{\ovo} 1) \to \ocptd(\paral, T_1A)$.
\end{rmk}

\begin{lem}\label{lem:cart}
  If $\alpha$ is a cartesian natural transformation
  \[
    \begin{tikzcd}
      C / c \ar[rr, "F"] \ar[rd] &\ar[d, phantom, pos=.4, "\!\!\phantom{{}_\alpha}\Downarrow_\alpha"]& D \\
      & C \ar[ru, "G"']
    \end{tikzcd}
  \]
  then the comma category $(D / F)$ is a slice category of the comma
  category $(D / G)$.

  Namely, $(D / F) \cong (D / G) / \alpha_1$, the slice over the
  object $\alpha_1 \colon F(1) \to G(c)$.
\end{lem}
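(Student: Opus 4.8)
The plan is to unwind the two comma categories and the slice completely and to exhibit mutually inverse functors between $(D/F)$ and $(D/G)/\alpha_1$, giving an isomorphism of categories on the nose. Write $\pi\colon C/c \to C$ for the projection and note that $C/c$ always has a terminal object $1 = (c,\id_c)$, with $\pi(1)=c$, so that $\alpha_1\colon F(1)\to G(c)$ is the component of $\alpha$ at $1$, regarded as an object of $(D/G)$. Recall that an object of $(D/F)$ is a triple $(d,\xi,f)$ with $\xi=(x,p)$ an object of $C/c$ and $f\colon d\to F\xi$; an object of $(D/G)$ is a triple $(d,x,g)$ with $g\colon d\to Gx$; and an object of $(D/G)/\alpha_1$ is such a triple $(d,x,g)$ together with a morphism $(\delta,\chi)\colon(d,x,g)\to(F(1),c,\alpha_1)$ in $(D/G)$, i.e.\ maps $\delta\colon d\to F(1)$ and $\chi\colon x\to c$ with $G(\chi)\,g=\alpha_1\,\delta$.

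The essential input is that $\alpha$ is cartesian. For each $\xi=(x,p)$ the unique morphism $\xi\to 1$ in $C/c$ is $p$ itself, and its $\alpha$-naturality square has corners $F\xi,\ Gx,\ F(1),\ G(c)$, with top map $\alpha_\xi\colon F\xi\to Gx$, left map $F(p)\colon F\xi\to F(1)$, right map $G(p)\colon Gx\to G(c)$, and bottom map $\alpha_1$; by hypothesis this square is a pullback. Consequently a morphism $f\colon d\to F\xi$ corresponds bijectively, via $g:=\alpha_\xi\,f$ and $\delta:=F(p)\,f$, to a pair of morphisms $g\colon d\to Gx$ and $\delta\colon d\to F(1)$ satisfying $G(p)\,g=\alpha_1\,\delta$. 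Reading this off, I would send an object $(d,(x,p),f)$ of $(D/F)$ to the object of $(D/G)/\alpha_1$ given by $(d,x,\alpha_\xi f)$ equipped with $\chi:=p$ and $\delta:=F(p)f$; the required identity $G(p)(\alpha_\xi f)=\alpha_1(F(p)f)$ is precisely commutativity of the naturality square. In the other direction I would send $\big((d,x,g),\delta,\chi\big)$ to $(d,(x,\chi),f)$, where $f\colon d\to F(x,\chi)$ is the unique map into the pullback with $\alpha_{(x,\chi)}f=g$ and $F(\chi)f=\delta$. Uniqueness in the universal property makes these two assignments mutually inverse on objects.

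It then remains to match morphisms. A morphism $(d,(x,p),f)\to(d',(x',p'),f')$ of $(D/F)$ is a pair $(\delta_0,h)$ with $h\colon(x,p)\to(x',p')$ in $C/c$ (so $p'h=p$) and $F(h)f=f'\delta_0$. I would compose this last equation with the two pullback projections of $F(x',p')$ and use naturality of $\alpha$ at $h$, which gives $\alpha_{(x',p')}F(h)=G(h)\alpha_{(x,p)}$, together with $F(p')F(h)=F(p'h)=F(p)$; I expect this to show that $F(h)f=f'\delta_0$ is equivalent to the conjunction of $G(h)g=g'\delta_0$ and $\delta'\delta_0=\delta$, which is exactly the condition for $(\delta_0,h)$ to be a morphism of $(D/G)/\alpha_1$ (the first being the $(D/G)$-morphism condition, the second compatibility with the maps to $\alpha_1$, while $p'h=p$ restates $\chi'h=\chi$). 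Since composition and identities in all these comma and slice categories are inherited from $C$ and $D$, the two assignments are then functors inverse to each other, yielding the claimed isomorphism. The main work — and the one place where cartesianness, rather than mere naturality, is indispensable — is establishing that $f$ is completely determined by and freely reconstructible from the pair $(g,\delta)$; once the pullback description of $F\xi$ is in hand, the rest is a routine diagram chase.
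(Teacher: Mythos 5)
Your proof is correct and takes essentially the same approach as the paper: both use cartesianness to recognize the naturality square at the unique morphism $\xi \to 1$ in $C/c$ as exhibiting $F(\xi)$ as the pullback $F(1)\times_{G(c)}G(x)$, and then translate objects and morphisms of $(D/F)$ through the pullback's universal property. The only difference is one of detail: you write out the mutually inverse functors and the morphism-level diagram chase explicitly, whereas the paper compresses the latter into the remark that ``the morphisms are also the same.''
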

\begin{proof}
  Since $\alpha$ is cartesian, for any object $f \colon c' \to c$ of
  $C / c$ we have a pullback
  \[
    \begin{tikzcd}
      F(f) \ar[r, "\alpha_f"] \ar[d, "F(f)"'] \drpullback & G(c') \ar[d,
      "G(f)"] \\
      F(1) \ar[r, "\alpha_1"'] & G(c)
    \end{tikzcd}
  \]
  Now, an object of the comma category $(D / F)$ consists of an object
  $d$ of $D$, an object $f \colon c' \to c$ of $C / c$, and an arrow
  $d \to F(f)$. By the universal property of the above pullback, to
  give such a $d \to F(f)$ is to give a commutative square
  \[
    \begin{tikzcd}
      d \ar[r] \ar[d] & G(c') \ar[d, "G(f)"] \\
      F(1) \ar[r, "\alpha_1"'] & G(c)
    \end{tikzcd}
  \]
  And this is precisely an object of $(D / G) / \alpha_1$. The
  morphisms are also the same.
\end{proof}

Explicitly, in this case we have $\ovocptd= \ocptd/B$ where
$B\colon \lC_2\to\bSet$ is defined by:
\begin{align*}
  B(\tz) &= \setof{\tz}\\
  B(\ton) &= \setof{\th,\tv}\\
  B(\ttw^m_n) &= \setof{\td{a}{b}{c}{d}
                | a, b, c, d \in \mathbb{N}, \; a + b = m, \; c + d = n}\\
  B(s_{i})(\td{a}{b}{c}{d})
         &=
           \begin{cases}
             \th & \quad\text{if } i \le a\\
             \tv & \quad\text{if } i > a
           \end{cases}\\
  B(t_{j})(\td{a}{b}{c}{d})
         &=
           \begin{cases}
             \tv & \quad\text{if } j \le c\\
             \th & \quad\text{if } j > c
           \end{cases}
\end{align*}
(the action of all other arrows being trivial).
The category $\lC_{\obo}$ is the category of elements of this $B$.

\begin{rmk}
  We have a commutative diagram (moreover, a pullback square)
  \[
    \begin{tikzcd}[row sep=large,every label/.append style = {font = \small}]
      \lC_{\ovo} \ar[r, "\!\!\!\oproj"] \ar[d,hook] \drpullback & \lC_1 \ar[d,hook] \\
      \lC_{\obo}\ar[r, "\tproj"'] & \lC_2
    \end{tikzcd}
  \]
  where each horizontal functor is the projection of a category of
  elements onto its domain, and the vertical functors are the obvious
  inclusions (each of which, incidentally, may also be viewed as
  projection of a category of elements onto its domain). We thereby
  obtain a similar diagram of functor categories:
  \[
    \vcenter{\xymatrix@R=12mm@C=10mm{\ovocptd \ar@<-2mm>[d]_{\sk}
        \ar@{}[r]|{\;\;\bot} \ar@<2mm>[r]^-{\oproj_!} &
        \ocptd \ar@<-2mm>[d]_{\sk} \ar@<2mm>[l]^-{\oproj^*}\\
        \obocptd \ar@{}[u]|{\dashv}
        \ar@<-2mm>[u]_-{\tau} \ar@{}[r]|{\;\;\bot} \ar@<2mm>[r]^-{\tproj_!} &
        \tcptd \ar@{}[u]|{\dashv} \ar@<-2mm>[u]_-{\tau} \ar@<2mm>[l]^-{\tproj^*}}}
  \]
  Here $\tproj^*$, $\oproj^*$, and both functors denoted \tau are
  restrictions (\tau\ means ``truncation''); $\tproj_!$, $\oproj_!$,
  and both functors denoted \sk are left Kan extensions (\sk\ means
  ``skeleton'').  We have the obvious commutativities
  $\oproj^* \tau \cong \tau \tproj^*$ and
  $\sk \oproj_! \cong \tproj_! \sk$, and the Beck-Chevalley property
  also holds, giving isomorphisms $\oproj_! \tau \cong \tau \tproj_!$
  and $\sk \oproj^* \cong \tproj^* \sk$.
  
  Viewing the left Kan extensions as slice category projections
  \[\oproj_! \colon \ocptd / A \to \ocptd \qquad \text{and} \qquad
    \tproj_! \colon \tcptd / B \to \tcptd\] we have that the right
  adjoints $\oproj^*$ and $\tproj^*$ are respectively given by product
  with $A$ and $B$ (pulling back $\ocptd = \ocptd / 1$ along $A \to 1$
  and $\tcptd = \tcptd / 1$ along $B \to 1$).  Explicitly, $\tproj^*$
  sends a 2-computad to a double computad whose 2-cells of shape
  $\td{a}{b}{c}{d}$ are the 2-cells of shape $\ttw^{a + b}_{c + d}$
  therein (a.k.a.\ ``quintets'').
  
    \[
      \begin{tikzpicture}[->,shorten <=-2pt,shorten >=-3pt, scale=.15]
        \node (a) at (-5, 5) {$\cdot$};
        \node (b) at (5, 5) {$\cdot$};
        \node (c) at (-5, -5) {$\cdot$};
        \node (d) at (5, -5) {$\cdot$};
        \node (uu) at (0,5) {$\bcdots$};
        \node (rr) at (5,0) {$\vcdots$};
        \node (ll) at (-5,0) {$\vcdots$};
        \node (dd) at (0,-5) {$\bcdots$};
        \node (z) at (0, 0) {$\alpha$};
        \draw (a) -- node[above] {$s^H_1$} (uu);
        \draw (uu) -- node [above] {$s^H_a$} (b);
        \draw (b) -- node [right] {$t^V_1$} (rr);
        \draw (rr) -- node [right] {$t^V_b$} (d);
        \draw (a) -- node [left] {$s^V_1$} (ll);
        \draw (ll) -- node [left] {$s^V_c$} (c);
        \draw (c) -- node [below] {$t^H_1$} (dd);
        \draw (dd) -- node [below] {$t^H_d$} (d);
      \end{tikzpicture}
      \qquad\;\;\; \mapsto \qquad
      \begin{tikzpicture}[->,shorten <=-2pt,shorten >=-3pt, xscale=.15,yscale=.045]
        \node (a) at (-10, 0) {$\cdot$};
        \node (b) at (0, 10) {$\cdot$};
        \node (c) at (0, -10) {$\cdot$};
        \node (d) at (10, 0) {$\cdot$};
        \node (uu) at (-5,5) {$\bcdots$};
        \node (rr) at (5,5) {$\bcdots$};
        \node (ll) at (-5,-5) {$\bcdots$};
        \node (dd) at (5,-5) {$\bcdots$};
        \node (z) at (0, 0) {$\!\!\phantom{{}_\alpha}\Downarrow_\alpha$};
        \draw (a) -- node[above] {$s^H_1$} (uu);
        \draw (uu) -- node [above] {$s^H_a$} (b);
        \draw (b) -- node [above] {$t^V_1$} (rr);
        \draw (rr) -- node [above] {$t^V_b$} (d);
        \draw (a) -- node [below] {$s^V_1$} (ll);
        \draw (ll) -- node [below] {$s^V_c$} (c);
        \draw (c) -- node [below] {$t^H_1$} (dd);
        \draw (dd) -- node [below] {$t^H_d$} (d);
      \end{tikzpicture}
    \]
\end{rmk}


We refer to 2-cells of shapes \td1001, \td0110, and \td1111 in a
double computad respectively as \textbf{horizontal bigons},
\textbf{vertical bigons}, and \textbf{squares}. We call a double
computad in which all 2-cells are squares a \textbf{double graph}. We
denote this full subcategory of \bDblCptd by \bDblGph, also a functor
category with domain a full subcategory of $\lC_\d$:
\[\vcenter{\xymatrix@R=5mm@C=5mm{\ttw \ar@<1mm>[r] \ar@<-1mm>[r]
      \ar@<-1mm>[d] \ar@<1mm>[d] &  \tv \ar@<-1mm>[d] \ar@<1mm>[d]\\
      \th \ar@<1mm>[r] \ar@<-1mm>[r] & \tz }}
\]
(composition laws as in $\lC_\d$, where $\ttw \coloneqq \td1111$).

The category \bDblGph is also a comma category
$(\bSet / \ovocptd(\sq, -))$. Hence we additionally have a functor
from $\bDblCptd = (\bSet / \ovocptd(\sq, T_{\ovo}-))$ to $\bDblGph$ by
applying $T_{\ovo}$, which reinterprets all of the 2-cells in a double
computad as squares of paths.
  \[
    \begin{tikzpicture}[->, scale=.15]
      \node (a) at (-5, 5) {};
      \node (b) at (5, 5) {};
      \node (c) at (-5, -5) {};
      \node (d) at (5, -5) {};
      \node at (a) {$\cdot$};
      \node at (b) {$\cdot$};
      \node at (c) {$\cdot$};
      \node at (d) {$\cdot$};
      \node (z) at (0, 0) {$\alpha$};
      \node (sh) at (0,5) {$\bcdots$};
      \node (th) at (0,-5) {$\bcdots$};
      \node (tv) at (5,0) {$\vcdots$};
      \node (sv) at (-5,0) {$\vcdots$};
      \draw (a) -- node [auto,anchor=south] {$s^H_1$} (sh);
      \draw (sh) -- node [auto,anchor=south] {$s^H_a$} (b);
      \draw (b) -- node [auto,anchor=west] {$t^V_1$} (tv);
      \draw (tv) -- node [auto,anchor=west] {$t^V_b$} (d);
      \draw (a) -- node [auto,anchor=east] {$s^V_1$} (sv);
      \draw (sv) -- node [auto,anchor=east] {$s^V_c$} (c);
      \draw (c) -- node [auto,anchor=north] {$t^H_1$} (th);
      \draw (th) -- node [auto,anchor=north] {$t^H_d$} (d);
    \end{tikzpicture}
    \qquad\;\;\;
    \mapsto
    \;\;\;\quad
    \begin{tikzpicture}[->, scale=.15]
      \node (a) at (-5, 5) {};
      \node (b) at (5, 5) {};
      \node (c) at (-5, -5) {};
      \node (d) at (5, -5) {};
      \node at (a) {$\cdot$};
      \node at (b) {$\cdot$};
      \node at (c) {$\cdot$};
      \node at (d) {$\cdot$};
      \node (z) at (0, 0) {$\alpha$};
      \draw (a) -- node [auto,anchor=south] {$s^H_1\cdots s^H_a$} (b);
      \draw (b) -- node [auto,anchor=west] {$t^V_1 \cdots t^V_b$} (d);
      \draw (a) -- node [auto,anchor=east] {$s^V_1 \cdots s^V_c$} (c);
      \draw (c) -- node [auto,anchor=north] {$t^H_1 \cdots t^H_d$} (d);
    \end{tikzpicture}
  \]
This is more precisely a functor
$\dflat \colon \bDblCptd \to \ovocatbDblGph$ where the codomain is
double graphs equipped with \ovo-category structure on 1-cells. Note
that this category \ovocatbDblGph is evidently monadic over \bDblGph.

The functor \dflat is pseudomonic; its image consists of double graphs
equipped with \emph{free} \ovo-category structure and maps sending
generating 1-cells to generating 1-cells. Thus double computads are equivalently such structured double graphs.

The category \bDblCat of (small, strict) double categories is also
monadic over \bDblGph, essentially by definition (as a double graph
equipped with various operations). The forgetful right adjoint
evidently factors through an intermediate right adjoint
$\bDblCat \to \ovocatbDblGph$, which is also monadic by
\cref{thm:mnd-cancel}. In the next section we will see that
\bDblCat is monadic over \bDblCptd as well.


\section{Algebraic definitions}
\label{sec:doublewords}

Now we are able to describe {\twowords} and {\doublewords}
(\cref{sec:twowords,sec:shortcut}) as algebras of monads on
the presheaf categories \tcptd and \bDblCptd respectively, confirming
their essentially algebraic nature.

In \cref{sec:double-computads}, we encountered several essentially algebraic
structures presented by operations and equations (such as
categories, strict 2-categories, and strict double categories), and we
tacitly interpreted these as monads on presheaf categories.  But we will soon need presentations of monads in
more general situations, so we review a general method for presenting monads, following~\cite[\S5]{steve:companion}.
(Our definitions of {\twowords} and {\doublewords} in this section
will just be presentations of monads on presheaf categories as usual, but
in \cref{sec:2-monads} we
will also be interested in presenting 2-monads on non-presheaf categories.)

Let \sV be a locally finitely presentable (l.f.p.)\ monoidal category whose subcategory of finitely presentable objects $\sV_f$ is closed under the monoidal structure, so we have a good theory of l.f.p\ \sV-enriched categories as in~\cite{kelly:enr-lfp}; we will use $\sV=\bSet$ and $\sV=\bCat$.
Let \sK be an l.f.p.\ \sV-category.
Then by~\cite{lack:finitary}, the category $\bMnd_f(\sK)$ of finitary monads on \sK is monadic over the category $[\nob\sK_f,\sK]$ of families of objects of \sK indexed by the set of finitely presentable objects of \sK.
Thus, we can \emph{present} such monads using free monads generated by such families and colimits in $\bMnd_f(\sK)$; and because these free monads and colimits are \emph{algebraic}~\cite{kelly:transfinite}, such a presentation also determines the algebras for the presented monad.
Specifically, given $A\in [\nob\sK_f,\sK]$, an algebra for the free finitary monad $FA$ it generates is an object $X\in \sK$ with a family of maps $\sK(c,X) \to \sK(Ac,X)$ for all $c\in \sK_f$; 
and an algebra for a colimit of finitary monads is an object with a compatible family of algebra structures for those monads.

As an example, we start with a definition of \twowords.
\begin{defn}\label{defn:wordmonads}
  {\Aan} \textbf{\twoword} is a 2-computad $X$ equipped with
  \begin{itemize}
  \item horizontal composition operations
    \[X(\ttw^m_n) \times_0 X(\ttw^{m'}_{n'}) \to X(\ttw^{m+m'}_{n+n'})\]
    (where the target 0-cell of the first factor is identified with the source
    0-cell of the second factor), 
  \item vertical composition operations
    \[X(\ttw^m_x) \times_1 X(\ttw^x_n) \to X(\ttw^m_n)\]
    (where the target 1-cell path of the first factor is identified with the
    source 1-cell path of the second factor),
    and
  \item identity operations
    \[\overbrace{X(\ton)\times_0 \cdots \times_0 X(\ton)}^n \to X(\ttw^n_n)\]
    (where the domain is length $n$ paths of 1-cells)
  \end{itemize}
  satisfying source and target laws, associativity and unit laws, and
  interchange laws.
\end{defn}

To go from this definition to a monad on \tcptd whose algebras are \twowords, we start with the following family $A\in [\nob\,\tcptd_f,\tcptd]$, where we identify objects of $\lC_1$ with their corresponding representable functors in \tcptd:
\[
  Ac =
  \begin{cases}
    \ttw^{m+m'}_{n+n'} &\text{if } c = \ttw^m_n \sqcup_0 \ttw^{m'}_{n'}\\
    \ttw^m_n &\text{if } c = \ttw^m_x \sqcup_1 \ttw^x_n\\
    \ttw^n_n &\text{if } c = \overbrace{\ton \sqcup_0 \cdots \sqcup_0 \ton}^n
  \end{cases}
\]
(Note that all representables are finitely presentable, and pushouts of finitely presentable objects are finitely presentable.)
Then an $FA$-algebra is a 2-computad $X$ equipped with three families of maps.
The first consists of maps
\[ \tcptd(\ttw^m_n \sqcup_0 \ttw^{m'}_{n'}, X) \to \tcptd(\ttw^{m+m'}_{n+n'}, X) \]
But by the universal property of colimits and the Yoneda lemma, this is equivalent to a map
\[X(\ttw^m_n) \times_0 X(\ttw^{m'}_{n'}) \to X(\ttw^{m+m'}_{n+n'})\]
as in \cref{defn:wordmonads} above.
The other two families similarly correspond to the other families of operations in \cref{defn:wordmonads}.
An $FA$-algebra is then a 2-computad equipped with all these operations, but not satisfying any axioms.

To impose the axioms on such a structure, we specify another family $B\in [\nob\,\tcptd_f,\tcptd]$ and a pair of morphisms $B \toto UFA$ in $[\nob\,\tcptd_f,\tcptd]$, where $U$ is the forgetful right adjoint to $F$.
For instance, the contribution to $B$ for associativity of vertical composition is
\[ B(\ttw^m_x \sqcup_1 \ttw^x_y \sqcup_1 \ttw^y_n) = \ttw^m_n. \]
We must then specify two morphisms $\ttw^m_n \to FA(\ttw^m_x \sqcup_1 \ttw^x_y \sqcup_1 \ttw^y_n)$, which is to say two 2-cells of shape $\ttw^m_n$ in the free $FA$-algebra on a trio of 2-cells that could be composed to give one of shape $\ttw^m_n$.
In an $FA$-algebra, there are two ways to bracket the composition of such a trio that are not equal; we take these two bracketed compositions as the two desired 2-cells.
All the other axioms are treated similarly.

Finally, we let $T_2^{\ad}$ be the coequalizer of the two maps $FB \toto FA$ in $\bMnd_f(\tcptd)$.
Then a $T_2^{\ad}$-algebra is an $FA$-algebra $X$ whose two underlying $FB$-algebra structures are equal.
In the case of associativity, this says precisely that the two possible composites of a vertically composable trio are equal in $X$, i.e.\ that $X$ obeys the associativity axiom; and similarly for the other axioms.
Thus, $T_2^{\ad}$-algebras are precisely \twowords\ as defined above.

As usual, we could give an equivalent ``unbiased''
definition using $n$-ary compositions, rather than just binary and nullary composition.
This would lead to a different presentation, but an isomorphic monad.

The double-categorical case is entirely analogous, leading to a monad $T_{\d}^{\ad}$ on \obocptd whose algebras are \doublewords.

\begin{defn}
  {\Aan} \textbf{\doubleword} is a double computad $X$
  with
  \begin{itemize}
  \item horizontal composition operations
    \[X(\td{a}{x}{c}{d}) \times_1 X(\td{a'}{b'}{x}{d'}) \to X(\td{a+a'}{b'}{c}{d+d'})\]
    (where the vertical target 1-cell path of the first factor is
    identified with the vertical source 1-cell path of the second
    factor),
  \item horizontal identity operations
    \[X(\tv)\times_0 \cdots \times_0 X(\tv) \to X(\td{0}{n}{n}{0})\]
    (where the domain is length $n$ paths of vertical 1-cells),
  \item vertical composition operations
    \[X(\td{a}{b}{c}{x}) \times_1 X(\td{x}{b'}{c'}{d'}) \to X(\td{a}{b+b'}{c+c'}{d'})\]
    (where the horizontal target 1-cell path of the first factor
    is identified with the horizontal source 1-cell path of the
    second factor), and
  \item vertical identity operations
    \[X(\th)\times_0 \cdots \times_0 X(\th) \to X(\td{n}{0}{0}{n})\]
    (where the domain is length $n$ paths of horizontal 1-cells)
  \end{itemize}
  satisfying source and target laws, associativity and unit laws, and
  interchange laws.
\end{defn}

These definitions agree
with those of \cref{sec:twowords,sec:shortcut},
since we have observed that 2-computads and double computads can be identified with 2-graphs
and double graphs equipped with free category structure via the functors $\tflat$ and $\dflat$, and the
2-cell operations and laws given here exactly enhance this to 2-category or double category
structure.

\begin{rmk}\label{thm:free-implicit}
  We can also describe these monads in a more conceptual way. Observe that the free 2-category
  monad on \ocattgph (2-graphs equipped with 1-category structure)
  restricts to the subcategory \tcptd (2-graphs equipped with free
  1-category structure and maps sending generating 1-cells to
  generating 1-cells); indeed, this free 2-category monad acts as
  identity on underlying 1-category structure.
  The algebras of this monad on \tcptd are simply algebras of the monad
  on \ocattgph that lie within the subcategory \tcptd, namely those
  2-categories with free underlying 1-categories; algebra morphisms
  are restricted to those that lie within the subcategory \tcptd,
  namely those sending generating 1-cells to generating 1-cells. But
  these are precisely {\twowords} and their functors as defined in
  \cref{sec:twowords}, so the monad is the same as $T_2^\ad$
  constructed above whose category of algebras is \tword.

  Similarly, the free double category monad on \ovocatbDblGph (double
  graphs equipped with horizontal and vertical 1-category structure)
  restricts to the subcategory \bDblCptd (double graphs equipped with
  free 1-category structure and maps sending generating 1-cells to
  generating 1-cells). This induced monad on \bDblCptd is $T_\d^\ad$,
  whose category of algebras is \bDblWord.
\end{rmk}

To upgrade these to definitions of bicategories and doubly weak double
categories, we need only introduce the following additional
operations.

\begin{defn}\label{defn:representeds}
  A \textbf{represented} {\twoword} $X$ is equipped with
  \begin{itemize}
    \item 1-cell composition 2-cells
      \[X(\ton) \times_0 X(\ton) \to X(\ttw^{2}_{1}) \qquad\text{and}\qquad X(\ton) \times_0 X(\ton) \to X(\ttw^{1}_{2})\]
      (where the domain is length 2 paths of 1-cells) and
    \item 1-cell identity 2-cells
      \[X(\tz) \to X(\ttw^{0}_{1}) \qquad\text{and}\qquad X(\tz) \to X(\ttw^{1}_{0})\]
  \end{itemize}
  satisfying laws that ensure these 2-cells form inverse pairs from
  and to the given 1-cell paths.

  Similarly, a \textbf{represented} {\doubleword} $X$ is equipped with
  \begin{itemize}  
  \item 1-cell composition 2-cells
    \begin{align*}
      X(\th) \times_0 X(\th) &\to X(\td{2}{0}{0}{1}),&  X(\th) \times_0 X(\th) &\to X(\td{1}{0}{0}{2}),\\ X(\tv) \times_0 X(\tv) &\to X(\td{0}{1}{2}{0}), & X(\tv) \times_0 X(\tv) &\to X(\td{0}{2}{1}{0})
    \end{align*}
    (where the domains are length 2 paths of horizontal or vertical 1-cells) and
  \item 1-cell identity creation 2-cells
    \begin{align*}
      X(\tz) &\to X(\td{0}{0}{0}{1}), &  X(\tz) &\to X(\td{1}{0}{0}{0}),\\ X(\tz) &\to X(\td{0}{1}{0}{0}), & X(\tz) &\to X(\td{0}{0}{1}{0})
    \end{align*}
  \end{itemize}
  satisfying laws that ensure these 2-cells form inverse pairs from
  and to the given 1-cell paths.
\end{defn}

In \cref{sec:twowords,sec:shortcut}
respectively we characterized bicategories and doubly weak double
categories as represented {\adjective} 2-categories and double
categories. Hence, by the above algebraic definitions:

\begin{prop}\label{prop:tmonadic}
  The category \bBicatst of bicategories and strict functors is
  monadic over the category \tcptd of 2-computads.
  
  Likewise, the category \bWDblCatst of doubly weak double categories
  and strict functors is monadic over the category \bDblCptd of
  double computads.\qed
\end{prop}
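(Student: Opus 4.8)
The plan is to realize both \bBicatst and \bWDblCatst as Eilenberg--Moore categories of finitary monads on \tcptd and \bDblCptd, obtained by enlarging the presentations of $T_2^{\ad}$ and $T_\d^{\ad}$ with the representability operations of \cref{defn:representeds}, and then to identify these algebra categories with bicategories and doubly weak double categories via the equivalences already established.

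Concretely, for the 2-categorical case recall that $T_2^{\ad}$ arose as a coequalizer $FB \toto FA \to T_2^{\ad}$ in $\bMnd_f(\tcptd)$, where $FA$ freely adjoins the composition, identity, and unit operations of \cref{defn:wordmonads} and the pair $B \toto UFA$ imposes the associativity, unit, and interchange laws. I would extend this to a monad $T_2$ by appending to $A$ the operations of \cref{defn:representeds} --- the two 1-cell composition 2-cells of shapes $\ttw^2_1$ and $\ttw^1_2$, with arity $\ton \sqcup_0 \ton$, and the two 1-cell identity 2-cells of shapes $\ttw^0_1$ and $\ttw^1_0$, with arity $\tz$ --- and appending to $B$ the generators equating the relevant vertical composites of these cells with identity 2-cells, so as to force them into inverse pairs. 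All these arities are representables or pushouts of representables, hence finitely presentable, so $T_2$ is again a finitary monad on \tcptd; and because the presentation is algebraic, its algebras are exactly 2-computads bearing all the operations of \cref{defn:wordmonads,defn:representeds} subject to all the laws, i.e.\ represented \twowords, with algebra maps the operation-preserving, i.e.\ \emph{strict}, functors. By \cref{cor:stricttwo} this category is equivalent, over \tcptd, to \bBicatst; since an equivalence over the base transports the monadic property, \bBicatst is monadic over \tcptd.

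The double-categorical case is entirely parallel: one extends $T_\d^{\ad}$ on \bDblCptd by the double-categorical operations of \cref{defn:representeds} --- the four composition 2-cells of shapes $\td2001$, $\td1002$, $\td0120$, $\td0210$ (arities $\th \sqcup_0 \th$ and $\tv \sqcup_0 \tv$) and the four identity-creation 2-cells out of $\tz$ --- together with the corresponding inverse-pair laws, producing a finitary monad $T_\d$ on \bDblCptd whose algebras are represented \doublewords and whose morphisms are strict functors. By the identifications of \cref{sec:shortcut,sec:doublewords} these are precisely the objects and morphisms of \bWDblCatst, so \bWDblCatst is monadic over \bDblCptd.

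The step demanding the most care is checking that representability is genuinely \emph{algebraic} in the finitary sense. Finite presentability of the new arities is immediate. The real point is that ``admitting chosen composites'' must be encoded equationally, which is exactly why \cref{defn:representeds} provides \emph{both} directions of each comparison 2-cell as separate operations together with laws asserting that their two composites are identities: a single isomorphism is not equational data, but an inverse \emph{pair} is, and inverse-pair laws fit the coequalizer-of-free-monads format. The accompanying subtlety is the matching of morphisms --- an algebra map for $T_2$ or $T_\d$ must preserve these comparison 2-cells on the nose, which is precisely the strict-functor condition --- and this is what singles out \bBicatst and \bWDblCatst rather than their larger pseudofunctor counterparts.
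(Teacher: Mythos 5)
Your proposal is correct and matches the paper's own argument: the paper likewise obtains these monads by extending the presentations of $T_2^{\ad}$ and $T_\d^{\ad}$ with the inverse-pair operations and laws of \cref{defn:representeds} (finitary, since the new arities are representables or pushouts thereof), and then invokes the identification of represented implicit structures with bicategories and doubly weak double categories under strict functors (\cref{cor:stricttwo}, \cref{prop:main}). Your closing observations --- that representability must be encoded as an inverse \emph{pair} to be equational, and that algebra morphisms preserving the chosen cells are exactly the strict functors --- are precisely the points the paper's construction relies on.
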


Now by the cancellation lemma (\cref{thm:mnd-cancel}), since \tword
is also monadic over \tcptd, we have that \bBicatst is furthermore
monadic over \tword; similarly, $\bWDblCatst$ is monadic over
\bDblWord. However, let us also say how to \emph{present} these monads
on \tword and \bDblWord; we do this because in the next section, we
will obtain 2-monads from the same presentations.

Since the category of algebras for a finitary monad on an l.f.p.\ category is again l.f.p., we can just apply the machinery of presentations of
monads again with $\sK=\tword$ and \bDblWord.  Thus, considering
the double case explicitly for concreteness and variety, we start with $A\in [\nob\,\bDblWord_f, \bDblWord]$ defined by
\[
  A(c) =
  \begin{cases}
    \td{2}{0}{0}{1} \sqcup \td{1}{0}{0}{2} &\text{if } c = \th \sqcup_0 \th\\
    \td{0}{1}{2}{0} \sqcup \td{0}{2}{1}{0} &\text{if } c = \tv\sqcup_0 \tv\\
    \td{0}{0}{0}{1}\sqcup \td{1}{0}{0}{0} \sqcup \td{0}{1}{0}{0} \sqcup \td{0}{0}{1}{0}
    &\text{if } c = \tz
  \end{cases}
\]
where we implicitly identify the representable objects in $\obocptd$ with their images under the free functor in \bDblWord.
Then an $FA$-algebra is {\aan} {\doubleword} equipped with the 1-cell composition and identity creation 2-cell operations as specified above.
We then describe another $B\in [\nob\,\bDblWord_f, \bDblWord]$ with two maps $B \toto UFA$ and consider the coequalizer in $\bMnd_f(\bDblWord)$ of the induced parallel pair $FB \toto FA$, to obtain a monad $T_{\d}^{\wk}$ on $\bDblWord$ whose algebras are represented implicit double categories.
Similarly, we get a monad $T_2^{\wk}$ on $\tword$ whose algebras are represented implicit 2-categories.

We can also describe the free algebras of these monads more directly.

\begin{prop}\label{rmk:freedescription}
  The free bicategory on {\aan} {\twoword} $\cat{X}$ admits the following description.
  \begin{itemize}
  \item Its 0-cells are those of $\cat{X}$.
  \item Its 1-cells are freely generated from those of $\cat{X}$ by binary composition and identities.
  \item Its 2-cells with a given boundary are those in $\cat{X}$ with boundary given by erasing parentheses and identities, with composition as in $\cat{X}$.
  \end{itemize}
  Similarly, the free doubly weak double category on {\aan} {\doubleword} $\cat{X}$ admits the following description.
  \begin{itemize}
  \item Its 0-cells are those of $\cat{X}$.
  \item Its 1-cells of both sorts are freely generated from those of $\cat{X}$ by binary composition and identities.
  \item Its 2-cells with a given boundary are those in $\cat{X}$ with boundary given by erasing parentheses and identities, with composition as in $\cat{X}$.
  \end{itemize}
\end{prop}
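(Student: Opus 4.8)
The plan is to verify directly that the structure described has the universal property of the free $T_2^{\wk}$-algebra on the implicit 2-category $\cat{X}$; equivalently, by \cref{cor:stricttwo}, that it is the free bicategory, in the sense that strict functors from it to a represented twoword $\cat{Z}$ correspond naturally to twoword functors $\cat{X}\to\cat{Z}$. The description of the 1-cells requires no real work. In the presentation of $T_2^{\wk}$, the composition and identity operations freely adjoin a binary composite $\udcom{-}{-}$ and a nullary identity $\di A$ as the targets of the witness 2-cells, while the imposed axioms (\cref{defn:representeds}) only force those witness 2-cells to be mutually inverse and hence constrain 2-cells alone. So on the nose the 1-cells of the free algebra are freely generated from the generating 1-cells of $\cat{X}$ by a binary composition and nullary identities, i.e.\ they are all bracketed words (identities and all), and ``erasing parentheses and identities'' is the evident flattening to paths of generating 1-cells.

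First I would assemble the candidate $\cat{Y}$: take the bracketed words as 1-cells, declare a 2-cell with a given boundary to be a 2-cell of $\cat{X}$ whose source and target are the flattened boundary paths, with 2-cell composition inherited from $\cat{X}$, and take each chosen composition isomorphism to be an identity 2-cell of $\cat{X}$ (the path $(s,t)$ and its composite $\udcom{s}{t}$ have the \emph{same} flattening, so the identity on that common flattened path serves, exactly as in the strictification of \cref{prop:bitotwo}). Checking that $\cat{Y}$ is a represented twoword --- the source/target, associativity, unit, and interchange laws for 2-cells, together with the requirement that each composition 2-cell and its partner form an inverse pair --- is then a routine transport of the corresponding facts in $\cat{X}$ across flattening, the inverse-pair condition holding because the witnesses are identities.

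The heart of the argument is the universal property. A strict functor $\bar\phi\colon\cat{Y}\to\cat{Z}$ extending a given twoword functor $\phi\colon\cat{X}\to\cat{Z}$ is forced on 1-cells, since strictness makes it send $\udcom{s}{t}$ and $\di A$ to the chosen composite and chosen identity in $\cat{Z}$; I would then define it on a 2-cell $\alpha$ by applying $\phi$ and conjugating by the canonical structural isomorphisms of $\cat{Z}$ that compare the chosen iterated composite of each bracketed boundary word with its flattened path. The main obstacle is the well-definedness and uniqueness of these structural isomorphisms: this is precisely the coherence theorem, which I would import from the classical coherence theorem for bicategories through the equivalence of \cref{prop:main-two}, so that the underlying bicategory of $\cat{Z}$ is genuinely coherent and its comparison 2-cells between parenthesizations are unique. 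Granting coherence, the conjugating isomorphisms cancel under composition, so $\bar\phi$ respects both 2-cell composition and the chosen composition isomorphisms, restricts to $\phi$ on length-one words, and is the unique such strict functor. This establishes the desired bijection, and hence the stated description.

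Finally, the doubly weak double case is proved by the same argument carried out independently in the horizontal and vertical directions, with $T_\d^{\wk}$ in place of $T_2^{\wk}$: the 1-cells of each sort are freely generated from the respective generating 1-cells by binary composition and identities, and a 2-cell with a given boundary is a 2-cell of $\cat{X}$ on the flattened square boundary, with composition as in $\cat{X}$. Coherence in each direction reduces to bicategorical coherence of the underlying horizontal and vertical bicategories (\cref{rmk:dbl-bi}, via \cref{prop:main-two}), and since horizontal and vertical composites of 1-cells never mix, these two coherence statements suffice separately and no new obstacle arises.
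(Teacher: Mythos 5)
Your proof is correct, but it takes a genuinely different route from the paper's. The paper never verifies the universal property of the explicit structure directly: it instead isolates an elementary local lemma --- freely adjoining to {\aan} {\twoword} a single 1-cell $f$ together with an isomorphism $f_1,\ldots,f_n \cong f$ has an explicit description, with 2-cells those of $\cat{X}$ after substituting the path for $f$ in boundaries --- and then realizes the free represented {\twoword} as the sequential colimit $\cat{X}_0 \to \cat{X}_1 \to \cdots$ of such adjunction steps, computing the colimit as a ``union'' in \tcptd\ using finitariness of the monad and the fact that the chain consists of monomorphisms of presheaves; the explicit description then falls out, and representedness and the universal property are checked at the end. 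Notably, that route makes no appeal to the coherence theorem. You instead assemble the candidate in one step (bracketed words, 2-cells transported along flattening, identity composition isomorphisms --- exactly the strictification pattern of \cref{prop:bitotwo} and \cref{lem:clique}) and verify the adjunction bijection by hand, with classical bicategorical coherence, imported through \cref{prop:main-two}, as the key ingredient making the conjugating structural 2-cells in $\cat{Z}$ well defined and unique. This import is legitimate and non-circular, since the coherence theorem is external to the paper and \cref{prop:main-two} precedes this proposition; just note that you need its mildly non-globular form --- any two parallel composites of chosen composition isomorphisms and their inverses in a represented {\twoword} agree --- which follows by whiskering down to globular comparison cells between bracketings, as the paper itself does in \cref{prop:bitotwo}. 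Each approach buys something: yours is shorter and makes the forced definition of the strict extension completely explicit, while the paper's is coherence-free and its one-isomorphism-at-a-time lemma does the combinatorial bookkeeping honestly --- in particular it actually establishes your opening claim that the relations in the presentation of $T_2^{\wk}$ never merge or create 1-cells, which your first paragraph asserts informally but which your direct universal-property argument fortunately renders redundant. Your handling of the double case, running the same argument with horizontally and vertically degenerate coherence cells plus interchange, matches the paper's ``the double-category case is similar.''
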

\begin{proof}
  We describe the 2-category case; the double-category case is
  similar.  First note that given a path $f_1, \ldots, f_n$ from $A$
  to $B$ in {\aan} {\twoword} $\cat{X}$, the {\twoword} obtained from
  $\cat{X}$ by freely adjoining a 1-cell $f\colon A \to B$ and an
  isomorphism $f_1, \ldots, f_n \cong f$ is described as follows: its
  0-cells and 1-cells are those of $\cat{X}$ plus the 1-cell $f$, and
  the 2-cells in $\cat{X}'$ with a given boundary are those in
  $\cat{X}$ with boundary obtained by replacing all occurrences of $f$
  with $f_1, \ldots, f_n$. It is easy to verify this {\twoword}
  satisfies the claimed universal property.  Similarly, we can adjoin
  any number of such 1-cells with isomorphisms.

  Now the free represented {\twoword} (equivalently, bicategory) on
  {\aan} {\twoword} defined as in \cref{defn:representeds} is a
  sequential colimit of such steps of adjoining isomorphisms.
  Specifically, starting from $\cat{X}_0 = \cat{X}$, we adjoin a
  1-cell as above for \emph{every} path in $\cat{X}_0$ of length 2 or
  0, obtaining a new {\twoword} $\cat{X}_1$.  We then repeat for every
  path of length 2 or 0 in $\cat{X}_1$, obtaining $\cat{X}_2$, and so
  on.  This yields a chain of inclusions
  \[ \cat{X}_0 \to \cat{X}_1 \to \cat{X}_2 \to \cdots.
  \]
  Since the monad on 2-computads for {\twowords} is finitary, the
  colimit $\cat{X}_\infty$ of this chain in \tword\ is its colimit in
  \tcptd\ equipped with the evident composition structure.  And since
  \tcptd\ is a presheaf category and this chain consists of
  monomorphisms, its colimit in \tcptd\ is its ``union'' in a
  straightforward sense, giving the explicit description as stated in
  the proposition.  Finally, it is straightforward to check that
  $\cat{X}_\infty$ is represented, and that any map from $\cat{X}$ to
  a represented {\twoword} factors uniquely through $\cat{X}_\infty$.
\end{proof}  

\begin{cor}\label{cor:freedescription}
  The free bicategory on a 2-computad $\cat{X}$ has 1-cells freely
  generated from those of $\cat{X}$ by binary composition and
  identities, and 2-cells as in the free strict 2-category with
  boundary given by erasing parentheses and identities.  Similarly,
  the free doubly weak double category on a double computad $\cat{X}$
  has 1-cells of both types freely generated from those of $\cat{X}$
  by binary composition and identities, and 2-cells as in the free
  strict double category with boundary given by erasing parentheses
  and identities.
\end{cor}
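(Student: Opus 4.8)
The plan is to factor the free-bicategory functor on 2-computads through the intermediate category \tword of {\twowords}, and then compose the two explicit descriptions already established. The forgetful functor $\bBicatst \to \tcptd$ factors as $\bBicatst \to \tword \to \tcptd$, where the first functor forgets the 1-cell composition and identity operations (retaining the implicit 2-category structure) and the second forgets the 2-cell composition operations. By \cref{prop:tmonadic} together with the cancellation lemma \cref{thm:mnd-cancel}, \bBicatst is monadic over \tword while \tword is monadic over \tcptd; hence each functor in the factorization is a right adjoint, and the free bicategory on a 2-computad is the composite of the two corresponding left adjoints.

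First I would identify the left adjoint $\tcptd \to \tword$, the free {\twoword} on a 2-computad. By \cref{thm:free-implicit} the relevant monad is the restriction of the free strict 2-category monad, which acts as the identity on underlying 1-category structure. Consequently the free {\twoword} on a 2-computad $\cat{X}$ has the same 0-cells and 1-cells as $\cat{X}$ (namely its generating 1-cells), while its 2-cells are precisely those of the free strict 2-category on $\cat{X}$.

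Next I would feed this into the left adjoint $\tword \to \bBicatst$, the free bicategory on {\aan} {\twoword}, whose explicit description is \cref{rmk:freedescription}. Applying that description with input the free {\twoword} on $\cat{X}$, the 0-cells are unchanged, the 1-cells are freely generated by binary composition and identities from the 1-cells of $\cat{X}$, and the 2-cells with a given boundary are those of the free {\twoword}---equivalently, the free strict 2-category---on $\cat{X}$, with boundary obtained by erasing parentheses and identities. This is exactly the claimed description, proving the 2-category case. The double-category case is entirely analogous: one factors $\bWDblCatst \to \bDblCptd$ through \bDblWord and invokes the double-category halves of \cref{thm:free-implicit} and \cref{rmk:freedescription}.

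The one point requiring care is the compatibility of the two boundary-erasure steps: a 2-cell boundary in the free bicategory involves parenthesized, identity-padded composites of generating 1-cells, and erasing these must land on a boundary actually occurring in the free {\twoword}. This is immediate once we note that the free {\twoword} on $\cat{X}$ retains the generating 1-cells of $\cat{X}$ unchanged as its 1-cells, so its 2-cell boundaries are exactly paths of generators---precisely what the erasure produces. Thus no genuine obstacle arises, and the content of the corollary reduces to this bookkeeping of composing the two adjoints, justified by the monadic factorization above.
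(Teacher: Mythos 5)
Your proposal is correct and is essentially the paper's own proof: the paper proves this corollary simply by combining \cref{rmk:freedescription} (the free doubly weak structure on an implicit one) with \cref{thm:free-implicit} (the free implicit structure on a computad, as the restricted free strict monad), i.e.\ exactly your factorization through \tword\ (resp.\ \bDblWord) and composition of the two left adjoints. Your closing bookkeeping about boundary erasure is a sound, if slightly more explicit, rendering of the same observation.
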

\begin{proof}
  Combine \cref{rmk:freedescription,thm:free-implicit}.
\end{proof}

Finally, in \cref{sec:twowords,sec:shortcut} we also characterized
strict 2-categories, pseudo double categories, and strict double categories by
imposing associativity and unit laws. These axioms can be added to the
monad presentations, so we have:

\begin{prop}\label{thm:strict-monadic}
  The category \tcat of 2-categories (and strict functors) is monadic over the category
  \tcptd of 2-computads.
  
  Likewise, the categories \bDblCat and \bPsDblCatst of strict double categories and
  pseudo double categories (both with strict
  functors) are monadic over the category \bDblCptd of double
  computads.\qed
\end{prop}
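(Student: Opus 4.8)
The plan is to present each of these categories as the algebras of a finitary monad on the relevant presheaf category of computads, obtained by adjoining strict associativity and unit equations to a presentation already in hand, and then to identify the algebras. Since \tcptd and \bDblCptd are presheaf categories they are l.f.p., so $\bMnd_f(\tcptd)$ and $\bMnd_f(\bDblCptd)$ admit all colimits, and it suffices in each case to exhibit the target monad as a coequalizer of free finitary monads, exactly as in the constructions of $T_2^{\wk}$ and $T_\d^{\wk}$.

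For \tcat I would start from the finitary monad on \tcptd whose algebras are bicategories, furnished by \cref{prop:tmonadic}; by \cref{cor:freedescription} its free algebras have 1-cells freely generated from the generating 1-cells by binary composition and identities. I then enlarge its presentation by a family $B \in [\nob\,\tcptd_f, \tcptd]$ together with a parallel pair $B \toto UFA$ encoding strict associativity and unitality of 1-cell composition: the equations $\udcom{\udcom{f}{g}}{h} = \udcom{f}{\udcom{g}{h}}$ and $\udcom{\hi{A}}{f} = f = \udcom{f}{\hi{B}}$ among the \emph{1-cells}, together with the companion equations declaring the resulting associator and unitor coherence 2-cells to be identities. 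The coequalizer in $\bMnd_f(\tcptd)$ of the induced pair $FB \toto FA$ is then a monad $T_2^{\st}$ whose algebras are exactly the bicategories in which 1-cell composition is strictly associative and unital, that is, strict 2-categories; this exhibits \tcat as monadic over \tcptd.

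The double-categorical cases run in parallel, starting from the monad on \bDblCptd whose algebras are doubly weak double categories (\cref{prop:tmonadic}). For \bDblCat I would impose these strictness equations on 1-cell composition in \emph{both} the horizontal and vertical directions, so that the algebras become strict double categories. For \bPsDblCatst it is cleanest to impose them in the \emph{vertical} direction only; the algebras of the resulting monad are then the vertically strict doubly weak double categories, which by \cref{prop:main} are equivalent, compatibly with strict functors, to pseudo double categories. In every case monadicity over the appropriate computad category is immediate once the category has been identified with the algebras of a finitary monad on a presheaf category.

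The bookkeeping---checking that each parallel pair $B \toto UFA$ names precisely the intended equations---proceeds just as for the associativity presentation of $T_2^{\ad}$ sketched earlier, and I expect it to be routine. The one point that genuinely needs care is that imposing strict associativity of 1-cells does not on its own trivialize the coherence data: once $\udcom{\udcom{f}{g}}{h}$ and $\udcom{f}{\udcom{g}{h}}$ are identified as 1-cells the associator becomes an endo-2-cell, so one must separately equate it with the identity (and likewise for the unitors) to recover strictness---both being equational, and hence nameable in the presentation. For \bPsDblCatst the only remaining subtlety is exactly the one resolved by \cref{prop:main}, namely the matching of vertically strict doubly weak structure and vertically strict functors with the classical notions of pseudo double category and pseudo double functor, which we may simply invoke.
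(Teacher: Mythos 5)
Your proposal is correct and is essentially the paper's own argument: the paper proves this proposition simply by observing that the equational characterizations of strictness from \cref{sec:twowords,sec:shortcut}---which, exactly as you note, comprise both the equations among 1-cells and the 2-cell equations trivializing the chosen composition isomorphisms, the latter being precisely the subtlety you flag---can be added to the monad presentations underlying \cref{prop:tmonadic}, with \cref{prop:main} and \cref{cor:strict-double} supplying the identification of the resulting algebras. In particular your handling of \bPsDblCatst via vertical strictness and \cref{prop:main} is exactly the paper's route.
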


The situation is summarized by chains of forgetful functors
\[\tcat \to \bBicatst \to \tword \to \tcptd\]
and
\[\bDblCat \to \bPsDblCatst \to \bWDblCatst \to \bDblWord \to \bDblCptd\]
all compositions of which are monadic, using
\cref{thm:mnd-cancel}.

\begin{rmk}
  The left adjoint $\tword \to \tcat$
  is in fact the obvious subcategory inclusion, sending {\twowords}
  to their path 2-categories. The left adjoint $\bDblWord
  \to \bDblCat$ is similar.

  The composite $\bBicatst \to \tword \to \tcat$ (forget then free) is
  the usual strictification functor for bicategories, which we
  described explicitly in \cref{prop:bitotwo}. Analogously, the
  composite $\bWDblCatst \to \bDblWord \to \bDblCat$ provides a
  strictification functor for doubly weak double categories; in the next section we will show that every doubly weak double category is equivalent to its strictification in a suitable sense.
\end{rmk}

\section{Icons and 2-monads}
\label{sec:2-monads}

In this section we will see that \tword and \bDblWord can be enhanced
to 2-categories. (One furthermore expects the instances of a
two-dimensional categorical structure to be objects in a
\emph{three}-dimensional categorical structure; transformations and
modifications of {\adjective} 2-categories are discussed in
\cref{sec:twowords-hom}.)

As is standard in the theory of bicategories, we cannot directly
define a (weak or strict) 2-category of bicategories, pseudofunctors,
and transformations: vertical composition of transformations is not
strictly associative.
But there is an alternative notion of 2-cell will gives us a
2-category after all, called an \emph{icon}~\cite{lack:icons}.


When $F$ and $G$ are pseudofunctors of bicategories, an icon from $F$ to $G$ is equivalent to a \emph{colax} transformation whose components are identity 1-cells.
(A \emph{lax} transformation from $F$ to $G$ whose components are identity 1-cells can be identified with an icon from $G$ to $F$; the reason one chooses the colax ones to be primary is that it is in that case that the 2-cell components point \emph{from} the value of $F$ on a 1-cell \emph{to} the value of $G$ on that 1-cell.)

We may define an icon of {\twoword} functors to be simply an icon of
the associated 2-functors between path 2-categories. Unpacking this,
we get the following:

\begin{defn}\label{defn:2icon}
  Let $\cat{C}$ and $\cat{D}$ be {\twowords}, and let
  $F, G \colon \cat{C} \to \cat{D}$ be functors \emph{that agree on
    0-cells}. An \textbf{icon} $\theta$ between $F$ and $G$ consists
  of, for each 1-cell $f \colon A \to B$ in $\cat{C}$, a 2-cell (bigon)
  $\theta_f$ in $\cat{D}$:
  \[
    \begin{tikzpicture}[->]
      \node (x1) at (-1,0) {};
      \node (x2) at (1,0) {};
      \node at (-1.2,.25) {\footnotesize$FA$};
      \node at (-1.2,0) {\footnotesize$=$};
      \node at (-1.2,-.25) {\footnotesize$GA$};
      \node at (1.2,.25) {\footnotesize$FB$};
      \node at (1.2,0) {\footnotesize$=$};
      \node at (1.2,-.25) {\footnotesize$GB$};
      \draw (x1) to[bend left=35] node[above] {\small$Ff$} (x2);
      \draw (x1) to[bend right=35] node[below] {\small$Gf$} (x2);
      \node at (0,0) {$\theta_f$};
    \end{tikzpicture}
  \]
  such that for each 2-cell $\alpha$ in $\cat{C}$, we have
  {\allowdisplaybreaks
    \begin{align*}
      \begin{tikzpicture}[<-,xscale=-.75,yscale=.95,shorten <=-3pt,shorten >=-2pt]
        \node (a) at (-2,0) {$\cdot$};
        \node (bl) at (-.4,.8) {};
        \node at (0,.8) {$\bcdots$};
        \node (br) at (.4,.8) {};
        \node (cl) at (-.4,-1) {};
        \node at (0,-1) {$\bcdots$};
        \node (cr) at (.4,-1) {};
        \node (d) at (2,0) {$\cdot$};
        \node at (0,-.1) {$F\alpha$};
        \draw (a) [bend left=12.5] to node [above right=-2,pos=.65] {\tiny$Fs_m$} (bl);
        \draw (br) [bend left=12.5] to node [above left=-2,pos=.35] {\tiny$Fs_1$} (d);
        \draw (a) [bend left=40] to node [above left=-2,pos=.25] {\tiny$Ft_n$} (cl);
        \draw (cr) [bend left=40] to node [above right=-2,pos=.75] {\tiny$Ft_1$} (d);
        \draw (a) [bend right=40] to node [below right=-2] {\tiny$Gt_n$} (cl);
        \draw (cr) [bend right=40] to node [below left=-2] {\tiny$Gt_1$} (d);
        \node at (-1.25,-.5) {$\sigma_{t_n}$};
        \node at (1.25,-.5) {$\sigma_{t_1}$};
      \end{tikzpicture}
      \,\;
      &=
        \;\,
        \begin{tikzpicture}[->,xscale=.75,yscale=-.95,shorten <=-2pt,shorten >=-3pt]
          \node (a) at (-2,0) {$\cdot$};
          \node (bl) at (-.4,.8) {};
          \node at (0,.8) {$\bcdots$};
          \node (br) at (.4,.8) {};
          \node (cl) at (-.4,-1) {};
          \node at (0,-1) {$\bcdots$};
          \node (cr) at (.4,-1) {};
          \node (d) at (2,0) {$\cdot$};
          \node at (0,-.1) {$G\alpha$};
          \draw (a) [bend left=12.5] to node [below left=-2,pos=.65] {\tiny$Gt_1$} (bl);
          \draw (br) [bend left=12.5] to node [below right=-2,pos=.35] {\tiny$Gt_n$} (d);
          \draw (a) [bend left=40] to node [below right=-2,pos=.25] {\tiny$Gs_1$} (cl);
          \draw (cr) [bend left=40] to node [below left=-2,pos=.75] {\tiny$Gs_m$} (d);
          \draw (a) [bend right=40] to node [above left=-2] {\tiny$Fs_1$} (cl);
          \draw (cr) [bend right=40] to node [above right=-2] {\tiny$Fs_m$} (d);
          \node at (-1.25,-.5) {$\sigma_{s_1}$};
          \node at (1.25,-.5) {$\sigma_{s_m}$};
        \end{tikzpicture}
    \end{align*}
  }
\end{defn}


We define \textbf{compositions} of icons componentwise. Likewise
\textbf{identity} icons are identities componentwise. We can also
\textbf{whisker} an icon with a functor (i.e.\ compose a functor
$C' \to C$ with an icon of functors $C \to D$ to obtain an icon of
functors $C' \to D$; or compose an icon of functors $C \to D$ with a
functor $D \to D'$ to obtain an icon of functors $C \to D'$) by using
the icon components at the image of the functor or by applying the
functor to the icon components, as usual.

\begin{prop}\label{prop:icon-two}
  There is a strict 2-category \ctword\ of implicit 2-categories, functors, and icons.\qed
\end{prop}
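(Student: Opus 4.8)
The plan is to reduce the claim to the corresponding fact for strict 2-categories. Recall from the theory of icons~\cite{lack:icons} that strict 2-categories, strict 2-functors, and icons assemble into a strict 2-category (a locally full sub-2-category of Lack's 2-category \textbf{Icon} of bicategories, lax functors, and icons, obtained by restricting objects to strict 2-categories and 1-cells to strict 2-functors). Indeed, this is the raison d'\^etre of icons: vertical composition of icons, being computed componentwise using the strictly associative vertical composition of 2-cells in the target, is itself strictly associative, unlike vertical composition of pseudonatural transformations. I will take this ambient strict 2-category as given.

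Since I defined an icon of {\twoword} functors to be precisely an icon of the associated strict 2-functors between path 2-categories, and defined its compositions, identities, and whiskerings to match those of icons of 2-functors, the proposition amounts to exhibiting {\twowords}, their functors, and icons as a sub-2-category of this ambient 2-category. The passage to path 2-categories sends {\aan} {\twoword} to a strict 2-category, {\aan} {\twoword} functor to a strict 2-functor preserving generating 1-cells, and an icon to the same icon; it remains only to check that the image is closed under the 2-categorical operations.

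These verifications are routine. For 1-cells: the identity {\twoword} functor is {\aan} {\twoword} functor, and a composite of two strict 2-functors each sending generating 1-cells to generating 1-cells again does so, so {\twoword} functors are closed under composition and contain the identities. For 2-cells: vertical composites, identity icons, and whiskerings of icons between {\twoword} functors are all defined componentwise and are therefore again icons between {\twoword} functors. (The empty hom-categories between functors that disagree on 0-cells, forced by \cref{defn:2icon}, cause no trouble.) A sub-collection of a strict 2-category that is closed under all the compositions and contains the identities is automatically a strict 2-category, inheriting associativity, unit, and interchange for both compositions; this yields \ctword.

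The only genuine content is the ambient fact that icons of strict 2-functors form a strict 2-category, which is~\cite{lack:icons} specialized from lax functors of bicategories to strict functors of strict 2-categories. Should one prefer a self-contained argument, one would instead verify the 2-category axioms directly from \cref{defn:2icon}: the source/target, associativity, unit, and interchange laws for icons all reduce componentwise to the same laws for bigons in the target {\twoword}, which hold because that {\twoword} is in particular a strict 2-category. I anticipate no real obstacle either way, since the componentwise definition of every operation makes each axiom an immediate consequence of the corresponding strict-2-category axiom in the target.
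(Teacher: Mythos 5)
Your proof is correct and matches the paper's own argument: the paper also disposes of this proposition by identifying \ctword\ with the locally full sub-2-category of the ambient strict 2-category of strict 2-categories, 2-functors, and icons from~\cite{lack:icons}, exactly as you do. Your closure verifications and the optional componentwise fallback are fine but add nothing beyond what the paper treats as immediate.
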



This is just the locally full sub-2-category of the 2-category of strict 2-categories, 2-functors, and icons in the ordinary sense.

The definition for {\doublewords} is similar, but there is an added subtlety: we have to choose directions for both the horizontal and vertical component bigons, and these choices can be independent.
Thus in principle we get four different notions of icon, and which one we regard as going ``from'' $F$ ``to'' $G$ depends on our beliefs about which direction the squares in a double category ``point''.
There are also four possibilities for this, which we may name cardinally as \textbf{northwest} $\Nwarrow$, \textbf{northeast} $\Nearrow$, \textbf{southeast} $\Searrow$, and \textbf{southwest} $\Swarrow$.

For the most part we will choose the \emph{southeast} view, which has the advantage that squares point in the same direction as all the arrows on their boundaries:
\[
  \begin{tikzpicture}[->, scale=.4]
    \node (a) at (-1, 1) {};
    \node (b) at (1, 1) {};
    \node (c) at (-1, -1) {};
    \node (d) at (1, -1) {};
    \node (0, 0) {$\Searrow$};
    \node at (a) {$\cdot$};
    \node at (b) {$\cdot$};
    \node at (c) {$\cdot$};
    \node at (d) {$\cdot$};
    \draw (a) -- (b);
    \draw (b) -- (d);
    \draw (a) -- (c);
    \draw (c) -- (d);
  \end{tikzpicture}
\]
This has the consequence that horizontal bigons point from top to bottom, while vertical bigons point from left to right.
However, it should be noted that this is not compatible with the ``quintets'' construction of a double category from a 2-category, which requires picking either the northeast or southwest view.
Fortunately, the four kinds of icon are interchanged by the symmetry operations of double categories, so all of them provide equivalent 2-categories of double categories in the end.
Moreover, \emph{invertible} icons are the same no matter which definition we pick.

\begin{defn}\label{defn:dblicon}
  Let $F, G \colon \cat{C} \to \cat{D}$ be functors of {\doublewords} \emph{that agree on
    0-cells}. A \textbf{southeast icon} $\theta$ between $F$ and $G$ consists
  of
  \begin{itemize}
  \item for each horizontal $f \colon A \to B$ in $\cat{C}$, a
    2-cell (horizontal bigon) $\theta_f$ in $\cat{D}$:
    \[
      \begin{tikzpicture}[->]
        \node (x1) at (-1,0) {};
        \node (x2) at (1,0) {};
        \node at (-1.2,.25) {\footnotesize$FA$};
        \node at (-1.2,0) {\footnotesize$=$};
        \node at (-1.2,-.25) {\footnotesize$GA$};
        \node at (1.2,.25) {\footnotesize$FB$};
        \node at (1.2,0) {\footnotesize$=$};
        \node at (1.2,-.25) {\footnotesize$GB$};
        \draw (x1) to[bend left=35] node[above] {\small$Ff$} (x2);
        \draw (x1) to[bend right=35] node[below] {\small$Gf$} (x2);
        \node at (0,0) {$\theta_f$};
      \end{tikzpicture}
    \]
  \item for each vertical $g \colon A \to B$ in $\cat{C}$, a
    2-cell (vertical bigon) $\theta_g$ in $\cat{D}$:
    \[
      \begin{tikzpicture}[->]
        \node (x1) at (0,1) {};
        \node (x2) at (0,-1) {};
        \node at (0,1.05) {\footnotesize$FA\!=\!GA$};
        \node at (0,-1.05) {\footnotesize$FB\!=\!GB$};
        \draw (x1) to[bend right=35] node[left] {\small$Fg$} (x2);
        \draw (x1) to[bend left=35] node[right] {\small$Gg$} (x2);
        \node at (0,0) {$\theta_g$};
      \end{tikzpicture}
    \]
  \end{itemize}
  such that for each 2-cell $\alpha$ in $\cat{C}$, we have
  {\allowdisplaybreaks
    \begin{align*}
      \begin{tikzpicture}[->,scale=1]
        \path (0, 2.75) -- (0,-2.75);
        \node (a) at (-1.5, 1.5) {};
        \node (b) at (1.5, 1.5) {};
        \node (c) at (-1.5, -1.5) {};
        \node (d) at (1.5, -1.5) {};
        \node (hu1) at (-.2, 1.5) {};
        \node (hu2) at (.2, 1.5) {};
        \node (vr1) at (1.5, .2) {};
        \node (vr2) at (1.5, -.2) {};
        \node (vl1) at (-1.5, .2) {};
        \node (vl2) at (-1.5, -.2) {};
        \node (hd1) at (-.2, -1.5) {};
        \node (hd2) at (.2, -1.5) {};
        \node at (a) {$\cdot$};
        \node at (b) {$\cdot$};
        \node at (c) {$\cdot$};
        \node at (d) {$\cdot$};
        \node at (0,1.5) {$\bcdots$};
        \node at (-1.5,0) {$\vcdots$};
        \node at (1.5,0) {$\vcdots$};
        \node at (0,-1.5) {$\bcdots$};
        \node at (0, 0) {$F\alpha$};
        \node at (1.8,.85) {$\theta_{t_1^V}$};
        \node at (1.8,-.85) {\,$\theta_{t_b^V}$};
        \node at (-.85,-1.8) {$\theta_{t_1^H}$};
        \node at (.85,-1.8) {$\theta_{t_d^H}$};
        \draw (a) -- node[above]{\tiny$Fs_1^H$} (hu1);
        \draw (hu2) -- node[above]{\tiny$Fs_a^H$} (b);
        \draw (b) to[out=-20,in=20,looseness=1.5] node[right]{\tiny$Gt_1^V$} (vr1);
        \draw (b) -- node[left]{\tiny$Ft_1^V$} (vr1);
        \draw (vr2) to[out=-20,in=20,looseness=1.5] node[right]{\tiny$Gt_b^V$} (d);
        \draw (vr2) -- node[left,pos=.4]{\tiny$Ft_b^V$\!} (d);
        \draw (a) -- node[left]{\tiny$Fs_1^V$} (vl1);
        \draw (vl2) -- node[left]{\tiny$Fs_c^V$} (c);
        \draw (c) to[out=-70,in=-110,looseness=1.5] node[below]{\tiny$Gt_1^H$} (hd1);
        \draw (c) -- node[above]{\tiny$Ft_1^H$} (hd1);
        \draw (hd2) to[out=-70,in=-110,looseness=1.5] node[below]{\tiny$Gt_d^H$} (d);
        \draw (hd2) -- node[above]{\tiny$Ft_d^H$} (d);
      \end{tikzpicture}
      \quad
      &=
        \quad
        \begin{tikzpicture}[->,scale=1]
          \path (0, 2.75) -- (0,-2.75);
          \node (a) at (-1.5, 1.5) {};
          \node (b) at (1.5, 1.5) {};
          \node (c) at (-1.5, -1.5) {};
          \node (d) at (1.5, -1.5) {};
          \node (hu1) at (-.2, 1.5) {};
          \node (hu2) at (.2, 1.5) {};
          \node (vr1) at (1.5, .2) {};
          \node (vr2) at (1.5, -.2) {};
          \node (vl1) at (-1.5, .2) {};
          \node (vl2) at (-1.5, -.2) {};
          \node (hd1) at (-.2, -1.5) {};
          \node (hd2) at (.2, -1.5) {};
          \node at (a) {$\cdot$};
          \node at (b) {$\cdot$};
          \node at (c) {$\cdot$};
          \node at (d) {$\cdot$};
          \node at (0,1.5) {$\bcdots$};
          \node at (-1.5,0) {$\vcdots$};
          \node at (1.5,0) {$\vcdots$};
          \node at (0,-1.5) {$\bcdots$};
          \node at (0, 0) {$G\alpha$};
          \node at (-.85,1.8) {$\theta_{s_1^H}$};
          \node at (.85,1.8) {$\theta_{s_a^H}$};
          \node at (-1.8,.85) {$\theta_{s_1^V}$};
          \node at (-1.8,-.85) {$\theta_{s_c^V}$};
          \draw (a) to[out=70,in=110,looseness=1.5] node[above]{\tiny$Fs_1^H$} (hu1);
          \draw (a) -- node[below]{\tiny$Gs_1^H$} (hu1);
          \draw (hu2) to[out=70,in=110,looseness=1.5] node[above]{\tiny$Fs_a^H$} (b);
          \draw (hu2) -- node[below]{\tiny$Gs_a^H$} (b);
          \draw (b) -- node[right]{\tiny$Gt_1^V$} (vr1);
          \draw (vr2) -- node[right]{\tiny$Gt_b^V$} (d);
          \draw (a) to[out=-160,in=160,looseness=1.5] node[left]{\tiny$Fs_1^V$} (vl1);
          \draw (a) -- node[right,pos=.6]{\tiny$Gs_1^V$} (vl1);
          \draw (vl2) to[out=-160,in=160,looseness=1.5] node[left]{\tiny$Fs_c^V$} (c);
          \draw (vl2) -- node[right]{\tiny$Gs_c^V$} (c);
          \draw (c) -- node[below]{\tiny$Gt_1^H$} (hd1);
          \draw (hd2) -- node[below]{\tiny$Gt_d^H$} (d);
        \end{tikzpicture}     
    \end{align*}
  }
\end{defn}

\begin{prop}\label{prop:icon-dbl}
  There is a strict 2-category \cDblWord of implicit double categories, functors, and (southeast) icons.\qed
\end{prop}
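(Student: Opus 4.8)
The plan is to mirror the proof of \cref{prop:icon-two} exactly. For the {\twoword} case one observes that an icon of {\twoword} functors is precisely an icon of the associated strict 2-functors between path 2-categories, and then realizes \ctword\ as a locally full sub-2-category of the (known, strict) 2-category of strict 2-categories, 2-functors, and icons. Here I would do the same: first note that a southeast icon of functors of {\doublewords} in the sense of \cref{defn:dblicon} is nothing but an icon of the associated strict double functors between path double categories, the single naturality equation of \cref{defn:dblicon} being the ``unpacked'' form of the full icon conditions (the compatibility-with-composition axiom becomes the \emph{definition} of $\theta$ on paths of 1-cells, since an implicit structure has no genuine composition of generating 1-cells, and what remains is exactly the displayed equation, now stated for all 2-cells $\alpha$). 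Thus it suffices to produce the ambient strict 2-category $\mathbf{DblCat}_{\mathrm{SE}}$ of strict double categories, strict double functors, and southeast icons, and then cut down to \cDblWord.

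First I would set up $\mathbf{DblCat}_{\mathrm{SE}}$, the direct double-categorical analogue of Lack's icon 2-category~\cite{lack:icons}: objects are strict double categories, 1-cells are strict double functors, and a 2-cell $\theta\colon F\Rightarrow G$ (between functors agreeing on 0-cells) assigns a horizontal bigon to each horizontal 1-cell and a vertical bigon to each vertical 1-cell, subject to the naturality equation of \cref{defn:dblicon}. All 2-categorical operations are defined componentwise, exactly as in the paragraph following \cref{defn:2icon}: vertical and horizontal composites of icons compose the component bigons, identity icons take identity bigons on every 1-cell, and whiskering by a functor either applies the functor to the components or reads them off the image. The content is then to check these are well-defined and satisfy the strict 2-category axioms. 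Well-definedness of the vertical composite $\phi\theta$ means checking that the naturality equation holds for $\phi\theta$ given that it holds for $\theta$ and for $\phi$; this is a pasting argument, stacking the two instances of the equation at a fixed $\alpha$ and cancelling the intermediate ($G\alpha$) copy. Associativity, unitality, and interchange for icons reduce to the corresponding laws for composition of bigons inside the target \cat{D}, hence hold \emph{strictly}; and identity bigons satisfy the equation trivially and are strict units. This establishes that $\mathbf{DblCat}_{\mathrm{SE}}$ is a strict 2-category.

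Finally I would cut down to \cDblWord. Its objects are the implicit double categories, i.e.\ the strict double categories whose underlying horizontal and vertical 1-categories are free (\cref{defn:impl-dbl}); its 1-cells are the functors of {\doublewords}, i.e.\ the strict double functors sending generating 1-cells to generating 1-cells. Both classes are closed under identities and composition (a composite of functors sending generators to generators again sends generators to generators, and identities obviously qualify), so they span a sub-2-category of $\mathbf{DblCat}_{\mathrm{SE}}$; taking it to be \emph{locally full} includes every southeast icon between two such functors, and strictness is inherited. This sub-2-category is precisely \cDblWord.

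The hard part will be the pasting verification inside $\mathbf{DblCat}_{\mathrm{SE}}$, specifically checking that the single naturality equation of \cref{defn:dblicon} is closed under composition of icons while respecting the \emph{asymmetric} southeast orientation (horizontal bigons pointing top-to-bottom, vertical bigons left-to-right). One must track carefully that the four families of boundary bigons $\theta_{s^H_i}$, $\theta_{s^V_j}$, $\theta_{t^H_k}$, $\theta_{t^V_l}$ attach on the correct sides when the equation is stacked, so that the intermediate cells cancel and all orientations agree; once this bookkeeping is in place, each required identity follows from associativity and interchange in \cat{D}. The remainder is routine.
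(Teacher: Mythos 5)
Your proposal is correct and takes essentially the same route as the paper: the paper also defines composition, identities, and whiskering of (southeast) icons componentwise and treats the strict 2-category axioms as routine consequences of associativity and interchange of bigon composition in the target, exactly as for \ctword, which it realizes as a locally full sub-2-category of the ordinary icon 2-category of strict 2-categories. One small point of care: in your ambient $\mathbf{DblCat}_{\mathrm{SE}}$ the 2-cells must carry the compatibility-with-composition and unit axioms in addition to the naturality equation (as you acknowledge in your first paragraph but omit when defining the ambient 2-cells), since for non-free double categories the naturality equation alone does not determine the components on composites; with that included, your locally full sub-2-category is precisely \cDblWord.
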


Now since $\ctword$ and $\cDblWord$ are 2-categories, we can hope to
enhance the monads on these categories to 2-monads.  This is not
possible for our monads on \tcptd\ and \obocptd, as these are not
2-categories in any obvious way.

\begin{rmk}\label{rmk:cptd-bigon}
  There is also another category between \tword\ and \tcptd\ that can
  be extended to a 2-category: its objects are 2-computads equipped
  with composition operations allowing arbitrary 2-cells to be
  composed only with bigons. (In other words, the bigons form
  categories which compatibly act on other 2-cells.)  The
  double-categorical case is similar.  However, for reasons of space
  we will not treat these categories.
\end{rmk}

\begin{lem}
  These 2-categories \ctword\ and \cDblWord\ are locally finitely
  presentable as 2-categories (that is, \bCat-enriched categories).
\end{lem}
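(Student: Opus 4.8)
The plan is to present $\ctword$ and $\cDblWord$ as the $\bCat$-enriched categories of strict algebras for finitary $2$-monads on locally finitely presentable $2$-categories, and then to invoke the $\bCat$-enriched analogue of the fact (used repeatedly above for ordinary monads) that the category of algebras for a finitary monad on an l.f.p.\ category is again l.f.p. In the enriched setting this is part of the same package of results of \cite{kelly:transfinite,lack:finitary,kelly:enr-lfp} recalled at the start of \cref{sec:doublewords}, now applied with $\sV=\bCat$. Thus everything reduces to exhibiting a suitable l.f.p.\ base $2$-category together with a finitary $2$-monad on it whose strict algebras, with icons as $2$-cells, are the structures in question.

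The base cannot be \tcptd or \obocptd, which as noted carry no obvious $2$-dimensional structure. Instead I would take the base to be exactly the $2$-category alluded to in \cref{rmk:cptd-bigon}: (double) computads whose bigons are equipped with a composition making them the morphisms of hom-categories that act compatibly on the remaining $2$-cells, and with icon-style $2$-cells between parallel maps (those agreeing on $0$- and $1$-cells, given by a natural family of bigons). Equivalently this is the $2$-category of $\bCat$-enriched (double) graphs carrying the additional higher-cell shapes of a (double) computad. Such a structure is a full sub-$\bCat$-category of a presheaf $\bCat$-category $[\mathcal{A},\bCat]$, cut out by requiring the values at the objects of $0$-cells (and at the shapes with no bigon-action) to be discrete; these inclusions have finitary (co)reflectors, so the base is itself l.f.p.\ as a $\bCat$-category.

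On this base the free-implicit-$2$-category (resp.\ free-implicit-double-category) construction is a $2$-monad whose strict algebras are precisely the objects of $\ctword$ (resp.\ $\cDblWord$); that the resulting strict-algebra $\bCat$-category has icons as its $2$-cells is exactly the componentwise definition of icon composition preceding \cref{prop:icon-two,prop:icon-dbl}. This $2$-monad is finitary because its underlying ordinary monad is the finitary monad $T_2^{\ad}$ (resp.\ $T_{\d}^{\ad}$) already presented in \cref{sec:doublewords}, and enriching the base merely adjoins the bigon-action, which is again computed by finite colimits. Applying the enriched finitary-algebra theorem then yields the lemma.

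The main obstacle is the second step: carefully constructing this enriched base and verifying that the free-implicit $2$-monad on it is genuinely finitary as a $\bCat$-enriched monad --- that is, that its enriched hom-functors preserve filtered colimits, equivalently that filtered colimits of implicit structures are formed on underlying computads together with pointwise filtered colimits of the bigon-categories. This is precisely the bookkeeping that \cref{rmk:cptd-bigon} declines to carry out in full, so some care is needed to supply just enough of it. As an independent cross-check one can instead follow the route foreshadowed in \cref{sec:twowords}: \tcat and \bDblCat are l.f.p.\ $2$-categories, being strict algebras for the finitary free-$2$-category and free-double-category $2$-monads on $\bCat$-graphs and double $\bCat$-graphs, and $\ctword$ (resp.\ $\cDblWord$) embeds via the comonadic path-$2$-category functor of \cite{amar} as the flexible (``pie'') algebras; since the induced comonad is finitary, its $\bCat$-category of coalgebras is again l.f.p.
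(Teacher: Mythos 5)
Your proposal takes a genuinely different route from the paper, and in both of its branches it rests on steps that are not actually available. The paper does not transfer local finite presentability along a (co)monadic adjunction; it verifies Kelly's criterion \cite[Proposition~7.5]{kelly:enr-lfp} directly: a cocomplete 2-category is l.f.p.\ iff its underlying ordinary category is l.f.p.\ and the ordinarily finitely presentable objects are also \bCat-finitely presentable. Cocompleteness is obtained by constructing powers by small categories hom-wise, the underlying category \bDblWord\ is already known to be l.f.p.\ from \cref{sec:doublewords}, and the crux is an explicit check that icons out of the representable generators $\tz$, $\th$, $\tv$, $\td{a}{b}{c}{d}$ are finitary structures (trivial icons, globular 2-cells, and ``cylinders'' respectively) and hence preserved by filtered colimits. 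Your main route does not avoid this crux --- it relocates it. To apply the enriched finitary-2-monad theorem you must first know that the base 2-category of \cref{rmk:cptd-bigon} is l.f.p.\ \emph{as a 2-category} and that the free-implicit 2-monad is \bCat-finitary, and both of these require exactly the same comparison between ordinary and enriched finite presentability (i.e., that icon-homs out of the generators preserve filtered colimits) that the paper proves by hand. Your remark that ``enriching the base merely adjoins the bigon-action, which is again computed by finite colimits'' is not a substitute for that verification, and you candidly concede that the construction of the base is the bookkeeping the paper declines to carry out; as written, the proposal therefore has a genuine hole precisely at the load-bearing step.

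The ``independent cross-check'' has a sharper problem: the statement ``since the induced comonad is finitary, its \bCat-category of coalgebras is again l.f.p.''\ is not the dual of, nor a consequence of, the finitary-monad theorem you invoke. The dual of ``algebras for finitary monads on l.f.p.\ categories are l.f.p.''\ concerns co-presentability; the fact that coalgebras for accessible comonads on locally presentable categories are locally presentable is a separate theorem in which the presentability rank is not automatically preserved, and its \bCat-enriched analogue for icon 2-categories would itself need proof. Moreover, you assert without argument that the comonad induced by the comonadic path-2-category functor of \cite{amar} is finitary, and you would additionally need cocompleteness (in particular powers by small categories) of the coalgebra 2-category to even state l.f.p.-ness --- none of which is supplied. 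So neither branch closes; the missing content in both cases is the direct finitariness analysis of icons out of the computadic generators, which is precisely what the paper's proof consists of.
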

\begin{proof}
  By~\cite[Proposition 7.5]{kelly:enr-lfp}, a cocomplete 2-category
  \sK\ is locally finitely presentable if and only if its underlying
  ordinary category $\sK_0$ is locally finitely presentable and
  whenever $X\in \sK$ is finitely presentable in $\sK_0$ (that is,
  $\sK_0(X,-): \sK_0 \to \bSet$ preserves filtered colimits) then it
  is also \bCat-finitely-presentable in \sK (that is,
  $\sK(X,-): \sK \to \bCat$ preserves filtered colimits).  For this,
  in turn, it suffices to show that $\sK_0$ has a strongly generating
  set of finitely presentable objects that are also finitely
  presentable in $\sK$.

  We consider \cDblWord; the case of \ctword\ is analogous.  For
  cocompleteness, since the underlying 1-category \bDblWord\ is
  cocomplete, it suffices by~\cite[\S3.8]{kelly:enriched} to show that
  \cDblWord\ has powers by small categories.  As for other
  2-categories of icons, these can be constructed ``hom-wise''.  The
  power $X^\lJ$ has the same objects as $X$, its vertical arrows from
  $x$ to $y$ are \lJ-shaped diagrams in the category of such vertical
  arrows of $X$, and similarly for horizontal arrows, while its
  2-cells are families of 2-cells in $X$ indexed by the objects of \lJ
  that are ``natural'' with respect to their boundaries.

  Now an evident strongly generating set of objects in the 1-category
  \bDblWord\ consists of the images of the representables $\tz$,
  $\th$, $\tv$, and $\td{a}{b}{c}{d}$, so it suffices to show that
  these are also finitely presentable in the 2-category, in other
  words that icons mapping out of them preserve filtered colimits.
  Now, there are no nontrivial icons with domain $\tz$, while icons
  with domain $\th$ and $\tv$ are simply horizontally or vertically
  globular 2-cells, and icons with domain $\td{a}{b}{c}{d}$ are
  commutative ``cylinders'' relating two 2-cells of shape
  $\td{a}{b}{c}{d}$ by globular 2-cells on their boundaries.  But all
  of these are finitary structures, and hence are preserved in
  filtered colimits.
\end{proof}

Therefore, we can use the machinery sketched in \cref{sec:doublewords} to present 2-monads
on \ctword\ and \cDblWord.  Moreover, since the finitary objects are
the same whether we regard them as 1-categories or 2-categories,
exactly the same presentation as before actually presents a
2-monad.

We immediately deduce that $\bBicatst$ and $\bWDblCatst$ can also be
enhanced to 2-categories $\cBicatst$ and $\cWDblCatst$, namely the
2-categories of strict algebras and strict morphisms for these 2-monads.
We also obtain immediately notions of
pseudo, lax, and colax morphism between bicategories and doubly weak
double categories.  Moreover, the ``endomorphism monad of a morphism''
$\{f,f\}$ from~\cite[\S2]{kl:property-like} (see
also~\cite[\S5.1]{steve:companion}) implies that the definitions of
these more general morphisms can also be deduced algebraically from
the presentation.

In general, suppose $FA$ is the free 2-monad on
$A\in [\nob\sK_f,\sK]$, for some locally finitely presentable
2-category \sK, so that an $FA$-algebra $X$ is determined by maps
$\sK(c,X) \to \sK(Ac,X)$.  Then a pseudo $FA$-morphism $f:X\to Y$ is
determined by natural isomorphisms
\[
  \begin{tikzcd}
    \sK(c,X) \ar[r] \ar[d] \ar[dr,phantom,"\cong"] & \sK(Ac,X) \ar[d] \\
    \sK(c,Y) \ar[r] & \sK(Ac,Y).
  \end{tikzcd}
\]
Similarly, if $T$ is the coequalizer of the maps $FB \toto FA$, a
pseudo $T$-morphism is a pseudo $FA$-morphism (as above) that
restricts to the same pseudo $FB$-morphism along the two given maps.
In our case, this specializes to the following:

\begin{lem}
  Let $\cat{C}$ and $\cat{D}$ be doubly weak double categories.  A
  pseudo $T_{\d}^{\wk}$-morphism $F:\cat{C}\to \cat{D}$ is a functor
  of implicit double categories together with
  \begin{itemize}
  \item For each pair of composable horizontal 1-cells $f:A\to B$ and $g:B\to C$ in $\cat{C}$, an invertible horizontal bigon in $\cat{D}$:
    \[
      \begin{tikzpicture}[->,yscale=1.2]
        \node (x1) at (-1,0) {};
        \node (x2) at (1,0) {};
        \node at (-1.2,0) {\footnotesize$FA$};
        \node at (1.2,0) {\footnotesize$FC$};
        \draw (x1) to[bend left=35] node[above] {\small$F\bcom{f}{g}$} (x2);
        \draw (x1) to[bend right=35] node[below] {\small$\ubcom{(Ff)}{(Fg)}$} (x2);
        \node at (0,0) {$\phi^H_{f,g}$};
      \end{tikzpicture}
    \]
    that commutes with the representability isomorphisms:
    \[
      \begin{tikzpicture}[->,xscale=.8]
        \node (x1) at (-1.5,0) {};
        \node (x2) at (1.5,0) {};
        \node (x3) at (0,-1) {};
        \node at (-1.7,0) {\footnotesize$FA$};
        \node at (1.7,0) {\footnotesize$FC$};
        \node at (0,-1.15) {\footnotesize$FB$};
        \draw[shorten <=2,shorten >=2] (x1) .. controls +(.75,1.25) and +(-.75,1.25) .. node[above] {\small$F\bcom{f}{g}$} (x2);
        \draw (x1) -- node[fill=white] {\scriptsize$\ubcom{(Ff)}{(Fg)}$} (x2);
        \draw (x1) -- node[below left] {\small$Ff$} (x3);
        \draw (x3) -- node[below right] {\small$Fg$} (x2);
        \node at (0,.55) {\small$\phi^H_{f,g}$};
        \node at (0,-.4) {\small$\choseniso$};
      \end{tikzpicture}
      \quad = \quad
      \begin{tikzpicture}[->,xscale=.8]
        \node (x1) at (-1.5,0) {};
        \node (x2) at (1.5,0) {};
        \node (x3) at (0,-1) {};
        \node at (-1.7,0) {\footnotesize$FA$};
        \node at (1.7,0) {\footnotesize$FC$};
        \node at (0,-1.15) {\footnotesize$FB$};
        \draw[shorten <=2,shorten >=2] (x1) .. controls +(.75,1.25) and +(-.75,1.25) .. node[above] {\small$F\bcom{f}{g}$} (x2);
        \draw (x1) -- node[below left] {\small$Ff$} (x3);
        \draw (x3) -- node[below right] {\small$Fg$} (x2);
        \node at (0,0) {$F(\choseniso)$};
      \end{tikzpicture}
    \]
  \item For each pair of composable vertical 1-cells $f:A\to B$ and $g:B\to C$ in $\cat{C}$, an invertible vertical bigon in $\cat{D}$:
    \[
      \begin{tikzpicture}[->,xscale=1.2]
        \node (x1) at (0,1) {};
        \node (x2) at (0,-1) {};
        \node at (0,1.1) {\footnotesize$FA$};
        \node at (0,-1.1) {\footnotesize$FC$};
        \draw (x1) to[bend right=35] node[left=9] {\small$F($} node[left=5.4] {\tiny$\uvcom{f}{g}$} node[left] {\small$)$} (x2);
        \draw (x1) to[bend left=35] node[right=1] {\tiny$\uvcom{Ff}{Fg}$} (x2);
        \node at (0,0) {$\phi^V_{f,g}$};
      \end{tikzpicture}
    \]
    that commutes with the representability isomorphisms:
    \[
      \begin{tikzpicture}[<-,rotate=90,xscale=.8]
        \node (x1) at (-1.5,0) {};
        \node (x2) at (1.5,0) {};
        \node (x3) at (0,-1) {};
        \node at (-1.65,0) {\footnotesize$FA$};
        \node at (1.65,0) {\footnotesize$FC$};
        \node at (0,-1.2) {\footnotesize$FB$};
        \draw[shorten <=2,shorten >=2] (x1) .. controls +(.75,1.25) and +(-.75,1.25) .. node[left=9] {\small$F($} node[left=5.4] {\tiny$\uvcom{f}{g}$} node[left] {\small$)$} (x2);
        \draw (x1) to node[fill=white] {\tiny$\uvcom{Ff}{Fg}$} (x2);
        \draw (x1) -- node[below right=-1.5] {\small$Ff$} (x3);
        \draw (x3) -- node[above right=-1.5] {\small$Fg$} (x2);
        \node at (0,.55) {\small$\phi^H_{f,g}$};
        \node at (0,-.4) {\;\small$\choseniso$};
      \end{tikzpicture}
      \quad = \quad
      \begin{tikzpicture}[<-,rotate=90,xscale=.8]
        \node (x1) at (-1.5,0) {};
        \node (x2) at (1.5,0) {};
        \node (x3) at (0,-1) {};
        \node at (-1.65,0) {\footnotesize$FA$};
        \node at (1.65,0) {\footnotesize$FC$};
        \node at (0,-1.2) {\footnotesize$FB$};
        \draw[shorten <=2,shorten >=2] (x1) .. controls +(.75,1.25) and +(-.75,1.25) .. node[left=9] {\small$F($} node[left=5.4] {\tiny$\uvcom{f}{g}$} node[left] {\small$)$} (x2);
        \draw (x1) -- node[below right=-1.5] {\small$Ff$} (x3);
        \draw (x3) -- node[above right=-1.5] {\small$Fg$} (x2);
        \node at (0,0) {\!\!$F(\choseniso)$};
      \end{tikzpicture}
    \]
  \item For each object $A\in \cat{C}$, invertible horizontal and vertical bigons:
    \[
      \begin{tikzpicture}[->,yscale=1.2]
        \node (x1) at (-1,0) {};
        \node (x2) at (1,0) {};
        \node at (-1.2,0) {\footnotesize$FA$};
        \node at (1.2,0) {\footnotesize$FA$};
        \draw (x1) to[bend left=35] node[above] {\small$F\bi{A}$} (x2);
        \draw (x1) to[bend right=35] node[below] {\small$\bi{FA}$} (x2);
        \node at (0,0) {$\phi^H_{A}$};
      \end{tikzpicture}
      \qquad\qquad\qquad\quad
      \begin{tikzpicture}[->,xscale=1.2]
        \node (x1) at (0,1) {};
        \node (x2) at (0,-1) {};
        \node at (0,1.1) {\footnotesize$FA$};
        \node at (0,-1.1) {\footnotesize$FA$};
        \draw (x1) to[bend right=35] node[left] {\small$F\bi{A}$}(x2);
        \draw (x1) to[bend left=35] node[right] {\small$\bi{FA}$} (x2);
        \node at (0,0) {$\phi^V_{A}$};
      \end{tikzpicture}
    \]
    that commute with the representability isomorphisms:
    \[
      \begin{tikzpicture}[->,xscale=.5,yscale=.8]
        \node (a) at (0,0) {\footnotesize$FA$};
        \begin{scope}[yscale=.9]
          \draw (a) .. controls +(-1.25,.65) and +(-1,0) .. (0,1.25) node [fill=white,inner sep=0] {\footnotesize$\bi{FA}$} .. controls +(1,0) and +(1.25,.65) .. (a);
          \node at (0,.7) {$\choseniso$};
        \end{scope}
        \begin{scope}[scale=2,shift={(0,-.125)}]
          \node (a) at (0,0) {\phantom{\footnotesize$FA$}};
          \draw[shorten <=2,shorten >=2] (a) .. controls +(-1.25,.65) and +(-1,0) .. (0,1.25) node [above] {\small$F\bi{A}$} .. controls +(1,0) and +(1.25,.65) .. (a);
        \end{scope}
        \node at (0,1.7) {$\phi^H_{A}$};
      \end{tikzpicture}
      \!\!=\!\!
      \begin{tikzpicture}[->,xscale=.5,yscale=.8]
        \node (a) at (0,0) {\footnotesize$FA$};
        \begin{scope}[scale=2,shift={(0,-.125)}]
          \node (a) at (0,0) {\phantom{\footnotesize$FA$}};
          \node at (0,.7) {$F(\choseniso)$};
          \draw[shorten <=2,shorten >=2] (a) .. controls +(-1.25,.65) and +(-1,0) .. (0,1.25) node [above] {\small$F\bi{A}$} .. controls +(1,0) and +(1.25,.65) .. (a);
        \end{scope}
      \end{tikzpicture}
      \qquad\;\;\;
      \begin{tikzpicture}[<-,rotate=90,xscale=.5,yscale=.8]
        \node (a) at (0,0) {\footnotesize$FA$};
        \begin{scope}[yscale=.9]
          \draw (a) .. controls +(-1.25,.65) and +(-1,0) .. (0,1.25) node [fill=white,inner sep=.5] {\footnotesize\;$\bi{FA}$} .. controls +(1,0) and +(1.25,.65) .. (a);
          \node at (0,.7) {\;$\choseniso$};
        \end{scope}
        \begin{scope}[scale=2,shift={(0,-.125)}]
          \node (a) at (0,0) {\phantom{\footnotesize$FA$}};
          \draw[shorten <=2,shorten >=2] (a) .. controls +(-1.25,.65) and +(-1,0) .. (0,1.25) node [left] {\small$F\bi{A}$} .. controls +(1,0) and +(1.25,.65) .. (a);
        \end{scope}
        \node at (0,1.7) {\!\!$\phi^H_{A}$};
      \end{tikzpicture}
      \!\!=
      \begin{tikzpicture}[<-,rotate=90,xscale=.5,yscale=.8]
        \node (a) at (0,0) {\footnotesize$FA$};
        \begin{scope}[scale=2,shift={(0,-.125)}]
          \node (a) at (0,0) {\phantom{\footnotesize$FA$}};
          \node at (0,.7) {$F(\choseniso)$};
          \draw[shorten <=2,shorten >=2] (a) .. controls +(-1.25,.65) and +(-1,0) .. (0,1.25) node [left] {\small$F\bi{A}$} .. controls +(1,0) and +(1.25,.65) .. (a);
        \end{scope}
      \end{tikzpicture}
    \]
  \end{itemize}\qed
\end{lem}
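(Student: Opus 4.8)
The plan is to recognize this statement as a direct instance of the general description of pseudo morphisms for a finitary 2-monad presented by a coequalizer $FB \toto FA$, specialized to the presentation of $T_{\d}^{\wk}$ on $\cDblWord$ developed in \cref{sec:doublewords,sec:2-monads}. Recall that there $A\in[\nob\,\bDblWord_f,\bDblWord]$ is the family with $A(\th\sqcup_0\th) = \td{2}{0}{0}{1}\sqcup\td{1}{0}{0}{2}$, $A(\tv\sqcup_0\tv)=\td{0}{1}{2}{0}\sqcup\td{0}{2}{1}{0}$, and $A(\tz)=\td{0}{0}{0}{1}\sqcup\td{1}{0}{0}{0}\sqcup\td{0}{1}{0}{0}\sqcup\td{0}{0}{1}{0}$, encoding the 1-cell composition and identity-creation operations of \cref{defn:representeds}, while $B$ encodes the inverse-pair laws. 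By the recipe stated immediately above, a pseudo $FA$-morphism $F\colon\cat{C}\to\cat{D}$ is a functor of implicit double categories together with a natural isomorphism filling the naturality square relating $\sK(c,\cat{C})\to\sK(Ac,\cat{C})$ and $\sK(c,\cat{D})\to\sK(Ac,\cat{D})$ for each generating component $c$ of $A$; a pseudo $T_{\d}^{\wk}$-morphism is then such an $FA$-morphism whose two induced $FB$-morphisms agree.

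First I would unpack each of these filling isomorphisms. In the 2-category $\cDblWord$, a map out of a representable of shape $\td{a}{b}{c}{d}$ names a 2-cell of that shape, and (as in the local-presentability lemma just proved) a 2-cell in the hom-category between two such is an icon, i.e.\ a ``cylinder'' of boundary bigons intertwining the two $\td{a}{b}{c}{d}$-cells. For the component $\td{2}{0}{0}{1}$ at $\th\sqcup_0\th$, the two parallel composites of the naturality square are $F$ applied to the chosen composition square of $(f,g)$ in $\cat{C}$ and the chosen composition square of $(Ff,Fg)$ in $\cat{D}$; these agree on every boundary 1-cell except the composite edge, where one reads $F\bcom{f}{g}$ and the other $\ubcom{(Ff)}{(Fg)}$. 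Hence the filling icon collapses to a single invertible bigon on that edge, which is exactly $\phi^H_{f,g}$, and the icon intertwining equation is precisely the triangle making $\phi^H_{f,g}$ compatible with the chosen representability isomorphisms, as displayed. The same unpacking at $\tv\sqcup_0\tv$ and at $\tz$ produces $\phi^V_{f,g}$, $\phi^H_A$, and $\phi^V_A$ with their compatibility triangles, once one tracks the southeast convention so that horizontal bigons point top-to-bottom and vertical ones left-to-right.

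Finally I would impose the coequalizer condition coming from $B$. The two composition squares at each composable pair (shapes $\td{2}{0}{0}{1}$ and $\td{1}{0}{0}{2}$, and likewise $\td{0}{1}{2}{0}$ and $\td{0}{2}{1}{0}$) are constrained by $B$ to be mutually inverse, and requiring $F$ to respect $B$ forces their filling bigons to be mutually inverse as well; thus the ``backward'' bigon is determined as the inverse of the ``forward'' one and contributes no new data, leaving precisely the invertible bigons $\phi^H_{f,g}$, $\phi^V_{f,g}$, $\phi^H_A$, $\phi^V_A$ subject only to the stated compatibility conditions.

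I expect the main obstacle to be the bookkeeping in the middle step: verifying in each case that the two parallel 2-cells genuinely share all boundary data except the single composite edge, so that the filling icon really is just one bigon rather than a nontrivial cylinder, and checking that the icon's intertwining equation unwinds exactly into the displayed triangle, with the bigon oriented according to the southeast convention. Once this reduction is in hand, matching the $B$-restriction to the collapse of the redundant inverse cells is routine.
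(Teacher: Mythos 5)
Your overall route is exactly the paper's: the lemma is stated without proof, as the immediate specialization of the general description (given just beforehand) of pseudo morphisms for a 2-monad presented as a coequalizer $FB \toto FA$, and your identification of the relevant family $A$, the unpacking of the filling isomorphisms at each generating shape, and the use of the coequalizer condition all match that intent.

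However, one step in your middle paragraph does not hold as stated, and it is precisely the step you flag as the main obstacle. You argue that because the two parallel cells of shape $\td{2}{0}{0}{1}$ agree on every boundary 1-cell except the composite edge, ``the filling icon collapses to a single invertible bigon on that edge.'' Agreement of the underlying 1-cells does not force the icon components there to be identities: an invertible icon between two maps $\td{2}{0}{0}{1}\to\cat{D}$ that agree on the top edges may still carry nontrivial automorphism bigons $Ff\to Ff$ and $Fg\to Fg$, and naturality in $(f,g)$ does not rule these out. Indeed, at the pure $FA$ stage they cannot be ruled out, because in the free $FA$-algebra the freely adjoined composition 2-cell has entirely fresh boundary 1-cells --- the source and target laws identifying the top edges of the composition cell with the given generators live in $B$, not in $A$ (just as in the presentation of $T_2^{\ad}$ in \cref{sec:doublewords}, where source and target laws are among the axioms imposed by $B$). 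The collapse is therefore itself part of the coequalizer condition: for the relevant component of $B$, one of the two maps into $UFA$ is the operation ``take a boundary edge of the composition cell applied to $(f,g)$,'' while the other factors through the unit of $FA$, whose filling isomorphism is the identity by the unit axiom for pseudo morphisms; equality of the two induced pseudo $FB$-structures then forces the boundary components of the icon to be identity bigons, after which the icon intertwining equation is exactly the displayed triangle. So your final step, where you impose $B$, must cover the source/target laws as well as the inverse-pair laws; once it does, the remainder of your argument --- including the determination of the bigons for the ``backward'' composition cells as inverses, contributing no new data --- goes through as you describe.
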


However, since the representability cells are also isomorphisms, the
conditions required above uniquely determine each invertible cell
$\phi$ (as the composite of two representability cells). The case of
bicategories is similar. Thus the pseudo-morphisms are simply functors
of the underlying {\adjective} structures, recovering the categories
\bBicat and \bWDblCat from \cref{sec:twowords} and
\cref{sec:shortcut}:

\begin{prop}
  If $X$ and $Y$ are bicategories, then every functor $F:X\to Y$ of
  \emph{implicit} 2-categories has a unique structure of pseudo
  $T_2^{\wk}$-morphism.

  Similarly, if $X$ and $Y$ are doubly weak double categories, then every
  functor $F:X\to Y$ of \emph{implicit} double categories has a unique
  structure of pseudo $T_{\d}^{\wk}$-morphism.\qed
\end{prop}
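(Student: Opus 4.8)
The plan is to read the claim directly off the preceding lemma, which expresses a pseudo $T_{\d}^{\wk}$-morphism structure on a functor $F$ of implicit double categories as a family of invertible bigons $\phi^H_{f,g}$, $\phi^V_{f,g}$, $\phi^H_A$, $\phi^V_A$ subject to the displayed compatibility conditions, and to \emph{no} further axioms. The essential observation is that each such condition is an equation of the schematic form
\[
  \phi \cdot (\text{chosen composition isomorphism of } Y) \;=\; F(\text{chosen composition isomorphism of } X),
\]
and that, because $Y$ is representable, its chosen composition isomorphisms are genuine isomorphism $2$-cells. Hence each equation can simply be solved for the unknown $\phi$.

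Concretely, I would argue as follows. Since $F$ is a functor of implicit structures it preserves $2$-cell composition and identities strictly, and so preserves isomorphisms; thus $F$ applied to a chosen composition isomorphism of $X$ is again invertible. For the horizontal-composition cell, the condition is that $\phi^H_{f,g}$ vertically composed with the chosen isomorphism $(Ff,Fg)\choseniso\ubcom{(Ff)}{(Fg)}$ equals $F$ applied to the chosen isomorphism $(f,g)\choseniso\bcom{f}{g}$. As the former is invertible, this forces $\phi^H_{f,g}$ to be the composite of $F(\choseniso_{f,g})$ followed by $\choseniso_{Ff,Fg}^{-1}$ (in the evident order), a bigon $F\bcom{f}{g}\to\ubcom{(Ff)}{(Fg)}$. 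The identical solve-for-the-unknown step applies to $\phi^V_{f,g}$ and to the identity cells $\phi^H_A$, $\phi^V_A$, using the chosen vertical composition and nullary composition isomorphisms of $Y$ respectively. Since each structure bigon is governed by exactly one such equation, there is neither redundancy whose consistency must be checked nor any freedom left over.

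This yields existence and uniqueness at once. Uniqueness is immediate, the displayed equations pinning down each $\phi$ outright. For existence I would observe that the cells defined by these formulae are invertible, being composites of an image under $F$ of an isomorphism with the inverse of a chosen isomorphism, and that they satisfy precisely the required compatibility conditions by construction; hence they assemble into a pseudo $T_{\d}^{\wk}$-morphism structure on $F$. The bicategorical statement is proved the same way and is in fact simpler: the analogous characterization of pseudo $T_2^{\wk}$-morphisms involves only the single horizontal family of composition and identity bigons, each again determined uniquely by its compatibility with the invertible chosen composition isomorphisms of $Y$.

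I do not expect a genuine obstacle. The one point deserving care is the appeal to the preceding lemma for the assertion that the listed compatibility conditions constitute the \emph{complete} definition of a pseudo-morphism, so that no residual coherence axioms need be reverified for the reconstructed $\phi$; this is exactly what the $2$-monad description guarantees, a pseudo $T_{\d}^{\wk}$-morphism being a pseudo $FA$-morphism restricting compatibly along the pair $FB\toto FA$. Granting that, the whole argument reduces to inverting the chosen composition isomorphisms, which are invertible precisely because $X$ and $Y$ are representable.
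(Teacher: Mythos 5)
Your proposal is correct and is essentially the paper's own argument: the paper proves the proposition by the one-line observation that, since the representability cells are isomorphisms, each compatibility condition in the preceding lemma forces the corresponding $\phi$ to be the composite of $F$ applied to a chosen isomorphism with the inverse of a chosen isomorphism, yielding existence and uniqueness simultaneously. Your solve-for-the-unknown write-up, including the remark that each structure cell is pinned down by exactly one equation so no residual coherence needs checking, is just a more detailed rendering of the same reasoning.
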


\begin{cor}
  The 2-monads $T_2^{\wk}$ on \ctword, and $T_{\d}^{\wk}$ on \cDblWord, are
  pseudo-idempotent.  Therefore, an icon between bicategories or
  doubly weak double categories is nothing more than an icon between
  their underlying implicit 2-categories or implicit double
  categories.
\end{cor}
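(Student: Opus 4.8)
The plan is to reduce pseudo-idempotence to the essential uniqueness of pseudo-morphism structure established in the preceding Proposition, via a standard characterization. Recall that a 2-monad $T$ on a 2-category $\sK$ is pseudo-idempotent exactly when its multiplication $\mu\colon T^2 \Rightarrow T$ is a pseudonatural equivalence; equivalently, when the forgetful 2-functor from pseudo-$T$-algebras and pseudo-morphisms to $\sK$ is 2-fully-faithful, i.e.\ when the counit of the free/forgetful pseudo-adjunction is an equivalence (see~\cite[\S2]{kl:property-like}). Since for a free algebra $TX$ the counit is the structure map $\mu_X\colon T^2X \to TX$, it suffices to show that each $\mu_X$ is an equivalence in $\ctword$ (resp.\ $\cDblWord$).

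First I would use the monad unit law $\mu_X\circ\eta_{TX} = 1_{TX}$, which already exhibits $\eta_{TX}$ as a section of $\mu_X$. It then remains to produce an invertible icon $\eta_{TX}\circ\mu_X \cong 1_{T^2X}$. Here $T^2X$ is a free algebra, hence a bicategory (resp.\ doubly weak double category) by \cref{rmk:freedescription}, and the composite $\eta_{TX}\circ\mu_X$ is the functor of implicit structures that flattens a doubly-bracketed path of $1$-cells and re-reads it as a single generator. The required icon is built componentwise from the chosen composition (representability) isomorphisms of $T^2X$ --- precisely the data of the strictification biequivalence. By the preceding Lemma and Proposition these coherence cells are forced, being composites of representability isomorphisms, so the icon is uniquely determined and invertible. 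This makes $\mu_X$ an equivalence, and pseudonaturality of $\mu$ upgrades the pointwise equivalences to a pseudonatural equivalence, giving pseudo-idempotence.

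Finally, the icon statement is a direct reading of 2-full-faithfulness of the forgetful 2-functor, which is equivalent to pseudo-idempotence by the characterization above. Concretely, the preceding Proposition makes this 2-functor bijective on objects of each hom-category (every functor of implicit structures lifts uniquely to a pseudo-morphism), and one checks it is bijective on $2$-cells as well: an icon of pseudo-morphisms is an icon of the underlying functors compatible with the $\phi$-cells, and since those cells are forced composites of representability isomorphisms, the compatibility holds automatically. Thus icons between bicategories (resp.\ doubly weak double categories) coincide with icons between their underlying implicit $2$-categories (resp.\ implicit double categories); and because \emph{invertible} icons do not depend on the northwest/northeast/southeast/southwest convention, the statement is insensitive to that choice.

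The main obstacle is the $2$-cell--level verification in the last step (equivalently, the construction of the icon in the second step): one must confirm that an arbitrary base icon satisfies the icon axiom relative to the forced coherence cells, which unwinds to a coherence calculation with the representability isomorphisms. This is exactly the kind of bookkeeping already subsumed by the uniqueness proved above, so the difficulty is organizational rather than conceptual.
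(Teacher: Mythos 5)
Your proposal is correct, but it takes a genuinely different and more laborious route than the paper. The paper's proof is a two-line formality: the operative definition of ``pseudo-idempotent'' (following \cite{kl:property-like}) is precisely that every morphism between algebras carries a unique pseudo-morphism structure, so the first statement is \emph{immediate} from the preceding Proposition, and the second statement is exactly~\cite[Proposition 6.7]{kl:property-like}. You instead work with the equivalent characterization that $\mu$ is a componentwise equivalence, and establish this by exhibiting an invertible icon $\eta_{TX}\circ\mu_X \cong 1_{T^2X}$ built from the representability isomorphisms of the free algebra $T^2X$ --- in effect re-running the strictification argument of \cref{prop:strict} at free algebras (indeed, your icon construction could be replaced by a direct appeal to \cref{lem:equivalence}, since $\mu_X$ is bijective on 0-cells, surjective on 1-cells, and bijective on 2-cells per boundary by \cref{rmk:freedescription}) --- and you then verify the 2-cell statement by hand, checking that the icon axiom applied to the representability 2-cells forces compatibility with the $\phi$-cells. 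What your approach buys is self-containedness and an explicit picture of why $T^2X \simeq TX$; what it costs is that you still lean on the Kelly--Lack equivalence of characterizations of pseudo-idempotence (so the detour through $\mu$ adds work without removing the external dependency), and your citation is slightly off: the endomorphism-monad material is in \S2 of~\cite{kl:property-like}, but the pseudo-idempotence characterizations you invoke, including the 2-cell statement, live in \S6, around Proposition 6.7. One small point to tighten if you keep your route: knowing the counit is an equivalence at \emph{free} algebras $\mu_X$ does yield it at all algebras (from $a\eta_A = 1$ and $\eta_A a = Ta\circ\eta_{TA}$ with $\eta_{TA}$ and $Ta$ equivalences), but this deduction deserves a sentence rather than being absorbed silently into the characterization.
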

\begin{proof}
  The first statement is by definition of ``pseudo-idempotent''.
  The second follows from~\cite[Proposition 6.7]{kl:property-like}.
\end{proof}

\begin{rmk}
  In particular, every lax or colax $T_2^{\wk}$- or
  $T_{\d}^{\wk}$-morphism is automatically pseudo.  We could obtain
  nontrivial notions of lax and colax functors by using the
  alternative base 2-category suggested in \cref{rmk:cptd-bigon}.
\end{rmk}

\begin{rmk}
  The same arguments apply for the 2-monads whose algebras are strict
  2-categories, strict double categories, and pseudo double categories.
  In the fully strict case it is also sensible to consider \emph{pseudo
    algebras}; these yield ``unbiased'' bicategories and a similar
  notion of ``unbiased doubly weak double category''.  General 2-monadic
  coherence techniques as
  in~\cite{power:coherence,lack:codescent-coh,shulman:psalg} can be
  adapted to show that every such unbiased structure is equivalent to a
  strict one.
\end{rmk}

We end this section by characterizing the relevant equivalences more
explicitly, and proving a coherence theorem for (biased) doubly weak
double categories.

\begin{lem}\label{lem:equivalence}
  A functor of {\doublewords} $F \colon \cat{C} \to \cat{D}$ is an
  equivalence in the 2-category \cDblWord if and only if it is
  \begin{itemize}
  \item bijective on 0-cells,
  \item locally essentially surjective on horizontal and vertical
    1-cells, and
  \item bijective on 2-cells per boundary of 1-cells in $\cat{C}$.
  \end{itemize}
  Therefore, a functor of doubly weak double categories is an
  equivalence in the 2-category \cWDblCat if and only if it satisfies
  these same conditions.
\end{lem}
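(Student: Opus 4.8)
The plan is to read this as the standard characterization of equivalences in a 2-category, specialized to \cDblWord, whose 2-cells are icons (\cref{defn:dblicon}). Recall that $F$ being an equivalence means there is some $G \colon \cat{D} \to \cat{C}$ together with invertible icons $\eta \colon \id_{\cat{C}} \cong GF$ and $\varepsilon \colon FG \cong \id_{\cat{D}}$; I will prove the two implications separately, constructing such a $G$ from the three stated conditions in the harder direction.

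For necessity, suppose $F$ is an equivalence; we may take $\eta,\varepsilon$ to form an adjoint equivalence, since every equivalence in a 2-category refines to one. Because an icon is defined only between functors that \emph{agree} on 0-cells, the mere existence of $\eta$ and $\varepsilon$ forces $GF$ and $FG$ to be the identity on 0-cells, so $F$ is a bijection on 0-cells with inverse $G$. The components $\eta_c \colon c \cong GFc$ and $\varepsilon_d \colon FGd \cong d$ are invertible horizontal or vertical bigons, so every 1-cell $d$ of $\cat{D}$ is isomorphic to $F(Gd)$, giving local essential surjectivity. Finally, fixing a boundary of 1-cells in $\cat{C}$, the icon axiom of \cref{defn:dblicon} expresses each 2-cell $\alpha$ as $GF\alpha$ whiskered along its boundary by the invertible bigons $\eta$; invertibility shows $\alpha$ is uniquely determined by $GF\alpha$, so $F$ is injective on 2-cells per boundary, while transporting $G\gamma$ back along $\eta$ and identifying its $F$-image with $\gamma$ via the triangle identity yields surjectivity.

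For sufficiency, I would build a pseudo-inverse $G$ from the three conditions. Define $G$ on 0-cells as the inverse bijection $F^{-1}$, and for each 1-cell $d$ choose, by local essential surjectivity, a 1-cell $Gd$ with an invertible bigon $\varepsilon_d \colon FGd \cong d$. To define $G$ on a 2-cell $\gamma$ of $\cat{D}$, first transport $\gamma$ along the boundary bigons $\varepsilon$ (composed along each of the four boundary paths) to a 2-cell over $F$ applied to the boundary of $G$-images in $\cat{C}$; the per-boundary bijection then produces a unique $G\gamma$ over that boundary in $\cat{C}$. The chosen $\varepsilon_d$ assemble into an invertible icon $\varepsilon \colon FG \cong \id_{\cat{D}}$ precisely because $G$ on 2-cells was defined by this transport. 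The second icon $\eta_c \colon c \cong GFc$ is then the unique 2-cell reflected through the bijection from $\varepsilon_{Fc}^{-1}$; it is invertible because the bijection reflects inverses, and its icon axiom is inherited from that of $\varepsilon$.

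The main obstacle is exactly the construction of $G$ on 2-cells and the verification that it is a functor of {\doublewords} for which $\varepsilon$ and $\eta$ are genuine icons. This is where the real work lies: since the boundary of a 2-cell is a whole square of paths, the ``transport along $\varepsilon$'' must be carried out simultaneously along all four sides, including nullary (identity) paths, and then recognized as the image of a unique $\cat{C}$-cell under the per-boundary bijection; checking compatibility with horizontal and vertical composition, with identities, and with the icon axiom amounts to careful bookkeeping with these invertible boundary bigons together with functoriality of the per-boundary bijections. Finally, the closing ``therefore'' is immediate: \cWDblCat is the full sub-2-category of \cDblWord on the representable objects, and the pseudo-inverse $G$ constructed above between two representable {\doublewords} is automatically a functor of doubly weak double categories, so the two notions of equivalence coincide.
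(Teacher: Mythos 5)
Your proposal is correct and takes essentially the same route as the paper's proof: in particular, the sufficiency direction coincides exactly (nonconstructive choice of $Gd$ with isomorphisms $FGd \cong d$, defining $G$ on 2-cells by transporting along these isomorphisms and applying the inverse of the per-boundary bijection, reading off the counit icon from this construction, and obtaining the unit icon at a 1-cell $c$ by reflecting the chosen isomorphism at $Fc$ back through the bijection), as does the closing full-sub-2-category observation. The only cosmetic difference is in the necessity direction, where you upgrade to an adjoint equivalence and use a triangle identity to get surjectivity on 2-cells, whereas the paper deduces bijectivity of $F$ per boundary directly from the (twisted) bijectivity of $GF$ and $FG$ per boundary; both arguments are sound.
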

\begin{proof}
  Suppose $F$ is an equivalence, so there exists
  $G \colon \cat{D} \to \cat{C}$ with invertible icons
  $1_\cat{C} \cong G \circ F$ and $1_\cat{D} \cong F \circ G$. For
  these icons to exist forces $F$ and $G$ to be inverse on 0-cells. We
  also have that $F$ is surjective on isomorphism classes of 1-cells,
  since $g \cong FGg$ for any 1-cell $g$ in $\cat{D}$. Finally, any
  2-cell $\alpha$ is related to $FG\alpha$ by composing with
  invertible icon components, so $FG$ is bijective on 2-cells per
  boundary of 1-cells; likewise so is $GF$, and therefore so
  must be $F$ and $G$.

  Conversely, suppose $F$ satisfies the conditions above. We
  nonconstructively define a functor $G \colon \cat{D} \to
  \cat{C}$. On 0-cells $G$ is inverse to $F$. For each 1-cell $g$ in
  $\cat{D}$, we pick a 1-cell $Gg$ in $\cat{C}$ with an isomorphism
  $g \cong FGg$. Now to define $G$ on a 2-cell in $\cat{D}$, we
  compose on all sides with these chosen isomorphisms or their
  inverses, then apply the inverse of the bijection on 2-cells given
  by $F$. Functoriality of $G$ so defined follows from functoriality
  of $F$, and we have an invertible icon $1_\cat{D} \cong F \circ G$
  by construction. To define the icon $1_\cat{C} \cong G \circ F$ at a
  1-cell $f$ in $\cat{C}$, we take the chosen isomorphism in $\cat{D}$
  at $Ff$, then apply the inverse of the bijection on 2-cells given by
  $F$. Naturality of this icon also follows from functoriality of $F$.
\end{proof}

\begin{prop}\label{prop:strict}
  Every doubly weak double category is equivalent to a strict one.
\end{prop}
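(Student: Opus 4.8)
The plan is to take the strictification already identified at the end of \cref{sec:doublewords}: for a doubly weak double category $\cat{C}$, let $\st(\cat{C})$ be the image of $\cat{C}$ under the composite $\bWDblCatst \to \bDblWord \to \bDblCat$ that forgets representability and then forms the path double category. Since the path double category of an implicit double category is a genuine strict double category, \cref{cor:strict-double} lets us regard $\st(\cat{C})$ as a \emph{strict} doubly weak double category. It then remains only to produce a functor between $\cat{C}$ and $\st(\cat{C})$ and to check, using the explicit criterion of \cref{lem:equivalence}, that it is an equivalence in $\cWDblCat$.

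Concretely, $\st(\cat{C})$ has the same 0-cells as $\cat{C}$; its horizontal (resp.\ vertical) generating 1-cells are the horizontal (resp.\ vertical) \emph{paths} of 1-cells of $\cat{C}$; and its 2-cells with a given boundary are the 2-cells of $\cat{C}$ whose boundary is obtained by concatenating the paths along each side. The chosen composite of two generators is their concatenation and the chosen composition isomorphisms are identities, so composition of 1-cells in $\st(\cat{C})$ is strictly associative and unital in both directions; hence $\st(\cat{C})$ is strict.

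Next I would define the comparison functor $G \colon \cat{C} \to \st(\cat{C})$ that is the identity on 0-cells, sends each generating 1-cell $f$ of $\cat{C}$ to the corresponding length-one path (a generator of $\st(\cat{C})$), and is essentially the identity on 2-cells: a 2-cell of $\cat{C}$ whose sides are paths $p$ is sent to the 2-cell of $\st(\cat{C})$ with the same underlying 2-cell, whose $\st(\cat{C})$-boundary flattens back to those same paths $p$. One checks $G$ is a well-defined functor of implicit double categories, since the horizontal and vertical 2-cell compositions are inherited unchanged by the path double category. Then I would verify the three conditions of \cref{lem:equivalence}: $G$ is bijective (indeed the identity) on 0-cells; it is bijective on 2-cells per boundary of 1-cells in $\cat{C}$, because a 2-cell of $\st(\cat{C})$ lying over the image of such a boundary is by construction exactly a 2-cell of $\cat{C}$ over that boundary; and it is locally essentially surjective on 1-cells precisely because $\cat{C}$ is representable, since any generator of $\st(\cat{C})$, i.e.\ a path $g_1 \cdots g_m$ in $\cat{C}$, admits a composite $h$ together with an invertible 2-cell $g_1 \cdots g_m \cong h$, which is exactly an isomorphism in $\st(\cat{C})$ between that generator and $G(h)$.

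The routine parts are the verification that $G$ is a functor of implicit double categories and that it is bijective on 2-cells per boundary; these amount to unwinding the description of the path double category. The one step that genuinely uses the hypotheses is local essential surjectivity, which is a direct translation of the representability of $\cat{C}$ into the equivalence criterion of \cref{lem:equivalence}. Thus the main (though mild) obstacle is organizing the bookkeeping of ``flattening'' nested paths so that the boundary identifications in the three conditions are stated coherently; once that is in place the equivalence follows immediately, and since $\st(\cat{C})$ is strict this proves the proposition.
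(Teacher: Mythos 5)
Your proposal is correct and follows essentially the same route as the paper's own proof: strictify by taking the path double category (regarded as a strict doubly weak double category via \cref{cor:strict-double}), describe it explicitly with paths as 1-cells and 2-cells bordered by concatenations, and show the length-one-path functor is an equivalence by the criterion of \cref{lem:equivalence}, with representability supplying local essential surjectivity. The only cosmetic difference is that the paper obtains the explicit description of $\st\cat{C}$ via the correspondence of \cref{prop:main} rather than directly, which does not change the substance of the argument.
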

\begin{proof}
  A doubly weak double category is defined as a representable
  {\doubleword}, and {\aan} {\doubleword} is in turn defined as a
  strict double category with free 1-cells. Hence every doubly weak
  double category has an associated strict double category (the path
  double category), its ``strictification''.
  On the other hand, in \cref{cor:strict-double}, we saw that strict
  double categories in the usual sense are identified with doubly weak
  double categories that happen to be strict. Thus the strictification
  of a doubly weak double category $\cat{C}$ determines another doubly
  weak double category $\st \cat{C}$, which is strict. We will show
  that $\cat{C}$ and $\st \cat{C}$ are equivalent {\doublewords}.

  Under the correspondence of \cref{prop:main}, we obtain the
  following description of $\st \cat{C}$: 0-cells in $\st \cat{C}$ are
  0-cells in $\cat{C}$, horizontal or vertical 1-cells in
  $\st \cat{C}$ are \emph{paths} of horizontal or vertical 1-cells in
  $\cat{C}$, and a 2-cell in $\st \cat{C}$ (bordered by paths of
  paths) is a 2-cell in $\cat{C}$ (bordered by the concatenations).

  There is an evident functor $F \colon \cat{C} \to \st \cat{C}$
  sending 1-cells to corresponding length 1 paths. This $F$ is clearly
  bijective on 0-cells and bijective on 2-cells per boundary of
  1-cells in $\cat{C}$. Moreover, $F$ is surjective on isomorphism
  classes of 1-cells, since each 1-cell in $\st \cat{C}$ (a path in
  $\cat{C}$) is isomorphic to a 1-cell in the image of $F$ (a length 1
  path, a composite of the path in $\cat{C}$). Hence $F$ is an
  equivalence by \cref{lem:equivalence}. (Moreover an equivalence in
  the other direction can be constructed explicitly by choosing a
  preferred way of associating compositions of paths.)
\end{proof}


\section{Double bicategories}\label{sec:tidy}

Our last goal in this paper is to give finite axiomatizations of
doubly weak double categories.  There are actually many such
definitions, and we struggled with choosing which ones to present in
detail.  In this section we give a definition that clarifies the
relationship to Verity's double bicategories (and \cref{defn:tidier}
reduces it to a definition only involving cells of \emph{square}
shape); in \cref{sec:cubical} we give a definition that clarifies the
relationship to Garner's cubical bicategories; and finally in
\cref{sec:finax} we give a monadic presentation using only finitely
many of the shapes of a double computad.

A \textbf{double graph with bigons} is a double computad whose only
2-cells are squares, horizontal bigons, and vertical bigons:

\[
  \begin{tikzpicture}[scale=.7]
    \draw [rect] (0,0) rectangle (2,2);
    \node (x1) [ov] at (1,0) {};
    \node (x2) [ov] at (0,1) {};
    \node (x3) [ov] at (1,2) {};
    \node (x4) [ov] at (2,1) {};
    \node (a) [ivs] at (1,1) {};
    \draw (x1) -- (a) -- (x3);
    \draw (x2) -- (a) -- (x4);

    \node at (3,1) {and};

    \draw [rect] (4,0) rectangle (6,2);
    \node (x1') [ov] at (5,0) {};
    \node (x3') [ov] at (5,2) {};
    \node (a') [ivs] at (5,1) {};
    \draw (x1') -- (a') -- (x3');

    \node at (7,1) {and};

    \draw [rect] (8,0) rectangle (10,2);
    \node (x1') [ov] at (8,1) {};
    \node (x3') [ov] at (10,1) {};
    \node (a') [ivs] at (9,1) {};
    \draw (x1') -- (a') -- (x3');
  \end{tikzpicture}
\]

The category \bBiDblGph of double graphs with bigons can be identified
with a functor category whose domain is a suitable full subcategory of
$\lC_\d$:
\[
  \vcenter{\xymatrix@R=5mm@C=5mm{& & \td{0}{1}{1}{0} \ar@<-1mm>[d] \ar@<1mm>[d]\\
      & \td{1}{1}{1}{1} \ar@<1mm>[r] \ar@<-1mm>[r] \ar@<-1mm>[d] \ar@<1mm>[d] &
      \tv \ar@<-1mm>[d] \ar@<1mm>[d]\\
      \td{1}{0}{0}{1} \ar@<1mm>[r] \ar@<-1mm>[r] & \th \ar@<1mm>[r]
      \ar@<-1mm>[r] & \tz}}
\]
(composition laws as in $\lC_\d$). Hence the forgetful functor
$\bDblCptd \to \bDblGph$ factors through \bBiDblGph.

We now recall the definition of double bicategory, writing out all the operations explicitly for reference.

\begin{defn}[{\cite{verity:base-change}}]
  A \textbf{double bicategory} consists of:
  \begin{itemize}
  \item A double graph with bigons. (That is, collections of 0-cells,
    horizontal and vertical 1-cells, and horizontal bigon 2-cells,
    vertical bigon 2-cells, and square 2-cells, related appropriately by
    various source and target maps.)
  \item The operations of a bicategory on the horizontal 1-cells and
    bigons. Likewise, the operations of a bicategory on the vertical
    1-cells and bigons.
  \item A top bigon-on-square action operation sending compatible
    pairs of horizontal bigons and squares (where the bottom 1-cell of
    the bigon is the same as the top 1-cell of the square) to squares.
    \[
      \begin{tikzpicture}[->,xscale=.6,yscale=.6]
        \node (au) at (-2,1) {$\cdot$};
        \node (bu) at (0,1) {$\cdot$};
        \node (ad) at (-2,-1) {$\cdot$};
        \node (bd) at (0,-1) {$\cdot$};
        \draw (au) to [bend left=40] (bu);
        \draw (au) to [bend right=40] (bu);
        \draw (ad) -- (bd);
        \draw (au) -- (ad);
        \draw (bu) -- (bd);
      \end{tikzpicture}
      \qquad\qquad\qquad
      \begin{tikzpicture}[scale=0.4,rotate=-90]
        \draw [rect] (-2,-2) rectangle (2,2);
        \node (a) [ivs, inner sep=0,minimum width=10] at (-.75,0) {};
        \node (b) [ivs, inner sep=0,minimum width=10] at (.75,0) {};
        \draw (.75,2) -- (b) -- (.75,-2);
        \draw (-2,0) -- (a) -- (b) -- (2,0);
      \end{tikzpicture}
    \]

    Likewise bottom, left, and right bigon-on-square action operations.
  \item A horizontal identity square operation sending vertical
    1-cells to squares.

    Likewise, a vertical identity square operation sending horizontal
    1-cells to squares.
  \item A horizontal composition operation sending compatible pairs of
    squares (where the right 1-cell of the first square is the same as
    the left 1-cell of the second square) to squares.

    Likewise, a vertical composition operation for squares.
  \end{itemize}
  Furthermore, the following laws hold:
  \begin{itemize}
  \item Appropriate source and target laws for all ways of composing
    bigons and squares.
  \item The laws of a bicategory for horizontal 1-cells and bigons, and
    likewise for vertical 1-cells and bigons.
  \item Identity, associativity, and mutual commutativity laws making
    the left, right, top, and bottom bigon-on-square operations into
    four compatible actions.
  \item For any vertical bigon $\beta$, the identity square
    commutativity law
    \[\ubcom{\beta}{\justi} = \ubcom{\justi}{\beta}\]
    (where the left hand side is the left action of $\beta$ on the
    identity square of its codomain, and the right hand side is the
    right action of $\beta$ on the identity square of its domain).

    Likewise, the analogous identity square commutativity law for
    horizontal bigons.
  \item For any compatible horizontal string consisting of a vertical
    bigon $\beta$ sandwiched between two squares $\zeta, \xi$,
    the associativity law
    \[\ubcom{\bcom{\zeta}{\beta}}{\xi} = \ubcom{\zeta}{\bcom{\beta}{\xi}}.\]
    Likewise, the analogous vertical sandwiching associativity law.
  \item For any compatible horizontal string consisting of a vertical
    bigon $\beta$ to the left of two squares $\zeta, \xi$, the
    associativity law
    \[\ubcom{\bcom{\beta}{\zeta}}{\xi} = \ubcom{\beta}{\bcom{\zeta}{\xi}}.\]
    Likewise, the analogous horizontal associativity law on the right,
    and the analogous vertical associativity laws on the top and bottom.
  \item An interchange law that says the two possible ways of composing
    two horizontal bigons side by side atop two horizontally adjacent
    squares are equal.
    \[
      \begin{tikzpicture}[->,xscale=.6,yscale=.6]
        \node (au) at (-2,1) {$\cdot$};
        \node (bu) at (0,1) {$\cdot$};
        \node (cu) at (2,1) {$\cdot$};
        \node (ad) at (-2,-1) {$\cdot$};
        \node (bd) at (0,-1) {$\cdot$};
        \node (cd) at (2,-1) {$\cdot$};
        \draw (au) to [bend left=40] (bu);
        \draw (bu) to [bend left=40] (cu);
        \draw (au) to [bend right=40] (bu);
        \draw (bu) to [bend right=40] (cu);
        \draw (ad) -- (bd);
        \draw (bd) -- (cd);
        \draw (au) -- (ad);
        \draw (bu) -- (bd);
        \draw (cu) -- (cd);
      \end{tikzpicture}
      \qquad\qquad\qquad
      \begin{tikzpicture}[scale=0.4]
        \draw [rect] (-2,-2) rectangle (2,2);
        \node (lu) [ivs, inner sep=0,minimum width=10] at (-.75,.75) {};
        \node (ru) [ivs, inner sep=0,minimum width=10] at (.75,.75) {};
        \node (ld) [ivs, inner sep=0,minimum width=10] at (-.75,-.75) {};
        \node (rd) [ivs, inner sep=0,minimum width=10] at (.75,-.75) {};
        \draw (-.75,2) -- (lu) -- (ld) -- (-.75,-2);
        \draw (.75,2) -- (ru) -- (rd) -- (.75,-2);
        \draw (-2,-.75) -- (ld) -- (rd) -- (2,-.75);
      \end{tikzpicture}
    \]
    Likewise, the analogous interchange laws for horizontal bigons below
    horizontally adjacent squares, and for vertical bigons to the left
    and to the right of vertically stacked squares.
  \item A horizontal left unitor naturality law for squares $\zeta$:
    \[
      \begin{tikzpicture}[->,xscale=.6,yscale=.6]
        \node (au) at (-2,1) {$\cdot$};
        \node (bu) at (0,1) {$\cdot$};
        \node (ad) at (-2,-1) {$\cdot$};
        \node (bd) at (0,-1) {$\cdot$};
        \node at (-1,-.25) {$\zeta$};
        \node at (-1,1) {$\choseniso$};
        \draw (au) to [bend left=40] node [above] {\tiny$\ubcom{\justi}{f}$} (bu);
        \draw (au) to [bend right=40] node [ed] {\scriptsize$f$} (bu);
        \draw (ad) -- node [below] {\scriptsize$k$} (bd);
        \draw (au) -- node [left] {\scriptsize$h$} (ad);
        \draw (bu) -- node [right] {\scriptsize$g$} (bd);
        \path (0,2.5) -- (0,-2.5);
      \end{tikzpicture}
      =
      \begin{tikzpicture}[->,xscale=.6,yscale=.6]
        \node (au) at (-2,1) {$\cdot$};
        \node (bu) at (0,1) {$\cdot$};
        \node (ad) at (-2,-1) {$\cdot$};
        \node (bd) at (0,-1) {$\cdot$};
        \node at (-1,.25) {\small$\ubcom{\justi}{\zeta}$};
        \node at (-1,-1) {$\choseniso$};
        \draw (au) -- node [above] {\tiny$\ubcom{\justi}{f}$} (bu);
        \draw (ad) to [bend left=40] node [ed] {\tiny$\ubcom{\justi}{k}$} (bd);
        \draw (ad) to [bend right=40] node [below] {\scriptsize$k$} (bd);
        \draw (au) -- node [left] {\scriptsize$h$} (ad);
        \draw (bu) -- node [right] {\scriptsize$g$} (bd);
        \path (0,2.5) -- (0,-2.5);
      \end{tikzpicture}
      \qquad\quad\qquad
      \begin{tikzpicture}[xscale=0.5,yscale=-0.5,rotate=90]
        \draw [rect] (-2,-2) rectangle (2,2);
        \node (a) [ivs, inner sep=0,minimum width=12.5] at (-.75,0) {$\choseniso$};
        \node (b) [ivs, inner sep=0,minimum width=12.5] at (.75,0) {$\zeta$};
        \draw (.75,2) -- node [ed] {$h$} (b) -- node [ed] {$g$} (.75,-2);
        \draw (-2,0) -- node [ed] {\tiny$\ubcom{\justi}{f}$} (a) -- node [ed] {$f$} (b) -- node [ed] {$k$} (2,0);
      \end{tikzpicture}
      \;=\;
      \begin{tikzpicture}[xscale=0.5,yscale=-0.5,rotate=90]
        \draw [rect] (-2,-2) rectangle (2,2);
        \node (a) [inner sep=2, draw] at (-.75,0) {\tiny$\ubcom{\justi}{\zeta}$};
        \node (b) [ivs, inner sep=0,minimum width=12.5] at (.75,0) {$\choseniso$};
        \draw (-.75,2) -- node [ed] {$h$} (a) -- node [ed] {$g$} (-.75,-2);
        \draw (-2,0) -- node [ed] {\tiny$\ubcom{\justi}{f}$} (a) -- node [ed] {\tiny$\ubcom{\justi}{k}$} (b) -- node [ed] {$k$} (2,0);
      \end{tikzpicture}
    \]
    where $\choseniso$ denotes the appropriate left unitor
    isomorphism bigons.
    
    Likewise, the analogous horizontal right unitor naturality law, and
    the analogous top and bottom (i.e.\ vertical left and right) unitor
    naturality laws.
  \item A horizontal associator naturality law for squares $\zeta,
    \xi, \psi$:
    \[
      \begin{tikzpicture}[->,xscale=.85,yscale=.6]
        \node (au) at (-2,1) {$\cdot$};
        \node (bu) at (0,1) {$\cdot$};
        \node (ad) at (-2,-1) {$\cdot$};
        \node (bd) at (0,-1) {$\cdot$};
        \node at (-1,-.25) {\footnotesize$\ubcom{\bcom{\zeta}{\xi}}{\psi}$};
        \node at (-1,1) {$\choseniso$};
        \draw (au) to [bend left=40] node [above] {\tiny$\ubcom{f}{\bcom{g}{h}}$} (bu);
        \draw (au) to [bend right=40] node [ed] {\tiny$\ubcom{\bcom{f}{g}}{h}$} (bu);
        \draw (ad) -- node [below] {\tiny$\ubcom{\bcom{p}{q}}{r}$} (bd);
        \draw (au) -- node [left] {\scriptsize$s$} (ad);
        \draw (bu) -- node [right] {\scriptsize$t$} (bd);
        \path (0,2.5) -- (0,-2.5);
      \end{tikzpicture}
      =
      \begin{tikzpicture}[->,xscale=.85,yscale=.6]
        \node (au) at (-2,1) {$\cdot$};
        \node (bu) at (0,1) {$\cdot$};
        \node (ad) at (-2,-1) {$\cdot$};
        \node (bd) at (0,-1) {$\cdot$};
        \node at (-1,.25) {\footnotesize$\ubcom{\zeta}{\bcom{\xi}{\psi}}$};
        \node at (-1,-1) {$\choseniso$};
        \draw (au) -- node [above] {\tiny$\ubcom{f}{\bcom{g}{h}}$} (bu);
        \draw (ad) to [bend left=40] node [ed] {\tiny$\ubcom{p}{\bcom{q}{r}}$} (bd);
        \draw (ad) to [bend right=40] node [below] {\tiny$\ubcom{\bcom{p}{q}}{r}$} (bd);
        \draw (au) -- node [left] {\scriptsize$s$} (ad);
        \draw (bu) -- node [right] {\scriptsize$t$} (bd);
        \path (0,2.5) -- (0,-2.5);
      \end{tikzpicture}
      \qquad\quad\qquad
      \begin{tikzpicture}[xscale=0.5,yscale=-0.5,rotate=90]
        \draw [rect] (-2,-2) rectangle (2,2);
        \node (a) [ivs, inner sep=0,minimum width=12.5] at (-.75,0) {$\choseniso$};
        \node (b) [inner sep=2, draw] at (.75,0) {\tiny$\ubcom{\bcom{\zeta}{\xi}}{\psi}$};
        \draw (.75,2) -- node [ed] {$s$} (b) -- node [ed] {$t$} (.75,-2);
        \draw (-2,0) -- node [ed] {\tiny$\ubcom{f}{\bcom{g}{h}}$} (a) -- node [ed] {\tiny$\ubcom{\bcom{f}{g}}{h}$} (b) -- node [ed] {\tiny$\ubcom{\bcom{p}{q}}{r}$} (2,0);
      \end{tikzpicture}
      \;=\;
      \begin{tikzpicture}[xscale=0.5,yscale=-0.5,rotate=90]
        \draw [rect] (-2,-2) rectangle (2,2);
        \node (a) [inner sep=2, draw] at (-.75,0) {\tiny$\ubcom{\zeta}{\bcom{\xi}{\psi}}$};
        \node (b) [ivs, inner sep=0,minimum width=12.5] at (.75,0) {$\choseniso$};
        \draw (-.75,2) -- node [ed] {$s$} (a) -- node [ed] {$t$} (-.75,-2);
        \draw (-2,0) -- node [ed] {\tiny$\ubcom{f}{\bcom{g}{h}}$} (a) -- node [ed] {\tiny$\ubcom{p}{\bcom{q}{r}}$} (b) -- node [ed] {\tiny$\ubcom{\bcom{p}{q}}{r}$} (2,0);
      \end{tikzpicture}
    \] where $\choseniso$ denotes the appropriate associator
    isomorphism bigons.
    
    Likewise, the analogous vertical associator naturality law.
  \item The interchange laws for squares as in a double category.

    Specifically, the identity compatibility law states that vertical
    identity squares on horizontal identity 1-cells agree with
    horizontal identity squares on vertical 1-cells; the identity
    interchange laws state that horizontal compositions of vertical
    identity squares are vertical identity squares and vice versa; and
    the square composition interchange law states that the two
    possible ways of composing a two by two grid of compatible squares
    are equal.
  \end{itemize}
\end{defn}

We will show that doubly weak double categories are equivalent to double bicategories satisfying an extra ``tidiness'' condition.

\begin{defn}\label{defn:tidy}
  A \textbf{tidy double bicategory} is a double bicategory in which
  the canonical map that sends \emph{2-cells in the horizontal
    bicategory} to \emph{squares whose vertical source and target are
    identities} is bijective
  \[
    \begin{tikzpicture}[->,xscale=.6,yscale=.7]
      \node (a) at (-1,0) {$\cdot$};
      \node (b) at (1,0) {$\cdot$};
      \draw (a) to [bend left=40] node [above] {\scriptsize$f$} (b);
      \draw (a) to [bend right=40] node [below] {\scriptsize$g$} (b);
    \end{tikzpicture}
    \;\;\overset{\cong}{\mapsto}\;\;
    \begin{tikzpicture}[->,xscale=.5,yscale=.5]
      \node (au) at (-1,1) {$\cdot$};
      \node (bu) at (1,1) {$\cdot$};
      \node (ad) at (-1,-1) {$\cdot$};
      \node (bd) at (1,-1) {$\cdot$};
      \draw (au) -- node [above] {\scriptsize$f$} (bu);
      \draw (ad) -- node [below] {\scriptsize$g$} (bd);
      \draw (au) -- node [left] {\scriptsize$\justi$} (ad);
      \draw (bu) -- node [right] {\scriptsize$\justi$} (bd);
    \end{tikzpicture}
    \qquad\qquad\qquad
    \begin{tikzpicture}[scale=0.4]
      \draw [rect] (-2,-2) rectangle (2,2);
      \node (a) [ivs] at (0,0) {};
      \draw (0,2) node [ov] {} -- node [ed] {$f$} (a) -- node [ed] {$g$} (0,-2) node [ov] {};
    \end{tikzpicture}
    \quad\overset{\cong}{\mapsto}\quad
    \begin{tikzpicture}[scale=0.4]
      \draw [rect] (-2,-2) rectangle (2,2);
      \node (a) [ivs] at (0,0) {};
      \draw (-2,0) node [ov] {} -- node [ed] {$\justi$} (a) -- node [ed] {$\justi$} (2,0) node [ov] {};
      \draw (0,2) node [ov] {} -- node [ed] {$f$} (a) -- node [ed] {$g$} (0,-2) node [ov] {};
    \end{tikzpicture}
  \]
  and analogously for 2-cells in the vertical bicategory and squares
  whose horizontal source and target are identities.

  Explicitly, this means a tidy double bicategory has:
  \begin{itemize}
  \item A conversion operation sending squares whose top and bottom
    1-cells are identities to vertical bigons.
    
    Likewise, a conversion operation sending squares whose left and
    right 1-cells are identities to horizontal bigons.
  \end{itemize}
  and the following laws are satisfied:
  \begin{itemize}
  \item Appropriate source and target laws for the degenerate square
    to bigon conversion operations.
  \item The horizontally degenerate square to vertical bigon
    conversion operation is inverse to the map that sends each
    vertical bigon $\beta$ to the square
    \[\ubcom{\beta}{\justi} = \ubcom{\justi}{\beta}.\]

    Likewise, the analogous correspondence holds between vertically
    degenerate squares and horizontal bigons.
  \end{itemize}
\end{defn}

\begin{rmk}
  Tidiness already appears, without a name, in~\cite[Lemma 1.4.9]{verity:base-change}.
  In~\cite{rwan:univalent-doublecats} it is called \emph{saturation}.
\end{rmk}

\begin{rmk}\label{rmk:tidy-nonmonadic}
  Double bicategories are monadic over double graphs, essentially by
  construction.  But \emph{tidy} double bicategories are not, since
  the domains of the additional square-to-bigon conversion operations
  are not objects of \bBiDblGph: there is no double graph with bigons
  representing, say, a ``square whose vertical source and target are
  identities''.
  %
\end{rmk}

All of the operations and laws in a (tidy) double bicategory are
readily derived from those in a doubly weak double category, and so
there is a forgetful functor $U \colon \bWDblCatst \to \bDblBicatst$,
where \bDblBicatst denotes the category of double bicategories and
strict functors, i.e.\ homomorphisms of the algebraic structure.  In
the other direction, we have a functor described as follows (similarly
to \cref{prop:bitotwo}), which will turn out to be left adjoint to
this forgetful functor.

\begin{prop}\label{prop:strictify}
  Given a double bicategory $\bicat{C}$, the following data amount to a
  doubly weak double category $F\bicat{C}$:
  \begin{itemize}
  \item The 0-cells and 1-cells (horizontal and vertical) are as in
    $\bicat{C}$.
  \item A 2-cell with a given boundary is a family consisting of a
    choice of square in $\bicat{C}$ for every possible bracketing of
    the boundary, such that these squares are related by composing
    with the appropriate rebracketing coherence isomorphism bigons.
  \item Composition (and identity) for 2-cells is induced by
    composition of squares in $\bicat{C}$.
  \item The composition isomorphisms are given by identity squares.
  \end{itemize}
\end{prop}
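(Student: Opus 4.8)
The plan is to follow the blueprint of \cref{prop:bitotwo} (the construction of the underlying represented {\twoword} of a bicategory), now using a coherence principle for double bicategories in place of the coherence theorem for bicategories. First I would isolate the coherence statement that is needed: in any double bicategory $\bicat{C}$, given a square $\sigma$ whose boundary is one bracketing of four fixed paths (top and bottom horizontal, left and right vertical) together with any second bracketing of the same four paths, there is a canonically determined square obtained by acting on $\sigma$ with the appropriate rebracketing coherence isomorphism bigons via the four bigon-on-square actions; moreover this square is independent of the way the rebracketing is decomposed into elementary coherence bigons. The proof of this reduces to the coherence theorem for bicategories applied separately to the horizontal and vertical bicategories of $\bicat{C}$ (which govern rebracketing of the horizontal and the vertical boundaries respectively), combined with the double-bicategory axioms asserting that the top/bottom actions commute with the left/right actions and that all four actions are associative, unital, and natural with respect to the unitors and associators. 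Consequently a clique morphism --- a family of squares, one per bracketing, compatible with rebracketing --- is determined by any single representative, so specifying a 2-cell of $F\bicat{C}$ with a given boundary amounts to choosing one square in $\bicat{C}$ with one bracketing.

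Next I would define composition and identities of 2-cells in $F\bicat{C}$ by composing and taking identities of squares in $\bicat{C}$, in each of the two directions. Well-definedness is exactly the argument of \cref{prop:bitotwo}: because rebracketing is coherent, rebracketing and then composing squares agrees with composing and then rebracketing, so the operation descends to clique morphisms. The horizontal and vertical associativity, unit, and interchange laws for 2-cells then follow from the corresponding square axioms of the double bicategory together with coherence. This establishes that $F\bicat{C}$ is a strict double category whose horizontal and vertical 1-cells are the (formally free) 1-cells of $\bicat{C}$, with generators sent to generators --- that is, {\aan} {\doubleword} in the sense of \cref{defn:impl-dbl}.

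Finally I would verify representability. For a horizontal path $f_1, \ldots, f_n$ the composite is the horizontal composite $f_1 \cdots f_n$ formed in $\bicat{C}$, which is again a single horizontal 1-cell, and the witnessing isomorphism 2-cell is the clique built from identity squares (whose boundary carries the path on one horizontal side, its composite on the other, and vertical identities on the remaining two sides); its inverse and the coherence needed to confirm genuine invertibility come from the bicategory structure on the horizontal bigons, and dually for vertical paths. Taking these identity-square cliques as the chosen composition isomorphisms makes $F\bicat{C}$ representable, hence a doubly weak double category, with the composition isomorphisms given by identity squares as claimed.

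The main obstacle is the coherence principle of the first step: unlike the one-dimensional case of \cref{prop:bitotwo}, the boundary of a square consists of four strings lying in two different bicategories, so I must show that rebracketings coming from the horizontal and the vertical directions interact coherently. The essential input is the mutual commutativity of the four bigon-on-square actions, which lets a two-dimensional rebracketing factor as a horizontal rebracketing followed by a vertical one; the remaining uniqueness within each direction is then precisely bicategory coherence. Once this is in hand, all the double-category axioms for $F\bicat{C}$ are routine verifications, exactly parallel to the proof of \cref{prop:bitotwo}.
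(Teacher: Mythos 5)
Your proposal is correct and follows essentially the same route as the paper's proof: a clique-of-bracketings construction whose well-definedness rests on bicategory coherence together with the compatibility of the four bigon-on-square actions, with composition descending to cliques because rebracketing commutes with composing, and with associativity, unitality, and interchange supplied respectively by the associator/unitor naturality laws for squares and the double-categorical interchange axioms. Your treatment merely makes explicit two points the paper leaves implicit --- factoring an arbitrary two-dimensional rebracketing into a horizontal one followed by a vertical one via the mutual commutativity of the actions, and the verification that the identity-square cliques are genuinely invertible --- so there is no substantive difference.
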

\begin{proof}
  Due to the compatibilities of the bigon actions, the coherence
  theorem for bicategories guarantees that each square with bracketed
  paths along its boundary determines, by composing with coherence
  isomorphisms, a unique corresponding square for every rebracketing
  of the boundary.  Thus composition of 2-cells is well-defined, since
  rebracketing then composing squares is the same as composing then
  rebracketing as appropriate.

  Finally, composition of 2-cells is horizontally and vertically associative
  and unital by the naturality conditions relating associators and
  unitors with squares. It satisfies interchange laws because the
  square composition operations do.
\end{proof}

\begin{rmk}
  The only use of bigons in this definition is to rebracket
  squares. Hence this construction discards the two bicategories of
  bigons; only when the double bicategory is tidy can these two
  bicategories be recovered from the bracketed squares and their
  composition.  Surprisingly, however, although it forgets this
  information it is still left adjoint to the forgetful functor.
\end{rmk}

\begin{lem}\label{lem:clique}
  Any doubly weak double category $\cat{C}$ is isomorphic to $FU\cat{C}$.
\end{lem}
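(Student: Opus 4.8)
The plan is to mimic, in the double-categorical setting, the canonical identification recorded in the discussion following \cref{prop:funtops} (that any represented {\twoword} agrees with the underlying {\twoword} of its underlying bicategory). I would exhibit an explicit isomorphism $\Phi\colon \cat{C}\to FU\cat{C}$ of {\doublewords}. On $0$-cells and on $1$-cells (both horizontal and vertical) $\Phi$ is the identity: by construction of $U$ and of $F$ (\cref{prop:strictify}), $FU\cat{C}$ has the same $0$-cells and $1$-cells as $\cat{C}$. All the content is therefore in the action on $2$-cells.

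On $2$-cells, $\Phi$ sends a $2$-cell $\alpha$ of $\cat{C}$, whose boundary is a square of strings of $1$-cells, to the family of squares obtained by composing $\alpha$ on each of its four sides with the composition isomorphisms assembled from the chosen binary and nullary composition cells of $\cat{C}$, one square for each way of bracketing the four boundary strings into single composite $1$-cells. First I would check this lands in $FU\cat{C}$: the squares attached to two different bracketings differ by a rebracketing coherence isomorphism built from the same chosen cells, so they form a clique morphism in the sense of \cref{prop:strictify}. The nullary composition isomorphisms handle the length-$0$ boundary strings; thus, for instance, a horizontal bigon of $\cat{C}$ is sent to a square whose vertical boundaries are chosen vertical identities. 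This is exactly the tidiness correspondence, available here precisely because $\cat{C}$ is representable.

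For the inverse, given a clique $\{\beta_P\}$ of squares in $FU\cat{C}$, I would pick any bracketing $P$ and compose $\beta_P$ with the \emph{inverses} of the relevant composition isomorphisms, decomposing its bracketed boundary back into the original strings to recover a $2$-cell of $\cat{C}$. The clique relations, together with the coherence theorem for bicategories, guarantee independence of the choice of $P$. That this assignment and $\Phi$ are mutually inverse is then immediate from the composition isomorphisms cancelling with their inverses, exactly as in the proofs of \cref{prop:twotobi,prop:funtops}.

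It remains to verify that $\Phi$ is a functor of {\doublewords}: that it respects boundaries, both horizontal and vertical composition of $2$-cells, the various identities, and interchange. As in \cref{prop:strictify}, each of these reduces to the assertion that composing $2$-cells and then rebracketing agrees with rebracketing and then composing, since composition in $FU\cat{C}$ is induced from composition of squares in $\cat{C}$ and the chosen composition cells are natural. I expect the principal difficulty to be organizational rather than conceptual: one must simultaneously track bracketings along all four boundary directions and confirm that the degenerate cases (length-$0$ and length-$1$ boundaries) are correctly absorbed by the nullary and unary composition isomorphisms, so that in particular the bigon data of $\cat{C}$ — which the construction of $F$ in \cref{prop:strictify} appears to discard — is faithfully recovered as the cliques of squares with identity boundaries. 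Once this bookkeeping is in place, the assignment is visibly bijective and structure-preserving, giving the desired isomorphism $\cat{C}\cong FU\cat{C}$ in \bWDblCat.
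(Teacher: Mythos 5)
Your proposal is correct and takes essentially the same route as the paper: the paper's entire proof is the single sentence that, by composing with the chosen isomorphisms, the 2-cells with arbitrary boundary are in composition-respecting correspondence with bracketed squares, which is precisely the bijection you construct. Your elaborations — the clique condition, the inverse via de-composition, the cancellation argument for functoriality, and the handling of length-0 boundaries by nullary composition isomorphisms — are exactly the details the paper leaves implicit (having established them in the style of \cref{prop:strictify}).
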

\begin{proof}
  By composing with chosen isomorphisms, the 2-cells with arbitrary
  boundary are in composition-respecting correspondence with bracketed
  squares.
\end{proof}

\begin{lem}\label{lem:bicatfunctor}
  In any double bicategory $\bicat{C}$, the canonical map converting
  horizontal bigons to squares induces a strict functor from the
  horizontal bicategory of $\bicat{C}$ to the horizontal bicategory of
  $F\bicat{C}$. (Likewise for the vertical bicategory.) Hence in the
  case of a \emph{tidy} double bicategory, this is a strict
  isomorphism of bicategories.

  Moreover, this assignment preserves the action of bigons on squares.
\end{lem}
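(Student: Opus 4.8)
The plan is to describe the conversion map explicitly and then verify the strict-functor axioms by unwinding the double bicategory structure. For a horizontal bigon $\alpha \colon f \Rightarrow g$ in $\bicat{C}$, write $\widehat{\alpha}$ for the square obtained by acting $\alpha$ on the vertical identity square of $g$ via the top bigon-on-square action (equivalently, on the vertical identity square of $f$ via the bottom action; these agree by the identity-square commutativity and action-associativity laws). By construction $\widehat{\alpha}$ has identity vertical source and target, so it is a horizontal bigon of $F\bicat{C}$ with the same horizontal boundary. I would then let $\Phi$ be the identity on $0$-cells and horizontal $1$-cells and send each $\alpha$ to $\widehat\alpha$. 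Since horizontal composition of $1$-cells in the horizontal bicategory of $F\bicat{C}$ is \emph{defined} to be the horizontal composition of $\bicat{C}$ (\cref{prop:strictify}), the comparison constraints of $\Phi$ are identities; hence it remains only to check that $\Phi$ is functorial on $2$-cells and preserves associators and unitors.

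First I would check that $\Phi$ preserves both composites of bigons. For the vertical composite $\beta\circ\alpha$ of $\alpha\colon f\Rightarrow g$ and $\beta\colon g\Rightarrow h$, the corresponding composition in the horizontal bicategory of $F\bicat{C}$ is computed by vertically composing the squares $\widehat\alpha$ and $\widehat\beta$ in $\bicat{C}$ and then reconciling the resulting vertical boundaries $\di{A}\cdot\di{A}$ with $\di{A}$ using the unitor coherence (identity) squares. Expanding $\widehat\alpha$ and $\widehat\beta$ as actions on identity squares and invoking the action-associativity and vertical-unitor-naturality laws identifies this with $\widehat{\beta\circ\alpha}$. Horizontal composition (whiskering) is treated identically, using horizontal square composition, the interchange laws, and the mutual compatibility of the four actions.

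Next I would verify preservation of the coherence cells. The associators and unitors of the horizontal bicategory of $F\bicat{C}$ are built from its composition isomorphisms, which are identity squares (\cref{prop:strictify}), exactly as in \cref{prop:twotobi}. The horizontal associator- and unitor-naturality laws in the definition of double bicategory say precisely that whiskering a square by the associator or unitor bigons of $\bicat{C}$ agrees with composing it with these identity squares; applied to identity squares, this identifies $\Phi$ of each associator and unitor of $\bicat{C}$ with the corresponding coherence cell of $F\bicat{C}$. Thus $\Phi$ is a strict functor of bicategories. When $\bicat{C}$ is tidy, \cref{defn:tidy} makes the conversion $\alpha\mapsto\widehat\alpha$ a bijection, and as $\Phi$ is already the identity on $0$-cells and $1$-cells it is then a strict isomorphism. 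The vertical case is verbatim with the two directions exchanged.

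Finally, for the \emph{moreover} I would compare the top action $\alpha\cdot\zeta$ in $\bicat{C}$ (for a horizontal bigon $\alpha$ and a square $\zeta$ whose top $1$-cell matches the bottom of $\alpha$) with the vertical composite of $\widehat\alpha$ and $\zeta$ viewed as $2$-cells of $F\bicat{C}$; rewriting $\widehat\alpha$ as $\alpha$ acting on the vertical identity square and applying the action-associativity law together with the unit law for the identity square collapses the composite to $\alpha\cdot\zeta$. The main obstacle throughout is the bookkeeping forced by the weak units: composing identity-sided squares produces composites $\di{A}\cdot\di{A}$ on their boundaries that must be repaired by unitors, and the real content is checking that these repairs match the clique and coherence data of $F\bicat{C}$ — which is exactly what the identity-square-commutativity, sandwiching-associativity, and unitor-naturality axioms of a double bicategory supply.
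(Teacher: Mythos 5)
Your proposal is correct and follows essentially the same route as the paper's own proof: you define the map by acting a bigon on a vertical identity square (the two choices agreeing by identity-square commutativity), verify functoriality on 2-cells via the action-associativity, interchange, and unitor-naturality axioms, get strictness because the coherence cells of $F\bicat{C}$ are built from identity squares as in \cref{prop:twotobi}, and deduce the isomorphism from tidiness. The only cosmetic slip is the phrase ``the unit law for the identity square'' in the \emph{moreover} step---no such strict unit law exists, since composing with $\ti{g}$ perturbs the vertical boundaries by weak units---but your closing sentence already identifies unitor naturality as the axiom performing exactly this repair, which is what the paper cites as well.
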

\begin{proof}
  The canonical map from horizontal bigons to squares is by composing
  with a vertical identity square; the resulting square is bordered by
  vertical identities, and so corresponds to a bigon in $F\bicat{C}$.

  The double bicategory laws of associativity, identity commutativity,
  and unitor naturality ensure this mapping preserves vertical
  bicategorical composition (i.e.\ that vertically composing bigons
  then converting to a square is the same as converting then
  vertically composing squares, up to rebracketing with unitors). The
  unit laws for bigon-on-square action ensure preservation of
  identities. The identity interchange and bigon-square interchange
  laws ensure preservation of horizontal composition. Coherence
  isomorphisms are preserved because in $F\bicat{C}$ they are defined
  (see \cref{prop:twotobi}) as compositions of morphisms related to
  identities by composing coherence isomorphisms.

  Moreover, the action of bigons on squares is preserved by
  associativity and unitor naturality laws.
\end{proof}

\begin{lem}\label{lem:freed}
  If $\bicat{C}$ is a double bicategory, then $F \bicat{C}$ is the
  free doubly weak double category on $\bicat{C}$.
\end{lem}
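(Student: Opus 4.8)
The plan is to exhibit $F$ as left adjoint to the forgetful functor $U \colon \bWDblCatst \to \bDblBicatst$, by equipping $F\bicat{C}$ with a unit map $\eta \colon \bicat{C} \to UF\bicat{C}$ and verifying that $(F\bicat{C},\eta)$ has the universal property of the free doubly weak double category on the double bicategory $\bicat{C}$. First I would define $\eta$: it is the identity on 0-cells and 1-cells; it sends a square of $\bicat{C}$ to the corresponding trivially bracketed 2-cell of $F\bicat{C}$; and it sends a horizontal or vertical bigon of $\bicat{C}$ to the bigon of $F\bicat{C}$ produced by the canonical bigon-to-square conversion. That $\eta$ is a strict homomorphism of double bicategories is essentially the content of \cref{lem:bicatfunctor}, which says this conversion is a strict functor on each constituent bicategory and respects the action of bigons on squares; the remaining compatibilities with square composition, identities, and the source/target maps are immediate from the squarewise definition of the operations in $F\bicat{C}$ recorded in \cref{prop:strictify}.

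Next, given any doubly weak double category $\cat{D}$ and strict functor $g \colon \bicat{C} \to U\cat{D}$, I would construct the extension $\hat g \colon F\bicat{C} \to \cat{D}$. The key observation is that by \cref{lem:clique} a 2-cell of $\cat{D}$ is precisely a clique of squares in $U\cat{D}$, while by \cref{prop:strictify} a 2-cell of $F\bicat{C}$ is a clique of squares in $\bicat{C}$. Since $g$ is strict it preserves the rebracketing coherence bigons, so applying it squarewise carries cliques to cliques; thus $\hat g$ is $g$ on 0-cells and 1-cells and squarewise on 2-cells. Equivalently, $\hat g$ is the composite $F\bicat{C} \xrightarrow{Fg} FU\cat{D} \xrightarrow{\sim} \cat{D}$ of the functorial action of $F$ on $g$ with the isomorphism of \cref{lem:clique}. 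Strictness of $\hat g$ is then automatic, because composition, identities, and the chosen composition isomorphisms in both $F\bicat{C}$ and $\cat{D}$ are computed squarewise and $g$ preserves all of them.

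It then remains to verify the triangle identity $U\hat g \circ \eta = g$ and uniqueness. The triangle identity is clear on 0-cells, 1-cells, and squares; on bigons it reduces to the fact that $g$, being a strict functor of double bicategories, commutes with the bigon-to-square conversion, so that converting a bigon in $\bicat{C}$, applying $\hat g$, and reinterpreting the result as a bigon in $U\cat{D}$ recovers $g$. For uniqueness, any strict $h \colon F\bicat{C} \to \cat{D}$ with $Uh \circ \eta = g$ must agree with $g$ on 0-cells and 1-cells, and since every 2-cell of $F\bicat{C}$ is assembled from squares lying in the image of $\eta$ by the doubly weak double category operations, strictness of $h$ forces $h = \hat g$ on 2-cells as well.

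I expect the main obstacle to be the two points where the bigon structure is genuinely used: checking that $\eta$ really is a morphism of double bicategories, and checking the triangle identity on bigons. Both hinge on the compatibility of the bigon-to-square conversion with the remaining operations, but this compatibility has already been isolated in \cref{lem:bicatfunctor}, so the argument largely amounts to assembling \cref{prop:strictify}, \cref{lem:clique}, and \cref{lem:bicatfunctor} into the adjunction, with the only delicate part being the coherence bookkeeping that keeps cliques well-defined under the squarewise operations.
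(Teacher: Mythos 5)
Your proposal is correct and follows essentially the same route as the paper's proof: both arguments rest on the same three ingredients used in the same roles --- the clique description of 2-cells from \cref{prop:strictify} and \cref{lem:clique} to transport a strict map $\bicat{C}\to U\cat{D}$ to a strict map $F\bicat{C}\to\cat{D}$ squarewise, and \cref{lem:bicatfunctor} to handle the bigon-to-square conversion in the other direction. The only difference is presentational: the paper exhibits the two translations as inverse bijections of hom-sets, while you package the same data as an explicit unit $\eta$ together with the triangle identity and uniqueness, which is a routine reformulation of the same universal property.
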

\begin{proof}
  Let $\cat{D}$ be a doubly weak double category. A strict functor
  $\bicat{C} \to U\cat{D}$ induces a strict functor
  $F\bicat{C} \to \cat{D} \cong FU\cat{D}$, since, using
  \cref{lem:clique}, the latter amounts to functorially mapping
  families of bracketed squares in $\bicat{C}$ to families of
  bracketed squares in $\cat{D}$. Conversely, by
  \cref{lem:bicatfunctor} such a map of squares
  $F\bicat{C} \to \cat{D}$ also induces action-respecting strict
  functors from the horizontal and vertical bicategories of \bicat{C}
  to those of \cat{D}, in total determining a strict functor
  $\bicat{C} \to U\cat{D}$. Moreover, these processes of translation
  are inverse.
\end{proof}




\begin{prop}\label{prop:tidy}
  The adjunction
  \[
    \vcenter{\xymatrix@C=12mm{\bDblBicatst \ar@{}[r]|{\!\!\!\!\bot}
        \ar@<2mm>[r]^-{F} & \bWDblCatst \ar@<2mm>[l]^-{U}}}
  \]
  restricts to an equivalence of categories between \bWDblCatst and
  the full subcategory of \bDblBicatst consisting of \emph{tidy} double
  bicategories.
\end{prop}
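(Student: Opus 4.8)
The plan is to recognize this as a standard reflective-subcategory situation and then compute the unit. First I would observe that by \cref{lem:freed} we have the adjunction $F \dashv U$, and that the isomorphism $\cat{C} \cong FU\cat{C}$ of \cref{lem:clique} is precisely its counit: it sends a clique of squares to the 2-cell it represents, by composing with the chosen isomorphisms. Hence the counit $\epsilon \colon FU \Rightarrow \Id$ is invertible, so $U$ is fully faithful and the adjunction exhibits \bWDblCatst\ as a reflective subcategory of \bDblBicatst. By the usual characterization of such a subcategory, the adjunction then restricts to an equivalence between \bWDblCatst\ and the full subcategory of \bDblBicatst\ consisting of those double bicategories $\bicat{C}$ for which the unit $\eta_{\bicat{C}} \colon \bicat{C} \to UF\bicat{C}$ is invertible. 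It therefore remains only to identify this subcategory with the tidy double bicategories.

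To do this I would compute $\eta_{\bicat{C}}$ shape by shape using the explicit description of $F\bicat{C}$ from \cref{prop:strictify}. On 0-cells and 1-cells $\eta_{\bicat{C}}$ is the identity, since $F$ and $U$ leave these unchanged (and the horizontal and vertical 1-cell composition in $F\bicat{C}$ is that of the bicategories of $\bicat{C}$). On squares it is a bijection: a 2-cell of shape $\td1111$ in $F\bicat{C}$ is a clique of squares of $\bicat{C}$ over a boundary all of whose sides are length-$1$ paths, and such a boundary admits a unique bracketing, so the clique is a single square. The essential point concerns the bigons: a 2-cell of shape $\td1001$ in $F\bicat{C}$ is a clique of squares over a horizontal-bigon boundary, and bracketing its two empty vertical paths as chosen vertical identity $1$-cells $\justi$ exhibits it as a single square of $\bicat{C}$ whose vertical source and target are identities. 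Under this identification $\eta_{\bicat{C}}$ sends a horizontal bigon $\beta$ to the degenerate square $\ubcom{\beta}{\justi}$, i.e.\ it is exactly the tidiness conversion map of \cref{defn:tidy}; the vertical-bigon case is dual.

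Finally I would invoke \cref{lem:bicatfunctor}, which guarantees that these conversion maps assemble into strict functors of the underlying horizontal and vertical bicategories, so that $\eta_{\bicat{C}}$ is genuinely a strict morphism of double bicategories. Being already invertible on 0-cells, 1-cells, and squares, it is invertible precisely when both conversion maps are bijective on 2-cells, which is exactly the defining condition of tidiness. Thus the subcategory on which the unit is invertible is the category of tidy double bicategories, and the restricted equivalence is the one claimed. I expect the only real work to be the bookkeeping of the middle paragraph — verifying that the horizontal bigons of $F\bicat{C}$ are precisely the vertically degenerate squares of $\bicat{C}$ and that the unit acts on them as $\beta \mapsto \ubcom{\beta}{\justi}$ — since the reflective-subcategory formalism and the strict functoriality then follow immediately from \cref{lem:clique,lem:freed,lem:bicatfunctor}.
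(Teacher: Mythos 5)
Your proposal is correct and takes essentially the same route as the paper, whose proof likewise observes that the counit is invertible by \cref{lem:clique} and that the unit is invertible precisely at tidy double bicategories via \cref{lem:bicatfunctor} (noting that squares and their composition in $F\bicat{C}$ agree with those in $\bicat{C}$). Your shape-by-shape computation of the unit --- identity on 0-cells, 1-cells, and squares, and the tidiness conversion $\beta \mapsto \ubcom{\beta}{\justi}$ on bigons --- simply makes explicit what the paper's two-line proof leaves implicit, including the converse direction that invertibility of the unit forces tidiness.
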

\begin{proof}
  The counit is an isomorphism, via \cref{lem:clique}. The unit is
  an isomorphism at tidy double bicategories, via
  \cref{lem:bicatfunctor} (additionally noting that squares and
  their composition in $F\bicat{C}$ are also as in $\bicat{C}$).
\end{proof}

\begin{cor}\label{cor:fftidy}
  The forgetful functor $\bWDblCatst \to \bDblBicatst$ is fully faithful.\qed
\end{cor}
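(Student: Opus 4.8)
The plan is to derive this immediately from the adjunction $F \dashv U$ of \cref{prop:tidy}, using the standard fact that a right adjoint is fully faithful if and only if the counit of the adjunction is a natural isomorphism. The counit here is the natural transformation $\varepsilon \colon FU \Rightarrow 1_{\bWDblCatst}$, and \cref{lem:clique} provides exactly an isomorphism $\cat{C} \cong FU\cat{C}$ for each doubly weak double category $\cat{C}$; as recorded in the proof of \cref{prop:tidy}, this isomorphism \emph{is} the counit. Hence $\varepsilon$ is a natural isomorphism, and therefore $U$ is fully faithful.

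Equivalently, I would read the statement off \cref{prop:tidy} directly. That proposition asserts that the adjunction restricts to an equivalence between \bWDblCatst and the full subcategory of \emph{tidy} double bicategories inside \bDblBicatst, one half of which is the corestriction of $U$ (which lands among tidy double bicategories by \cref{lem:bicatfunctor}). Since an equivalence is fully faithful and the inclusion of a full subcategory is fully faithful, their composite---which is precisely the forgetful functor $U \colon \bWDblCatst \to \bDblBicatst$---is fully faithful.

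The only step needing any care is the identification of the isomorphism of \cref{lem:clique} with the counit, equivalently its naturality in $\cat{C}$; but this is already part of what the proof of \cref{prop:tidy} establishes, so no genuine obstacle remains and the corollary is a one-line consequence.
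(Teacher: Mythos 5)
Your proposal is correct and matches the paper's intended argument exactly: the corollary is stated with no separate proof because it follows immediately from \cref{prop:tidy}, whose proof shows the counit is an isomorphism via \cref{lem:clique}, equivalently exhibiting the forgetful functor as an equivalence onto the full subcategory of tidy double bicategories followed by the (fully faithful) full inclusion. One trivial quibble: that $U\cat{C}$ is tidy comes directly from the definition of the underlying double bicategory of a doubly weak double category (composing with the invertible representability cells is a bijection) rather than from \cref{lem:bicatfunctor}, which concerns $F\bicat{C}$; this does not affect the argument.
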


\begin{cor}
  The forgetful functor $\bWDblCatst \to \bBiDblGph$ is faithful and
  conservative.\qed
\end{cor}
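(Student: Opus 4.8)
The plan is to factor the forgetful functor $\bWDblCatst \to \bBiDblGph$ as the composite
\[
  \bWDblCatst \xrightarrow{\;U\;} \bDblBicatst \to \bBiDblGph,
\]
where $U$ is the forgetful functor of \cref{cor:fftidy} and the second arrow discards the bicategory and action structure of a double bicategory, remembering only its underlying double graph with bigons. First I would check that this composite really is the forgetful functor in question: the underlying double graph with bigons of $U\cat{C}$ consists of the $0$-cells, horizontal and vertical $1$-cells, squares, and horizontal and vertical bigons of $\cat{C}$, which is exactly the data that $\bWDblCatst \to \bBiDblGph$ extracts directly. So the two functors agree on objects, and likewise on morphisms.

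Then I would treat the two factors separately, using that both faithfulness and conservativity (reflection of isomorphisms) are closed under composition. The functor $U$ is fully faithful by \cref{cor:fftidy}; since every fully faithful functor reflects isomorphisms (if $Uf$ is invertible, its inverse is $Ug$ for a unique $g$ by fullness, and then $gf=1$ and $fg=1$ follow from faithfulness), $U$ is in particular faithful and conservative. For the second factor I would invoke \cref{rmk:tidy-nonmonadic}: double bicategories are monadic over $\bBiDblGph$, and any monadic functor is both faithful and conservative. Composing the two, the forgetful functor $\bWDblCatst \to \bBiDblGph$ is faithful and conservative.

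The only genuine work is the bookkeeping in the first step, confirming that the forgetful functor factors as claimed through $\bDblBicatst$; once that is in place the conclusion is a purely formal consequence of \cref{cor:fftidy}, the monadicity recorded in \cref{rmk:tidy-nonmonadic}, and the standard stability of faithfulness and of reflection of isomorphisms under composition. I do not anticipate a real obstacle here, since both intermediate properties have already been established and the remaining argument is diagram-chasing at the level of the factorization.
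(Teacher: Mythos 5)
Your proposal is correct and matches the paper's intended argument: the result is stated as an immediate corollary of \cref{cor:fftidy}, precisely because the forgetful functor factors as the fully faithful comparison $\bWDblCatst \to \bDblBicatst$ followed by the monadic (hence faithful and conservative) forgetful functor $\bDblBicatst \to \bBiDblGph$, and both properties are stable under composition. The only elaboration you add beyond the paper's one-line deduction is the routine verification of the factorization and the standard facts about fully faithful and monadic functors, all of which are sound.
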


Thus, we can still regard a doubly weak double category as
``structure'' on an underlying double graph with bigons, though that
structure is not monadic.

\begin{rmk}\label{rmk:tidysquare}
  Conversely, a
  double bicategory is equivalently a doubly weak double category
  $\cat{C}$ together with two bicategories with strict functors into the
  horizontal and vertical bicategories of $\cat{C}$ that are the identity
  on 1-cells.
  Thus, we may alternatively identify doubly weak double
  categories with double bicategories in which the bicategories are
  freely generated by the 1-cells and their incoherent operations.
\end{rmk}

The equivalence of \cref{prop:tidy} can also be extended to pseudofunctors.
For double bicategories, these are the morphisms in Verity's category $\smash{\underline{\underline{\mathcal{H}\mathit{oriz}}}}_{SH}$, whose definition is obtained by combining~\cite[Definition 1.4.7, the definition preceding Lemma 1.4.9, and the definition preceding Observation 1.4.10]{verity:base-change}.

\begin{defn}\label{defn:dblbicat-psfr}
  Let $\cat{C}$ and $\cat{D}$ be double bicategories.  A \textbf{double pseudofunctor} $\cat{C}\to\cat{D}$ consists of:
  \begin{itemize}
  \item Two pseudofunctors from the vertical and horizontal bicategories of $\cat{C}$ to those of $\cat{D}$, which are the same on objects.
  \item A function from squares of $\cat{C}$ to squares of $\cat{D}$ that acts on boundaries as the 1-cell action of the horizontal and vertical pseudofunctors.
  \item The top, bottom, left, and right actions of bigons on squares are preserved.
  \item The horizontal and vertical square composition and identities are preserved, modulo the coherence cells for the horizontal and vertical pseudofunctors.
  \end{itemize}
  These are the morphisms of a category \bDblBicat.
\end{defn}

\begin{lem}
  Any pseudofunctor between doubly weak double categories induces a
  double pseudofunctor between their underlying double bicategories.
\end{lem}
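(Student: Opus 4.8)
The plan is to extend the forgetful functor $U$ of \cref{prop:tidy} from strict functors to pseudofunctors, by unpacking what a pseudofunctor of doubly weak double categories is and assembling the double pseudofunctor data of \cref{defn:dblbicat-psfr}. Recall from \cref{sec:2-monads} that a pseudofunctor between doubly weak double categories is nothing more than a functor $F\colon\cat{C}\to\cat{D}$ of the underlying implicit double categories; so I may take such an $F$ as the input and must produce a double pseudofunctor $UF\colon U\cat{C}\to U\cat{D}$.

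First I would produce the two constituent pseudofunctors of bicategories. By \cref{rmk:dbl-bi}, $\cat{C}$ and $\cat{D}$ have underlying horizontal and vertical bicategories, which are exactly the horizontal and vertical bicategories of $U\cat{C}$ and $U\cat{D}$. Restricting $F$ to the underlying horizontal (resp.\ vertical) implicit 2-categories and applying \cref{prop:funtops} yields a pseudofunctor between the horizontal (resp.\ vertical) bicategories, whose coherence isomorphisms are built from the chosen composition isomorphisms by de-composing in $\cat{D}$ and re-composing in $\cat{C}$. These two pseudofunctors automatically agree on 0-cells, since $F$ is a single assignment there. The function on squares is just $F$ itself: because $F$ sends 1-cells to 1-cells, it sends a square (a 2-cell bordered by length-1 paths) to a square, and the induced action on the boundary 1-cells is precisely the 1-cell part of the horizontal and vertical pseudofunctors.

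Next I would check the interaction conditions. Preservation of the four bigon-on-square actions is immediate: each such action is an instance of horizontal or vertical 2-cell composition in the implicit double category, and $F$, being a functor of implicit double categories, preserves 2-cell composition strictly. The remaining conditions --- preservation of horizontal and vertical square composition and identities \emph{modulo} the pseudofunctor coherence cells --- carry the real content. Square composition in $U\cat{C}$ is the horizontal (resp.\ vertical) composite of squares in $\cat{C}$, post-composed with chosen representability isomorphisms to restore length-1 boundaries (as in the construction of $F\bicat{C}$ in \cref{prop:strictify}). Since $F$ preserves the underlying 2-cell composition on the nose but not the chosen composites of 1-cells, the gap between applying $F$ after rebracketing and rebracketing after $F$ is measured exactly by the coherence bigons supplied above, which is precisely the ``modulo coherence cells'' equation demanded by \cref{defn:dblbicat-psfr}.

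The main obstacle I anticipate is this last verification: confirming that the coherence data from \cref{prop:funtops} are exactly what is needed to mediate square composition and identities. This is a coherence bookkeeping computation entirely parallel to the proof of \cref{prop:funtops} (and the pseudofunctor clause of \cref{prop:bitotwo}), reducing once more to the chosen composition isomorphisms cancelling with their inverses; I expect no difficulty beyond diagram-chasing. Functoriality of the assignment $F\mapsto UF$ in pseudofunctors then follows from its compatibility with composition on squares together with the uniqueness of the coherence data.
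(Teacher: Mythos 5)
Your proposal is correct and takes essentially the same approach as the paper, whose entire proof of this lemma is the single line ``Just like \cref{prop:funtops}''; your argument is precisely the detailed unpacking of that reference, taking a pseudofunctor to be a functor of underlying implicit double categories, building the horizontal and vertical coherence isomorphisms by de-composing and re-composing with the chosen composition isomorphisms, and reducing the remaining verifications (bigon actions preserved on the nose, square composition preserved modulo coherence cells) to cancellation of those isomorphisms with their inverses.
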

\begin{proof}
  Just like \cref{prop:funtops}.
\end{proof}

\begin{lem}
  If $G:\cat{C}\to\cat{D}$ is a double pseudofunctor between double
  bicategories, the following defines a pseudofunctor of doubly weak
  double categories $FG:F\cat{C}\to F\cat{D}$, where $F$ is as in
  \cref{prop:strictify}.
  \begin{itemize}
  \item The action on 0-cells and 1-cells is as for $G$.
  \item Given a 2-cell with some boundary, its component with a given
    bracketing of the boundary is sent to the image of that 2-cell
    under $G$, acted on all four sides by the coherence isomorphisms
    for that bracketing induced by the horizontal and vertical
    pseudofunctor parts of $G$.
  \end{itemize}
\end{lem}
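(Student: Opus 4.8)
The plan is to verify that the stated assignment really is a pseudofunctor of doubly weak double categories, following the same blueprint as \cref{prop:pstofun,prop:funtops}. Recall from \cref{prop:strictify} that a $2$-cell of $F\cat{C}$ with a given boundary is a clique: a family of squares in $\cat{C}$, one for each bracketing of the four boundary paths, any two related by composing (on the relevant sides) with the rebracketing coherence isomorphism bigons of the horizontal and vertical bicategories of $\cat{C}$. The proposed $FG$ sends such a clique to the family in $\cat{D}$ whose component at a chosen bracketing is the $G$-image of a component of the input, acted on all four sides by the coherence isomorphism bigons of the horizontal and vertical pseudofunctor parts of $G$ (exactly as the map on $2$-cells in \cref{prop:pstofun} is built from pseudofunctor coherence cells).

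First I would check well-definedness: that this family is itself a clique in $\cat{D}$, and that its value does not depend on which component of the input clique we start from. Both reduce to coherence for pseudofunctors of bicategories, applied to the horizontal part and the vertical part of $G$ separately. Concretely, rebracketing a boundary path in $\cat{C}$, applying $G$, and then inserting $G$'s coherence cells should agree with applying $G$ and then rebracketing in $\cat{D}$; this is precisely the content of the coherence argument underlying \cref{prop:pstofun}. Since $G$'s action on squares intertwines the four bigon-on-square actions (by \cref{defn:dblbicat-psfr}), the horizontal and vertical coherence adjustments act on the four sides independently and so do not interfere.

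Next I would check that $FG$ is a functor of implicit double categories, i.e.\ a strict double functor on the path double categories that sends generating $1$-cells to generating $1$-cells: it is $G$ on $0$- and $1$-cells by construction, so it remains to verify that it preserves horizontal and vertical composition and identities of $2$-cells. This uses the hypothesis that $G$ preserves square composition and identities modulo the pseudofunctor coherence cells: composing two cliques in $F\cat{C}$ is computed (by \cref{prop:strictify}) by composing chosen bracketed squares, and applying $G$ introduces coherence cells that cancel against their inverses under the clique identifications, leaving exactly the composite clique in $F\cat{D}$ --- the same cancellation as in \cref{prop:twotobi,prop:funtops}. Once $FG$ is known to be a functor of implicit double categories, it automatically carries a unique pseudofunctor structure, since $T_{\d}^{\wk}$ is pseudo-idempotent, so no separate construction of coherence data is required.

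I expect the main obstacle to be the well-definedness (clique) step: carefully bookkeeping the interaction of the two independent systems of rebracketing isomorphisms (horizontal and vertical) with $G$'s two pseudofunctor coherence systems and the four bigon-on-square actions, so that independence of the chosen component holds on the nose. Once this coherence bookkeeping is arranged as in \cref{prop:pstofun}, functoriality and the remaining verifications are routine cancellations.
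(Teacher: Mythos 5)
Your proposal is correct and follows essentially the same route as the paper, whose entire proof is that coherence for pseudofunctors makes the operation on 2-cells well-defined and composition-preserving; your clique bookkeeping and cancellation steps are just an expansion of that one-line argument. Your closing appeal to pseudo-idempotence of $T_{\d}^{\wk}$ to obtain the pseudofunctor structure for free is likewise consistent with the paper's conventions, under which a pseudofunctor of doubly weak double categories is nothing more than a functor of the underlying implicit double categories.
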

\begin{proof}
  Coherence for pseudofunctors implies that the operation on 2-cells
  is well-defined, and preserves composition of 2-cells.
\end{proof}

\begin{prop}
  The equivalence of \cref{prop:tidy} extends to an equivalence
  between \bWDblCat and the full subcategory of \bDblBicat determined
  by the tidy double bicategories.\qed
\end{prop}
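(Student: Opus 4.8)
The plan is to show that the functors $F$ and $U$, extended to the full pseudofunctorial settings by the two preceding lemmas, form the same adjoint equivalence as in \cref{prop:tidy} once we verify the extra naturality. Since the objects and the tidiness condition are unchanged, and since the underlying double bicategory of a doubly weak double category is automatically tidy (so that $U$ indeed lands in the tidy full subcategory of \bDblBicat), the only new content is that the unit and counit of \cref{prop:tidy} promote to natural transformations with respect to pseudofunctors and double pseudofunctors.

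First I would record functoriality of the extended assignments. The preceding lemma sends a pseudofunctor $H \colon \cat{C} \to \cat{D}$ of doubly weak double categories to a double pseudofunctor $UH$; I would check that $U$ preserves identities and composites, which reduces to the fact that the induced coherence cells are assembled from the underlying pseudofunctor coherence isomorphisms and these compose associatively, exactly the verification behind \cref{prop:pstofun}. Dually, the lemma producing $FG$ from a double pseudofunctor $G$ makes $F$ act on morphisms, and the identities $F(G'G) = FG' \cdot FG$ and $F(1) = 1$ follow by coherence for pseudofunctors, as in \cref{prop:funtops}.

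Next I would promote the unit and counit to natural transformations. The counit $FU\cat{C} \to \cat{C}$ is the clique identification of \cref{lem:clique}; its naturality in a pseudofunctor $H$ says that identifying a family of bracketed squares with a single cell and then applying $H$ with its coherence cells agrees with applying $FUH$ bracketing-wise and then identifying, which is precisely the bookkeeping governed by the coherence theorem for pseudofunctors. The unit $\cat{C} \to UF\cat{C}$ at a tidy double bicategory is the strict isomorphism of the horizontal and vertical bicategories from \cref{lem:bicatfunctor}, and its naturality in a double pseudofunctor $G$ holds because the two pseudofunctor parts of $G$ are carried across this isomorphism by construction and the bigon-on-square action is preserved by the final clause of that lemma. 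As both the unit and counit are componentwise invertible (already established in \cref{prop:tidy}, \cref{lem:clique}, and \cref{lem:bicatfunctor}), $F$ and $U$ are mutually inverse equivalences between \bWDblCat and the tidy full subcategory of \bDblBicat, exactly mirroring \cref{prop:main-two}. I expect the naturality of the counit with respect to pseudofunctors to be the main obstacle, since one must check that the clique identification absorbs the coherence isomorphisms inserted by $FUH$ in the correct way; but this is controlled by pseudofunctor coherence and presents no genuine difficulty beyond careful tracking of the coherence cells.
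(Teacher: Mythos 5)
Your proposal is correct and matches the paper's intended argument: the proposition is stated without further proof precisely because it reduces to the equivalence of \cref{prop:tidy} together with the two preceding lemmas extending $U$ and $F$ to pseudofunctors and double pseudofunctors, supplemented by exactly the routine functoriality and unit/counit naturality verifications you carry out. The coherence bookkeeping you flag for the counit is already packaged into those lemmas (the coherence cells of $UH$ and $FG$ are built from the chosen composition isomorphisms, and \cref{lem:bicatfunctor} records preservation of the bigon actions), so there is no gap.
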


\begin{rmk}
  If $\cat{C}$ and $\cat{D}$ are strict double categories regarded as
  double bicategories, then a double pseudofunctor as in
  \cref{defn:dblbicat-psfr} specializes to the notion of double
  pseudofunctor from~\cite[Definition 6.1]{shulman:dblderived}.
\end{rmk}

  

Finally, we can further clarify the relationship between doubly weak
double categories and ``untidy'' double bicategories as follows.

\begin{lem}\label{lem:veritymonad}
  The algebras of the monad on \bBiDblGph induced by the forgetful
  functor $\bWDblCatst \to \bDblGph$ are precisely double
  bicategories.
\end{lem}
\begin{proof}
  First we observe that the free doubly weak double category on a
  double graph with bigons is such that the 1-cells are bracketed
  paths, and the 2-cells are grids of squares with sequences of
  vertical or horizontal bigons placed at the vertical and horizontal
  edges, matching along 1-cells, with boundaries bracketed
  arbitrarily.

  Using \cref{cor:freedescription}, to see this it suffices to give a
  similar description of free strict double categories, where the
  1-cells are instead simply paths. Such bigon-accessorized grids
  indeed form a strict double category (where 2-cells bordered by
  identities are given by zero-width or zero-height grids), and we may
  check its universal property. Namely, given a double graph with
  bigons $X$, a strict double category $\cat{C}$, and a map
  $X \to U\cat{C}$ (where $U\cat{C}$ is the underlying double graph
  with bigons of $\cat{C}$), there is a unique extension to a strict
  double functor from the free strict double category
  $FX \to \cat{C}$. Each 2-cell in $FX$ may be composed from the
  generators $X$, for example by horizontally composing the rows
  consisting of squares and vertical bigons; horizontally composing
  (whiskering) horizontal 1-cells and vertical compositions of
  horizontal bigons between the rows; and finally vertically composing
  all these horizontal composites. Hence we obtain a map
  $FX \to \cat{C}$ sending cells in $FX$ to the corresponding
  composites in $\cat{C}$. Functoriality is shown using the
  associativity and interchange laws.


  Now by \cref{prop:tidy}, in order to see that the two monads on
  \bBiDblGph agree, it is enough to see that the underlying
  bicategories of a free double bicategory and those of a free doubly
  weak double category both constitute the free bicategories on the
  underlying 2-graphs. For double bicategories this is clear because
  the only operations giving bigons are the bicategory operations; for
  doubly weak double categories this follows from the description in
  the previous paragraph (and the similar description of free
  bicategories on 2-graphs).
\end{proof}

\begin{prop}
  The forgetful functor $\bWDblCatst \to \bBiDblGph$ is not monadic.
  (That is to say, doubly weak double categories are distinct from
  double bicategories.)
\end{prop}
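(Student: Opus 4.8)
The plan is to reduce the statement, via the earlier results, to the single concrete task of producing one double bicategory that fails the tidiness condition of \cref{defn:tidy}. First I would record that the forgetful functor $U\colon \bWDblCatst \to \bBiDblGph$ factors as $U = R\circ U'$, where $U'\colon \bWDblCatst \to \bDblBicatst$ sends a doubly weak double category to its underlying double bicategory and $R\colon \bDblBicatst \to \bBiDblGph$ is the underlying-double-graph-with-bigons functor, which is monadic essentially by construction (as in \cref{rmk:tidy-nonmonadic}). By \cref{lem:veritymonad} the monad on $\bBiDblGph$ induced by $U$ is exactly the monad $R$ is monadic for, namely the one whose algebras are double bicategories. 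Under this identification of Eilenberg--Moore categories, the comparison functor associated to $U$ is precisely $U'$, so $U$ is monadic if and only if $U'$ is an equivalence.

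I would then feed in the structure theorem for $U'$: by \cref{cor:fftidy} it is fully faithful, and by \cref{prop:tidy} its essential image is exactly the \emph{tidy} double bicategories. Hence $U'$ is an equivalence precisely when every double bicategory is tidy, and to finish it suffices to exhibit a non-tidy one.

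For the witness I would take the double bicategory $\bicat{B}$ with a single 0-cell, only identity horizontal and vertical 1-cells, only identity horizontal and vertical bigons, and whose collection of squares is a nontrivial commutative monoid $M$ (e.g.\ $M=\mathbb{N}$), with both horizontal and vertical square composition given by the monoid operation, the identity square equal to the unit, and all four bigon-on-square actions trivial. Verifying the double bicategory axioms is routine: the horizontal and vertical bicategories are trivial, all associators and unitors are identities, and the square interchange law collapses to commutativity of $M$ by Eckmann--Hilton. The point is that the tidiness comparison of \cref{defn:tidy} sends the unique horizontal bigon to the identity square (it is composition with a vertical identity square, as in \cref{lem:bicatfunctor}); since all 1-cells are identities, the squares with identity vertical boundaries are \emph{all} of $M$, so the comparison is the inclusion $\{1\}\hookrightarrow M$, which is not surjective once $|M|\ge 2$. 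Thus $\bicat{B}$ is not tidy, $U'$ is not essentially surjective, and $U$ is not monadic.

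The conceptual heart is simply the tension between \cref{prop:tidy} (doubly weak double categories are the tidy double bicategories) and \cref{lem:veritymonad} (the induced monad sees all double bicategories); all the real content is in these two cited results. The only genuine step to carry out here is the verification that $\bicat{B}$ is a legitimate double bicategory, and I expect the one point requiring a moment's care is checking that the Eilenberg--Moore comparison functor is correctly identified with $U'$ rather than some other functor into $\bDblBicatst$.
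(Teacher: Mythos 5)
Your proposal is correct, and its skeleton is the same as the paper's: \cref{lem:veritymonad} identifies the induced monad's algebras with double bicategories, so it suffices to exhibit a double bicategory that is not (isomorphic to) the underlying one of any doubly weak double category, and tidiness is the obstruction. Your extra bookkeeping identifying the Eilenberg--Moore comparison with $U'\colon\bWDblCatst\to\bDblBicatst$ is sound and is in fact recorded in the paper itself (\cref{rmk:premonadic}), so nothing is missing there. Where you genuinely diverge is the witness. The paper's $\bicat{C}_M$ puts a (possibly noncommutative) monoid $M$ into the \emph{vertical bigons} over a two-object, one-arrow $1$-skeleton, keeps the squares trivial, and lets the bigons act trivially on the lone identity square; tidiness then fails because there are more bigons than degenerate squares. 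Your $\bicat{B}$ is the dual failure: trivial bigons over a one-object, identities-only $1$-skeleton, with the \emph{squares} forming $M$ under both compositions, so the degenerate squares outnumber the bigons and the comparison is the non-surjective inclusion $\{1\}\hookrightarrow M$. Your version carries two small extra burdens the paper's avoids: interchange forces $M$ to be commutative (Eckmann--Hilton, as you note), and you must verify the full suite of square-composition and coherence-naturality axioms rather than only the action axioms on a single fixed square --- though these verifications do go through, since with all $1$-cells and bigons trivial every unitor/associator naturality law reduces to adding the unit of $M$. Both counterexamples are legitimate; yours is arguably the more minimal object, the paper's the more general hypothesis on $M$.
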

\begin{proof}
  %
  %
  By \cref{lem:veritymonad}, it suffices to exhibit a double
  bicategory that does not arise from any doubly weak double category.
  In a doubly weak double category, there is a bijection between
  2-cells of shapes
  \begin{center}
    \begin{tikzpicture}[scale=0.5]
      \draw [rect] (-2,-2) rectangle (2,2);
      \node (a) [ivs] at (0,0) {};
      \draw (-2,0) node [ov] {} -- node [ed] {$f$} (a) -- node [ed] {$g$} (2,0) node [ov] {};
      \draw (0,-2) node [ov] {} -- node [ed] {$\justi$} (a) -- node [ed] {$\justi$} (0,2) node [ov] {};
    \end{tikzpicture}
    \qquad and\qquad
    \begin{tikzpicture}[scale=0.5]
      \draw [rect] (-2,-2) rectangle (2,2);
      \node (a) [ivs] at (0,0) {};
      \draw (-2,0) node [ov] {} -- node [ed] {$f$} (a) -- node [ed] {$g$} (2,0) node [ov] {};
    \end{tikzpicture}
  \end{center}
  obtained by composing on the top and bottom with the isomorphisms
  \begin{center}
    \begin{tikzpicture}
      \draw [rect] (-1,-1) rectangle (1,1);
      \node (a) [ivs,inner sep=0] at (0,0) {$\cong$};
      \draw (0,-1) node [ov] {} -- node [ed] {$\justi$} (a);
    \end{tikzpicture}
    \qquad and \qquad
    \begin{tikzpicture}
      \draw [rect] (-1,-1) rectangle (1,1);
      \node (a) [ivs,inner sep = 0] at (0,0) {$\cong$};
      \draw (0,1) node [ov] {} -- node [ed] {$\justi$} (a);
    \end{tikzpicture}\,.
  \end{center}
  
  We now construct a double bicategory without this property. Given
  any monoid $M$, let the double bicategory $\bicat{C}_M$ have two
  0-cells $A$ and $B$, one nonidentity vertical 1-cell
  $f \colon A \to B$, a vertical bigon
  \begin{center}
    \begin{tikzpicture}[scale=0.5]
      \draw [rect] (-2,-2) rectangle (2,2);
      \node (a) [ivs] at (0,0) {$m$};
      \draw (-2,0) node [ov] {} -- node [ed] {$f$} (a) -- node [ed] {$f$} (2,0) node [ov] {};
    \end{tikzpicture}
  \end{center}
  for each $m\in M$, no nonidentity horizontal arrows or bigons, and
  no nonidentity squares.  The only square that the nontrivial
  vertical bigons can act on is the identity square
  \begin{center}
    \begin{tikzpicture}[scale=0.5]
      \draw [rect] (-2,-2) rectangle (2,2);
      \node (a) [ivs] at (0,0) {$\ti{f}$};
      \draw (-2,0) node [ov] {} -- node [ed] {$f$} (a) -- node [ed] {$f$} (2,0) node [ov] {};
      \draw (0,-2) node [ov] {} -- node [ed] {$\hi{B}$} (a) -- node [ed] {$\hi{A}$} (0,2) node [ov] {};
    \end{tikzpicture}
  \end{center}
  and we can simply say that it is fixed by this action.  Thus, if $M$
  is nontrivial, then in $\bicat{C}_M$ there is no bijection between 2-cells
  of shapes
  \begin{center}
    \begin{tikzpicture}[scale=0.5]
      \draw [rect] (-2,-2) rectangle (2,2);
      \node (a) [ivs] at (0,0) {};
      \draw (-2,0) node [ov] {} -- node [ed] {$f$} (a) -- node [ed] {$f$} (2,0) node [ov] {};
      \draw (0,-2) node [ov] {} -- node [ed] {$\hi{B}$} (a) -- node [ed] {$\hi{A}$} (0,2) node [ov] {};
    \end{tikzpicture}
    \qquad and\qquad
    \begin{tikzpicture}[scale=0.5]
      \draw [rect] (-2,-2) rectangle (2,2);
      \node (a) [ivs] at (0,0) {};
      \draw (-2,0) node [ov] {} -- node [ed] {$f$} (a) -- node [ed] {$f$} (2,0) node [ov] {};
    \end{tikzpicture}\,.
  \end{center}
  Hence $\bicat{C}_M$ cannot arise from any doubly weak double category.
\end{proof}

\begin{rmk}\label{rmk:premonadic}
  Given \cref{lem:veritymonad}, the functor $\bWDblCatst\to\bDblBicatst$
  is the canonical comparison functor to the category of algebras for
  the induced monad on \bBiDblGph.  When such a comparison functor is fully faithful
  (as it is in this case, by \cref{cor:fftidy}), the right adjoint
  forgetful functor (here $\bWDblCatst\to \bBiDblGph$) is said to be
  \emph{of descent type}~\cite{bw:ttt} or
  \emph{premonadic}~\cite{tholen:thesis}.  There are many other
  equivalent characterizations of this property, which are summarized
  in~\cite[Theorem 2.4]{finitary-monads}; perhaps the most
  interesting is that \emph{every doubly weak double category has a
    canonical presentation as a coequalizer of maps between doubly
    weak double categories that are freely generated by double graphs
    with bigons.}
\end{rmk}

The definition of tidy double bicategory is convenient because it is
finite.  However, it still contains redundancies that can be
eliminated.  If we pare it down to the bones, we obtain our most
concise definition of doubly weak double category.

\begin{prop}\label{defn:tidier}
  A doubly weak double category amounts to:
  \begin{itemize}
  \item a double graph,
  \item horizontal and vertical 1-cell composition and identity operations (as in a double category),
  \item horizontal and vertical square composition and identity operations (as in a double category), and
  \item horizontal and vertical associator and unitor squares (and their putative inverses) with identity 1-cells as their vertical and horizontal boundaries, respectively,
  \end{itemize}
  with appropriate sources and targets, such that
  \begin{itemize}
  \item
        the canonical map induced by composing with an identity square (in any of the four directions)
    \[
      \begin{tikzpicture}[->,xscale=.5,yscale=.5]
        \node (au) at (-1,1) {$\cdot$};
        \node (bu) at (1,1) {$\cdot$};
        \node (ad) at (-1,-1) {$\cdot$};
        \node (bd) at (1,-1) {$\cdot$};
        \draw (au) -- node [above] {\scriptsize$f$} (bu);
        \draw (ad) -- node [below] {\scriptsize$k$} (bd);
        \draw (au) -- node [left] {\scriptsize$h$} (ad);
        \draw (bu) -- node [right] {\scriptsize$g$} (bd);
      \end{tikzpicture}
      \quad\overset{\cong}{\mapsto}\quad
      \begin{tikzpicture}[->,xscale=.5,yscale=.5]
        \node (au) at (-1,1) {$\cdot$};
        \node (bu) at (1,1) {$\cdot$};
        \node (ad) at (-1,-1) {$\cdot$};
        \node (bd) at (1,-1) {$\cdot$};
        \draw (au) -- node [above] {\scriptsize$f$} (bu);
        \draw (ad) -- node [below] {\scriptsize$k$} (bd);
        \draw (au) -- node [left=2] {\scriptsize$\uvcom{\justi}{h}$} (ad);
        \draw (bu) -- node [right=2] {\scriptsize$\uvcom{\justi}{g}$} (bd);
      \end{tikzpicture}
    \]
    is a bijection, per boundary, and 
  \item
    if we define a vertical (resp.\ horizontal) bigon to be a square whose vertical (resp.\ horizontal)
    boundaries are identities:
    \[
      \begin{tikzcd}[arrow style=tikz]
        A \ar[d,"f"'] \ar[r,"1_A^H"] \ar[dr,phantom,"\alpha"] & A \ar[d,"g"] \\
        B \ar[r,"1_B^H"'] & B
      \end{tikzcd}
      \hspace{2cm}
      \begin{tikzcd}[arrow style=tikz]
        A \ar[r,"f"] \ar[d,"1_A^V"'] \ar[dr,phantom,"\beta"] & B \ar[d,"1_B^V"]\\
        A \ar[r,"g"'] & B
      \end{tikzcd}
    \]
    then these data with the derived bigon identity,
    composition, and action operations
    \[
      \qquad\qquad\quad
      \begin{tikzpicture}[->,xscale=.5,yscale=.5]
        \node (au) at (-2,1) {$\cdot$};
        \node (bu) at (0,1) {$\cdot$};
        \node (ad) at (-2,-1) {$\cdot$};
        \node (bd) at (0,-1) {$\cdot$};
        \node at (-1,0) {\small$\justi$};
        \draw (au) -- node[above] {\scriptsize$f$} (bu);
        \draw (ad) -- node[below] {\scriptsize$f$} (bd);
        \draw (au) -- node[left] {\scriptsize$\justi$} (ad);
        \draw (bu) -- node[right] {\scriptsize$\justi$} (bd);
      \end{tikzpicture}
      \qquad\qquad\qquad\quad\hspace{.9em}
      \begin{tikzpicture}[->,xscale=.5,yscale=.5,rotate=-90]
        \node (au) at (-2,1) {$\cdot$};
        \node (bu) at (0,1) {$\cdot$};
        \node (cu) at (2,1) {$\cdot$};
        \node (ad) at (-2,-1) {$\cdot$};
        \node (bd) at (0,-1) {$\cdot$};
        \node (cd) at (2,-1) {$\cdot$};
        \node at (-1,0) {\small$\alpha$};
        \node at (1,0) {\small$\beta$};
        \draw (au) -- node[right] {\scriptsize$\justi$} (bu);
        \draw (bu) -- node[right] {\scriptsize$\justi$} (cu);
        \draw (ad) -- node[left] {\scriptsize$\justi$} (bd);
        \draw (bd) -- node[left] {\scriptsize$\justi$} (cd);
        \draw[<-] (au) -- node[above] {\scriptsize$f$} (ad);
        \draw[<-] (bu) -- node[fill=white,inner sep=2] {\scriptsize$x$} (bd);
        \draw[<-] (cu) -- node[below] {\scriptsize$g$} (cd);
      \end{tikzpicture}
      \;\mapsto\;
      \begin{tikzpicture}[->,xscale=.5,yscale=.5,rotate=-90]
        \node (au) at (-2,1) {$\cdot$};
        \node (bu) at (0,1) {$\cdot$};
        \node (ad) at (-2,-1) {$\cdot$};
        \node (bd) at (0,-1) {$\cdot$};
        \node at (-1,0) {\small$\uvcom{\alpha}{\beta}$};
        \draw (au) -- node[right=2] {\scriptsize$\uvcom{\justi}{\justi}$} (bu);
        \draw (ad) -- node[left=2] {\scriptsize$\uvcom{\justi}{\justi}$} (bd);
        \draw[<-] (au) -- node[above] {\scriptsize$f$} (ad);
        \draw[<-] (bu) -- node[below] {\scriptsize$g$} (bd);
      \end{tikzpicture}
      \;\overset{\cong}{\mapsto}\;
      \begin{tikzpicture}[->,xscale=.5,yscale=.5,rotate=-90]
        \node (au) at (-2,1) {$\cdot$};
        \node (bu) at (0,1) {$\cdot$};
        \node (ad) at (-2,-1) {$\cdot$};
        \node (bd) at (0,-1) {$\cdot$};
        \node at (-1,0) {\small$\uvcom{\alpha}{\beta}$};
        \draw (au) -- node[right] {\scriptsize$\justi$} (bu);
        \draw (ad) -- node[left] {\scriptsize$\justi$} (bd);
        \draw[<-] (au) -- node[above] {\scriptsize$f$} (ad);
        \draw[<-] (bu) -- node[below] {\scriptsize$g$} (bd);
      \end{tikzpicture}
    \]
    \[
      \begin{tikzpicture}[->,xscale=.5,yscale=.5]
        \node (au) at (-2,1) {$\cdot$};
        \node (bu) at (0,1) {$\cdot$};
        \node (cu) at (2,1) {$\cdot$};
        \node (ad) at (-2,-1) {$\cdot$};
        \node (bd) at (0,-1) {$\cdot$};
        \node (cd) at (2,-1) {$\cdot$};
        \node at (-1,0) {\small$\alpha$};
        \node at (1,0) {\small$\beta$};
        \draw (au) -- node[above] {\scriptsize$f$} (bu);
        \draw (bu) -- node[above] {\scriptsize$h$} (cu);
        \draw (ad) -- node[below] {\scriptsize$g$} (bd);
        \draw (bd) -- node[below] {\scriptsize$k$} (cd);
        \draw (au) -- node[left] {\scriptsize$\justi$} (ad);
        \draw (bu) -- node[fill=white,inner sep=2] {\scriptsize$\justi$} (bd);
        \draw (cu) -- node[right] {\scriptsize$\justi$} (cd);
      \end{tikzpicture}
      \;\mapsto\;
      \begin{tikzpicture}[->,xscale=.5,yscale=.5]
        \node (au) at (-2,1) {$\cdot$};
        \node (bu) at (0,1) {$\cdot$};
        \node (ad) at (-2,-1) {$\cdot$};
        \node (bd) at (0,-1) {$\cdot$};
        \node at (-1,0) {\small$\ubcom{\alpha}{\beta}$};
        \draw (au) -- node[above] {\scriptsize$\ubcom{f}{h}$} (bu);
        \draw (ad) -- node[below] {\scriptsize$\ubcom{g}{k}$} (bd);
        \draw (au) -- node[left] {\scriptsize$\justi$} (ad);
        \draw (bu) -- node[right] {\scriptsize$\justi$} (bd);
      \end{tikzpicture}
      \qquad\quad
      \begin{tikzpicture}[->,xscale=.5,yscale=.5,rotate=-90]
        \node (au) at (-2,1) {$\cdot$};
        \node (bu) at (0,1) {$\cdot$};
        \node (cu) at (2,1) {$\cdot$};
        \node (ad) at (-2,-1) {$\cdot$};
        \node (bd) at (0,-1) {$\cdot$};
        \node (cd) at (2,-1) {$\cdot$};
        \node at (-1,0) {\small$\alpha$};
        \node at (1,0) {\small$\zeta$};
        \draw (au) -- node[right] {\scriptsize$\justi$} (bu);
        \draw (bu) -- node[right] {\scriptsize$g$} (cu);
        \draw (ad) -- node[left] {\scriptsize$\justi$} (bd);
        \draw (bd) -- node[left] {\scriptsize$h$} (cd);
        \draw[<-] (au) -- node[above] {\scriptsize$f$} (ad);
        \draw[<-] (bu) -- node[fill=white,inner sep=2] {\scriptsize$x$} (bd);
        \draw[<-] (cu) -- node[below] {\scriptsize$k$} (cd);
      \end{tikzpicture}
      \;\mapsto\;
      \begin{tikzpicture}[->,xscale=.5,yscale=.5,rotate=-90]
        \node (au) at (-2,1) {$\cdot$};
        \node (bu) at (0,1) {$\cdot$};
        \node (ad) at (-2,-1) {$\cdot$};
        \node (bd) at (0,-1) {$\cdot$};
        \node at (-1,0) {\small$\uvcom{\alpha}{\zeta}$};
        \draw (au) -- node[right=2] {\scriptsize$\uvcom{\justi}{g}$} (bu);
        \draw (ad) -- node[left=2] {\scriptsize$\uvcom{\justi}{h}$} (bd);
        \draw[<-] (au) -- node[above] {\scriptsize$f$} (ad);
        \draw[<-] (bu) -- node[below] {\scriptsize$k$} (bd);
      \end{tikzpicture}
      \;\overset{\cong}{\mapsto}\;
      \begin{tikzpicture}[->,xscale=.5,yscale=.5,rotate=-90]
        \node (au) at (-2,1) {$\cdot$};
        \node (bu) at (0,1) {$\cdot$};
        \node (ad) at (-2,-1) {$\cdot$};
        \node (bd) at (0,-1) {$\cdot$};
        \node at (-1,0) {\small$\uvcom{\alpha}{\zeta}$};
        \draw (au) -- node[right] {\scriptsize$g$} (bu);
        \draw (ad) -- node[left] {\scriptsize$h$} (bd);
        \draw[<-] (au) -- node[above] {\scriptsize$f$} (ad);
        \draw[<-] (bu) -- node[below] {\scriptsize$k$} (bd);
      \end{tikzpicture}
    \]
    (and similarly in other directions) satisfy the laws of a double bicategory.
  \end{itemize}
\end{prop}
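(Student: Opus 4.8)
The plan is to show that the data and axioms listed are nothing more than an unpacking of the notion of a \emph{tidy} double bicategory, and then to invoke the equivalence of \cref{prop:tidy}. The essential point is that in this pared-down presentation we do not posit bigons as separate data; instead we \emph{define} a bigon to be a square bordered by identity 1-cells, so that the square-to-bigon conversion operation of \cref{defn:tidy} is forced to be an inverse to the canonical map and tidiness holds by construction. What must be verified is that, under this identification, conditions (1) and (2) assemble exactly into the structure and axioms of a tidy double bicategory.

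First I would pass from the present data to a tidy double bicategory. Taking the horizontal and vertical bigons to be the degenerate squares as displayed, the bijection of condition (1) --- composing with an identity square, per boundary --- is precisely what lets the derived bigon identity, composition, and bigon-on-square action operations of the statement be defined: one composes degenerate squares \emph{as squares} and then transports along the inverse of this bijection to correct the doubled identity $\justi\cdot\justi$ on a boundary back to a single $\justi$. The given associator and unitor squares, together with their putative inverses, supply the associators and unitors of the two bicategories of bigons; since condition (1) makes the relevant boundary adjustments invertible, these putative inverses are forced to be genuine two-sided inverses, so the bigons form honest bicategories. Condition (2) then asserts verbatim that all the double-bicategory laws --- the bicategory axioms, the four compatible actions, interchange, and unitor/associator naturality --- hold for these derived operations. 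The resulting double bicategory is tidy because its bigons \emph{are} the degenerate squares, with the conversion of \cref{defn:tidy} being the inverse furnished by condition (1). Conversely, from a tidy double bicategory I would discard the separately axiomatized bigons, keeping the double graph, the 1-cell operations, and the square composition and identity operations; tidiness identifies each bigon with a unique degenerate square, under which the bicategory associators and unitors become the required associator and unitor squares and their inverses become the putative inverses, while condition (1) falls out of tidiness and the unit laws for the bigon-on-square actions. These passages are mutually inverse precisely because tidiness puts bigons and degenerate squares in bijection, so no data is lost; hence the present data is equivalent to a tidy double bicategory, and \cref{prop:tidy} identifies such with doubly weak double categories.

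The main obstacle I anticipate is the bookkeeping needed to confirm that conditions (1) and (2) together capture tidiness on the nose: one must check that the derived bigon vertical and horizontal compositions, and the four bigon-on-square actions, all defined through the condition-(1) bijection, coincide with Verity's corresponding operations, and that the unit and interchange laws force the two descriptions to agree. This is routine but delicate, since the entire economy of the definition rests on condition (1) being exactly strong enough to recover the two bicategories of bigons from the squares alone.
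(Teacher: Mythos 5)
Your proposal is correct and takes essentially the same route as the paper's proof: the structure described is automatically a tidy double bicategory (tidiness holding by construction, since bigons are \emph{defined} to be degenerate squares and the condition-(1) bijection serves as the conversion of \cref{defn:tidy}), while conversely a tidy double bicategory is replaced, up to isomorphism, by one whose bigons literally are the degenerate squares they correspond to, the tidiness isomorphisms becoming identities and the bigon operations collapsing onto the square operations, with the two passages mutually inverse and \cref{prop:tidy} then supplying the equivalence with doubly weak double categories. One small correction of attribution: condition (1) for squares with \emph{arbitrary} boundaries does not follow from tidiness itself (which only concerns the bigon/degenerate-square bijection) but from the unitor naturality laws together with the fact that the actions of the invertible unitor bigons are bijections, a detail the paper's proof likewise leaves implicit.
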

(Here one could use either of the two inverse bijections to define
composition of bigons; it does not matter.)
\begin{proof}
  The double bicategory so-defined is automatically tidy.  Conversely,
  given any tidy double bicategory, we obtain an isomorphic one by
  replacing all the sets of bigons by the sets of squares to which
  they are in bijection by tidiness.  After this replacement, the
  tidiness isomorphisms become identities, and all the composition
  operations on bigons become equal to the corresponding ones on
  squares; thus we have a structure as described in the statement.
  The two processes are evidently inverse up to isomorphism.
\end{proof}

This definition can be convenient when constructing examples that
do not start with a given bicategory.

\begin{eg}\label{eg:fundamental2}
  As in \cref{eg:fundamental}, let $X$ be a topological space, let the
  0-cells be points of $X$, the 1-cells be continuous paths
  $p:[0,1] \to X$, and the 2-cells be homotopy classes of continuous
  maps $[0,1]\times [0,1] \to X$ rel their boundaries.  We take the
  composition operations on these data to be the usual ones, and the
  associator and unitor squares to be the usual reparametrizing
  homotopies.  It is then straightforward to verify the axioms.
\end{eg}

We will also see a worked example putting this definition to use in
the next section.

\section{Cubical bicategories}\label{sec:cubical}

Next, we compare our definition of doubly weak double category with
Garner's definition of cubical bicategory, which he described as
follows~\cite{garner:cubical-bicats}:
\begin{quote}\small
  Definition. A cubical bicategory is given by sets of objects, of
  vertical arrows, of horizontal arrows and of squares, satisfying the
  obvious source and target criteria, together with operations of
  identity and binary composition for vertical and horizontal arrows,
  satisfying no laws at all; and finally, for every
  $n \mathord{\times} m$ grid of squares (where possibly $n$ or $m$
  are zero), and every way of composing up the horizontal and vertical
  boundaries using the nullary and binary compositions, a composite
  square with those boundaries. The coherence axioms which this
  structure must satisfy say that any two ways of composing up a
  diagram of squares must give the same answer.
\end{quote}

Just like Verity's definition, Garner's definition can be derived from
ours by ignoring some of the structure of a double computad. But
first, let us elaborate on the subtler points of this definition.

The condition that ``any two ways of composing up a diagram of squares
must give the same answer'' a priori constitutes infinitely many
axioms involving grids of squares nested arbitrarily deeply. In
particular, we have infinitely many axioms involving arbitrarily many
$n \times 0$ and $0 \times m$ grids nested within other grids. For
example, all of the following composites are made equal, since they
represent the same formal $2 \times 2$ grid of squares:

\[\;
  \begin{tikzpicture}[->,scale=.8,shorten <=-3.5pt,shorten >=-3.5pt]
    \node (lu) at (-1,1) {$\cdot$};
    \node (mu) at (0,1) {$\cdot$};
    \node (ru) at (1,1) {$\cdot$};
    \node (lm) at (-1,0) {$\cdot$};
    \node (mm) at (0,0) {$\cdot$};
    \node (rm) at (1,0) {$\cdot$};
    \node (ld) at (-1,-1) {$\cdot$};
    \node (md) at (0,-1) {$\cdot$};
    \node (rd) at (1,-1) {$\cdot$};
    \node (lluu) at (-1.5,1.5) {$\cdot$};
    \node (rruu) at (1.5,1.5) {$\cdot$};
    \node (lldd) at (-1.5,-1.5) {$\cdot$};
    \node (rrdd) at (1.5,-1.5) {$\cdot$};
    \node at (-.5,.5) {\small$\alpha$};
    \node at (.5,.5) {\small$\beta$};
    \node at (-.5,-.5) {\small$\gamma$};
    \node at (.5,-.5) {\small$\delta$};
    \draw (lu) -- node[ed,pos=.45] {\scriptsize$f$} (mu);
    \draw (mu) -- node[ed,pos=.45] {\scriptsize$g$} (ru);
    \draw (lm) -- node[ed,pos=.45] {\scriptsize$k$} (mm);
    \draw (mm) -- node[ed,pos=.45] {\scriptsize$l$} (rm);
    \draw (ld) -- node[ed,pos=.45] {\scriptsize$p$} (md);
    \draw (md) -- node[ed,pos=.45] {\scriptsize$q$} (rd);
    \draw (lu) -- node[ed,pos=.35] {\scriptsize$h$} (lm);
    \draw (lm) -- node[ed,pos=.35] {\scriptsize$m$} (ld);
    \draw (mu) -- node[ed,pos=.35] {\scriptsize$i$} (mm);
    \draw (mm) -- node[ed,pos=.35] {\scriptsize$n$} (md);
    \draw (ru) -- node[ed,pos=.35] {\scriptsize$j$} (rm);
    \draw (rm) -- node[ed,pos=.35] {\scriptsize$o$} (rd);
    \draw (lluu) -- node[above] {\scriptsize$\uhcom{\hcom{\justi}{\hcom{f}{g}}}{\justi}$} (rruu);
    \draw (lldd) -- node[below] {\scriptsize$\uhcom{\hcom{\justi}{\hcom{p}{q}}}{\justi}$} (rrdd);
    \draw (lluu) -- node[left] {\scriptsize$\uvcom{h}{m}$} (lldd);
    \draw (rruu) -- node[right] {\scriptsize$\uvcom{j}{o}$} (rrdd);
  \end{tikzpicture}
  \qquad\qquad\qquad\;
  \begin{tikzpicture}[->,scale=.8,shorten <=-3.5pt,shorten >=-3.5pt]
    \node (lu) at (-1.5,1) {$\cdot$};
    \node (lhu) at (-.5,1) {$\cdot$};
    \node (rhu) at (.5,1) {$\cdot$};
    \node (ru) at (1.5,1) {$\cdot$};
    \node (lm) at (-1.5,0) {$\cdot$};
    \node (lhm) at (-.5,0) {$\cdot$};
    \node (rhm) at (.5,0) {$\cdot$};
    \node (rm) at (1.5,0) {$\cdot$};
    \node (ld) at (-1.5,-1) {$\cdot$};
    \node (lhd) at (-.5,-1) {$\cdot$};
    \node (rhd) at (.5,-1) {$\cdot$};
    \node (rd) at (1.5,-1) {$\cdot$};
    \node (lluuh) at (-3,1.5) {$\cdot$};
    \node (llddh) at (-3,-1.5) {$\cdot$};
    \node (llhuuh) at (-2,1.5) {$\cdot$};
    \node (llhddh) at (-2,-1.5) {$\cdot$};
    \node (muuh) at (0,1.5) {$\cdot$};
    \node (mddh) at (0,-1.5) {$\cdot$};
    \node (rrhuuh) at (2,1.5) {$\cdot$};
    \node (rrhddh) at (2,-1.5) {$\cdot$};
    \node (rruuh) at (3,1.5) {$\cdot$};
    \node (rrddh) at (3,-1.5) {$\cdot$};
    \node (lluu) at (-3.5,2) {$\cdot$};
    \node (rruu) at (3.5,2) {$\cdot$};
    \node (lldd) at (-3.5,-2) {$\cdot$};
    \node (rrdd) at (3.5,-2) {$\cdot$};
    \node (l0) at (-2.5,1) {$\cdot$};
    \node (l1) at (-2.5,0) {$\cdot$};
    \node (l2) at (-2.5,-1) {$\cdot$};
    \node (r0) at (2.5,1) {$\cdot$};
    \node (r1) at (2.5,0) {$\cdot$};
    \node (r2) at (2.5,-1) {$\cdot$};
    \node at (-1,.5) {\small$\alpha$};
    \node at (1,.5) {\small$\beta$};
    \node at (-1,-.5) {\small$\gamma$};
    \node at (1,-.5) {\small$\delta$};
    \draw (l0) -- node[ed,pos=.35] {\scriptsize$h$} (l1);
    \draw (l1) -- node[ed,pos=.35] {\scriptsize$m$} (l2);
    \draw (r0) -- node[ed,pos=.35] {\scriptsize$j$} (r1);
    \draw (r1) -- node[ed,pos=.35] {\scriptsize$o$} (r2);
    \draw (lu) -- node[ed,pos=.45] {\scriptsize$f$} (lhu);
    \draw (rhu) -- node[ed,pos=.45] {\scriptsize$g$} (ru);
    \draw (lm) -- node[ed,pos=.45] {\scriptsize$k$} (lhm);
    \draw (rhm) -- node[ed,pos=.45] {\scriptsize$l$} (rm);
    \draw (ld) -- node[ed,pos=.45] {\scriptsize$p$} (lhd);
    \draw (rhd) -- node[ed,pos=.45] {\scriptsize$q$} (rd);
    \draw (lu) -- node[ed,pos=.35] {\scriptsize$h$} (lm);
    \draw (lm) -- node[ed,pos=.35] {\scriptsize$m$} (ld);
    \draw (lhu) -- node[ed,pos=.35] {\scriptsize$i$} (lhm);
    \draw (lhm) -- node[ed,pos=.35] {\scriptsize$n$} (lhd);
    \draw (rhu) -- node[ed,pos=.35] {\scriptsize$i$} (rhm);
    \draw (rhm) -- node[ed,pos=.35] {\scriptsize$n$} (rhd);
    \draw (ru) -- node[ed,pos=.35] {\scriptsize$j$} (rm);
    \draw (rm) -- node[ed,pos=.35] {\scriptsize$o$} (rd);
    \draw (lluuh) -- node[ed,pos=.45] {\scriptsize$\justi$} (llhuuh);
    \draw (llhuuh) -- node[ed] {\scriptsize$f$} (muuh);
    \draw (muuh) -- node[ed] {\scriptsize$g$} (rrhuuh);
    \draw (rrhuuh) -- node[ed,pos=.45] {\scriptsize$\justi$} (rruuh);
    \draw (llddh) -- node[ed,pos=.45] {\scriptsize$\justi$} (llhddh);
    \draw (llhddh) -- node[ed] {\scriptsize$p$} (mddh);
    \draw (mddh) -- node[ed] {\scriptsize$q$} (rrhddh);
    \draw (rrhddh) -- node[ed,pos=.45] {\scriptsize$\justi$} (rrddh);
    \draw (lluuh) -- node[ed,pos=.45] {\scriptsize$\uvcom{h}{m}$} (llddh);
    \draw (llhuuh) -- node[ed,pos=.45] {\scriptsize$\uvcom{h}{m}$} (llhddh);
    \draw (muuh) -- node[ed,pos=.45] {\scriptsize$\uvcom{i}{n}$} (mddh);
    \draw (rrhuuh) -- node[ed,pos=.45] {\scriptsize$\uvcom{j}{o}$} (rrhddh);
    \draw (rruuh) -- node[ed,pos=.45] {\scriptsize$\uvcom{j}{o}$} (rrddh);
    \draw (lluu) -- node[above] {\scriptsize$\uhcom{\hcom{\justi}{\hcom{f}{g}}}{\justi}$} (rruu);
    \draw (lldd) -- node[below] {\scriptsize$\uhcom{\hcom{\justi}{\hcom{p}{q}}}{\justi}$} (rrdd);
    \draw (lluu) -- node[left] {\scriptsize$\uvcom{h}{m}$} (lldd);
    \draw (rruu) -- node[right] {\scriptsize$\uvcom{j}{o}$} (rrdd);
  \end{tikzpicture}
\]
\[
  \begin{tikzpicture}[->,scale=.725,shorten <=-3.5pt,shorten >=-3.5pt]
    \node (lu) at (-1,1) {$\cdot$};
    \node (mu) at (0,1) {$\cdot$};
    \node (ru) at (1,1) {$\cdot$};
    \node (lm) at (-1,0) {$\cdot$};
    \node (mm) at (0,0) {$\cdot$};
    \node (rm) at (1,0) {$\cdot$};
    \node (ld) at (-1,-1) {$\cdot$};
    \node (md) at (0,-1) {$\cdot$};
    \node (rd) at (1,-1) {$\cdot$};
    \node (lluu) at (-1.5,1.5) {$\cdot$};
    \node (rruu) at (1.5,1.5) {$\cdot$};
    \node (lldd) at (-1.5,-1.5) {$\cdot$};
    \node (rrdd) at (1.5,-1.5) {$\cdot$};
    \node (llluuu) at (-2,2) {$\cdot$};
    \node (rrruuu) at (2,2) {$\cdot$};
    \node (lllddd) at (-2,-2) {$\cdot$};
    \node (rrrddd) at (2,-2) {$\cdot$};
    \node (lllluuuu) at (-2.5,2.5) {$\cdot$};
    \node (rrrruuuu) at (2.5,2.5) {$\cdot$};
    \node (lllldddd) at (-2.5,-2.5) {$\cdot$};
    \node (rrrrdddd) at (2.5,-2.5) {$\cdot$};
    \node at (-.5,.5) {\small$\alpha$};
    \node at (.5,.5) {\small$\beta$};
    \node at (-.5,-.5) {\small$\gamma$};
    \node at (.5,-.5) {\small$\delta$};
    \draw (lu) -- node[ed,pos=.45] {\scriptsize$f$} (mu);
    \draw (mu) -- node[ed,pos=.45] {\scriptsize$g$} (ru);
    \draw (lm) -- node[ed,pos=.45] {\scriptsize$k$} (mm);
    \draw (mm) -- node[ed,pos=.45] {\scriptsize$l$} (rm);
    \draw (ld) -- node[ed,pos=.45] {\scriptsize$p$} (md);
    \draw (md) -- node[ed,pos=.45] {\scriptsize$q$} (rd);
    \draw (lu) -- node[ed,pos=.35] {\scriptsize$h$} (lm);
    \draw (lm) -- node[ed,pos=.35] {\scriptsize$m$} (ld);
    \draw (mu) -- node[ed,pos=.35] {\scriptsize$i$} (mm);
    \draw (mm) -- node[ed,pos=.35] {\scriptsize$n$} (md);
    \draw (ru) -- node[ed,pos=.35] {\scriptsize$j$} (rm);
    \draw (rm) -- node[ed,pos=.35] {\scriptsize$o$} (rd);
    \draw (lluu) -- node[ed] {\scriptsize$\uhcom{f}{g}$} (rruu);
    \draw (lldd) -- node[ed] {\scriptsize$\uhcom{p}{q}$} (rrdd);
    \draw (lluu) -- node[ed] {\scriptsize$\uvcom{h}{m}$} (lldd);
    \draw (rruu) -- node[ed] {\scriptsize$\uvcom{j}{o}$} (rrdd);
    \draw (llluuu) -- node[ed] {\scriptsize$\uhcom{\justi}{\hcom{f}{g}}$} (rrruuu);
    \draw (lllddd) -- node[ed] {\scriptsize$\uhcom{\justi}{\hcom{p}{q}}$} (rrrddd);
    \draw (llluuu) -- node[ed] {\scriptsize$\uvcom{h}{m}$} (lllddd);
    \draw (rrruuu) -- node[ed] {\scriptsize$\uvcom{j}{o}$} (rrrddd);
    \draw (lllluuuu) -- node[above] {\scriptsize$\uhcom{\hcom{\justi}{\hcom{f}{g}}}{\justi}$} (rrrruuuu);
    \draw (lllldddd) -- node[below] {\scriptsize$\uhcom{\hcom{\justi}{\hcom{p}{q}}}{\justi}$} (rrrrdddd);
    \draw (lllluuuu) -- node[left] {\scriptsize$\uvcom{h}{m}$} (lllldddd);
    \draw (rrrruuuu) -- node[right] {\scriptsize$\uvcom{j}{o}$} (rrrrdddd);
  \end{tikzpicture}
  \qquad\quad
  \begin{tikzpicture}[->,scale=.95,shorten <=-3.5pt,shorten >=-3.5pt]
    \node (lu) at (-1.25,1.25) {$\cdot$};
    \node (lhu) at (-.35,1.25) {$\cdot$};
    \node (rhu) at (.35,1.25) {$\cdot$};
    \node (ru) at (1.25,1.25) {$\cdot$};
    \node (luh) at (-1.25,.35) {$\cdot$};
    \node (ldh) at (-1.25,-.35) {$\cdot$};
    \node (lhuh) at (-.35,.35) {$\cdot$};
    \node (lhdh) at (-.35,-.35) {$\cdot$};
    \node (rhuh) at (.35,.35) {$\cdot$};
    \node (rhdh) at (.35,-.35) {$\cdot$};
    \node (ruh) at (1.25,.35) {$\cdot$};
    \node (rdh) at (1.25,-.35) {$\cdot$};
    \node (ld) at (-1.25,-1.25) {$\cdot$};
    \node (lhd) at (-.35,-1.25) {$\cdot$};
    \node (rhd) at (.35,-1.25) {$\cdot$};
    \node (rd) at (1.25,-1.25) {$\cdot$};
    \node (lluu) at (-1.65,1.65) {$\cdot$};
    \node (rruu) at (1.65,1.65) {$\cdot$};
    \node (lldd) at (-1.65,-1.65) {$\cdot$};
    \node (rrdd) at (1.65,-1.65) {$\cdot$};
    \node (lluuuh) at (-1.65,2.55) {$\cdot$};
    \node (rruuuh) at (1.65,2.55) {$\cdot$};
    \node (lldddh) at (-1.65,-2.55) {$\cdot$};
    \node (rrdddh) at (1.65,-2.55) {$\cdot$};
    \node (lluuu) at (-2.1,3) {$\cdot$};
    \node (rruuu) at (2.1,3) {$\cdot$};
    \node (llddd) at (-2.1,-3) {$\cdot$};
    \node (rrddd) at (2.1,-3) {$\cdot$};
    \node (lllhuuu) at (-3,3) {$\cdot$};
    \node (rrrhuuu) at (3,3) {$\cdot$};
    \node (lllhddd) at (-3,-3) {$\cdot$};
    \node (rrrhddd) at (3,-3) {$\cdot$};
    \node (llluuu) at (-3.5,3.5) {$\cdot$};
    \node (rrruuu) at (3.5,3.5) {$\cdot$};
    \node (lllddd) at (-3.5,-3.5) {$\cdot$};
    \node (rrrddd) at (3.5,-3.5) {$\cdot$};
    \node (u0) at (0,1.1) {$\cdot$};
    \node (u1) at (0,.5) {$\cdot$};
    \node (d0) at (0,-.5) {$\cdot$};
    \node (d1) at (0,-1.1) {$\cdot$};
    \node (l0) at (-1.1,0) {$\cdot$};
    \node (l1) at (-.5,0) {$\cdot$};
    \node (r0) at (.5,0) {$\cdot$};
    \node (r1) at (1.1,0) {$\cdot$};
    \node (uu0) at (-1.25,2.1) {$\cdot$};
    \node (uu1) at (0,2.1) {$\cdot$};
    \node (uu2) at (1.25,2.1) {$\cdot$};
    \node (dd0) at (-1.25,-2.1) {$\cdot$};
    \node (dd1) at (0,-2.1) {$\cdot$};
    \node (dd2) at (1.25,-2.1) {$\cdot$};
    \node (ll0) at (-2.55,2.55) {$\cdot$};
    \node (ll1) at (-2.55,0) {$\cdot$};
    \node (ll2) at (-2.55,-2.55) {$\cdot$};
    \node (rr0) at (2.55,2.55) {$\cdot$};
    \node (rr1) at (2.55,0) {$\cdot$};
    \node (rr2) at (2.55,-2.55) {$\cdot$};
    \node at (-.8,.8) {\small$\alpha$};
    \node at (.8,.8) {\small$\beta$};
    \node at (-.8,-.8) {\small$\gamma$};
    \node at (.8,-.8) {\small$\delta$};
    \node at (0,0) {\small$\cdot$};
    \draw (l0) -- node[ed,pos=.45] {\scriptsize$k$} (l1);
    \draw (r0) -- node[ed,pos=.45] {\scriptsize$l$} (r1);
    \draw (u0) -- node[ed,pos=.35] {\scriptsize$i$} (u1);
    \draw (d0) -- node[ed,pos=.35] {\scriptsize$n$} (d1);
    \draw (lu) -- node[ed,pos=.45] {\scriptsize$f$} (lhu);
    \draw (uu0) -- node[ed] {\scriptsize$f$} (uu1);
    \draw (uu1) -- node[ed] {\scriptsize$g$} (uu2);
    \draw (dd0) -- node[ed] {\scriptsize$p$} (dd1);
    \draw (dd1) -- node[ed] {\scriptsize$q$} (dd2);
    \draw (ll0) -- node[ed] {\scriptsize$h$} (ll1);
    \draw (ll1) -- node[ed] {\scriptsize$m$} (ll2);
    \draw (rr0) -- node[ed] {\scriptsize$j$} (rr1);
    \draw (rr1) -- node[ed] {\scriptsize$o$} (rr2);
    \draw (lhu) -- node[ed,pos=.45] {\scriptsize$\justi$} (rhu);
    \draw (rhu) -- node[ed,pos=.45] {\scriptsize$g$} (ru);
    \draw (luh) -- node[ed,pos=.45] {\scriptsize$k$} (lhuh);
    \draw (lhuh) -- node[ed,pos=.45] {\scriptsize$\justi$} (rhuh);
    \draw (rhuh) -- node[ed,pos=.45] {\scriptsize$l$} (ruh);
    \draw (ldh) -- node[ed,pos=.45] {\scriptsize$k$} (lhdh);
    \draw (lhdh) -- node[ed,pos=.45] {\scriptsize$\justi$} (rhdh);
    \draw (rhdh) -- node[ed,pos=.45] {\scriptsize$l$} (rdh);
    \draw (ld) -- node[ed,pos=.45] {\scriptsize$p$} (lhd);
    \draw (lhd) -- node[ed,pos=.45] {\scriptsize$\justi$} (rhd);
    \draw (rhd) -- node[ed,pos=.45] {\scriptsize$q$} (rd);
    \draw (lu) -- node[ed,pos=.35] {\scriptsize$h$} (luh);
    \draw (luh) -- node[ed,pos=.35] {\scriptsize$\justi$} (ldh);
    \draw (ldh) -- node[ed,pos=.35] {\scriptsize$m$} (ld);
    \draw (lhu) -- node[ed,pos=.35] {\scriptsize$i$} (lhuh);
    \draw (lhuh) -- node[ed,pos=.35] {\scriptsize$\justi$} (lhdh);
    \draw (lhdh) -- node[ed,pos=.35] {\scriptsize$n$} (lhd);
    \draw (rhu) -- node[ed,pos=.35] {\scriptsize$i$} (rhuh);
    \draw (rhuh) -- node[ed,pos=.35] {\scriptsize$\justi$} (rhdh);
    \draw (rhdh) -- node[ed,pos=.35] {\scriptsize$n$} (rhd);
    \draw (ru) -- node[ed,pos=.35] {\scriptsize$j$} (ruh);
    \draw (ruh) -- node[ed,pos=.35] {\scriptsize$\justi$} (rdh);
    \draw (rdh) -- node[ed,pos=.35] {\scriptsize$o$} (rd);
    \draw (lluu) -- node[ed] {\scriptsize$\uhcom{\hcom{f}{\justi}}{g}$} (rruu);
    \draw (lldd) -- node[ed] {\scriptsize$\uhcom{\hcom{p}{\justi}}{q}$} (rrdd);
    \draw (lluu) -- node[ed] {\scriptsize$\uvcoms{\vcoms{h}{\justi}}{m}$} (lldd);
    \draw (rruu) -- node[ed] {\scriptsize$\uvcoms{\vcoms{j}{\justi}}{o}$} (rrdd);
    \draw (lluuuh) -- node[ed] {\scriptsize$\uhcom{f}{g}$} (rruuuh);
    \draw (lldddh) -- node[ed] {\scriptsize$\uhcom{p}{q}$} (rrdddh);
    \draw (lluuuh) -- node[ed] {\scriptsize$\justi$} (lluu);
    \draw (rruuuh) -- node[ed] {\scriptsize$\justi$} (rruu);
    \draw (lldd) -- node[ed] {\scriptsize$\justi$} (lldddh);
    \draw (rrdd) -- node[ed] {\scriptsize$\justi$} (rrdddh);
    \draw (lluuu) -- node[ed] {\scriptsize$\uhcom{f}{g}$} (rruuu);
    \draw (llddd) -- node[ed] {\scriptsize$\uhcom{p}{\justi}$} (rrddd);
    \draw (lluuu) -- node[ed] {\scriptsize$\uvcoms{\vcoms{\justi}{\vcoms{\vcoms{h}{\justi}}{m}}}{\justi}$} (llddd);
    \draw (rruuu) -- node[ed] {\scriptsize$\uvcoms{\vcoms{\justi}{\vcoms{\vcoms{j}{\justi}}{o}}}{\justi}$} (rrddd);
    \draw (lllhuuu) -- node[ed] {\scriptsize$\justi$} (lluuu);
    \draw (rruuu) -- node[ed] {\scriptsize$\justi$} (rrrhuuu);
    \draw (lllhddd) -- node[ed] {\scriptsize$\justi$} (llddd);
    \draw (rrddd) -- node[ed] {\scriptsize$\justi$} (rrrhddd);
    \draw (lllhuuu) -- node[ed] {\scriptsize$\uvcom{h}{m}$} (lllhddd);
    \draw (rrrhuuu) -- node[ed] {\scriptsize$\uvcom{j}{o}$} (rrrhddd);
    \draw (llluuu) -- node[above] {\scriptsize$\uhcom{\hcom{\justi}{\hcom{f}{g}}}{\justi}$} (rrruuu);
    \draw (lllddd) -- node[below] {\scriptsize$\uhcom{\hcom{\justi}{\hcom{p}{q}}}{\justi}$} (rrrddd);
    \draw (llluuu) -- node[left] {\scriptsize$\uvcom{h}{m}$} (lllddd);
    \draw (rrruuu) -- node[right] {\scriptsize$\uvcom{j}{o}$} (rrrddd);
  \end{tikzpicture}
\] 

To put it another way, equality between composite squares in the free
cubical bicategory on a double graph is checked by comparing the
boundaries and the induced grids of generating squares appearing in
the composites. This leads to the following observation.

Recall that $\bDblGph$ denotes the category of double graphs.

\begin{lem}\label{lem:garnermonad}
  The algebras of the monad on \bDblGph induced by the forgetful
  functor $\bWDblCatst \to \bDblGph$ are precisely cubical
  bicategories.
\end{lem}
\begin{proof}
  The above definition is obtained from the characterization in
  \cref{cor:freedescription} of the free doubly weak double category
  on a double computad, specialized to the case of double graphs. (The
  2-cells in free \emph{strict} double categories on double graphs are
  given by grids of squares; this is well-known and also follows as a
  special case of the description of free strict double categories in
  the proof of \cref{lem:veritymonad}.)
\end{proof}

\begin{prop}
  The forgetful functor $\bWDblCatst \to \bDblGph$ is not
  monadic. (That is to say, doubly weak double categories are distinct
  from cubical bicategories.)
\end{prop}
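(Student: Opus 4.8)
The plan is to follow the same strategy used just above for the forgetful functor $\bWDblCatst \to \bBiDblGph$. By \cref{lem:garnermonad}, the algebras of the induced monad on \bDblGph are exactly Garner's cubical bicategories, and the functor in question is the canonical comparison functor from $\bWDblCatst$ into that category of algebras. To prove non-monadicity it therefore suffices to exhibit a single cubical bicategory that is not the underlying cubical bicategory of any doubly weak double category, so that the comparison functor fails to be essentially surjective. The obstruction I would use is the tidiness-type bijection isolated in \cref{defn:tidier}: in every doubly weak double category the operation of composing a square with an identity square is a bijection per boundary. Crucially, because $1$-cell composition in a cubical bicategory satisfies no laws, this operation is \emph{not} a strict unit; composing an identity square on top of a square replaces its vertical boundaries $h,g$ by the genuinely different $1$-cells $\udcom{\justi}{h}$ and $\udcom{\justi}{g}$. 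Any cubical bicategory in which the resulting ``prepend a unit'' map fails to be injective on some boundary class therefore cannot arise from a doubly weak double category.

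To build such an example, fix a set $S$ with at least two elements and define a cubical bicategory $\bicat{C}_S$ as follows, in direct analogy with the double bicategory $\bicat{C}_M$ above. Take two $0$-cells $A,B$, one generating vertical $1$-cell $f \colon A \to B$, and no generating horizontal $1$-cells, so that all $1$-cells are free identities, composites thereof, and free composites involving $f$. For the squares I would take the essentially \emph{codiscrete} structure: declare that for every admissible choice of boundary $1$-cells there is exactly one square, with the single exception of the boundary having top $\justi_A^H$, bottom $\justi_B^H$, left $f$, and right $f$, at which I instead place an $S$-indexed family $\{\beta_s\}_{s\in S}$. Every grid composite is then forced: a grid whose total boundary is not the exceptional one takes as its value the unique square of that shape, while the only grid producing the exceptional boundary is a single $\beta_s$, since any nontrivial horizontal composite lengthens the top and bottom boundaries and any nontrivial vertical composite lengthens the left and right boundaries, neither of which can reproduce the length-one generator data $f$. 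Consequently Garner's coherence axiom—that any two ways of composing a grid agree—holds automatically, because every composite lands in a shape carrying a unique square.

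Finally I would observe that $\bicat{C}_S$ violates the \cref{defn:tidier} bijection. Composing each $\beta_s$ on top with the vertical identity square at $\justi_A^H$ yields a square whose left and right boundaries are both $\udcom{\justi_A^V}{f}$; this new boundary is non-exceptional, hence carries a unique square $\tau$, so all of the distinct $\beta_s$ are sent to the single $\tau$, and the map is not injective once $|S|\ge 2$. Since the same map is a bijection per boundary in every doubly weak double category, $\bicat{C}_S$ is not the underlying cubical bicategory of any, whence the comparison functor is not essentially surjective and the forgetful functor $\bWDblCatst \to \bDblGph$ is not monadic. I expect the main point needing care to be the verification that the codiscrete-except-one-shape assignment genuinely is a cubical bicategory; the crux is the combinatorial observation that no nontrivial grid composite can reproduce the exceptional boundary, which is exactly what collapses Garner's coherence axiom to triviality and allows the exceptional family of squares to sit harmlessly inside an otherwise codiscrete structure.
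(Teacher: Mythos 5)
Your overall strategy coincides with the paper's: by \cref{lem:garnermonad} it suffices to exhibit a single cubical bicategory violating a bijection that holds in every doubly weak double category, so that the (fully faithful) comparison functor fails to be essentially surjective. The paper's counterexample $\cat{C}_M$ uses a commutative monoid (one $0$-cell, free $1$-cells on one generator in each direction, every nontrivial boundary class identified with $M$, grids composed by summation) and contradicts the bijection given by composing on top with a unitor; your $\bicat{C}_S$ is ``codiscrete except one boundary class'' and contradicts the identity-square bijection of \cref{defn:tidier}/\cref{defn:tidycubical}. Both invariants are valid consequences of tidiness, and the cardinality arguments are parallel (your map fails injectivity where the paper's fails surjectivity), so this is a legitimate variant of the same proof rather than a new route.

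There is, however, a concrete error in the step you yourself flagged as the crux. It is not true that ``the only grid producing the exceptional boundary is a single $\beta_s$'': Garner's definition also includes the degenerate $n\times 0$ and $0\times m$ grids, and the $1\times 0$ grid consisting of the vertical $1$-cell $f$ alone --- that is, the horizontal identity square on $f$ --- composes, with the trivial bracketing of $f$ on left and right and the bare nullary composites $\justi_A^H$, $\justi_B^H$ on top and bottom, to a square with exactly your exceptional boundary. Your ``lengthening'' argument correctly rules out grids with at least two rows or columns, and forces any $1\times 1$ grid hitting that boundary to be a bare $\beta_s$ composed trivially, but it does not see the empty column. Consequently the composition structure on $\bicat{C}_S$ is not entirely forced: you must in addition choose which element of $S$ serves as $\justi^H_f$. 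The repair is easy --- fix any $s_0\in S$ and set $\justi^H_f \coloneqq \beta_{s_0}$. Your boundary analysis then shows that the only composites landing in the exceptional class are the trivial unary composite of some $\beta_s$ and this one degenerate grid, so the coherence axioms still hold essentially vacuously (every other composite lands in a singleton class, and no diagram admits two composing-ups that disagree on the exceptional class), and your final non-injectivity argument --- all $\beta_s$ are sent by the identity-square map to the unique square with left and right boundaries $\udcom{\justi_A^V}{f}$, contradicting \cref{defn:tidier} when $|S|\ge 2$ --- is untouched. With that patch the proof is complete.
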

\begin{proof}
  By \cref{lem:garnermonad}, it suffices to exhibit a cubical
  bicategory that does not arise from any doubly weak double category.
  In a doubly weak double category, there is a bijection between
  2-cells of shapes
  \begin{center}
    \begin{tikzpicture}[scale=0.1666]
      \draw [rect] (-6,-6) rectangle (6,6);
      \node (a) [ivs] at (0,0) {};
      \draw (-6,0) node [ov] {} -- node [ed] {$h$} (a) -- node [ed] {$k$} (6,0) node [ov] {};
      \draw (0,-6) node [ov] {} -- node [ed] {$g$} (a) -- node [ed] {$f$} (0,6) node [ov] {};
    \end{tikzpicture}
    \qquad and\qquad
    \begin{tikzpicture}[scale=0.1666]
      \draw [rect] (-6,-6) rectangle (6,6);
      \node (a) [ivs] at (0,0) {};
      \draw (-6,0) node [ov] {} -- node [ed] {$h$} (a) -- node [ed] {$k$} (6,0) node [ov] {};
      \draw (0,-6) node [ov] {} -- node [ed] {$g$} (a) -- node [ed] {$\udcom{\justi}{f}$} (0,6) node [ov] {};
    \end{tikzpicture}
  \end{center}
  obtained by composing on the top with the unitor isomorphism
  \begin{center}
    \begin{tikzpicture}[xscale=.1666,yscale=-.1666]
      \draw [rect] (-6,-6) rectangle (6,6);
      \node [ov] (v) at (2.5, -6) {};
      \node [ivs, inner sep=0] (a) at (-2, -2.25) {$\choseniso$};
      \node [ivs, inner sep=0] (c) at (0, 1.75) {$\choseniso$};
      \node [ov] (y) at (0, 6) {};
      \draw (v) [bend right=15] to node [ed] {$f$} (c);
      \draw (a) [bend left=10] to node [ed] {$\justi$} (c);
      \draw (c) -- node [ed] {$\ubcom{\justi}{f}$} (y);
    \end{tikzpicture}\,.
  \end{center}
  We now construct a cubical bicategory without this property. Given
  any commutative monoid $M$ with identity $0_M$, let the cubical
  bicategory $\cat{C}_M$ have one 0-cell, and let the horizontal and
  vertical 1-cells both be freely generated, i.e.\ given by bracketed
  strings of $1$. Let there be one 2-cell bordered on all sides by
  $1$, which we label $0_M$
  \begin{center}
    \begin{tikzpicture}[scale=0.1666]
      \draw [rect] (-6,-6) rectangle (6,6);
      \node (a) [ivs, inner sep=0] at (0,0) {$0_M$};
      \draw (-6,0) node [ov] {} -- node [ed] {$1$} (a) -- node [ed] {$1$} (6,0) node [ov] {};
      \draw (0,-6) node [ov] {} -- node [ed] {$1$} (a) -- node [ed] {$1$} (0,6) node [ov] {};
    \end{tikzpicture}
  \end{center}
  and let the 2-cells having any other particular boundary be
  identified with $M$. The composite of any grid of 2-cells will be
  given by simply adding up the elements of $M$ occurring in it.
  
  Now if $M$ is nontrivial, then in $\cat{C}_M$ there is no bijection
  between 2-cells of shapes
  \begin{center}
    \begin{tikzpicture}[scale=0.1666]
      \draw [rect] (-6,-6) rectangle (6,6);
      \node (a) [ivs] at (0,0) {};
      \draw (-6,0) node [ov] {} -- node [ed] {$1$} (a) -- node [ed] {$1$} (6,0) node [ov] {};
      \draw (0,-6) node [ov] {} -- node [ed] {$1$} (a) -- node [ed] {$1$} (0,6) node [ov] {};
    \end{tikzpicture}
    \qquad and\qquad
    \begin{tikzpicture}[scale=0.1666]
      \draw [rect] (-6,-6) rectangle (6,6);
      \node (a) [ivs] at (0,0) {};
      \draw (-6,0) node [ov] {} -- node [ed] {$1$} (a) -- node [ed] {$1$} (6,0) node [ov] {};
      \draw (0,-6) node [ov] {} -- node [ed] {$1$} (a) -- node [ed] {$\udcom{1}{1}$} (0,6) node [ov] {};
    \end{tikzpicture}\,.
  \end{center}
  Hence $\cat{C}_M$ cannot arise from any doubly weak double category.
\end{proof}

However, \cref{lem:garnermonad} does also give us:

\begin{cor}\label{cor:cubical-cmp}
  There is a canonical functor from doubly weak double categories to cubical bicategories.
\end{cor}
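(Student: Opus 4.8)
The plan is to exhibit the desired functor as the canonical Eilenberg--Moore comparison functor associated to the monad already identified in \cref{lem:garnermonad}. Since that lemma tells us the category of cubical bicategories \emph{is} the category of algebras $\bDblGph^T$ for the monad $T$ on \bDblGph\ induced by the forgetful functor $U \colon \bWDblCatst \to \bDblGph$, all that remains is to invoke the standard fact that any monadic-style adjunction comes equipped with a comparison functor into its category of algebras.

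First I would verify that $U$ genuinely induces a monad, i.e.\ that it has a left adjoint. The forgetful functor factors as
\[
  \bWDblCatst \too \bDblCptd \too \bDblGph,
\]
where the first functor is the monadic forgetful functor of \cref{prop:tmonadic} (hence a right adjoint), and the second is the restriction functor between the presheaf categories $\bDblCptd \cong [\lC_\d,\bSet]$ and $\bDblGph \cong [\lE,\bSet]$ (for $\lE$ the full subcategory of $\lC_\d$ on square shapes), which has a left adjoint given by left Kan extension. Composing these two left adjoints produces a left adjoint to $U$, so $T = UFU$--style composite defines a monad $T$ on \bDblGph\ as required.

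With the adjunction $F \dashv U$ and its induced monad $T$ in hand, the canonical comparison functor $K \colon \bWDblCatst \to \bDblGph^T$ sends each doubly weak double category $\cat{C}$ to the $T$-algebra $(U\cat{C},\, U\varepsilon_{\cat{C}})$, where $\varepsilon$ is the counit of the adjunction, and acts on strict functors by applying $U$. By \cref{lem:garnermonad} this target category $\bDblGph^T$ is precisely the category of cubical bicategories, so $K$ is exactly the claimed functor. I do not expect a genuine obstacle here: the only substantive identification---that $T$-algebras are cubical bicategories---has already been carried out in \cref{lem:garnermonad}, and the existence of the comparison functor is a formal consequence of the adjunction, so the proof reduces to assembling these two ingredients.
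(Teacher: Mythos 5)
Your proposal is correct and takes essentially the same approach as the paper, whose proof consists precisely of invoking the standard comparison functor from the domain of a right adjoint into the category of algebras for the induced monad, identified with cubical bicategories by \cref{lem:garnermonad}. Your additional verification that the forgetful functor has a left adjoint (monadicity over \bDblCptd from \cref{prop:tmonadic}, composed with left Kan extension along the inclusion of the square shapes) is a sound filling-in of what the paper leaves implicit; just note that the induced monad is $T = UF$ rather than the garbled ``$UFU$'' in your write-up.
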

\begin{proof}
  This is the standard comparison functor from the domain of any right
  adjoint to the category of algebras for the monad induced by the
  adjunction.
\end{proof}

We now show that, as was the case for double bicategories, this
comparison functor is fully faithful, and we characterize the
image. (It is possible to quickly see that the comparison functor is
fully faithful using \cref{defn:tidier}, but it will take us some
additional work to establish the following simple characterization of
the image.)

\begin{defn}\label{defn:tidycubical}
  A \textbf{tidy cubical bicategory} is a cubical bicategory such that
  the canonical map induced by composing with an identity square (in
  any of the four directions)
  \[
    \begin{tikzpicture}[->,xscale=.5,yscale=.5]
      \node (au) at (-1,1) {$\cdot$};
      \node (bu) at (1,1) {$\cdot$};
      \node (ad) at (-1,-1) {$\cdot$};
      \node (bd) at (1,-1) {$\cdot$};
      \draw (au) -- node [above] {\scriptsize$f$} (bu);
      \draw (ad) -- node [below] {\scriptsize$k$} (bd);
      \draw (au) -- node [left] {\scriptsize$h$} (ad);
      \draw (bu) -- node [right] {\scriptsize$g$} (bd);
    \end{tikzpicture}
    \quad\overset{\cong}{\mapsto}\quad
    \begin{tikzpicture}[->,xscale=.5,yscale=.5]
      \node (au) at (-1,1) {$\cdot$};
      \node (bu) at (1,1) {$\cdot$};
      \node (ad) at (-1,-1) {$\cdot$};
      \node (bd) at (1,-1) {$\cdot$};
      \draw (au) -- node [above] {\scriptsize$f$} (bu);
      \draw (ad) -- node [below] {\scriptsize$k$} (bd);
      \draw (au) -- node [left=2] {\scriptsize$\uvcom{\justi}{h}$} (ad);
      \draw (bu) -- node [right=2] {\scriptsize$\uvcom{\justi}{g}$} (bd);
    \end{tikzpicture}
    \qquad\qquad\qquad
    \begin{tikzpicture}[scale=0.4]
      \draw [rect] (-2,-2) rectangle (2,2);
      \node (a) [ivs] at (0,0) {};
      \draw (-2,0) node [ov] {} -- node [ed] {$h$} (a) -- node [ed] {$g$} (2,0) node [ov] {};
      \draw (0,2) node [ov] {} -- node [ed] {$f$} (a) -- node [ed] {$k$} (0,-2) node [ov] {};
    \end{tikzpicture}
    \quad\overset{\cong}{\mapsto}\quad
    \begin{tikzpicture}[scale=0.4]
      \draw [rect] (-2,-2) rectangle (2,2);
      \node (a) [ivs] at (0,0) {};
      \draw (-2,0) node [ov] {} -- node [ed] {$\uvcom{\justi}{h}$} (a) -- node [ed] {$\uvcom{\justi}{g}$} (2,0) node [ov] {};
      \draw (0,2) node [ov] {} -- node [ed] {$f$} (a) -- node [ed] {$k$} (0,-2) node [ov] {};
    \end{tikzpicture}
  \]
  is a bijection, per boundary.  In terms of operations and laws, this
  means a tidy cubical bicategory is additionally equipped with four
  conversion operations, defined on squares of forms
  \[
    \begin{tikzpicture}[->,xscale=.5,yscale=.5]
      \node (au) at (-1,1) {$\cdot$};
      \node (bu) at (1,1) {$\cdot$};
      \node (ad) at (-1,-1) {$\cdot$};
      \node (bd) at (1,-1) {$\cdot$};
      \draw (au) -- node [above] {\scriptsize$f$} (bu);
      \draw (ad) -- node [below] {\scriptsize$k$} (bd);
      \draw (au) -- node [left=2] {\scriptsize$\uvcom{\justi}{h}$} (ad);
      \draw (bu) -- node [right=2] {\scriptsize$\uvcom{\justi}{g}$} (bd);
    \end{tikzpicture}
    \qquad
    \begin{tikzpicture}[->,xscale=.5,yscale=.5]
      \node (au) at (-1,1) {$\cdot$};
      \node (bu) at (1,1) {$\cdot$};
      \node (ad) at (-1,-1) {$\cdot$};
      \node (bd) at (1,-1) {$\cdot$};
      \draw (au) -- node [above] {\scriptsize$f$} (bu);
      \draw (ad) -- node [below] {\scriptsize$k$} (bd);
      \draw (au) -- node [left=2] {\scriptsize$\uvcom{h}{\justi}$} (ad);
      \draw (bu) -- node [right=2] {\scriptsize$\uvcom{g}{\justi}$} (bd);
    \end{tikzpicture}
    \qquad
    \begin{tikzpicture}[->,xscale=.5,yscale=.5]
      \node (au) at (-1,1) {$\cdot$};
      \node (bu) at (1,1) {$\cdot$};
      \node (ad) at (-1,-1) {$\cdot$};
      \node (bd) at (1,-1) {$\cdot$};
      \draw (au) -- node [above] {\scriptsize$\ubcom{\justi}{f}$} (bu);
      \draw (ad) -- node [below] {\scriptsize$\ubcom{\justi}{k}$} (bd);
      \draw (au) -- node [left=2] {\scriptsize$h$} (ad);
      \draw (bu) -- node [right=2] {\scriptsize$g$} (bd);
    \end{tikzpicture}
    \qquad
    \begin{tikzpicture}[->,xscale=.5,yscale=.5]
      \node (au) at (-1,1) {$\cdot$};
      \node (bu) at (1,1) {$\cdot$};
      \node (ad) at (-1,-1) {$\cdot$};
      \node (bd) at (1,-1) {$\cdot$};
      \draw (au) -- node [above] {\scriptsize$\ubcom{f}{\justi}$} (bu);
      \draw (ad) -- node [below] {\scriptsize$\ubcom{k}{\justi}$} (bd);
      \draw (au) -- node [left=2] {\scriptsize$h$} (ad);
      \draw (bu) -- node [right=2] {\scriptsize$g$} (bd);
    \end{tikzpicture}
  \]
  satisfying laws that ensure these are sent to squares of the form
  \[
    \begin{tikzpicture}[->,xscale=.5,yscale=.5]
      \node (au) at (-1,1) {$\cdot$};
      \node (bu) at (1,1) {$\cdot$};
      \node (ad) at (-1,-1) {$\cdot$};
      \node (bd) at (1,-1) {$\cdot$};
      \draw (au) -- node [above] {\scriptsize$f$} (bu);
      \draw (ad) -- node [below] {\scriptsize$k$} (bd);
      \draw (au) -- node [left=2] {\scriptsize$h$} (ad);
      \draw (bu) -- node [right=2] {\scriptsize$g$} (bd);
    \end{tikzpicture}
  \]
  and that these operations are inverse to composing with identities.
\end{defn}

\begin{prop}
  The comparison functor of \cref{cor:cubical-cmp} is an equivalence
  onto the full subcategory of tidy cubical bicategories.
\end{prop}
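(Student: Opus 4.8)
The plan is to follow the blueprint of \cref{prop:tidy}, replacing double bicategories by cubical bicategories and using the finite presentation of \cref{defn:tidier} in place of the tidy double bicategory axioms. Write $K \colon \bWDblCatst \to \bCubBicat$ for the comparison functor of \cref{cor:cubical-cmp}. I would establish three things: that $K$ is fully faithful, that every $K\cat{C}$ is tidy, and that every tidy cubical bicategory lies in the essential image.

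For full faithfulness I would invoke \cref{defn:tidier}, which presents a doubly weak double category entirely in terms of its double graph together with 1-cell composition and identities, square composition and identities, and associator and unitor squares, subject to tidiness. By \cref{lem:garnermonad}, a morphism of cubical bicategories $K\cat{C} \to K\cat{D}$ is a map of double graphs preserving 1-cell composition and identities and all grid compositions of squares; in particular it preserves binary horizontal and vertical square composition and identity squares. The key observation is that the associator and unitor squares are themselves definable from this structure: the associator for a composable triple $f,g,h$ is the composite of the $1 \times 3$ grid of vertical identity squares on $f$, $g$, $h$ with its top boundary composed as $((fg)h)$ and its bottom boundary composed as $(f(gh))$, and the unitors arise analogously from the appropriate degenerate (nullary) grid compositions. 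Hence a cubical morphism automatically preserves associators and unitors, and therefore all the data of \cref{defn:tidier}; it also respects the derived bigon operations, since those are built from squares. Conversely, a strict functor of doubly weak double categories preserves all these operations and hence all grid compositions (a grid composite being an iterated binary composite). These two assignments are mutually inverse, so $K$ is fully faithful.

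Next I would check that every $K\cat{C}$ is tidy: composing a square with an identity square in any of the four directions is composition with a derived unitor isomorphism in $\cat{C}$, hence invertible, so the relevant map is a per-boundary bijection. This places the essential image inside the full subcategory of tidy cubical bicategories. For essential surjectivity I would, given a tidy cubical bicategory $\bicat{C}$, reconstruct a doubly weak double category $G\bicat{C}$ via \cref{defn:tidier}: take the underlying double graph, the 1-cell and square composition and identity operations, and the associator and unitor squares defined by the degenerate grid composites above. Tidiness supplies exactly the required per-boundary bijection. Finally $K(G\bicat{C}) \cong \bicat{C}$, because the squares, 1-cells, binary square compositions, and hence all grid compositions of $K(G\bicat{C})$ agree with those of $\bicat{C}$ by construction; together with full faithfulness this exhibits $K$ as an equivalence onto the tidy cubical bicategories.

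The hard part will be the remaining verification in the essential-surjectivity step: that the bigons (squares with identity boundaries), equipped with the operations derived in \cref{defn:tidier}, satisfy the full list of double bicategory laws. This is the ``additional work'' flagged before the statement. Each such law --- the bicategory axioms for the horizontal and vertical bigons, the four compatible bigon-on-square actions, the interchange laws, and the associator/unitor naturality laws --- must be deduced from the single cubical coherence axiom that any two ways of composing a grid of squares agree, together with the explicit descriptions of the associators and unitors as grid composites. I expect this bookkeeping, rather than any conceptual difficulty, to be where the real effort lies.
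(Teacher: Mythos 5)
Your proposal is correct and takes essentially the same route as the paper's proof: both pass through the squares-only presentation of \cref{defn:tidier}, realize identity squares and the associator/unitor coherence cells as composites of degenerate grids, use the tidiness bijections to transfer between the two structures, and locate the real work in verifying the double bicategory laws for the derived bigon operations (which the paper carries out by expressing both sides of each law, after composing with identity squares, as formal composites of the same grid and then cancelling by tidiness). The only glosses worth tightening are that a grid composite with an arbitrarily bracketed boundary is an iterated binary composite \emph{together with} coherence-bigon rebracketings (not literally an iterated binary composite), and that the flagged verifications need the tidiness cancellation in addition to the cubical coherence axiom, since the bigon operations are defined through the cancellation bijections.
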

\begin{proof}
  Suppose given a tidy cubical bicategory.
  We will construct a tidy double bicategory using the squares-only
  definition from \cref{defn:tidier}. That is, we require a double
  graph equipped with binary composition and identity operations, such
  that the canonical maps induced by composing with identities are
  bijections per boundary, and the squares and ``bigons'' (squares
  bordered appropriately by identities) with the induced operations
  have the structure of a double bicategory.

  Any cubical bicategory has an underlying double graph with binary
  composition operations and identities (among other more general
  composition operations). In particular, an identity square for (say)
  vertical composition is obtained by composing a $0 \times 1$ grid
  using single identity 1-cells as the composites of the nullary left
  and right boundaries. A \emph{tidy} cubical bicategory moreover by
  definition has the same identity square cancellation condition of
  \cref{defn:tidier}.

  As in \cref{defn:tidier}, we define horizontal (vertical) bigons to
  be squares bordered by vertical (horizontal) identity 1-cells, and
  we define the bigon-on-square and bigon-on-bigon composition
  operations of a double bicategory by composing squares then applying
  the given identity square cancellation bijection. We display this again
  here for convenience:
  \[
    \begin{tikzpicture}[->,xscale=.5,yscale=.5,rotate=-90]
      \node (au) at (-2,1) {$\cdot$};
      \node (bu) at (0,1) {$\cdot$};
      \node (cu) at (2,1) {$\cdot$};
      \node (ad) at (-2,-1) {$\cdot$};
      \node (bd) at (0,-1) {$\cdot$};
      \node (cd) at (2,-1) {$\cdot$};
      \node at (-1,0) {\small$\alpha$};
      \node at (1,0) {\small$\zeta$};
      \draw (au) -- node[right] {\scriptsize$\justi$} (bu);
      \draw (bu) -- node[right] {\scriptsize$g$} (cu);
      \draw (ad) -- node[left] {\scriptsize$\justi$} (bd);
      \draw (bd) -- node[left] {\scriptsize$h$} (cd);
      \draw[<-] (au) -- node[above] {\scriptsize$f$} (ad);
      \draw[<-] (bu) -- node[fill=white,inner sep=2] {\scriptsize$x$} (bd);
      \draw[<-] (cu) -- node[below] {\scriptsize$k$} (cd);
    \end{tikzpicture}
    \;\mapsto\;
    \begin{tikzpicture}[->,xscale=.5,yscale=.5,rotate=-90]
      \node (au) at (-2,1) {$\cdot$};
      \node (bu) at (0,1) {$\cdot$};
      \node (ad) at (-2,-1) {$\cdot$};
      \node (bd) at (0,-1) {$\cdot$};
      \node at (-1,0) {\small$\uvcom{\alpha}{\zeta}$};
      \draw (au) -- node[right=2] {\scriptsize$\uvcom{\justi}{g}$} (bu);
      \draw (ad) -- node[left=2] {\scriptsize$\uvcom{\justi}{h}$} (bd);
      \draw[<-] (au) -- node[above] {\scriptsize$f$} (ad);
      \draw[<-] (bu) -- node[below] {\scriptsize$k$} (bd);
    \end{tikzpicture}
    \;\overset{\cong}{\mapsto}\;
    \begin{tikzpicture}[->,xscale=.5,yscale=.5,rotate=-90]
      \node (au) at (-2,1) {$\cdot$};
      \node (bu) at (0,1) {$\cdot$};
      \node (ad) at (-2,-1) {$\cdot$};
      \node (bd) at (0,-1) {$\cdot$};
      \node at (-1,0) {\small$\uvcom{\alpha}{\zeta}$};
      \draw (au) -- node[right] {\scriptsize$g$} (bu);
      \draw (ad) -- node[left] {\scriptsize$h$} (bd);
      \draw[<-] (au) -- node[above] {\scriptsize$f$} (ad);
      \draw[<-] (bu) -- node[below] {\scriptsize$k$} (bd);
    \end{tikzpicture}
  \] 
  
  Now we observe that the structure of a cubical bicategory does
  contain coherence 2-cells bounded by identities, as in the structure
  of a double bicategory.  Any sequence of (say) horizontal 1-cells
  \[
    \begin{tikzcd}[arrow style=tikz]
      \cdot \ar[r,"f_1"] & \cdot \ar[r,"f_2"]  & \cdots \ar[r,"f_{n-1}"] & \cdot \ar[r,"f_n"] & \cdot
    \end{tikzcd}
  \]
  can be regarded as a $0\times n$ grid of composable squares.
  Therefore, given any two ways of bracketing a composite of these
  1-cells (perhaps including insertion of identities), we can take
  those to be the top and bottom composites for this grid, use single
  identity 1-cells as the composites of the nullary left and right
  boundaries, and obtain a coherence 2-cell. We will write all of
  these coherence 2-cells as ``$\choseniso$'', save for the identity
  squares written as ``$\justi$'' (which, observe, are a special case
  of coherence 2-cells), and we often write elongated $=$ signs for
  identity 1-cells.  For instance, here is horizontal associativity:
  \[
    \begin{tikzcd}[arrow style=tikz]
      \cdot \ar[r,"f(gh)"] \ar[d,-,double] \ar[dr,phantom,"\choseniso"] & \cdot \ar[d,-,double] \\
      \cdot \ar[r,"(fg)h"'] & \cdot
    \end{tikzcd}
  \]

  Our discussion at the beginning of this section implies that
  two formal composites, i.e.\ squares in a free cubical bicategory,
  constructed from the same grid of squares are equal if and only if
  they have the same boundary. (By definition, the 2-cells in a free
  cubical bicategory on a double graph are compatible grids of squares
  with bracketed boundaries.)  In particular, any formal composite
  featuring only coherence cells is itself a coherence cell, since
  there is at most one formal composite with any given boundary
  featuring \emph{no} squares.

  We next verify the double bicategory laws. The double-categorical
  interchange laws are automatic from the cubical bicategory
  structure.
  To show the remaining laws, note that in a \emph{tidy} cubical
  bicategory, we have cancellation with respect to composing with
  identities. Therefore one strategy to show an equation between two
  squares is to compose both of them with identities and then to
  express the resulting two squares as formal composites derived from
  the same grid. (Then since we know these squares must be equal, by
  cancellation the original squares are equal.)

  Let us start with the unitor naturality laws. We must show that the
  following compositions with coherence bigons are equal:
  \[

  \]

  The reader is warned that the above calculations are somewhat
  subtle, as the visual representations omit detail. Formally, each
  diagram is to be decomposed into particular nested parenthesized
  grids. For example, in the above proof of the bigon-square
  sandwiching law, we start with a $5 \times 2$ grid. Then we
  reinterpret this as a $1 \times 2$ grid nested within the middle of
  a $3 \times 1$ grid nested within the middle of a $3 \times 1$ grid
  (using that both this and the previous composite represent the same
  formal $1 \times 2$ grid). Then we reinterpret the middle
  $3 \times 1$ grid as a $1 \times 3$ grid, and the rest of the
  argument proceeds symmetrically. Also note that in certain cases, a
  correct choice of parenthesization along the boundary allows the
  shown steps to be performed, whereas an incorrect choice of
  parenthesization does not. For example, in the same proof, the
  bracketing $(\justi((\justi\tikz[baseline=-2.5]{ \draw[->] (0,0) -- (.3,0); })\justi))\justi$ for the left and right boundaries
  works, whereas the bracketing $((\justi\justi)\tikz[baseline=-2.5]{ \draw[->] (0,0) -- (.3,0); })(\justi\justi)$ does not.
  
  Finally, we observe that the processes of translation between tidy
  double bicategories and tidy cubical bicategories are inverse. It is
  clear that a tidy double bicategory is recovered from the cubical
  bicategory structure of its underlying doubly weak double category,
  since all the data of \cref{defn:tidier} are included in the
  structure. Conversely, all the structure of a tidy cubical
  bicategory is determined by the underlying tidy double category
  structure, since an arbitrary grid composition operation is obtained
  by binarily composing the grid and acting with coherence isomorphism
  bigons to rebracket the boundary as desired.
\end{proof}

\begin{cor}\label{thm:cub-ff}
  The forgetful functor from doubly weak double categories to
  cubical bicategories is fully faithful.\qed
\end{cor}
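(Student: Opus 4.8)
The plan is to deduce this immediately from the proposition just proved, which exhibits the comparison functor as an equivalence onto the tidy cubical bicategories. First I would recall, via \cref{lem:garnermonad} and \cref{cor:cubical-cmp}, that the forgetful functor under consideration \emph{is} the canonical comparison functor from \bWDblCatst to the category of algebras for the monad on \bDblGph; thus the desired full faithfulness is precisely the assertion that this comparison functor is fully faithful. Since the preceding proposition already establishes that the comparison functor corestricts to an equivalence onto the subcategory of tidy cubical bicategories (\cref{defn:tidycubical}), everything that remains is formal.

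The key observation is that the tidy cubical bicategories constitute a \emph{full} subcategory of the cubical bicategories. Tidiness is a property of a cubical bicategory --- namely that the map induced by composing with an identity square is a bijection per boundary in each of the four directions --- and the accompanying conversion operations are uniquely pinned down as the inverses of these bijections. Because any morphism of cubical bicategories preserves identity squares and their composition, it automatically preserves these derived conversion operations; hence no extra condition is imposed on morphisms between tidy objects, and the inclusion of the tidy subcategory is fully faithful.

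The forgetful functor then factors as the corestriction onto the tidy subcategory (an equivalence, hence fully faithful) followed by this full inclusion (fully faithful), so the composite is fully faithful, which is exactly the claim. I do not anticipate any genuine obstacle here: the only point requiring a moment's care is confirming fullness of the tidy subcategory, and this reduces to the fact noted above that tidiness is carried by a property together with structure that is uniquely determined and preserved under all morphisms. (One could alternatively bypass the equivalence entirely and read off full faithfulness directly from the squares-only presentation of \cref{defn:tidier}, but invoking the established equivalence is the most economical route.)
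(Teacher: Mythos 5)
Your proposal is correct and matches the paper's intended argument: the corollary is stated with \verb|\qed| precisely because it follows immediately from the preceding proposition (the comparison functor is an equivalence onto the tidy cubical bicategories, and tidiness being a mere property with uniquely determined conversion operations makes that subcategory full). Your explicit verification of fullness of the tidy subcategory is exactly the implicit step the paper relies on, so nothing is missing or different in substance.
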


\begin{cor}
  The forgetful functor $\bWDblCatst \to \bDblGph$ is faithful and
  conservative.\qed
\end{cor}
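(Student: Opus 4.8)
The plan is to factor the forgetful functor $\bWDblCatst \to \bDblGph$ through the category of cubical bicategories, in exact parallel with the argument for double bicategories given just after \cref{cor:fftidy}. The point is that all the substantive work has already been done, and what remains is to assemble two formal properties of functors along a factorization.

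First I would recall that, by \cref{lem:garnermonad}, the category of cubical bicategories is the category of algebras for the monad on \bDblGph induced by the forgetful functor $\bWDblCatst \to \bDblGph$. Consequently the forgetful functor from cubical bicategories to \bDblGph is monadic, and every monadic functor is both faithful (an algebra homomorphism is determined by its underlying morphism of double graphs) and conservative (a homomorphism whose underlying morphism is invertible has an inverse that is automatically a homomorphism). Next I would invoke \cref{thm:cub-ff}: the canonical comparison functor from \bWDblCatst to cubical bicategories (that of \cref{cor:cubical-cmp}) is fully faithful. A fully faithful functor is visibly faithful, and it is also conservative because it reflects isomorphisms: if $Ff$ is invertible then its inverse is $Fh$ for a unique $h$ by fullness, and faithfulness forces $fh$ and $hf$ to be identities, so $f$ is invertible.

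Finally, since the comparison functor commutes with the two forgetful functors to \bDblGph, the forgetful functor $\bWDblCatst \to \bDblGph$ is exactly the composite of the comparison functor with the monadic forgetful functor. Both faithfulness and conservativity are closed under composition, so the composite is faithful and conservative, as claimed. There is no genuine obstacle here; the only step deserving a line of justification is the observation that fully faithful functors are conservative, which is the short reflection-of-isomorphisms argument sketched above, and everything else is bookkeeping about composites of faithful and conservative functors.
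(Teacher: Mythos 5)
Your proposal is correct and is essentially the paper's own argument: the corollary is deduced directly from \cref{thm:cub-ff} via the same factorization of the forgetful functor $\bWDblCatst \to \bDblGph$ through the category of cubical bicategories, which is monadic over $\bDblGph$ by \cref{lem:garnermonad}, with the comparison functor of \cref{cor:cubical-cmp} fully faithful. You have simply made explicit the formal bookkeeping (fully faithful and monadic functors are both faithful and conservative, and both properties are closed under composition) that the paper leaves implicit in its \qed.
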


Thus we can still regard a doubly weak double category as
``structure'' on an underlying double graph, though that structure is
not monadic.

\begin{rmk}
  Similarly to \cref{rmk:premonadic}, \cref{thm:cub-ff}
  says that the forgetful functor $\bWDblCat \to \bDblGph$ is \emph{of
    descent type} or \emph{premonadic}, and this implies that
  \emph{every doubly weak double category has a canonical presentation
    as a coequalizer of maps between doubly weak double categories that
    are freely generated by double graphs}.
\end{rmk}


\section{A finite axiomatization}\label{sec:finax}

Tidy double bicategories do constitute a finite axiomatization of
doubly weak double categories: they are essentially algebraic
(presenting a finite limit theory) with finitely many types, finitely
many operations, and finitely many equations.

However, they do not share the good property of the infinitary
definition in \cref{sec:doublewords} of being presented as monadic
over a presheaf category in which \emph{pseudofunctors} can also be
represented as presheaf maps.
%
%
%
(A tidy double bicategory requires operations whose domains involve
identity 1-cells; however, identity 1-cells are not strictly preserved
by pseudofunctors.)


We now present another finitary definition, exhibiting doubly weak
double categories as monadic over a presheaf category with domain a
finite subcategory of that of double computads.  The practical use of
this particular presentation is questionable, but the point is to
illustrate that something like it can be done. There are many axioms,
but most of them are adaptations of the axioms for double
bicategories.

A \textbf{monogon} in a double computad is a 2-cell of shape
\td{1}{0}{0}{0}, \td{0}{1}{0}{0}, \td{0}{0}{1}{0}, or \td{0}{0}{0}{1}.
A \textbf{double graph with monogons} is a double computad in which
all 2-cells are monogons or squares.
Let \bMoDblGph denote the category of
double graphs with monogons, a functor category whose domain is a
suitable full subcategory of $\lC_\d$:
\[
  \vcenter{\xymatrix@R=5mm@C=5mm{& \td{0}{0}{1}{0} \ar[dr] & \td{0}{1}{0}{0} \ar[d]\\
      \td{1}{0}{0}{0} \ar[dr] & \td{1}{1}{1}{1} \ar@<1mm>[r]
      \ar@<-1mm>[r] \ar@<-1mm>[d] \ar@<1mm>[d] &
      \tv \ar@<-1mm>[d] \ar@<1mm>[d]\\
      \td{0}{0}{0}{1} \ar[r] & \th \ar@<1mm>[r] \ar@<-1mm>[r] & \tz}}
\]

\begin{defn}\label{def:weakmono}
  A \textbf{weak composition structure} on a double graph with monogons
  consists of the following operations.
  \begin{itemize}
  \item Horizontal and vertical binary composition and identity
    operations for 1-cells and squares, as in a double bicategory.
  \item Four 2-cell composition operations sending two compatible
    squares and two compatible monogons to a square:
    \[

    \]
    (By the associativity laws above, we can use any of the three
    possible ways to compose the right hand side of the pentagon
    equation. Here and elsewhere we do not annotate how each diagram
    is built up from the basic composition operations, trusting the
    reader to compose the diagrams up in a suitable way.)
    
    Likewise, analogous vertical bicategory pentagon and triangle
    laws.
  \item The square interchange laws as in a double category (the
    identity compatibility law, the identity interchange laws, and the
    square composition interchange law).
  \item Interchange laws involving monogons and horizontal composition of squares:
    \[
      \bcom{\;

    \]
    Likewise, three other analogous (rotated) laws.
  \end{itemize}
\end{defn}

Any doubly weak double category has an underlying double graph with
monogons, equipped with weak composition structure. Conversely, we
have the following.

\begin{prop}
  Any double graph with monogons having a weak composition structure $\cat{X}$ has an underlying
  tidy double bicategory:
  \begin{itemize}
  \item The 0-cells, 1-cells, squares, 1-cell identities and
    composition, and square identities and composition are as in
    $\cat{X}$.
  \item The horizontal bigons are the squares in $\cat{X}$ bordered by
    vertical identities. The vertical bigons are the squares in
    $\cat{X}$ bordered by horizontal ientities.
  \item Horizontal compostion of horizontal bigons, horizontal
    unitors, and horizontal associators are as in $\cat{X}$. The top
    and bottom actions of horizontal bigons $\alpha$ on squares
    $\zeta$ are defined as
    \[
      \begin{tikzpicture}[xscale=0.5,yscale=-0.5,rotate=90]
        \draw [rect] (-2,-2) rectangle (2,2);
        \node (a) [ivs, inner sep=0,minimum width=12.5] at (-.75,0) {$\alpha$};
        \node (b) [ivs, inner sep=0,minimum width=12.5] at (.75,0) {$\zeta$};
        \node (c) [ivs, inner sep=0,minimum width=12.5] at (-.75,1.35) {$\choseniso$};
        \node (d) [ivs, inner sep=0,minimum width=12.5] at (-.75,-1.35) {$\choseniso$};
        \draw (.75,2) -- (b) -- (.75,-2);
        \draw (c) -- node [ed] {$\justi$} (a) -- node [ed] {$\justi$} (d);
        \draw (-2,0) -- (a) -- (b) -- (2,0);
      \end{tikzpicture}
      \qquad\qquad\qquad
      \begin{tikzpicture}[xscale=0.5,yscale=-0.5,rotate=90]
        \draw [rect] (-2,-2) rectangle (2,2);
        \node (a) [ivs, inner sep=0,minimum width=12.5] at (-.75,0) {$\zeta$};
        \node (b) [ivs, inner sep=0,minimum width=12.5] at (.75,0) {$\alpha$};
        \node (c) [ivs, inner sep=0,minimum width=12.5] at (.75,1.35) {$\choseniso$};
        \node (d) [ivs, inner sep=0,minimum width=12.5] at (.75,-1.35) {$\choseniso$};
        \draw (c) -- node [ed] {$\justi$} (b) -- node [ed] {$\justi$} (d);
        \draw (-.75,2) -- (a) -- (-.75,-2);
        \draw (-2,0) -- (a) -- (b) -- (2,0);
      \end{tikzpicture}
    \]
    and vertical composition of bigons $\alpha$ and $\beta$ is defined as
    \[
      \begin{tikzpicture}[xscale=0.5,yscale=-0.5,rotate=90]
        \draw [rect] (-2,-2) rectangle (2,2);
        \node (a) [ivs, inner sep=0,minimum width=12.5] at (-.75,0) {$\alpha$};
        \node (b) [ivs, inner sep=0,minimum width=12.5] at (.75,0) {$\beta$};
        \node (c) [ivs, inner sep=0,minimum width=12.5] at (-.75,1.35) {$\choseniso$};
        \node (d) [ivs, inner sep=0,minimum width=12.5] at (-.75,-1.35) {$\choseniso$};
        \draw (.75,2) -- node [ed] {$\justi$} (b) -- node [ed] {$\justi$} (.75,-2);
        \draw (c) -- node [ed] {$\justi$} (a) -- node [ed] {$\justi$} (d);
        \draw (-2,0) -- (a) -- (b) -- (2,0);
      \end{tikzpicture}
      \;=\;
      \begin{tikzpicture}[xscale=0.5,yscale=-0.5,rotate=90]
        \draw [rect] (-2,-2) rectangle (2,2);
        \node (a) [ivs, inner sep=0,minimum width=12.5] at (-.75,0) {$\alpha$};
        \node (b) [ivs, inner sep=0,minimum width=12.5] at (.75,0) {$\beta$};
        \node (c) [ivs, inner sep=0,minimum width=12.5] at (.75,1.35) {$\choseniso$};
        \node (d) [ivs, inner sep=0,minimum width=12.5] at (.75,-1.35) {$\choseniso$};
        \draw (c) -- node [ed] {$\justi$} (b) -- node [ed] {$\justi$} (d);
        \draw (-.75,2) -- node [ed] {$\justi$} (a) -- node [ed] {$\justi$} (-.75,-2);
        \draw (-2,0) -- (a) -- (b) -- (2,0);
      \end{tikzpicture}
    \]
    Similarly for the vertical bicategory.
  \end{itemize}
\end{prop}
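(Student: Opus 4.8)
The plan is to verify two things: first, that the described data indeed form a double bicategory in Verity's sense, and second, that this double bicategory is tidy. Since the 0-cells, 1-cells, squares, and their horizontal and vertical composition and identity operations are taken verbatim from $\cat{X}$, and the bigons are \emph{defined} to be the degenerate squares (those bordered by identity 1-cells in one direction), essentially all of the work amounts to matching each double bicategory axiom against a corresponding law in \cref{def:weakmono}.

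First I would establish the horizontal and vertical bicategory structures. The horizontal composition of horizontal bigons, their unitors, and their associators are inherited directly from $\cat{X}$; what must be checked is that these obey the bicategory axioms. Associativity and unit laws for bigon composition, together with the pentagon and triangle identities, follow from the unitor/associator invertibility laws, the naturality laws, and the explicit horizontal pentagon and triangle laws of the weak composition structure, once one unwinds that vertical composition of horizontal bigons is computed by composing the two degenerate squares and reassociating the boundary identities via the coherence monogons $\choseniso$. The vertical bicategory is handled symmetrically.

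Next I would verify the bigon-on-square action axioms. The top, bottom, left, and right actions are defined via the square-plus-two-monogons composition operations, so the identity and associativity axioms for each action, as well as the mutual commutativity of the four actions, follow from the sandwiching associativity laws, the ``squares composed beside monogons'' associativity laws, and the four-way associativity laws of \cref{def:weakmono}; the identity-square commutativity laws and the action compatibility laws supply the remaining coherence. The unitor and associator naturality laws for squares translate directly to the corresponding naturality laws in the weak composition structure, and the double-categorical interchange laws for squares are already part of the weak composition structure; the interchange laws relating bigons to square composition come from the monogon interchange laws.

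The final step, tidiness, is almost immediate: because the horizontal and vertical bigons are defined to be exactly the squares bordered by the relevant identity 1-cells, the canonical conversion map of \cref{defn:tidy} is, up to the coherence cells $\choseniso$, the identity on these squares, and the laws ensuring that the monogon-to-square map is undone by the degenerate-square-to-monogon map guarantee that composing with an identity square is inverse to this conversion; hence the map is a bijection per boundary and the double bicategory is tidy. I expect the main obstacle to be the sheer bookkeeping in the action and interchange axioms: each double bicategory law involving bigons must be rewritten in terms of degenerate squares and coherence monogons, and one must repeatedly invoke the associativity and identity laws of \cref{def:weakmono} to rebracket the boundaries so that the two sides become literally equal composites in $\cat{X}$.
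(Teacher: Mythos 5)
Your overall route coincides with the paper's: take the 0-cells, 1-cells, squares and their operations verbatim from $\cat{X}$, define bigons as degenerate squares, and match each double bicategory axiom against a law of \cref{def:weakmono}. Your attributions largely agree with what the paper asserts (square interchange is built into the structure; the bigon and bigon-on-square interchange laws come from the monogon interchange laws together with the identity composition monogon law and the identity laws; the naturality, pentagon, and triangle laws transcribe directly; and the remaining axioms, in the paper's words, ``correspond directly to laws of weak composition structure'').

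However, you skip the one verification that the paper's proof actually carries out in detail, and it is a genuine gap because it is part of the statement itself: the proposition \emph{asserts an equality} between two candidate definitions of vertical composition of horizontal bigons --- the composite in which the coherence monogons cap the identity boundaries of $\alpha$, versus the one in which they cap those of $\beta$ (and similarly for the vertical bicategory). Your sketch speaks of ``the'' vertical composition of bigons as a single reassociated composite, but these are a priori different composites in $\cat{X}$, and without their agreement the bigon composition of the would-be double bicategory is not well defined, nor are the subsequent axiom checks that choose one form or the other unambiguous. The paper establishes this equality by a several-step diagram computation invoking the identity laws, the identity composition monogons, and --- essentially --- the identity commutativity laws; it is not an instance of any single axiom and cannot be absorbed into generic ``rebracketing bookkeeping.'' A secondary misattribution: for tidiness you cite the laws saying the monogon-to-square map is undone by the degenerate-square-to-monogon conversion, but those laws concern \emph{monogons} versus squares degenerate on three sides (they are what the \emph{following} proposition needs); the tidiness condition of \cref{defn:tidy}, namely that $\beta\mapsto\ubcom{\beta}{\justi}$ is inverted by the (here trivial) identification of degenerate squares with bigons, rests instead on the identity laws together with identity commutativity, since it is these that make the capped composite of a bigon with an identity square return the bigon.
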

\begin{proof}
  Notice that the two ways of defining vertical composition of bigons
  do in fact agree, using identity laws, identity composition
  monogons, and identity commutativity:
  \[
    \begin{tikzpicture}[xscale=0.5,yscale=-0.5,rotate=90]
      \draw [rect] (-2,-2) rectangle (2,2);
      \node (a) [ivs, inner sep=0,minimum width=12.5] at (-.75,0) {$\alpha$};
      \node (b) [ivs, inner sep=0,minimum width=12.5] at (.75,0) {$\beta$};
      \node (c) [ivs, inner sep=0,minimum width=12.5] at (-.75,1.35) {$\choseniso$};
      \node (d) [ivs, inner sep=0,minimum width=12.5] at (-.75,-1.35) {$\choseniso$};
      \draw (.75,2) -- node [ed] {$\justi$} (b) -- node [ed] {$\justi$} (.75,-2);
      \draw (c) -- node [ed] {$\justi$} (a) -- node [ed] {$\justi$} (d);
      \draw (-2,0) -- (a) -- (b) -- (2,0);
    \end{tikzpicture}
    \;=\;
    \begin{tikzpicture}[xscale=0.45,yscale=-0.5,rotate=90]
      \draw [rect] (-2.9,-2.9) rectangle (2.9,2.9);
      \node (a) [ivs, inner sep=0,minimum width=12.5] at (-1.65,0) {$\alpha$};
      \node (b) [ivs, inner sep=0,minimum width=12.5] at (.75,0) {$\beta$};
      \node (c) [ivs, inner sep=0,minimum width=12.5] at (-1.65,1.35) {$\choseniso$};
      \node (d) [ivs, inner sep=0,minimum width=12.5] at (-1.65,-1.35) {$\choseniso$};
      \node (e) [ivs, inner sep=0,minimum width=12.5] at (.75,1.75) {$\justi$};
      \node (f) [ivs, inner sep=0,minimum width=12.5] at (.75,-1.75) {$\justi$};
      \node (el) [ivs, inner sep=0,minimum width=12.5] at (-.6,1.75) {$\choseniso$};
      \node (fl) [ivs, inner sep=0,minimum width=12.5] at (-.6,-1.75) {$\choseniso$};
      \node (er) [ivs, inner sep=0,minimum width=12.5] at (2.1,1.75) {$\choseniso$};
      \node (fr) [ivs, inner sep=0,minimum width=12.5] at (2.1,-1.75) {$\choseniso$};
      \draw (.75,2.9) -- node [ed] {$\justi$} (e) -- node [ed] {$\justi$} (b) -- node [ed] {$\justi$} (f) -- node [ed] {$\justi$} (.75,-2.9);
      \draw (c) -- node [ed] {$\justi$} (a) -- node [ed] {$\justi$} (d);
      \draw (el) -- node [ed] {$\justi$} (e) -- node [ed] {$\justi$} (er);
      \draw (fl) -- node [ed] {$\justi$} (f) -- node [ed] {$\justi$} (fr);
      \draw (-2.9,0) -- (a) -- (b) -- (2.9,0);
    \end{tikzpicture}
    \;=\;
    \begin{tikzpicture}[scale=0.45,rotate=90]
      \draw [rect] (-4.5,-4.5) rectangle (4.5,4.5);
      \node (ld) [ivs, inner sep=0,minimum width=12.5] at (-.75,-1.35) {$\choseniso$};
      \node (rd) [ivs, inner sep=0,minimum width=12.5] at (3,-1.35) {$\choseniso$};
      \node (dl) [ivs, inner sep=0,minimum width=12.5] at (-1.75,-2.35) {$\choseniso$};
      \node (dr) [ivs, inner sep=0,minimum width=12.5] at (.25,-2.35) {$\choseniso$};
      \node (dll) [ivs, inner sep=0,minimum width=12.5] at (-3.1,-2.35) {$\choseniso$};
      \node (drr) [ivs, inner sep=0,minimum width=12.5] at (1.6,-2.35) {$\choseniso$};
      \node (dd) [ivs, inner sep=0,minimum width=12.5] at (-.75,-3.35) {$\choseniso$};
      \node (lu) [ivs, inner sep=0,minimum width=12.5] at (-.75,1.35) {$\choseniso$};
      \node (ru) [ivs, inner sep=0,minimum width=12.5] at (3,1.35) {$\choseniso$};
      \node (ul) [ivs, inner sep=0,minimum width=12.5] at (-1.75,2.35) {$\choseniso$};
      \node (ur) [ivs, inner sep=0,minimum width=12.5] at (.25,2.35) {$\choseniso$};
      \node (ull) [ivs, inner sep=0,minimum width=12.5] at (-3.5,2.35) {$\choseniso$};
      \node (urr) [ivs, inner sep=0,minimum width=12.5] at (1.6,2.35) {$\choseniso$};
      \node (uu) [ivs, inner sep=0,minimum width=12.5] at (-.75,3.35) {$\choseniso$};
      \node (lm) [ivs, inner sep=0,minimum width=12.5] at (-.75,0) {$\beta$};
      \node (rm) [ivs, inner sep=0,minimum width=12.5] at (3,0) {$\alpha$};
      \draw (dl) -- node [ed] {$1$} (dll);
      \draw (dr) -- node [ed] {$1$} (drr);
      \draw (dd) -- node [ed] {$1$} (-.75,-4.5);
      \draw (ul) -- node [ed] {$1$} (ull);
      \draw (ur) -- node [ed] {$1$} (urr);
      \draw (uu) -- node [ed] {$1$} (-.75,4.5);
      \draw (lu) -- node [ed] {$1$} (lm) -- node [ed] {$1$} (ld);
      \draw (ru) -- node [ed] {$1$} (rm) -- node [ed] {$1$} (rd);
      \draw (-4.5,0) -- (lm) -- (rm) -- (4.5,0);
    \end{tikzpicture}
    \;=\;
    \begin{tikzpicture}[scale=0.5,rotate=-90]
      \draw [rect] (-2,-2) rectangle (2,2);
      \node (l) [ivs, inner sep=0,minimum width=12.5] at (-1.25,0) {$\alpha$};
      \node (m) [ivs, inner sep=0,minimum width=12.5] at (0,0) {$\justi$};
      \node (r) [ivs, inner sep=0,minimum width=12.5] at (1.25,0) {$\beta$};
      \node (lu) [ivs, inner sep=0,minimum width=12.5] at (-1.25,1.25) {$\choseniso$};
      \node (ld) [ivs, inner sep=0,minimum width=12.5] at (-1.25,-1.25) {$\choseniso$};
      \node (ru) [ivs, inner sep=0,minimum width=12.5] at (1.25,1.25) {$\choseniso$};
      \node (rd) [ivs, inner sep=0,minimum width=12.5] at (1.25,-1.25) {$\choseniso$};
      \draw (lu) -- node [ed] {$\justi$} (l) -- node [ed] {$\justi$} (ld);
      \draw (ru) -- node [ed] {$\justi$} (r) -- node [ed] {$\justi$} (rd);
      \draw (0,2) -- node [ed] {$\justi$} (m) --  node [ed] {$\justi$}(0,-2);
      \draw (-2,0) -- (l) -- (m) -- (r) -- (2,0);
    \end{tikzpicture}
  \]
  
  The interchange laws for the bicategories and for the
  bigon-on-square actions come straightforwardly from the monogon
  interchange laws, identity composition monogon law, and identity
  laws.

  All other laws of a double bicategory correspond directly to laws of
  weak composition structure.
\end{proof}

\begin{prop}
  The category of double graphs with monogons equipped with weak
  composition structure (and homomorphisms) is equivalent to the
  category of doubly weak double categories (and strict functors)
  $\bWDblCatst$.
\end{prop}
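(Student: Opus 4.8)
The plan is to exhibit mutually inverse functors between $\bWDblCatst$ and the category of double graphs with monogons equipped with weak composition structure. One direction, call it $F'$, is essentially the preceding proposition composed with \cref{prop:tidy}: a weak composition structure yields a tidy double bicategory, which in turn corresponds to a doubly weak double category. For the reverse functor $U'$, I would send a doubly weak double category $\cat{C}$ to its underlying double graph with monogons --- that is, the 0-cells, 1-cells, square 2-cells, and monogon 2-cells of $\cat{C}$ --- equipped with the operations induced by composition in $\cat{C}$. Since $\cat{C}$ is a representable implicit double category, it has 2-cells of every shape and can compose any compatible configuration, so every operation listed in \cref{def:weakmono} is present: the binary and identity compositions, the mixed square--monogon and four-monogon compositions, the unitor and associator squares, and the identity-composition monogons (the latter two families arising from representability). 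Verifying that these satisfy the axioms of \cref{def:weakmono} is then a routine matter of specializing the associativity, unit, interchange, and representability laws already available in $\cat{C}$.

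Next I would check that $F'$ and $U'$ are mutually inverse up to natural isomorphism. For the round trip $F' U'$ starting at a doubly weak double category, note that by \cref{prop:tidy} (via the squares-only presentation of \cref{defn:tidier}) such a structure is determined up to isomorphism by its squares together with their composition, unitor, and associator data. Since $U'$ retains all of this square data intact and $F'$ reconstructs the doubly weak double category from exactly that data --- discarding only the monogons --- the composite $F' U'$ is naturally isomorphic to the identity.

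The more delicate round trip is $U' F'$, because the passage to a tidy double bicategory in the preceding proposition discards the monogons of a weak composition structure $\cat{X}$ entirely, building bigons out of degenerate squares instead. The key point, and the step I expect to be the main obstacle, is that the monogons of $\cat{X}$ are nonetheless faithfully recovered by $U'$. This is guaranteed by the final family of laws in \cref{def:weakmono}, which assert that the canonical map sending a monogon to a degenerate square is inverse to the canonical map sending an appropriately degenerate square back to a monogon. Under this bijection the monogons of $\cat{X}$ are identified with precisely the degenerate squares that become the monogon 2-cells of the associated doubly weak double category, so no data is lost and $U' F' \cat{X} \cong \cat{X}$. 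Once this bijection is in hand, one verifies that it carries each weak composition operation of $\cat{X}$ to the corresponding induced operation --- a large but mechanical check, since every operation on both sides is assembled by composing squares and monogons with coherence cells. Functoriality on morphisms and naturality of both isomorphisms then follow at once, since strict functors preserve all the operations involved.
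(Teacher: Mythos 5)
Your proposal takes essentially the same route as the paper: one direction is the composite of the preceding proposition with \cref{prop:tidy}, the other is the forgetful functor to double graphs with monogons, and the crux of the comparison is the bijection between monogons and squares bordered by identities on three sides, under which all the weak composition operations are identified with operations assembled from the doubly weak double category structure.

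There is, however, one step you assert as given that actually requires proof, and it is precisely the step the paper's proof opens with. The final family of laws in \cref{def:weakmono} stipulates only \emph{one} composite: converting a monogon to a square (via the 4-ary operation with coherence monogons) and back yields the original monogon. You claim the laws ``assert that the canonical map sending a monogon to a degenerate square is inverse to'' the map back, but the other composite --- starting from a square $\zeta$ bordered by identities on three sides, converting to a monogon and back --- is \emph{not} an axiom. The paper derives it: using the associativity laws, the round-trip composite is rewritten as $\zeta$ composed alongside an identity square flanked by identity-composition monogons, which the identity laws collapse to $\zeta$. Without this short derivation your bijection, and hence the isomorphism $U'F'\cat{X}\cong\cat{X}$, is not yet established. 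Relatedly, the ``large but mechanical check'' you defer is where the paper does concrete work: it exhibits the 4-ary monogon-to-square operation and the mixed square--monogon compositions explicitly as composites of triply-degenerate squares and coherence cells interpretable in the tidy double bicategory, which is what shows every operation of the weak composition structure is determined by (and recovered from) the doubly weak double category. With the missing derivation supplied, your argument matches the paper's.
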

\begin{proof}
  First, observe that the canonical maps between squares
  bordered by identities on three sides and monogons
  \[

      \;}\,.
  \]
\end{rmk}

\begin{rmk}
  Less minimal than squares and monogons, but perhaps more natural, is
  the full subcategory $\lE \into \lC_\d$ including $\tz$, $\th$,
  $\tv$, and $\td{a}{b}{c}{d}$ for all $a, b, c, d \leq 1$, so that
  $[\lE, \bSet]$ gives the ``subunary'' double computads. An
  axiomatization for doubly weak double categories presenting a monad
  on $[\lE, \bSet]$ could presumably be given involving a large number
  of binary 2-cell composition operations, removing the need for the
  unusual 4-ary operations we have given.  As a middle ground, one
  could also give a definition using monogons, bigons, and squares
  (involving both binary and ternary 2-cell composition operations).

  It is tempting to conjecture that the forgetful functor
  $\bWDblCatst \to [\lE, \bSet]$ will be monadic when $\lE$ is any
  full subcategory of $\lC_\d$ including the 0-cells, 1-cells,
  monogons, and squares. However, this appears not to be true:
  consider the case where $\lE$ consists of only these and the 2-cell
  shape $\td{2}{0}{0}{0}$.
\end{rmk}


\appendix
\crefalias{section}{appendix}

\section{Transformations and modifications}\label{sec:twowords-hom}

In this section we discuss transformations and modifications of
{\adjective} structures. We will see that when $\cat{C}$ and $\cat{D}$
are {\twowords}, we obtain {\aan} {\twoword} $\Hom(\cat{C}, \cat{D})$;
in the case $\cat{C}$ and $\cat{D}$ are representable, this is the
usual bicategory of transformations and modifications of
bicategories. More than this, we will see that the \emph{lax and
  colax} transformations of {\twowords} (resp.\ bicategories) assemble
into {\aan} \emph{\doubleword} (resp.\ \emph{doubly weak double
  category}) $\Homl(\cat{C}, \cat{D})$, providing a natural source of
examples of doubly weak double categories.

It is also true that when $\cat{C}$ and $\cat{D}$ are {\doublewords}
(resp.\ doubly weak double categories), we have {\aan} {\doubleword}
(resp.\ doubly weak double category) $\Hom(\cat{C},
\cat{D})$. However, we will focus on the 2-categorical case. This is
for reasons of space and also because we are unable to provide
motivation for studying transformations and modifications of doubly
weak double categories (we have no examples). Still, all of the
definitions in this section readily generalize to double-categorical
analogues.


To figure out what the content of $\Hom(\cat{C}, \cat{D})$ ought to
be, recall the defining property of an internal hom: it is universal
such that $\cat{C} \otimes \Hom(\cat{C}, \cat{D})$ maps into
$\cat{D}$. However, this leaves us to wonder what the monoidal product
$\otimes$ ought to be. In ordinary 2-category theory, the relevant
monoidal product is the \emph{Gray tensor product}~\cite{gray:formal},
which composes 2-categories as if they were the homs in a semistrict
tricategory (so that closure for $\otimes$ induces a semistrict
tricategory of 2-categories).

This composition can be represented very cleanly using string diagrams, as described in~\cite{morehouse:two}.
Namely, a string diagram for $\cat{C} \otimes \cat{D}$ consists of a
string diagram for $\cat{C}$ superimposed over a string diagram for
$\cat{D}$.
For example, diagrams in
$\cat{C} \cong \Hom(\cat{1}, \cat{C})$ can be composed with
diagrams in $\Hom(\cat{C}, \cat{D})$ to yield diagrams in
$\cat{D} \cong \Hom(\cat{1}, \cat{D})$:

\[\cat{1} \;\;
  \begin{tikzpicture}[xscale=.1,yscale=.15,yslant=.55]
    \draw [rect] (-6,-6) rectangle (6,6);
    \node [ov] (u) at (-3.5, 6) {};
    \node [ov] (v) at (3.5, 6) {};
    \node [iv] (a) at (0, 0) {\small$\alpha$};
    \node [ov] (x) at (-3.5, -6) {};
    \node [ov] (y) at (3.5, -6) {};
    \node [ed] at (-3.5, 0) {$A$};
    \node [ed] at (0, 3.5) {$B$};
    \node [ed] at (3.5, 0) {$C$};
    \node [ed] at (0, -3.5) {$D$};
    \draw (u) [bend right=15] to node [ed] {$f$} (a);
    \draw (v) [bend left=15] to node [ed] {$g$} (a);
    \draw (a) [bend right=15] to node [ed] {$h$} (x);
    \draw (a) [bend left=15] to node [ed] {$i$} (y);
  \end{tikzpicture}
  \;\; \cat{C} \;\;
  \begin{tikzpicture}[xscale=.1,yscale=.15,yslant=.55]
    \begin{scope}
      \clip [rect] (-6,-6) rectangle (6,6);
      \fill [fun, pattern=north east lines](-6,-6) rectangle (0,6);
      \fill [fun, pattern=grid](0,-6) rectangle (6,6);
      \node [ed] at (-3.5, 0) {$F$};
      \node [ed] at (3.5, 0) {$G$};
      \node [ov] (u) at (0, 6) {};
      \node [ov] (x) at (0, -6) {};
      \draw [nat] (u) to node [nated] {\small$\sigma$} (x);
    \end{scope}
    \draw [rect] (-6,-6) rectangle (6,6);
  \end{tikzpicture}
  \;\; \cat{D} \qquad
  \begin{tikzpicture}
    \begin{scope}[scale=.25,line width=.35mm]
      \draw (-1, 1) [out=-45, in=180,looseness=.75] to (1, 0) [out=180, in=45,looseness=.75] to (-1, -1);
      \clip (-1, 1) [out=-45, in=180,looseness=.75] to (1, 0)[out=180, in=45,looseness=.75] to (-1, -1) [bend left=10] to cycle;
      \draw (.3,0) circle (1.25);
      \clip (.3,0) circle (1.25);
      \draw [fill=black] (-1,0) circle (.55);
    \end{scope}
  \end{tikzpicture}
  \qquad\qquad\quad
  \begin{tikzpicture}[scale=.2]
    \begin{scope}
      \clip [rect] (-6,-6) rectangle (6,6);
      \node [ov] (u) at (-3.5, 6) {};
      \node [ov] (v) at (3.5, 6) {};
      \node [iv] (a) at (-1, 0) {\normalsize$\alpha$};
      \node [ov] (x) at (-3.5, -6) {};
      \node [ov] (y) at (3.5, -6) {};
      \node [ed] at (-4.25, 0) {\small$A$};
      \node [ed] at (-.95, 3.25) {\small$B$};
      \node [ed] at (4.25, 0) {\small$C$};
      \node [ed] at (-.95, -3.25) {\small$D$};
      \draw (u) [bend right=15] to node [ed,pos=.4] {\small$f$} (a);
      \draw (v) [bend left=15] to node [ed,pos=.25] {\small$g$} (a);
      \draw (a) [bend right=15] to node [ed,pos=.6] {\small$h$} (x);
      \draw (a) [bend left=15] to node [ed,pos=.75] {\small$i$} (y);
      
      \node [ov] (u) at (0, 6) {};
      \node [ov] (x) at (0, -6) {};
      \node [ov] (a) at (2.25, 0) {};
      \fill [fun, pattern=north east lines](-6,6) -- (u) .. controls (0, 4) and (2.25, 3) .. (a) .. controls (2.25, -3) and (0, -4) .. (x) -- (-6,-6) -- cycle;
      \fill [fun, pattern=grid](6,6) -- (u) .. controls (0, 4) and (2.25, 3) .. (a) .. controls (2.25, -3) and (0, -4) .. (x) -- (6,-6) -- cycle;
      \draw [nat] (u) .. controls (0, 4) and (2.25, 3) .. (a) .. controls (2.25, -3) and (0, -4) .. (x);
    \end{scope}
    \draw [rect] (-6,-6) rectangle (6,6);
  \end{tikzpicture}
  \;\;=\;\;
  \begin{tikzpicture}[scale=.2]
    \begin{scope}
      \clip [rect] (-6,-6) rectangle (6,6);
      \node [ov] (u) at (-3.5, 6) {};
      \node [ov] (v) at (3.5, 6) {};
      \node [iv] (a) at (1, 0) {\normalsize$\alpha$};
      \node [ov] (x) at (-3.5, -6) {};
      \node [ov] (y) at (3.5, -6) {};
      \node [ed] at (-4.25, 0) {\small$A$};
      \node [ed] at (.95, 3.25) {\small$B$};
      \node [ed] at (4.25, 0) {\small$C$};
      \node [ed] at (.95, -3.25) {\small$D$};
      \draw (u) [bend right=15] to node [ed,pos=.25] {\small$f$} (a);
      \draw (v) [bend left=15] to node [ed,pos=.4] {\small$g$} (a);
      \draw (a) [bend right=15] to node [ed,pos=.75] {\small$h$} (x);
      \draw (a) [bend left=15] to node [ed,pos=.6] {\small$i$} (y);
      
      \node [ov] (u) at (0, 6) {};
      \node [ov] (x) at (0, -6) {};
      \node [ov] (a) at (-2.25, 0) {};
      \fill [fun, pattern=north east lines](-6,6) -- (u) .. controls (0, 4) and (-2.25, 3) .. (a) .. controls (-2.25, -3) and (0, -4) .. (x) -- (-6,-6) -- cycle;
      \fill [fun, pattern=grid](6,6) -- (u) .. controls (0, 4) and (-2.25, 3) .. (a) .. controls (-2.25, -3) and (0, -4) .. (x) -- (6,-6) -- cycle;
      \draw [nat] (u) .. controls (0, 4) and (-2.25, 3) .. (a) .. controls (-2.25, -3) and (0, -4) .. (x);
    \end{scope}
    \draw [rect] (-6,-6) rectangle (6,6);
  \end{tikzpicture}
\]

\[
  \begin{tikzpicture}[scale=.325]
    \begin{scope}
      \clip [rect] (-6,-6) rectangle (6,6);
      \node [ov] (u) at (-3.5, 6) {};
      \node [ov] (v) at (3.5, 6) {};
      \node [iv, inner sep=2.3] (a) at (-1, 0) {\normalsize$\alpha$};
      \node [ov] (x) at (-3.5, -6) {};
      \node [ov] (y) at (3.5, -6) {};
      \node [ed] at (-4.25, 0) {\small$A$};
      \node [ed] at (-.95, 3.25) {\small$B$};
      \node [ed] at (4.4, 0) {\small$C$};
      \node [ed] at (-.95, -3.25) {\small$D$};
      \draw (u) [bend right=15] to node [ed,pos=.4] {\small$f$} (a);
      \draw (v) [bend left=15] to node [ed,pos=.25] {\small$g$} (a);
      \draw (a) [bend right=15] to node [ed,pos=.6] {\small$h$} (x);
      \draw (a) [bend left=15] to node [ed,pos=.75] {\small$i$} (y);
      
      \node [ov] (u) at (0, 6) {};
      \node [ov] (x) at (0, -6) {};
      \node [ov] (a) at (2.875, 0) {};
      \fill [fun, pattern=north east lines](-6,6) -- (u) .. controls (0, 4) and (2.875, 3) .. (a) .. controls (2.875, -3) and (0, -4) .. (x) -- (-6,-6) -- cycle;
      \fill [fun, pattern=grid](6,6) -- (u) .. controls (0, 4) and (2.875, 3) .. (a) .. controls (2.875, -3) and (0, -4) .. (x) -- (6,-6) -- cycle;
      \draw [nat] (u) .. controls (0, 4) and (2.875, 3) .. (a) .. controls (2.875, -3) and (0, -4) .. (x);
    \end{scope}
    \draw [rect] (-6,-6) rectangle (6,6);
  \end{tikzpicture}
  \;\;\;\;\coloneqq\;\;\;\;
  \begin{tikzpicture}[scale=.325]
    \begin{scope}
      \clip [rect] (-6,-6) rectangle (6,6);
      \node [ov] (u) at (-3.5, 6) {};
      \node [ov] (v) at (3.5, 6) {};
      \node [iv] (a) at (-1, 0) {$F\alpha$};
      \node [ov] (x) at (-3.5, -6) {};
      \node [ov] (y) at (3.5, -6) {};
      \node [ed] at (-4.25, 0) {$FA$};
      \node [ed] at (-.95, 3.25) {$FB$};
      \node [ed] at (4.5, 0) {$GC$};
      \node [ed] at (-.95, -3.25) {$FD$};
      \node [ed] at (1.15, 0) {$FC$};
      \node [ed] at (1.7, 5.25) {$GB$};
      \node [ed] at (1.7, -5.25) {$GD$};
      \draw (u) [bend right=15] to node [ed,pos=.4] {$Ff$} (a);
      \draw (v) [bend left=15] to node [ed,pos=.25] {$Gg$} node [ed,pos=.8] {$Fg$} (a);
      \draw (a) [bend right=15] to node [ed,pos=.6] {$Fh$} (x);
      \draw (a) [bend left=15] to node [ed,pos=.2] {$Fi$} node [ed,pos=.75] {$Gi$} (y);
      
      \node [ov] (u) at (0, 6) {};
      \node [ov] (x) at (0, -6) {};
      \node [ov] (a) at (2.875, 0) {};
      \draw (u) .. controls (0, 4) and (2.875, 3) .. node [ed,pos=.31] {$\sigma_A$} (a) node [ed] {$\sigma_C$} .. controls (2.875, -3) and (0, -4) .. node [ed,pos=.69] {$\sigma_D$} (x);
      \node [iv,fill=white,inner sep=.3] at (1.8, 2.875) {\scalebox{.75}{$\sigma_g\!\!\!\scalebox{.75}{${}^{-1}$}$}};
      \node [iv,fill=white] at (1.8, -2.875) {$\sigma_i$};
    \end{scope}
    \draw [rect] (-6,-6) rectangle (6,6);
  \end{tikzpicture}
\]

The Gray tensor product is easy to express in terms of {\adjective}
structures. Recall that a \textbf{shuffle} of linearly ordered sets
is a compatible linear order on their disjoint union.

\begin{defn}\label{defn:tgray}
  Let $\cat{C}$ and $\cat{D}$ be {\twowords}. The \textbf{Gray tensor product} of
  $\cat{C}$ and $\cat{D}$, denoted $\cat{C} \otimes \cat{D}$, is {\aan} {\twoword} defined as
  follows.
  \begin{itemize}
  \item A 0-cell in $\cat{C} \otimes \cat{D}$ is a pair $\grpair{c}{d}$ of a
    0-cell $c$ in $\cat{C}$ and a 0-cell $d$ in $\cat{D}$.
  \item A  1-cell in $\cat{C} \otimes \cat{D}$ is \emph{either}
    \begin{itemize}
    \item a pair
      $\grpair{f}{d} \colon \grpair{c}{d} \to \grpair{c'}{d}$ of a
      1-cell $f \colon c \to c'$ in $\cat{C}$ and a 1-cell $d$ in $\cat{D}$,
      \emph{or}
    \item a pair
      $\grpair{c}{g} \colon \grpair{c}{d} \to \grpair{c}{d'}$ of a
      0-cell $c$ in $\cat{C}$ and a 1-cell $g \colon d \to d'$ in $\cat{D}$.
    \end{itemize}

    Equivalently, a path of 1-cells in $\cat{C} \otimes \cat{D}$ is a
    \emph{shuffle} of a path of 1-cells in $\cat{C}$ and a path of
    1-cells in $\cat{D}$.
  \item A 2-cell in $\cat{C} \otimes \cat{D}$, with source and target
    each a shuffle of a path in $\cat{C}$ and a path in $\cat{D}$, is
    a pair $\grpair{\alpha}{\beta}$ of a 2-cell $\alpha$ with the
    source and target paths in $\cat{C}$ and a 2-cell $\beta$ with the
    source and target paths in $\cat{D}$.
  \item Composition of 2-cells is by composition in $\cat{C}$ and
    $\cat{D}$.
  \end{itemize}
\end{defn}

We also define $\otimes$ on functors in the obvious way: if
$F \colon \cat{C} \to \cat{D}$ and $G \colon \cat{C}' \to \cat{D}'$
are functors of {\twowords}, then $F \otimes G$ sends each cell called
$\grpair{x}{y}$ to the cell called $\grpair{F(x)}{G(y)}$ with
appropriate boundary.

\begin{rmk}
  This is the usual Gray tensor product of strict 2-categories,
  specialized to {\twowords} (i.e.\ the path 2-category of the Gray tensor product of
  {\twowords} is the usual Gray tensor product of their path
  2-categories). The description of the 2-cells given here follows
  from the equivalence (see e.g.~\cite[Corollary 3.22]{gurski})
  between the Gray tensor product of 2-categories
  $\cat{C} \otimes \cat{D}$ and the cartesian product of 2-categories
  $\cat{C} \times \cat{D}$.
\end{rmk}

\begin{rmk}\label{rmk:grayunit-two}
  The above definition easily generalizes from a binary product to an
  $n$-ary product, by replacing pairs and binary shuffles with $n$-tuples
  and $n$-ary shuffles. In particular, observe that the empty Gray
  tensor product defined in this way is {\aan} {\twoword} with one
  0-cell denoted $\grempty$ and no other non-identity cells.
\end{rmk}

\begin{prop}\label{prop:symmetric-two}
  \tword is symmetric monoidal with respect to $\otimes$.
\end{prop}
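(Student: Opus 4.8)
The plan is to verify the symmetric monoidal axioms by working cellwise through the shuffle description of \cref{defn:tgray}, and to discharge the coherence conditions by appealing to the identification (noted in the remark following \cref{defn:tgray}) of $\cat{C} \otimes \cat{D}$ with the (pseudo) Gray tensor product of the path 2-categories, for which symmetric monoidality is classical \cite{gray:formal,gurski}. First I would confirm that $\otimes$ is a well-defined bifunctor $\tword \times \tword \to \tword$. On objects, the underlying 1-category of $\cat{C} \otimes \cat{D}$ is free: its 1-cells are shuffles of free paths in $\cat{C}$ and $\cat{D}$, and since composability forces any word in the pair-generators $\grpair{f}{d}$ and $\grpair{c}{g}$ to restrict to a path in each factor, such words are precisely the shuffles, exhibiting the free category on the generating pair-1-cells. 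On morphisms, $F \otimes G$ sends each generator $\grpair{f}{d}$ or $\grpair{c}{g}$ to a generator, so it is a morphism of implicit 2-categories; bifunctoriality is then immediate from the pairwise definition of composition.

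Next I would introduce the unit and the structure isomorphisms. The unit is the implicit 2-category $\cat{1}$ with a single 0-cell $\grempty$ (the empty Gray product of \cref{rmk:grayunit-two}). Using the $n$-ary generalization of \cref{rmk:grayunit-two}, I would define the associator $(\cat{C} \otimes \cat{D}) \otimes \cat{E} \cong \cat{C} \otimes (\cat{D} \otimes \cat{E})$ as the evident relabeling of a shuffle of three paths (a ternary shuffle being built associatively from binary ones), the left and right unitors as the deletion of the empty factor, and the braiding $\sigma_{\cat{C},\cat{D}} \colon \cat{C} \otimes \cat{D} \cong \cat{D} \otimes \cat{C}$ as the operation transposing the two factors of every 0-cell, shuffle, and cell-pair $\grpair{\alpha}{\beta}$. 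Each of these is defined on cells by a bijection of the underlying shuffle and pair data, hence sends generators to generators and is invertible in \tword.

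I would then check naturality cellwise—straightforward, since each isomorphism acts by relabeling the pair and shuffle data that $F \otimes G$ transports—and reduce the coherence axioms (pentagon, triangle, the two hexagons, and the involutivity $\sigma_{\cat{D},\cat{C}} \circ \sigma_{\cat{C},\cat{D}} = \id$) to the corresponding statements for path 2-categories. Because $\cat{C} \otimes \cat{D}$ is the Gray tensor product of the path 2-categories, each structure isomorphism above is the restriction of the corresponding Gray isomorphism, and the path 2-category functor $\tword \to \tcat$ is faithful (a morphism of implicit 2-categories is exactly a 2-functor of path 2-categories preserving generators), any coherence diagram that commutes in \tcat commutes already in \tword; and these diagrams commute in \tcat by the classical symmetric monoidal structure of the Gray tensor product.

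The main obstacle I anticipate is the braiding rather than the associator or unitors: whereas associativity and unitality only permute or delete shuffle data, the braiding must correctly reverse the Gray interchangers—the cells $\grpair{\id_f}{\id_g}$ whose source and target shuffles interleave the same pair of paths in opposite orders. I would need to verify that transposing factors sends each such interchanger to the interchanger pointing the other way, and that this is compatible with the hexagons; here the fact that the interchangers are \emph{invertible} (being pairs of identity 2-cells) is precisely what upgrades a braiding to a genuine symmetry and yields $\sigma_{\cat{D},\cat{C}} \circ \sigma_{\cat{C},\cat{D}} = \id$. Confirming this compatibility is the one place where the pseudo (rather than lax) character of the tensor product is essential.
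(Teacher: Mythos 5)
Your proposal is correct, and the construction of the structure isomorphisms matches the paper's: the paper's (very brief) proof sketch says exactly that functoriality is immediate and that associativity, unitality, and symmetry hold ``by reparenthesizing and reordering the names of tuples,'' using the $n$-ary description of \cref{rmk:grayunit-two}. Where you diverge is in discharging the coherence axioms: the paper checks them directly, which is essentially trivial in this framework because every structure map is a pure relabeling of tuple and shuffle data, so the pentagon, triangle, hexagons, and involutivity of the braiding commute on the nose; you instead route through the identification of $\cat{C} \otimes \cat{D}$ with the Gray tensor product of path 2-categories, faithfulness of the path functor $\tword \to \tcat$, and the classical symmetric monoidal coherence of the Gray tensor \cite{gray:formal,gurski}. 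Your reduction is valid (the path functor is indeed a faithful subcategory inclusion, as noted later in the paper, and the structure maps do restrict to the classical ones), and it buys you an appeal to established results at the cost of heavier machinery; the direct check buys a genuinely elementary, self-contained argument, and is arguably the point of the implicit formulation. Relatedly, the obstacle you anticipate for the braiding dissolves at the implicit level: a 2-cell of $\cat{C} \otimes \cat{D}$ over a given shuffled boundary is literally a pair $\grpair{\alpha}{\beta}$, composition is componentwise, and the interchangers are pairs of identities present with \emph{every} choice of source and target shuffle, hence mutually inverse by construction; transposing factors tautologically carries each to the corresponding interchanger in $\cat{D} \otimes \cat{C}$, so no verification about reversing interchangers is needed. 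Your observation that invertibility of interchangers is what makes the pseudo (as opposed to lax) tensor symmetric is nonetheless correct in spirit, and is consistent with \cref{rmk:laxgray-two}, where the (co)lax Gray tensor is noted to be only a non-symmetric monoidal product.
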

\begin{proof}[Sketch of proof]
  Functoriality of $\otimes$ is immediate from the
  definition. Moreover, $\otimes$ is associative, unital
  (\cref{rmk:grayunit-two}), and symmetric up to coherent natural
  isomorphism, by reparenthesizing and reordering the names of tuples.
\end{proof}

In \cref{sec:twowords} we defined {\aan} {\twoword} as a strict
2-category whose 1-cells are free, and we defined a functor of
{\twowords} as a 2-functor sending the generating 1-cells to
generating 1-cells. Now we define a \textbf{(lax or colax)
  transformation} of {\twoword} functors as a (lax or colax) natural
transformation of 2-functors whose components are generating 1-cells,
and we define a \textbf{modification} of {\twoword} transformations as
a modification of (compositions of) these 2-category natural
transformations. We spell out the details below.

These definitions are appropriate in that they provide closure for the
Gray tensor product (to be shown in \cref{prop:gray-two}), and they
exactly give the usual notions of transformations and modifications in
bicategories, under the correspondence between representable
{\twowords} and bicategories (to be shown in
\cref{prop:tranmain-two}).

\begin{defn}\label{defn:tran-two}
  Let $F$ and $G$ be functors between {\twowords} $\cat{C}$ and
  $\cat{D}$. A \textbf{colax transformation} $\sigma\colon F \to G$ consists of
  \begin{itemize}
  \item for each 0-cell $A$ in $\cat{C}$, a 1-cell $\sigma_A$ in $\cat{D}$:
    \[
      \quad
      \begin{tikzpicture}[->,xscale=.75,yscale=-.75]
        \begin{scope}
          \node (a) at (-1, 0) {$FA$};
          \node (b) at (1, 0) {$GA$};
          \draw (a) -- node[above]{\small$\sigma_A$} (b);
        \end{scope}
      \end{tikzpicture}
      \quad
      \qquad\qquad\qquad
      \begin{tikzpicture}[scale=.3]
        \draw [rect] (-4,-4) rectangle (4,4);
        \node at (-2, 0) {$A$};
        \node at (2, 0) {$A$};
        \fill [fun, pattern=north east lines](-4,-4) rectangle (0,4);
        \fill [fun, pattern=grid](0,-4) rectangle (4,4);
        \node [ov] (s) at (0, 4) {};
        \node [ov] (t) at (0, -4) {};
        \draw (s) [nat] -- node [nated] {$\sigma$} (t);
      \end{tikzpicture}
    \]
  \item for each 1-cell $f \colon A \to B$ in $\cat{C}$,
    a 2-cell $\sigma_f$ in $\cat{D}$:
    \[
      \begin{tikzpicture}[->,xscale=.75,yscale=-.75]
        \begin{scope}[rotate=45]
          \node (a) at (-1, 1) {$FA$};
          \node (b) at (1, 1) {$GA$};
          \node (c) at (-1, -1) {$FB$};
          \node (d) at (1, -1) {$GB$};
          \node at (0, 0) {$\sigma_f$};
          \draw (a) -- node[below left]{\small$\sigma_A$} (b);
          \draw (b) -- node[below right]{\small$Gf$} (d);
          \draw (a) -- node[above left]{\small$Ff$} (c);
          \draw (c) -- node[above right]{\small$\sigma_B$} (d);
        \end{scope}
      \end{tikzpicture}
      \qquad\qquad\qquad
      \begin{tikzpicture}[scale=-.3]
        \draw [rect] (-4,-4) rectangle (4,4);
        \node at (-2, 0) {$B$};
        \node at (2, 0) {$A$};
        \node [ov] (s) at (0, 4) {};
        \node [ov] (t) at (0, -4) {};
        \draw (s) -- (t);
        \node [ed] at (0, -2.5) {$f$};
        \node [ed] at (0, 2.5) {$f$};
        \fill [fun, pattern=grid](-4,-4) -- (-4,4) --
        (2,4) .. controls (2,2) and (-2,-2) .. (-2, -4) -- cycle;
        \fill [fun, pattern=north east lines](4,4) -- (4,-4) --
        (-2,-4) .. controls (-2,-2) and (2,2) .. (2, 4) -- cycle;
        \node [ov] (sa) at (2, 4) {};
        \node [ov] (ta) at (-2, -4) {};
        \draw (sa) [nat] .. controls (2,2) and (-2,-2) .. node [nated, pos=.2] {$\sigma$} node [nated,pos=.8] {$\sigma$} (ta);
      \end{tikzpicture}
    \]
  \end{itemize}
  such that for each 2-cell $\alpha$ in $\cat{C}$, we have
  {\allowdisplaybreaks
    \begin{align*}
      \begin{tikzpicture}[<-,xscale=-.75,yscale=.65,shorten <=-3pt,shorten >=-2pt]
        \node (a) at (-2,0) {$\cdot$};
        \node (bl) at (-.4,1) {};
        \node at (0,1) {$\bcdots$};
        \node (br) at (.4,1) {};
        \node (cl) at (-.4,-1) {};
        \node at (0,-1) {$\bcdots$};
        \node (cr) at (.4,-1) {};
        \node (d) at (2,0) {$\cdot$};
        \node at (0,0) {$F\alpha$};
        \draw (a) [bend left=12.5] to node [above right=-2,pos=.65] {\tiny$Fs_m$} (bl);
        \draw (br) [bend left=12.5] to node [above left=-2,pos=.35] {\tiny$Fs_1$} (d);
        \draw (a) [bend right=12.5] to node [above left=-3,pos=.25] {\tiny$Ft_n$} (cl);
        \draw (cr) [bend right=12.5] to node [above right=-3,pos=.75] {\tiny$Ft_1$} (d);
        \begin{scope}[shift={(-.5,-1.5)}]
          \node (a1) at (-2,0) {$\cdot$};
          \node (cl1) at (-.4,-1) {};
          \node at (0,-1) {$\bcdots$};
          \node (cr1) at (.4,-1) {};
          \node (d1) at (2,0) {$\cdot$};
          \draw (a1) [bend right=12.5] to node [below right=-2,pos=.65] {\tiny$Gt_n$} (cl1);
          \draw (cr1) [bend right=12.5] to node [below left=-2,pos=.35] {\tiny$Gt_1$} (d1);
        \end{scope}
        \begin{scope}[shift={(-.25,-.75)}]
          \node at (0,-1) {$\bcdots$};
        \end{scope}
        \draw (a1) -- node [above right=-2] {\tiny$\sigma$} (a);
        \draw (cl1) -- (cl);
        \draw (cr1) -- (cr);
        \draw (d1) -- node [below left=-2] {\tiny$\sigma$} (d);
        \node at (-1.45,-1.35) {$\sigma_{t_n}$};
        \node at (.95,-1.35) {$\sigma_{t_1}$};
      \end{tikzpicture}
      \,\;
      &=
        \;\,
        \begin{tikzpicture}[->,xscale=.75,yscale=-.65,shorten <=-2pt,shorten >=-3pt]
          \node (a) at (-2,0) {$\cdot$};
          \node (bl) at (-.4,1) {};
          \node at (0,1) {$\bcdots$};
          \node (br) at (.4,1) {};
          \node (cl) at (-.4,-1) {};
          \node at (0,-1) {$\bcdots$};
          \node (cr) at (.4,-1) {};
          \node (d) at (2,0) {$\cdot$};
          \node at (0,0) {$G\alpha$};
          \draw (a) [bend left=12.5] to node [below left=-2,pos=.65] {\tiny$Gt_1$} (bl);
          \draw (br) [bend left=12.5] to node [below right=-2,pos=.35] {\tiny$Gt_n$} (d);
          \draw (a) [bend right=12.5] to node [below right=-3,pos=.25] {\tiny$Gs_1$} (cl);
          \draw (cr) [bend right=12.5] to node [below left=-3,pos=.75] {\tiny$Gs_m$} (d);
          \begin{scope}[shift={(-.5,-1.5)}]
            \node (a1) at (-2,0) {$\cdot$};
            \node (cl1) at (-.4,-1) {};
            \node at (0,-1) {$\bcdots$};
            \node (cr1) at (.4,-1) {};
            \node (d1) at (2,0) {$\cdot$};
            \draw (a1) [bend right=12.5] to node [above left=-2,pos=.65] {\tiny$Fs_1$} (cl1);
            \draw (cr1) [bend right=12.5] to node [above right=-2,pos=.35] {\tiny$Fs_m$} (d1);
          \end{scope}
          \begin{scope}[shift={(-.25,-.75)}]
            \node at (0,-1) {$\bcdots$};
          \end{scope}
          \draw (a1) -- node [below left=-2] {\tiny$\sigma$} (a);
          \draw (cl1) -- (cl);
          \draw (cr1) -- (cr);
          \draw (d1) -- node [above right=-2] {\tiny$\sigma$} (d);
          \node at (-1.45,-1.35) {$\sigma_{s_1}$};
          \node at (.95,-1.35) {$\sigma_{s_m}$};
        \end{tikzpicture}
      \\[1em]
      \begin{tikzpicture}[scale=-.3]
        \draw [rect] (-5,-5) rectangle (5,5);
        \node [ov] (u) at (-2, 5) {};
        \node [ov] (v) at (2, 5) {};
        \node [iv,inner sep=2pt,font=\small] (a) at (0, 0) {$\alpha$};
        \node [ov] (x) at (-2, -5) {};
        \node [ov] (y) at (2, -5) {};
        \draw (u) [bend right=20] to node [ed] {$t_n$} (a);
        \draw (v) [bend left=20] to node [ed] {$t_1$} (a);
        \draw (a) [bend right=20] to node [ed] {$s_m$} (x);
        \draw (a) [bend left=20] to node [ed] {$s_1$} (y);
        \node [ed,font=\large] at (0,-2.5) {$\bcdots$};
        \fill [fun, pattern=grid](-5,-5) -- (-5,5) -- (3.5,5) .. controls (3.5,3.6) and (-3.5,3) .. (-3.5, -5) -- cycle;
        \fill [fun, pattern=north east lines](5,5) -- (5,-5) -- (-3.5,-5) .. controls (-3.5,3) and (3.5,3.6) .. (3.5, 5) -- cycle;
        \node [ov] (sa) at (3.5, 5) {};
        \node [ov] (ta) at (-3.5, -5) {};
        \draw (sa) [nat] .. controls (3.5,3.6) and (-3.5,3) .. node [nated, pos=.75] {$\sigma$} (ta);
        \node [circle,fill=white,inner sep=0pt,font=\large] at (0,2.5) {$\bcdots$};
        \node [ed,font=\large] at (0,5) {$\bcdots$};
        \node [ed,font=\large] at (0,-5) {$\bcdots$};
      \end{tikzpicture}
      \quad
      &=
        \quad
        \begin{tikzpicture}[scale=-.3]
          \draw [rect] (-5,-5) rectangle (5,5);
          \node [ov] (u) at (-2, 5) {};
          \node [ov] (v) at (2, 5) {};
          \node [iv,inner sep=2pt,font=\small] (a) at (0, 0) {$\alpha$};
          \node [ov] (x) at (-2, -5) {};
          \node [ov] (y) at (2, -5) {};
          \draw (u) [bend right=20] to node [ed] {$t_n$} (a);
          \draw (v) [bend left=20] to node [ed] {$t_1$} (a);
          \draw (a) [bend right=20] to node [ed] {$s_m$} (x);
          \draw (a) [bend left=20] to node [ed] {$s_1$} (y);
          \node [ed,font=\large] at (0,2.5) {$\bcdots$};
          \fill [fun, pattern=grid](-5,-5) -- (-5,5) -- (3.5,5) .. controls (3.5,-3) and (-3.5,-3.6) .. (-3.5, -5) -- cycle;
          \fill [fun, pattern=north east lines](5,5) -- (5,-5) -- (-3.5,-5) .. controls (-3.5,-3.6) and (3.5,-3) .. (3.5, 5) -- cycle;
          \node [ov] (sa) at (3.5, 5) {};
          \node [ov] (ta) at (-3.5, -5) {};
          \draw (sa) [nat] .. controls (3.5,-3) and (-3.5,-3.6) .. node [nated, pos=.25] {$\sigma$} (ta);
          \node [circle,fill=white,inner sep=0pt,font=\large] at (0,-2.5) {$\bcdots$};
          \node [ed,font=\large] at (0,5) {$\bcdots$};
          \node [ed,font=\large] at (0,-5) {$\bcdots$};
        \end{tikzpicture}
    \end{align*}
  }
  A \textbf{lax transformation} is defined dually, with
  transformation component 1-cells at the northwest and southeast
  corners of diagrams (diagrams mirrored).
  
  When the $\sigma_f$ 2-cells are all invertible, we call $\sigma$
  simply a \textbf{transformation}.
  \[
    \begin{tikzpicture}[->,xscale=.75,yscale=-.75]
      \begin{scope}[rotate=45]
        \node (a) at (-1, 1) {$FA$};
        \node (b) at (1, 1) {$FB$};
        \node (c) at (-1, -1) {$GA$};
        \node (d) at (1, -1) {$GB$};
        \node at (0, 0) {$\sigma_f^{-1}$};
        \draw (a) -- node[below left]{\small$Ff$} (b);
        \draw (b) -- node[below right]{\small$\sigma_B$} (d);
        \draw (a) -- node[above left]{\small$\sigma_A$} (c);
        \draw (c) -- node[above right]{\small$Gf$} (d);
      \end{scope}
    \end{tikzpicture}
    \qquad\qquad\qquad
    \begin{tikzpicture}[xscale=-.3,yscale=.3]
      \draw [rect] (-4,-4) rectangle (4,4);
      \node at (-2, 0) {$B$};
      \node at (2, 0) {$A$};
      \node [ov] (s) at (0, 4) {};
      \node [ov] (t) at (0, -4) {};
      \draw (s) -- (t);
      \node [ed] at (0, -2.5) {$f$};
      \node [ed] at (0, 2.5) {$f$};
      \fill [fun, pattern=grid](-4,-4) -- (-4,4) --
      (2,4) .. controls (2,2) and (-2,-2) .. (-2, -4) -- cycle;
      \fill [fun, pattern=north east lines](4,4) -- (4,-4) --
      (-2,-4) .. controls (-2,-2) and (2,2) .. (2, 4) -- cycle;
      \node [ov] (sa) at (2, 4) {};
      \node [ov] (ta) at (-2, -4) {};
      \draw (sa) [nat] .. controls (2,2) and (-2,-2) .. node [nated, pos=.2] {$\sigma$} node [nated,pos=.8] {$\sigma$} (ta);
    \end{tikzpicture}
  \]
  \[
    \begin{tikzpicture}[scale=.25]
      \draw [rect] (-4,-4) rectangle (4,4);
      \node [ov] (s) at (0, 4) {};
      \node [ov] (t) at (0, -4) {};
      \draw (s) -- (t);
      \node [ed] at (0, 0) {$f$};
      \fill [fun, pattern=north east lines](-4,-4) -- (-4,4) -- (2, 4) .. controls (2,3) and (-2,1) .. (-2, 0) .. controls (-2,-1) and (2,-3) .. (2, -4) -- cycle;
      \fill [fun, pattern=grid](4,4) -- (4,-4) -- (2, -4) .. controls (2,-3) and (-2,-1) .. (-2, 0) .. controls (-2,1) and (2,3) .. (2, 4) -- cycle;
      \node [ov] (sa) at (2, 4) {};
      \node [ov] (ma) at (-2, 0) {};
      \node [ov] (ta) at (2, -4) {};
      \draw (sa) [nat] .. controls (2,3) and (-2,1) .. (ma) node [nated] {$\sigma$} .. controls (-2,-1) and (2,-3) .. (ta);
    \end{tikzpicture}
    \quad=\quad
    \begin{tikzpicture}[scale=.25]
      \draw [rect] (-4,-4) rectangle (4,4);
      \node [ov] (s) at (0, 4) {};
      \node [ov] (t) at (0, -4) {};
      \draw (s) -- (t);
      \node [ed] at (0, 0) {$f$};
      \fill [fun, pattern=north east lines](-4,-4) -- (-4,4) -- (2, 4) -- (2, -4) -- cycle;
      \fill [fun, pattern=grid](4,4) -- (4,-4) -- (2, -4) -- (2, 4) -- cycle;
      \node [ov] (sa) at (2, 4) {};
      \node [ov] (ma) at (-2, 0) {};
      \node [ov] (ta) at (2, -4) {};
      \draw (sa) [nat] -- node [nated,pos=.5] {$\sigma$} (ta);
    \end{tikzpicture}
    \qquad\qquad\qquad
    \begin{tikzpicture}[xscale=-.25, yscale=.25]
      \draw [rect] (-4,-4) rectangle (4,4);
      \node [ov] (s) at (0, 4) {};
      \node [ov] (t) at (0, -4) {};
      \draw (s) -- (t);
      \node [ed] at (0, 0) {$f$};
      \fill [fun, pattern=grid](-4,-4) -- (-4,4) -- (2, 4) .. controls (2,3) and (-2,1) .. (-2, 0) .. controls (-2,-1) and (2,-3) .. (2, -4) -- cycle;
      \fill [fun, pattern=north east lines](4,4) -- (4,-4) -- (2, -4) .. controls (2,-3) and (-2,-1) .. (-2, 0) .. controls (-2,1) and (2,3) .. (2, 4) -- cycle;
      \node [ov] (sa) at (2, 4) {};
      \node [ov] (ma) at (-2, 0) {};
      \node [ov] (ta) at (2, -4) {};
      \draw (sa) [nat] .. controls (2,3) and (-2,1) .. (ma) node [nated] {$\sigma$} .. controls (-2,-1) and (2,-3) .. (ta);
    \end{tikzpicture}
    \quad=\quad
    \begin{tikzpicture}[xscale=-.25, yscale=.25]
      \draw [rect] (-4,-4) rectangle (4,4);
      \node [ov] (s) at (0, 4) {};
      \node [ov] (t) at (0, -4) {};
      \draw (s) -- (t);
      \node [ed] at (0, 0) {$f$};
      \fill [fun, pattern=grid](-4,-4) -- (-4,4) -- (2, 4) -- (2, -4) -- cycle;
      \fill [fun, pattern=north east lines](4,4) -- (4,-4) -- (2, -4) -- (2, 4) -- cycle;
      \node [ov] (sa) at (2, 4) {};
      \node [ov] (ma) at (-2, 0) {};
      \node [ov] (ta) at (2, -4) {};
      \draw (sa) [nat] -- node [nated,pos=.5] {$\sigma$} (ta);
    \end{tikzpicture}
  \]
\end{defn}

\begin{rmk}\label{rmk:lax-colax}
  A transformation is both a colax transformation and a lax
  transformation: given a colax transformation where the $\sigma_f$
  2-cells are all invertible, the inverse 2-cells $\sigma_f^{-1}$ are
  components of a lax transformation, and vice versa.
  \[
    \begin{tikzpicture}[xscale=.25,yscale=-.25]
      \draw [rect] (-5,-5) rectangle (5,5);
      \node [ov] (u) at (-1.65, 5) {};
      \node [ov] (v) at (1.65, 5) {};
      \node [iv,inner sep=4pt] (a) at (0, 0) {};
      \node [ov] (x) at (-1.65, -5) {};
      \node [ov] (y) at (1.65, -5) {};
      \draw (u) [bend right=16.5,pos=.525] to (a);
      \draw (v) [bend left=16.5,pos=.525] to (a);
      \draw (a) [bend right=16.5,pos=.475] to (x);
      \draw (a) [bend left=16.5,pos=.475] to (y);
      \begin{scope}
        \node [ed] at (0,2.5) {$\bcdots$};
        \fill [fun, pattern=north east lines](-5,-5) -- (-5,5) -- (3.5,5) .. controls (3.5,-4) and (-3.5,-2.5) .. (-3.5, -5) -- cycle;
        \fill [fun, pattern=grid](5,5) -- (5,-5) -- (-3.5,-5) .. controls (-3.5,-4) and (3.5,-2.5) .. (3.5, 5) -- cycle;
        \node [ov] (sa) at (3.5, 5) {};
        \node [ov] (ta) at (-3.5, -5) {};
        \draw (sa) [nat] .. controls (3.5,-4) and (-3.5,-2.5) .. (ta);
        \node [circle,fill=white,inner sep=0pt] at (0,-2.5) {$\bcdots$};
      \end{scope}
      \node [ed] at (0,5) {$\bcdots$};
      \node [ed] at (0,-5) {$\bcdots$};
    \end{tikzpicture}
    \, = \,
    \begin{tikzpicture}[xscale=.25,yscale=-.25]
      \draw [rect] (-5,-5) rectangle (5,5);
      \node [ov] (u) at (-1.65, 5) {};
      \node [ov] (v) at (1.65, 5) {};
      \node [iv,inner sep=4pt] (a) at (0, 0) {};
      \node [ov] (x) at (-1.65, -5) {};
      \node [ov] (y) at (1.65, -5) {};
      \draw (u) [bend right=16.5,pos=.525] to (a);
      \draw (v) [bend left=16.5,pos=.525] to (a);
      \draw (a) [bend right=16.5,pos=.475] to (x);
      \draw (a) [bend left=16.5,pos=.475] to (y);
      \begin{scope}
        \fill [fun, pattern=north east lines](-5,-5) -- (-5,5) -- (3.5,5) .. controls (3.5,3) and (-3,4.5) .. (-3,3) .. controls (-3,1.5) and (2.5,3) .. (2.5,0) .. controls (2.5,-3) and (-3.5,-2.5) .. (-3.5,-5) -- cycle;
        \fill [fun, pattern=grid](5,-5) -- (5,5) -- (3.5,5) .. controls (3.5,3) and (-3,4.5) .. (-3,3) .. controls (-3,1.5) and (2.5,3) .. (2.5,0) .. controls (2.5,-3) and (-3.5,-2.5) .. (-3.5,-5) -- cycle;
        \node [ov] (sa) at (3.5, 5) {};
        \node [ov] (ta) at (-3.5, -5) {};
        \draw (sa) [nat] .. controls (3.5,3) and (-3,4.5) .. (-3,3) .. controls (-3,1.5) and (2.5,3) .. (2.5,0) .. controls (2.5,-3) and (-3.5,-2.5) .. (ta);
        \node [circle,fill=white,inner sep=0pt] at (0,2) {$\bcdots$};
        \node [circle,fill=white,inner sep=0pt] at (0,3.75) {$\bcdots$};
        \node [circle,fill=white,inner sep=0pt] at (0,-2.5) {$\bcdots$};
      \end{scope}
      \node [ed] at (0,5) {$\bcdots$};
      \node [ed] at (0,-5) {$\bcdots$};
    \end{tikzpicture}
    \, = \,
    \begin{tikzpicture}[xscale=.25,yscale=-.25]
      \draw [rect] (-5,-5) rectangle (5,5);
      \node [ov] (u) at (-1.65, 5) {};
      \node [ov] (v) at (1.65, 5) {};
      \node [iv,inner sep=4pt] (a) at (0, 0) {};
      \node [ov] (x) at (-1.65, -5) {};
      \node [ov] (y) at (1.65, -5) {};
      \draw (u) [bend right=16.5,pos=.525] to (a);
      \draw (v) [bend left=16.5,pos=.525] to (a);
      \draw (a) [bend right=16.5,pos=.475] to (x);
      \draw (a) [bend left=16.5,pos=.475] to (y);
      \begin{scope}[scale=-1]
        \fill [fun, pattern=grid](-5,-5) -- (-5,5) -- (3.5,5) .. controls (3.5,3) and (-3,4.5) .. (-3,3) .. controls (-3,1.5) and (2.5,3) .. (2.5,0) .. controls (2.5,-3) and (-3.5,-2.5) .. (-3.5,-5) -- cycle;
        \fill [fun, pattern=north east lines](5,-5) -- (5,5) -- (3.5,5) .. controls (3.5,3) and (-3,4.5) .. (-3,3) .. controls (-3,1.5) and (2.5,3) .. (2.5,0) .. controls (2.5,-3) and (-3.5,-2.5) .. (-3.5,-5) -- cycle;
        \node [ov] (sa) at (3.5, 5) {};
        \node [ov] (ta) at (-3.5, -5) {};
        \draw (sa) [nat] .. controls (3.5,3) and (-3,4.5) .. (-3,3) .. controls (-3,1.5) and (2.5,3) .. (2.5,0) .. controls (2.5,-3) and (-3.5,-2.5) .. (ta);
        \node [circle,fill=white,inner sep=0pt] at (0,2) {$\bcdots$};
        \node [circle,fill=white,inner sep=0pt] at (0,3.75) {$\bcdots$};
        \node [circle,fill=white,inner sep=0pt] at (0,-2.5) {$\bcdots$};
      \end{scope}
      \node [ed] at (0,5) {$\bcdots$};
      \node [ed] at (0,-5) {$\bcdots$};
    \end{tikzpicture}
    \, = \,
    \begin{tikzpicture}[xscale=.25,yscale=-.25]
      \draw [rect] (-5,-5) rectangle (5,5);
      \node [ov] (u) at (-1.65, 5) {};
      \node [ov] (v) at (1.65, 5) {};
      \node [iv,inner sep=4pt] (a) at (0, 0) {};
      \node [ov] (x) at (-1.65, -5) {};
      \node [ov] (y) at (1.65, -5) {};
      \draw (u) [bend right=16.5,pos=.525] to (a);
      \draw (v) [bend left=16.5,pos=.525] to (a);
      \draw (a) [bend right=16.5,pos=.475] to (x);
      \draw (a) [bend left=16.5,pos=.475] to (y);
      \begin{scope}[scale=-1]
        \node [ed] at (0,2.5) {$\bcdots$};
        \fill [fun, pattern=grid](-5,-5) -- (-5,5) -- (3.5,5) .. controls (3.5,-4) and (-3.5,-2.5) .. (-3.5, -5) -- cycle;
        \fill [fun, pattern=north east lines](5,5) -- (5,-5) -- (-3.5,-5) .. controls (-3.5,-4) and (3.5,-2.5) .. (3.5, 5) -- cycle;
        \node [ov] (sa) at (3.5, 5) {};
        \node [ov] (ta) at (-3.5, -5) {};
        \draw (sa) [nat] .. controls (3.5,-4) and (-3.5,-2.5) .. (ta);
        \node [circle,fill=white,inner sep=0pt] at (0,-2.5) {$\bcdots$};
      \end{scope}
      \node [ed] at (0,5) {$\bcdots$};
      \node [ed] at (0,-5) {$\bcdots$};
    \end{tikzpicture}
  \]
\end{rmk}



Just as a transformation is a morphism of functors, a modification is
a morphism of transformations. The most commonly seen definition of
modification goes between two (lax or colax) transformations. However,
there is a more general definition of modification that involves both
\emph{lax and colax} transformations. We actually get a (\adjective)
\emph{double category} $\Homl(\cat{C}, \cat{D})$ where the horizontal
arrows are lax transformations and the vertical arrows are colax
transformations.

\begin{defn}\label{defn:mod-two}
  A \textbf{modification}
  \[

    \end{align*}
  }
\end{defn}

We define \textbf{horizontal compositions} and \textbf{vertical
  compositions} of modifications
componentwise.
Likewise \textbf{horizontal (lax) identity} and
\textbf{vertical (colax) identity} modifications are identities
componentwise.

\begin{prop}
  Functors, lax and colax transformations, and modifications between
  $\cat{C}$ and $\cat{D}$ form {\aan} {\doubleword}
  $\Homl(\cat{C}, \cat{D})$ (via composition of modifications).
\end{prop}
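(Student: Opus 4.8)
The plan is to verify that $\Homl(\cat{C}, \cat{D})$ satisfies the definition of {\aan} {\doubleword} (\cref{defn:impl-dbl}) by exhibiting it in the algebraic form described in \cref{sec:doublewords}: a double computad equipped with horizontal and vertical composition and identity operations on $2$-cells, subject to source/target, associativity, unit, and interchange laws, and with \emph{no} composition of $1$-cells, so that the underlying horizontal and vertical $1$-categories are automatically free. First I would fix the double computad: its $0$-cells are functors $\cat{C}\to\cat{D}$, its horizontal $1$-cells are lax transformations, its vertical $1$-cells are colax transformations, and its $2$-cells are modifications as in \cref{defn:mod-two}, with the boundary map $\del$ reading off the four bounding sequences $\pi_\bullet$, $\tau_\bullet$ (lax, north/south) and $\rho_\bullet$, $\sigma_\bullet$ (colax, east/west). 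Freeness of the two $1$-categories is then built in: transformations are taken as \emph{generators}, so a path of transformations is a formal sequence that is never actually composed, exactly matching the bounding data of a modification.

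Next I would define the operations, all \emph{componentwise}. Given two modifications sharing a vertical (colax) boundary, their horizontal composite is defined by horizontally composing the component $2$-cells $\Gamma_A$ in the implicit $2$-category $\cat{D}$; dually for vertical composition along a shared horizontal (lax) boundary; and the horizontal and vertical identity modifications on a path of colax (resp.\ lax) transformations are the corresponding identity $2$-cells of $\cat{D}$, taken componentwise. Because the structure of $\cat{D}$ used here is precisely its $2$-cell composition, identity, and interchange structure (which is available since $\cat{D}$ is {\aan} {\twoword}), each source/target law, each horizontal and vertical associativity and unit law, and the interchange law for $\Homl(\cat{C},\cat{D})$ reduces immediately to the corresponding law for $2$-cells in $\cat{D}$, evaluated at each $0$-cell $A$ of $\cat{C}$. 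These are therefore a routine family of componentwise verifications.

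The one step that is not bookkeeping, and which I expect to be the main obstacle, is checking that these componentwise definitions actually land in modifications again --- that a composite or identity satisfies the two naturality-type compatibility equations of \cref{defn:mod-two} for every $1$-cell $f\colon A\to B$ of $\cat{C}$. For this I would paste together the compatibility squares of the two factors along their shared boundary and rewrite using the interchange law in $\cat{D}$ together with the transformation conditions of \cref{defn:tran-two} for the relevant components $\rho_f,\sigma_f,\pi_f,\tau_f$; the two sides of the composite's equation then agree after reassociating the whiskered component $1$-cells. The calculation is heaviest here because the modification condition already involves all four boundary families simultaneously, so this is a genuine two-dimensional pasting argument rather than a one-variable naturality check; organizing it through the string diagrams of \cref{defn:mod-two} should make the reassociations transparent.

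Finally I would observe that the whole construction is symmetric under the horizontal/vertical (lax/colax) and left/right interchange operations, so the dual operations and the dual halves of each law follow formally from the ones already checked, completing the identification of $\Homl(\cat{C},\cat{D})$ as {\aan} {\doubleword}.
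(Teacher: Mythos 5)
Your proposal is correct and takes essentially the same route as the paper, which defines all compositions and identities of modifications componentwise and then disposes of the proposition in one line: the associativity, unit, and interchange laws are inherited from the 2-cells of $\cat{D}$, with freeness of the 1-cells automatic since transformations are never composed. The one substantive check you highlight---that componentwise composites of modifications again satisfy the modification axiom---is left implicit in the paper's componentwise definition of composition, and your sketch of it is right, except that it needs only the factor modification axioms plus interchange in $\cat{D}$; the naturality conditions of \cref{defn:tran-two} on the boundary transformations play no role there.
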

\begin{proof}
  The associativity, unit, and interchange laws are inherited from the
  2-cells in $\cat{D}$.
\end{proof}

We denote by $\Hom(\cat{C}, \cat{D})$ the {\twoword} whose 0-cells are
functors $\cat{C} \to \cat{D}$, 1-cells are \emph{transformations},
and 2-cells are modifications between these.

\begin{rmk}
  Given a colax transformation of {\twoword} functors, if every
  component 1-cell is a left adjoint, we obtain (upon choosing
  adjunctions) a lax transformation in the other direction (where the
  new component 2-cells are the \emph{mates} of the old ones).
  %
  %
  A \emph{conjoint pair} in $\Homl(\cat{C}, \cat{D})$ is such a pair
  of colax and lax transformations, with component 1-cells in left and
  right adjoint pairs.
  
  On the other hand, as noted in \cref{rmk:lax-colax}, given a
  colax transformation, if every component 2-cell is invertible, we
  obtain a lax transformation in the same direction; this is the
  content of a (non lax or colax) transformation. A \emph{companion
    pair} in $\Homl(\cat{C}, \cat{D})$ is (up to isomorphism) such a
  transformation.

  In general, {\twowords} may be identified with {\doublewords} having
  horizontal and vertical 1-cells in assigned companion pairs. (It is
  the same as in the strict case; the translation from (\adjective)
  2-categories to such (\adjective) double categories is the
  ``squares'' or ``quintets'' construction of \cref{eg:bi-dbl}.)
  The {\twoword} $\Hom(\cat{C}, \cat{D})$ is then embedded in
  $\Homl(\cat{C}, \cat{D})$ as the 1-cells with companions.
  (The former is recovered up to equivalence from the latter through
  the right adjoint to the quintets construction.)
\end{rmk}

It still remains to verify that $\Hom(\cat{C}, \cat{D})$ in fact
provides an internal hom for the Gray tensor product. In other words,
$\cat{C} \otimes \cat{D}$ is universal with a map
$\cat{C} \to \Hom(\cat{D}, \cat{C} \otimes \cat{D})$:

\begin{prop}\label{prop:gray-two}
  \tword is closed with respect to $\otimes$.

  In particular, the Gray tensor product $\cat{C} \otimes \cat{D}$ is
  the free {\twoword} on the following data and laws:
  \begin{itemize}
  \item For every 0-cell $c$ of $\cat{C}$, there is a functor
    $\grpair{c}{-} \colon \cat{D} \to \cat{C} \otimes \cat{D}$.
  \item For every 1-cell $f \colon c \to d$ of $\cat{C}$,
    there is a transformation $\grpair{f}{1} \colon
    \grpair{c}{-} \to \grpair{d}{-}$.
  \item For every 2-cell $\alpha$ of $\cat{C}$, there is a
    modification $\grpair{\alpha}{1}$ between the associated
    transformations.
  \item Such modifications compose as in $\cat{C}$, with identities as
    in $\cat{C}$.
  \end{itemize}
\end{prop}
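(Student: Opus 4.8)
The plan is to prove closure by exhibiting, naturally in $\cat{E}$, a bijection
\[
  \tword(\cat{C} \otimes \cat{D}, \cat{E}) \;\cong\; \tword(\cat{C}, \Hom(\cat{D}, \cat{E})),
\]
and the cleanest way to obtain it is to verify directly the explicit universal property stated in the proposition. First I would unpack, straight from \cref{defn:tgray}, what a functor $F \colon \cat{C} \otimes \cat{D} \to \cat{E}$ amounts to. Since the $0$-cells of $\cat{C} \otimes \cat{D}$ are pairs $\grpair{c}{d}$, its generating $1$-cells are those of the two types $\grpair{f}{d}$ and $\grpair{c}{g}$, and its $2$-cells are pairs $\grpair{\alpha}{\beta}$ composing componentwise, such a functor $F$ is determined by its values on these generators, subject only to the source/target, composition, and interchange laws of $\cat{C} \otimes \cat{D}$.

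Next I would reorganize this data along the slices $\grpair{c}{-} \colon \cat{D} \to \cat{C} \otimes \cat{D}$. For each $0$-cell $c$ of $\cat{C}$ the restriction of $F$ gives a functor $F_c \colon \cat{D} \to \cat{E}$; for each $1$-cell $f \colon c \to c'$ the images of the $1$-cells $\grpair{f}{d}$ together with the $2$-cells $\grpair{f}{\beta}$ assemble into a \emph{transformation} $F_c \to F_{c'}$ in the sense of \cref{defn:tran-two}; and for each $2$-cell $\alpha$ of $\cat{C}$ the images of the cells $\grpair{\alpha}{d}$ assemble into a modification in the sense of \cref{defn:mod-two}. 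The content of this step is to check that the zig-zag naturality axiom for a transformation and the axiom for a modification are \emph{exactly} the interchange laws of $\cat{C} \otimes \cat{D}$ applied to a shuffle of a $\cat{C}$-cell past an interleaved $\cat{D}$-string; invertibility of the component $2$-cells corresponds to the fact that $\grpair{f}{\beta}$ is invertible whenever $\beta$ is, and the assertion that these modifications compose ``as in $\cat{C}$'' is immediate because $2$-cell composition in $\cat{C} \otimes \cat{D}$ is componentwise.

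By the definition of $\Hom(\cat{D}, \cat{E})$ (whose $0$-cells, $1$-cells, and $2$-cells are functors, transformations, and modifications), this reorganized data is precisely a {\twoword} functor $\cat{C} \to \Hom(\cat{D}, \cat{E})$, and the assignment is manifestly invertible. Naturality in $\cat{E}$ (and in $\cat{C}$) is routine, since in both directions the correspondence is given by restriction and whiskering. As an independent sanity check I would use the remark following \cref{defn:tgray}, identifying $\otimes$ on {\twowords} with the classical Gray tensor product of the path $2$-categories (and \cite[Corollary 3.22]{gurski} for its $2$-cells): this reduces the claim to the known closedness of Gray, the only extra point being that unit and counit stay inside \tword, i.e.\ send generating $1$-cells to generating $1$-cells, which holds because the $\grpair{f}{1}$-transformations have generating-$1$-cell components.

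The main obstacle I expect is the bookkeeping in the middle step: matching the shuffle/interleaving structure of $1$-cell paths in $\cat{C} \otimes \cat{D}$ with the whiskered naturality data in \cref{defn:tran-two,defn:mod-two}, and confirming that a single interchange relation in $\cat{C} \otimes \cat{D}$ yields each zig-zag square of a transformation (and each modification equation) with nothing over- or under-counted. The string-diagram ``superimposition'' picture of \cite{morehouse:two} is the right organizing device here and makes the correspondence visually transparent; the care required is in turning it into a precise bijection that tracks how a $\cat{C}$-$2$-cell slides past an interleaved $\cat{D}$-string when the boundary paths are arbitrary shuffles.
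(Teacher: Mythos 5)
Your overall route coincides with the paper's: verify the stated presentation/universal property directly, keeping the reduction to classical Gray closedness in reserve (the paper makes exactly that remark at the end of its proof). Your forward direction is also fine, but note it is essentially trivial: restricting $F \colon \cat{C}\otimes\cat{D} \to \cat{E}$ along the slices $\grpair{c}{-}$ yields the transformation and modification data \emph{on the nose}, because $2$-cell composition in $\cat{C}\otimes\cat{D}$ is componentwise — so the "bookkeeping" you anticipate there (matching interchange to the zig-zag axioms) costs nothing. The genuine gap sits exactly where you write that "the assignment is manifestly invertible." It is not: the slice data only records the values of $F$ on cells of the special shapes $\grpair{1_c}{\beta}$, $\grpair{\alpha}{1_d}$, and the transformation components $\grpair{f}{d},\grpair{c'}{g} \to \grpair{c}{g},\grpair{f}{d'}$, whereas a general $2$-cell $\grpair{\alpha}{\beta}$ has source and target that are arbitrary shuffles. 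Your opening claim that $F$ "is determined by its values on these generators" already presupposes what must be proved. To invert the restriction — and, for the universal property, to extend arbitrary such data in an arbitrary target $\cat{X}$ to a functor at all — one must: (i) decompose every $\grpair{\alpha}{\beta}$ as a composite of $\grpair{\alpha}{1}$, $\grpair{1}{\beta}$, and "shuffle" cells $\grpair{1}{1}$, each shuffle cell built from the invertible transposition components above (this is the content of the paper's \cref{fig:grpair-two}); (ii) check that the resulting value is independent of the chosen factorization of the shuffle permutation into transpositions; and (iii) verify functoriality of the extension, which is where the naturality and modification axioms are actually consumed — vertical composites reduce to the normal form by transformation components cancelling with their inverses, and horizontal composites by the naturality and modification laws. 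None of (i)--(iii) appears in your proposal, and you have located the expected difficulty in the direction where there is none.

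Your "independent sanity check" via the classical Gray tensor product is in fact the paper's acknowledged alternative \emph{proof}, so promoting it to the main argument would be legitimate; but as stated it is also incomplete. Beyond observing that unit and counit send generating $1$-cells to generating $1$-cells, one must identify the {\twoword} hom $\Hom(\cat{D},\cat{E})$ — whose underlying $1$-category is \emph{free} on single transformations, with $2$-cells the modifications between \emph{paths} of transformations as in \cref{defn:mod-two} — with the appropriate sub-structure of the strict Gray internal hom of the path $2$-categories, in which pseudonatural transformations compose strictly and have arbitrary path components. That translation is routine but must be performed; either supply it, or carry out (i)--(iii) directly as in the paper.
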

\begin{proof}
  Note first that the construction $\Hom(\cat{D}, \cat{X})$ is
  functorial in $\cat{X}$ (since functors, transformations,
  modifications, and their compositions are shapes consisting of cells
  and equations in $\cat{X}$), and a map from $\cat{C}$ into
  $\Hom(\cat{D}, \cat{X})$ is precisely the data in $\cat{X}$ as
  described above.
  
  It is easy to see that $\cat{C} \otimes \cat{D}$ contains such
  data. Now suppose $\cat{X}$ also contains such data. We must check
  that the induced map on the putative generating cells extends to a
  unique functor $\cat{C} \otimes \cat{D} \to \cat{X}$.

  \begin{figure}
    \[
      \begin{tikzpicture}[->, xscale=1.1,yscale=.7]
        \begin{scope}
          \node (a1) at (-2, 0) {};
          \node (b1) at (0, 0) {};
          \node at (a1) {$\cdot$};
          \node at (b1) {$\cdot$};
          \node at (a1) {$\cdot$};
          \node at (b1) {$\cdot$};
          \node (z1) at (-1, 0) {$\grpair{\alpha}{1}$};
          \node at (-1,.85) {$\bcdots$};
          \node at (-1,-.85) {$\bcdots$};
          \node (sh1) at (-1,1) {$\phantom{\bcdots}$};
          \node (th1) at (-1,-1) {$\phantom{\bcdots}$};
          \draw (a1) to [bend left=10] (sh1);
          \draw (sh1) to [bend left=10] (b1);
          \draw (a1) to [bend right=10] (th1);
          \draw (th1) to [bend right=10] (b1);
        \end{scope}
        \begin{scope}[shift={(2,0)}]
          \node (a2) at (-2, 0) {};
          \node (b2) at (0, 0) {};
          \node at (b2) {$\cdot$};
          \node (z2) at (-1, 0) {$\grpair{1}{\beta}$};
          \node at (-1,.85) {$\bcdots$};
          \node at (-1,-.85) {$\bcdots$};
          \node (sh2) at (-1,1) {$\phantom{\bcdots}$};
          \node (th2) at (-1,-1) {$\phantom{\bcdots}$};
          \draw (a2) to [bend left=10] (sh2);
          \draw (sh2) to [bend left=10] (b2);
          \draw (a2) to [bend right=10] (th2);
          \draw (th2) to [bend right=10] (b2);
        \end{scope}
        \begin{scope}[shift={(0,1)}]
          \node (ua) at (-1, .8) {$\bcdots$};
          \node (ub) at (1, .8) {$\bcdots$};
          \node (um) at (0, 1.3) {$\bcdots$};
          \node (z4) at (0, .4) {$\grpair{1}{1}$};
          \draw (a1) to [bend left=11] (ua);
          \draw (ua) to [bend left=6] (um);
          \draw (um) to [bend left=6] (ub);
          \draw (ub) to [bend left=11] (b2);
        \end{scope}
        \begin{scope}[shift={(0,-1)}]
          \node (ba) at (-1, -.8) {$\bcdots$};
          \node (bb) at (1, -.8) {$\bcdots$};
          \node (bm) at (0, -1.3) {$\bcdots$};
          \node (z4) at (0, -.4) {$\grpair{1}{1}$};
          \draw (a1) to [bend right=11] (ba);
          \draw (ba) to [bend right=6] (bm);
          \draw (bm) to [bend right=6] (bb);
          \draw (bb) to [bend right=11] (b2);
        \end{scope}
      \end{tikzpicture}
      \qquad\qquad\;\;
      \begin{tikzpicture}[scale=.49]
        \draw [rect] (-4.5,-3.3) rectangle (4.5,3.3);
        \draw (2.6,2.3) -- (2.6, 1.6) .. controls +(0,-.5) and +(.5,.5) .. (1.75,0) .. controls +(.5,-.5) and +(0,.5) .. (2.6,-1.6) -- (2.6,-2.3);
        \draw (.9,2.3) -- (.9, 1.6) .. controls +(0,-.5) and +(-.5,.5) .. (1.75,0) .. controls +(-.5,-.5) and +(0,.5) .. (.9,-1.6) -- (.9,-2.3);
        \node [ed] at (-1.75,1.3) {\large$\bcdots$};
        \node [ed] at (1.75,-1.3) {\large$\bcdots$};
        \node [ed] at (1.75,1.3) {\large$\bcdots$};
        \node [ed] at (-1.75,-1.3) {\large$\bcdots$};
        \node [iv, minimum width=6mm,fill=white] at (1.75, 0) {\large$\beta$};
        \fill [fun, pattern=north east lines] (-4.5,3.3) -- (-3.4,3.3) -- (-3.4,2.3) -- (-2.6,2.3) -- (-2.6, 1.6) .. controls +(0,-.5) and +(-.5,.5) .. (-1.75,0) .. controls +(-.5,-.5) and +(0,.5) .. (-2.6,-1.6) -- (-2.6,-2.3) -- (-3.4,-2.3) -- (-3.4,-3.3) -- (-4.5,-3.3) -- cycle;
        \fill [fun, pattern=grid] (4.5,3.3) -- (3.4,3.3) -- (3.4,2.3) -- (-.9,2.3) -- (-.9, 1.6) .. controls +(0,-.5) and +(.5,.5) .. (-1.75,0) .. controls +(.5,-.5) and +(0,.5) .. (-.9,-1.6) -- (-.9,-2.3) -- (3.4,-2.3) -- (3.4,-3.3) -- (4.5,-3.3) -- cycle;
        \draw [nat] (-2.6,2.3) -- (-2.6, 1.6) .. controls +(0,-.5) and +(-.5,.5) .. (-1.75,0) .. controls +(-.5,-.5) and +(0,.5) .. (-2.6,-1.6) -- (-2.6,-2.3);
        \draw [nat] (-.9,2.3) -- (-.9, 1.6) .. controls +(0,-.5) and +(.5,.5) .. (-1.75,0) .. controls +(.5,-.5) and +(0,.5) .. (-.9,-1.6) -- (-.9,-2.3);
        \node [modc, minimum width=6mm] at (-1.75, 0) {\large$\alpha$};
        \draw (-2.6,2.3) -- (-2.6, 3.3);
        \draw [nat] (-.9,2.3) -- (-.9, 3.3);
        \draw (.9,2.3) -- (.9, 3.3);
        \draw [nat] (2.6,2.3) -- (2.6, 3.3);
        \draw (-2.6,-2.3) -- (-2.6, -3.3);
        \draw [nat] (-.9,-2.3) -- (-.9, -3.3);
        \draw (.9,-2.3) -- (.9, -3.3);
        \draw [nat] (2.6,-2.3) -- (2.6, -3.3);
        \node [rectangle, rounded corners, fill=white, draw, dashed, inner sep=0, minimum height=5.2mm, minimum width=36mm, font=\large\em] at (0, 2.3) {s \, h \, u \, f \, f \, l \, e};
        \node [rectangle, rounded corners, fill=white, draw, dashed, inner sep=0, minimum height=5.2mm, minimum width=36mm,font=\large\em] at (0, -2.3) {s \, h \, u \, f \, f \, l \, e};
        \node [ed] at (-3.4,3.3) {\large$\bcdots$};
        \node [ed] at (-1.75,3.3) {\large$\bcdots$};
        \node [ed] at (0,3.3) {\large$\bcdots$};
        \node [ed] at (1.75,3.3) {\large$\bcdots$};
        \node [ed] at (3.4,3.3) {\large$\bcdots$};
        \node [ed] at (-3.4,-3.3) {\large$\bcdots$};
        \node [ed] at (-1.75,-3.3) {\large$\bcdots$};
        \node [ed] at (0,-3.3) {\large$\bcdots$};
        \node [ed] at (1.75,-3.3) {\large$\bcdots$};
        \node [ed] at (3.4,-3.3) {\large$\bcdots$};
      \end{tikzpicture}
    \]
    \caption{A generic 2-cell $\grpair{\alpha}{\beta}$ in $\cat{C} \otimes \cat{D}$.}\label{fig:grpair-two}
  \end{figure}
  
  All cells in $\cat{C} \otimes \cat{D}$ are indeed compositions of
  these generating cells: see \cref{fig:grpair-two}. Here each 2-cell
  written $\grpair{1}{1}$, or ``shuffle'', may be composed in a
  canonical way (up to associativity) from the transformation
  component 2-cells
  $\grpair{f}{d}, \grpair{c'}{g} \to \grpair{c}{g}, \grpair{f}{d'}$ or
  their inverses, by constructing the induced permutation out of
  transpositions. We accordingly extend the map
  $\cat{C} \otimes \cat{D} \to \cat{X}$ to arbitrary cells, sending
  each 2-cell written as a composite of the generating 2-cells to the
  corresponding composite in $\cat{X}$.

  To show functoriality, consider 2-cells in the image of this
  extended map $\cat{C} \otimes \cat{D} \to \cat{X}$, i.e.\ those
  built as in \cref{fig:grpair-two}. Vertical composites reduce to the
  desired form by transformation component 2-cells cancelling with
  their inverses; horizontal composites are put into the desired form
  using the naturality and modification laws.

  It is then easy to see that the left adjoint acts as
  $- \otimes \cat{D}$ on morphisms as well.

  Alternatively, we could skip this argument by appealing to existing
  knowledge about the Gray tensor product of 2-categories, of which
  the Gray tensor product of {\twowords} may be viewed as a special
  case; the Gray tensor product of strict 2-categories has a
  presentation like the above since its internal homs are given by
  2-functors, pseudonatural transformations, and modifications of
  strict 2-categories.
\end{proof}

\begin{rmk}\label{rmk:laxgray-two}
  Replacing the transformations in \cref{prop:gray-two} with
  \emph{(co)lax} transformations, we obtain the \textbf{(co)lax Gray
    tensor product}~\cite{gray:formal} as the presented
  structure. (The lax Gray tensor product is then the reverse of the
  colax Gray tensor product.)  However, it is perhaps less obvious
  that this definition gives a (non-symmetric) monoidal product.
\end{rmk}

\begin{rmk}\label{rmk:nocartesian}
  In contrast, \tword is not cartesian closed. For example, let
  $\cat{C}$, $\cat{D}_1$, $\cat{D}_2$ and $\cat{I}$ be
  respectively free on
  \[
    \begin{tikzpicture}[scale=.1]
      \draw [rect] (-4,-4) rectangle (4,4);
      \node [ov] (s) at (0, 4) {};
      \node [ov] (t) at (0, -4) {};
      \draw (s) -- (t);
    \end{tikzpicture}
    ,\quad
    \begin{tikzpicture}[scale=.1]
      \draw [rect] (-4,-4) rectangle (4,4);
      \node [ov] (s) at (0, 4) {};
      \node [ivs] (m) at (0, 0) {};
      \draw (m) -- (s);
    \end{tikzpicture}
    ,\quad
    \begin{tikzpicture}[scale=.1]
      \draw [rect] (-4,-4) rectangle (4,4);
      \node [ov] (t) at (0, -4) {};
      \node [ivs] (m) at (0, 0) {};
      \draw (m) -- (t);
    \end{tikzpicture}
    ,\quad \text{and}\quad
    \begin{tikzpicture}[scale=.1]
      \draw [rect] (-4,-4) rectangle (4,4);
    \end{tikzpicture}
    .
  \]
  Now the pushout of the unique functors $\cat{C} \to \cat{D}_1$ and
  $\cat{C} \to \cat{D}_2$ is not preserved by $- \times \cat{I}$.
  (Cartesian products in \tword are calculated using the essentially
  algebraic definition of {\twowords} in \cref{sec:doublewords}; note
  this does not agree with the cartesian product in \tcat.) Indeed,
  this pushout has nontrivial composite 2-cells $\alpha$ with nullary
  source and target, so its product with $\cat{I}$ likewise has
  nontrivial 2-cells $(\alpha, \justi)$. On the other hand $\cat{D}_1$
  and $\cat{D}_2$ have no nontrivial 2-cells with nullary source and
  target, so the products with $\cat{I}$ are simply $\cat{I}$, as is
  the pushout of these.
\end{rmk}

The next proposition implies in particular that if $\cat{C}$ and
$\cat{D}$ are bicategories, then $\Homl(\cat{C}, \cat{D})$ is a doubly
weak double category.

\begin{prop}
  If $\cat{C}$ and $\cat{D}$ are {\twowords} and $\cat{D}$ is
  represented, then $\Homl(\cat{C}, \cat{D})$ (and hence in
  particular $\Hom(\cat{C}, \cat{D})$) is represented.
\end{prop}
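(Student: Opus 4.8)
The plan is to reduce the statement to the representability of the two underlying {\twowords} of $\Homl(\cat{C},\cat{D})$ and then to build all composites componentwise out of the chosen composites in $\cat{D}$. First I would observe that {\aan} {\doubleword} is representable exactly when each of its underlying horizontal and vertical {\twowords} (\cref{rmk:dbl-bi}) is: a composite of a horizontal path is a single horizontal 1-cell together with a horizontal bigon witnessing the isomorphism, and all of this data lives inside the underlying horizontal {\twoword}, whose bigons are precisely the vertically degenerate 2-cells; dually for vertical paths. The underlying horizontal {\twoword} of $\Homl(\cat{C},\cat{D})$ has functors as 0-cells, lax transformations as 1-cells, and vertically degenerate modifications as bigons, and the underlying vertical one is the same with colax transformations. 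By the horizontal/vertical symmetry of the construction it suffices to treat the horizontal case, and since representability need only be checked on binary and nullary paths, I reduce to producing a composite of two lax transformations and of the empty path.

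Next, for lax transformations $\sigma\colon F\to G$ and $\tau\colon G\to H$ I would define a composite lax transformation $\sigma\ast\tau\colon F\to H$ whose component at a 0-cell $A$ is the chosen composite $\udcom{\sigma_A}{\tau_A}$ in $\cat{D}$, and whose component 2-cell at a 1-cell $f\colon A\to B$ is obtained by pasting $\sigma_f$ and $\tau_f$ along $Gf$ and then composing on the two remaining sides with the chosen composition isomorphisms $\choseniso$ of $\cat{D}$, so that the source and target boundaries become the single 1-cells $\udcom{\sigma_A}{\tau_A}$ and $\udcom{\sigma_B}{\tau_B}$. This is the usual composite of (co)lax transformations transported into the represented setting, in the same manner that horizontal composition of bigons in \cref{prop:twotobi} turns a pasting into a genuine bigon by whiskering with composition isomorphisms. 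I would then verify the lax transformation axiom of \cref{defn:tran-two} for $\sigma\ast\tau$: for each 2-cell $\alpha$ of $\cat{C}$ the required pasting equation follows from the corresponding axioms for $\sigma$ and $\tau$ together with naturality and coherence of the composition isomorphisms, which cancel against their inverses just as in \cref{prop:twotobi,prop:funtops}.

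Then I would exhibit the witnessing isomorphism. The candidate is the vertically degenerate modification $\Gamma$ whose horizontal source is the length-two path $(\sigma,\tau)$, whose horizontal target is $\sigma\ast\tau$, and whose empty vertical boundaries are identities; its component at each 0-cell $A$ is the chosen composition isomorphism $\choseniso$ relating the path $\sigma_A,\tau_A$ to $\udcom{\sigma_A}{\tau_A}$ in $\cat{D}$. Each such component is invertible because composition isomorphisms are, so $\Gamma$ is invertible componentwise, and the modification law of \cref{defn:mod-two} holds precisely because $(\sigma\ast\tau)_f$ was defined by rebracketing the pasting of $\sigma_f$ and $\tau_f$ through these same isomorphisms. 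For the nullary case, the composite at a functor $F$ is the identity lax transformation, with components the chosen nullary composites $\di{FA}$ and component 2-cells built from the unitors of $\cat{D}$, and the empty path is isomorphic to it via the nullary composition isomorphisms of $\cat{D}$. Dualizing gives the vertical (colax) case, so $\Homl(\cat{C},\cat{D})$ is representable. Finally, since the composite of two \emph{transformations} (those with invertible component 2-cells) again has invertible component 2-cells — a pasting and rebracketing of invertible cells being invertible — the same construction restricts to show $\Hom(\cat{C},\cat{D})$ is representable.

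The hard, though essentially routine, part will be the verification of the lax transformation axiom for $\sigma\ast\tau$ and of the modification law for $\Gamma$: both amount to careful bookkeeping of the many boundary 1-cells and of where each composition isomorphism is inserted, rather than any genuinely new idea, since conceptually everything reduces to composition isomorphisms cancelling with their inverses.
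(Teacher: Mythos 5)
Your proposal is correct and takes essentially the same route as the paper's proof: binary and nullary composites of (co)lax transformations are defined componentwise via the chosen composites in $\cat{D}$, with component 2-cells obtained by pasting $\sigma_f$ and $\tau_f$ and rebracketing the boundaries with the chosen composition isomorphisms, and the witnessing isomorphisms are exactly the invertible modifications whose components are those composition isomorphisms. Your preliminary reduction to representability of the underlying horizontal and vertical {\twowords} is a valid bit of extra framing that the paper leaves implicit, but it does not change the substance of the argument.
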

\begin{proof}
  We define binary composites of colax transformations
  $\sigma \colon F \to G$ and $\rho \colon G \to H$ and identity
  transformations (nullary composites) componentwise on 1-cells, and
  with 2-cell components:
  \[
    \begin{tikzpicture}[->,xscale=.65,yscale=-.65]
      \begin{scope}[rotate=45]
        \node (a) at (-1, 1) {\small$FA$};
        \node (b) at (1, 1) {\small$GA$};
        \node (c) at (-1, -1) {\small$FB$};
        \node (d) at (1, -1) {\small$GB$};
        \node at (0, 0) {$\sigma_f$};
        \draw (a) -- node[below left=-2]{\tiny$\sigma_A$} (b);
        \draw (b) -- node[above left=-3]{\tiny$Gf$} (d);
        \draw (a) -- node[above left=-2]{\tiny$Ff$} (c);
        \draw (c) -- node[above right=-2]{\tiny$\sigma_B$} (d);
        \begin{scope}[shift={(2,0)}]
          \node (b2) at (1, 1) {\small$HA$};
          \node (d2) at (1, -1) {\small$HB$};
          \node at (0, 0) {$\rho_f$};
          \draw (b) -- node[below left=-2]{\tiny$\rho_A$} (b2);
          \draw (b2) -- node[below right=-2]{\tiny$Hf$} (d2);
          \draw (d) -- node[above right=-2]{\tiny$\rho_B$} (d2);
        \end{scope}
        \node at (1, 1.65) {\choseniso};
        \node at (1, -1.65) {\choseniso};
        \draw (a) [bend left=50] to node[below left=-2]{\tiny$\ubcom{\sigma_A}{\rho_A}$} (b2);
        \draw (c) [bend right=50] to node[above right=-2]{\tiny$\ubcom{\sigma_B}{\rho_B}$} (d2);
      \end{scope}
    \end{tikzpicture}
    \qquad\qquad\qquad
    \begin{tikzpicture}[xscale=.27,yscale=-.27]
      \draw [rect] (-5,-5) rectangle (5,5);
      \node [ov] (s) at (0, 5) {};
      \node [ov] (t) at (0, -5) {};
      \draw (s) -- node [ed,pos=.8] {$f$} node [ed,pos=.2] {$f$} (t);
      \node at (-2, -3) {$A$};
      \node at (2, 3) {$B$};
      \coordinate (si) at (-2, 3) {};
      \coordinate (ti) at (2, -3) {};
      \fill [fun, pattern=north east lines] (-5,-5) -- (2,-5) -- (ti)  .. controls +(-1.75,3.5) and +(-1,-4.5) .. (si) -- (-2, 5) -- (-5,5) -- cycle;
      \fill [fun, pattern=grid] (5,5) -- (-2,5) -- (si) .. controls +(1.75,-3.5) and +(1,4.5) .. (ti) -- (2, -5) -- (5,-5) -- cycle;
      \fill [fun, pattern=horizontal lines] (si) .. controls +(1.75,-3.5) and +(1,4.5) .. (ti) .. controls +(-1.75,3.5) and +(-1,-4.5) .. cycle;
      \node [ov] (sa) at (-2, 5) {};
      \node [ov] (ta) at (2, -5) {};
      \draw (sa) [edf] -- (si);
      \draw (ta) [edf] -- (ti);
      \draw (si) [nat] .. controls +(-1,-4.5) and +(-1.75,3.5) .. node [nated] {$\sigma$} (ti);
      \draw (si) [nat] .. controls +(1.75,-3.5) and +(1,4.5) .. node [nated] {$\rho$} (ti);
      \node [modc] at (si) {$\choseniso$};
      \node [modc] at (ti) {$\choseniso$};
    \end{tikzpicture}
  \]
  \[
    \,\;\;\quad\begin{tikzpicture}[->,shorten <=-1.5pt,shorten >=-1.5pt,rotate=135]
      \begin{scope}[xscale=-.4,yscale=.575]
        \node (a) at (0,1) {\small$FA$};
        \node (b) at (0,-1) {\small$FB$};
        \node at (0,1.7) {$\choseniso$};
        \node at (0,-1.7) {$\choseniso$};
        \draw (a) -- node [above left=-2] {\tiny$Ff$} (b);
        \draw (a) .. controls +(-1.25,.65) and +(-1,0) .. (0,2.25) node [below left=-2] {\footnotesize$\justi$} .. controls +(1,0) and +(1.25,.65) .. (a);
        \draw (b) .. controls +(-1.25,-.65) and +(-1,0) .. (0,-2.25) node [above right=-2] {\footnotesize$\justi$} .. controls +(1,0) and +(1.25,-.65) .. (b);
      \end{scope}
    \end{tikzpicture}
    \qquad\qquad\qquad\qquad
    \begin{tikzpicture}[xscale=.27,yscale=-.27]
      \node [ov] (s) at (0, 5) {};
      \node [ov] (t) at (0, -5) {};
      \draw (s) -- node [ed] {$f$} (t);
      \node at (-2, 0) {$A$};
      \node at (2, 0) {$B$};
      \draw [rect, fun, pattern=north east lines] (-5,-5) rectangle (5,5);
      \node [ov] (sa) at (-2, 5) {};
      \node (si) at (-2, 3) {};
      \node (ti) at (2, -3) {};
      \node [ov] (ta) at (2, -5) {};
      \draw (sa) [edf] -- (si);
      \draw (ta) [edf] -- (ti);
      \node [modc] at (si) {$\choseniso$};
      \node [modc] at (ti) {$\choseniso$};
    \end{tikzpicture}
  \]
  These are easily checked to be horizontal transformations. Moreover,
  the composition 2-cells in $\cat{D}$ are components of invertible
  modifications. Lax transformations are similar.
\end{proof}

\begin{rmk}\label{rmk:notrep}
  The Gray tensor product of two representable {\twowords} is usually
  \emph{not} representable: if $f \colon c \to c'$ is an arrow in
  $\cat{C}$ and $g \colon d \to d'$ is an arrow in $\cat{D}$, there is
  no composite 1-cell of the compatible $\grpair{f}{d}$ and
  $\grpair{c'}{g}$ in $\cat{C} \otimes \cat{D}$.
\end{rmk}



Next we observe that our notions of transformation, modification, and
icon correspond to the usual notions for bicategories.


\begin{prop}\label{prop:tranmain-two}
  Identifying represented {\twowords} and functors with bicategories
  and pseudofunctors (\cref{prop:main-two}) respects (co)lax
  transformations, modifications, and icons, as well as their
  composition.
\end{prop}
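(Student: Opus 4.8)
The plan is to leverage the fact, established in the proof of \cref{prop:main-two} (via \cref{prop:twotobi}), that under the identification of a represented {\twoword} with its underlying bicategory the \emph{bigons} are exactly the 2-cells of the bicategory, while an arbitrary 2-cell corresponds to a bracketed bigon obtained by composing with the chosen composition isomorphisms. Since every piece of structure in \cref{defn:tran-two,defn:mod-two,defn:2icon} is assembled from bigons together with the composition and unit isomorphism cells, the whole correspondence reduces to tracking what happens on these generating cells, and checking that the axioms match.

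First I would treat (co)lax transformations. A colax transformation of {\twoword} functors has exactly the data of a colax transformation of the underlying pseudofunctors: component 1-cells $\sigma_A$ and naturality bigons $\sigma_f$, which are literally 2-cells of \cat{D} under the identification. The single naturality axiom of \cref{defn:tran-two}, quantified over \emph{all} 2-cells $\alpha$ of \cat{C}, is what must be shown to unpack into the three classical axioms. Because \cat{C} is representable, every such $\alpha$ is a composite of bigons with composition and unit isomorphism cells, and the naturality condition is manifestly closed under composition in $\alpha$; hence it is equivalent to its restriction to these generators. Restricting to bigons yields ordinary bicategorical naturality; restricting to the binary composition isomorphism cells $(f;g)\choseniso\ubcom{f}{g}$ and their inverses yields the coherence of $\sigma$ with 1-cell composition (the condition involving the associators); and restricting to the nullary isomorphism cells for $\bi{A}$ yields the coherence with identities (the unitor condition). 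Conversely, the three bicategorical axioms let one reconstruct the general condition for an arbitrary $\alpha$ by decomposing it through these generators. The lax case is dual, exchanging the corner at which the component 1-cells sit.

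For modifications the matching is more direct: a modification of {\twoword} transformations in \cref{defn:mod-two} has a single component bigon $\Gamma_A$ per 0-cell --- again a 2-cell of \cat{D} --- and one axiom per 1-cell $f$, which is precisely the modification axiom of a bicategory once the boundary bigons $\sigma_f$, $\tau_f$, and the rest are read as bicategory 2-cells. Here there are no further coherence cells to disentangle, so the definitions coincide after the identification. For icons I would invoke \cref{defn:2icon}, which already defines an icon as an icon of the associated path 2-functors, together with the observation (recorded before \cref{defn:2icon}) that an icon of pseudofunctors is nothing but a colax transformation with identity 1-cell components; the icon correspondence then falls out of the transformation correspondence restricted to the case $\sigma_A = \bi{FA}$, matching the bigon components $\theta_f$ and the single icon axiom. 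Finally, all the compositions in question --- of transformations, of modifications (horizontal and vertical), and of icons --- are defined componentwise or inherited from the 2-cells of \cat{D}, and the corresponding bicategorical compositions are defined the same way, so they are automatically respected.

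The hard part will be the bookkeeping in the transformation step: verifying that specializing the single implicit naturality equation to the composition and unit isomorphism cells reproduces \emph{exactly} the associator- and unitor-laden coherence axioms of a bicategorical transformation (and, in the reverse direction, that an arbitrary $\alpha$ decomposes through these generators so that the general equation follows). This is the one place where the chosen composition isomorphisms of \cref{prop:twotobi} must be threaded through the string diagrams with care; everything else is a direct comparison of definitions.
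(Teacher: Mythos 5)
Your proposal is correct and takes essentially the same route as the paper's proof: both directions pass between the $(2,2)$-shaped naturality cells $\sigma_f$ and bicategorical bigon components by composing with the chosen composition isomorphisms, the single implicit naturality axiom is reduced to its instances at bigons and at the chosen isomorphism cells (your closure-under-composition argument is exactly the paper's bracketing of an arbitrary $\alpha$ into a bigon and sliding $\sigma$ across, as in \cref{fig:i2ctna}), modifications are a direct comparison, and icons are handled via colax transformations with identity components, with compositions respected componentwise. The only caution is your phrase that the $\sigma_f$ are ``literally'' 2-cells of the bicategory --- in \cref{defn:tran-two} they are $(2,2)$-cells, so the conversion through the chosen isomorphisms that you flag at the outset is genuinely needed, precisely as in the paper.
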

\begin{proof}
  Suppose $\sigma \colon F \to G$ is a colax transformation of
  {\twoword} functors. We define a colax natural transformation of the
  underlying pseudofunctors as follows.
  \begin{itemize}
  \item The component 1-cell at 0-cell $A$ is $\sigma_A$.
  \item The component 2-cell at 1-cell $f\colon A \to B$ is $\sigma_f$
    converted to a bigon:
    \[

    \end{align*}
    \caption{The colax transformation naturality axiom}
    \label{fig:i2ctna}
  \end{figure}
  These translation processes are clearly inverse. Moreover, it is
  easy to see that identities, compositions, and whiskerings are sent
  to identities, compositions, and whiskerings, as defined in
  e.g.~\cite{johnson-yau}.

  Our general notion of modification between lax and colax
  transformations of {\twowords} corresponds to a notion for
  bicategories defined in the same way, and it is easy to see that the
  specialization to modifications between only lax or only colax
  transformations (and their composition) coincides with the usual
  definition, as in e.g.~\cite{johnson-yau}.

  Finally, icons in a represented {\twoword} are in one-to-one
  correspondence with colax transformations whose components are
  identities, by composing the naturality 2-cells with nullary
  composition isomorphisms:\[
    \!
    \begin{tikzpicture}[->]
      \node at (-1.2,.25) {\footnotesize$FA$};
      \node at (-1.2,0) {\footnotesize$=$};
      \node at (-1.2,-.25) {\footnotesize$GA$};
      \node at (1.2,.25) {\footnotesize$FB$};
      \node at (1.2,0) {\footnotesize$=$};
      \node at (1.2,-.25) {\footnotesize$GB$};
      \begin{scope}[rotate=-90]
        \node at (0,0) {$\sigma_f$};
        \node at (0,.65) {$\choseniso$};
        \node at (0,-.65) {$\choseniso$};
        \begin{scope}[shift={(0,-1.25)},xscale=.35,yscale=.75]
          \draw [shorten >=12.5, shorten <=12.5] (0,0) .. controls +(-1.25,.65) and +(-1,0) .. (0,1.25) .. controls +(1,0) and +(1.25,.65) .. node [below,pos=.25] {\tiny$\justi$} (0,0);
        \end{scope}
        \begin{scope}[shift={(0,1.25)},xscale=.35,yscale=.75]
          \draw [shorten >=12.5, shorten <=12.5] (0,0) .. controls +(-1.25,-.65) and +(-1,0) .. node [above,pos=.75] {\tiny$\justi$} (0,-1.25) .. controls +(1,0) and +(1.25,-.65) .. (0,0);
        \end{scope}
      \end{scope}
      \draw [shorten >=12.5, shorten <=12.5] (-1.2, .1) to [bend left=50] node [above] {\tiny $Ff$} (1.2, .1);
      \draw [shorten >=12.5, shorten <=12.5] (-1.2, -.1) to [bend right=50] node [below] {\tiny $Gf$} (1.2, -.1);
    \end{tikzpicture}
    \quad\leftrightarrow\quad
    \begin{tikzpicture}[->,xscale=.75]
      \node (x1) at (-1,0) {};
      \node (x2) at (1,0) {};
      \node at (-1.2,.25) {\footnotesize$FA$};
      \node at (-1.2,0) {\footnotesize$=$};
      \node at (-1.2,-.25) {\footnotesize$GA$};
      \node at (1.2,.25) {\footnotesize$FB$};
      \node at (1.2,0) {\footnotesize$=$};
      \node at (1.2,-.25) {\footnotesize$GB$};
      \draw (x1) to[bend left=35] node[above] {\tiny$Ff$} (x2);
      \draw (x1) to[bend right=35] node[below] {\tiny$Gf$} (x2);
      \node at (0,0) {$\theta_f$};
      \node at (1.2,.6) {$\choseniso$};
      \node at (-1.2,-.6) {$\choseniso$};
      \begin{scope}[shift={(1.2,0)},xscale=.5,yscale=.75]
        \draw [shorten >=12.5, shorten <=12.5] (0,0) .. controls +(-1.25,.65) and +(-1,0) .. (0,1.25) node [above] {\tiny$\justi$} .. controls +(1,0) and +(1.25,.65) .. (0,0);
      \end{scope}
      \begin{scope}[shift={(-1.2,0)},xscale=.5,yscale=.75]
        \draw [shorten >=12.5, shorten <=12.5] (0,0) .. controls +(-1.25,-.65) and +(-1,0) .. (0,-1.25) node [below] {\tiny$\justi$} .. controls +(1,0) and +(1.25,-.65) .. (0,0);
      \end{scope}
    \end{tikzpicture}
  \]
  Composition and whiskering for icons are also as in~\cite{lack:icons}.
\end{proof}

\begin{rmk}
  It is easy to generalize most of the results of this section to
  double-categorical versions, with a few caveats. We refer the reader
  to~\cite{bohm:gray} for definitions of horizontal and vertical
  pseudonatural transformations, modifications, and Gray tensor
  products of strict double categories; see
  also~\cite{morehouse:double} for definitions of horizontal and
  vertical lax and colax transformations.
  
  A maximally general definition of modification between both lax and
  colax horizontal and vertical transformations of (\adjective) double
  categories can be formulated by placing transformation component
  1-cells at all possible corners of the diagram:
  \[
    \begin{tikzpicture}[scale=.19,rotate=90]
      \draw (5.0,0) -- (-5.0,0);
      \fill [fun, pattern=crosshatch] (-5.0,5.0) -- (-2.8,5.0) .. controls +(0,-1.5) and +(-1.25,2.5) .. (0,1.8).. controls +(-2.0,1.0) and +(1.5,0) .. (-5.0,2.8) -- cycle;
      \fill [fun, pattern=grid] (-2.8,5.0) .. controls +(0,-1.5) and +(-1.25,2.5) .. (0,1.8) .. controls +(1.25,2.5) and +(0,-1.5) .. (2.8,5.0) -- cycle;
      \fill [fun, pattern=north east lines] (5.0,5.0) -- (2.8,5.0) .. controls +(0,-1.5) and +(1.25,2.5) .. (0,1.8) .. controls +(2.0,1.0) and +(-1.5,0) .. (5.0,2.8) -- cycle;
      \fill [fun, pattern=north west lines] (5.0,2.8) .. controls +(-1.5,0) and +(2.0,1.0) .. (0,1.8) .. controls +(4.0,-1.5) and +(-1.5,0) .. (5.0,-2.8) -- cycle; 
      \fill [fun, pattern=dots] (-5.0,2.8) .. controls +(1.5,0) and +(-2.0,1.0) .. (0,1.8) .. controls +(-4.0,-1.5) and +(1.5,0) .. (-5.0,-2.8) -- cycle;
      \fill [fun, pattern=crosshatch dots] (-5.0,-5.0) -- (-5.0,-2.8) .. controls +(1.5,0) and +(-4.0,-1.5) .. (0,1.8) .. controls +(-1.5,-3) and +(0,2.5) .. (-2.8,-5.0) -- cycle;
      \fill [fun, pattern=vertical lines] (-2.8,-5.0) .. controls +(0,2.5) and +(-1.5,-3) .. (0,1.8) .. controls +(1.5,-3) and +(0,2.5) .. (2.8,-5.0) -- cycle;
      \fill [fun, pattern=horizontal lines] (5.0,-5.0) -- (5.0,-2.8) .. controls +(-1.5,0) and +(4.0,-1.5) .. (0,1.8) .. controls +(1.5,-3) and +(0,2.5) .. (2.8,-5.0) -- cycle;
      \draw [nat] (-2.8,5.0) .. controls +(0,-1.5) and +(-1.25,2.5) .. (0,1.8);
      \draw [nat] (2.8,5.0) .. controls +(0,-1.5) and +(1.25,2.5) .. (0,1.8);
      \draw [nat] (5.0,2.8) .. controls +(-1.5,0) and +(2.0,1.0) .. (0,1.8);
      \draw [nat] (5.0,-2.8) .. controls +(-1.5,0) and +(4.0,-1.5) .. (0,1.8);
      \draw [nat] (-5.0,2.8) .. controls +(1.5,0) and +(-2.0,1.0) .. (0,1.8);
      \draw [nat] (-5.0,-2.8) .. controls +(1.5,0) and +(-4.0,-1.5) .. (0,1.8);
      \draw [nat] (-2.8,-5.0) .. controls +(0,2.5) and +(-1.5,-3) .. (0,1.8);
      \draw [nat] (2.8,-5.0) .. controls +(0,2.5) and +(1.5,-3) .. (0,1.8);
      \node [modo,minimum size=0pt,inner sep=1pt] at (0,1.8) {\footnotesize$\Gamma$};
      \draw [rect] (-5.0,-5.0) rectangle (5.0,5.0);
    \end{tikzpicture}
    \; = \;
    \begin{tikzpicture}[scale=-.19,rotate=90]
      \draw (5.0,0) -- (-5.0,0);
      \fill [fun, pattern=horizontal lines] (-5.0,5.0) -- (-2.8,5.0) .. controls +(0,-1.5) and +(-1.25,2.5) .. (0,1.8).. controls +(-2.0,1.0) and +(1.5,0) .. (-5.0,2.8) -- cycle;
      \fill [fun, pattern=vertical lines] (-2.8,5.0) .. controls +(0,-1.5) and +(-1.25,2.5) .. (0,1.8) .. controls +(1.25,2.5) and +(0,-1.5) .. (2.8,5.0) -- cycle;
      \fill [fun, pattern=crosshatch dots] (5.0,5.0) -- (2.8,5.0) .. controls +(0,-1.5) and +(1.25,2.5) .. (0,1.8) .. controls +(2.0,1.0) and +(-1.5,0) .. (5.0,2.8) -- cycle;
      \fill [fun, pattern=dots] (5.0,2.8) .. controls +(-1.5,0) and +(2.0,1.0) .. (0,1.8) .. controls +(4.0,-1.5) and +(-1.5,0) .. (5.0,-2.8) -- cycle; 
      \fill [fun, pattern=north west lines] (-5.0,2.8) .. controls +(1.5,0) and +(-2.0,1.0) .. (0,1.8) .. controls +(-4.0,-1.5) and +(1.5,0) .. (-5.0,-2.8) -- cycle;
      \fill [fun, pattern=north east lines] (-5.0,-5.0) -- (-5.0,-2.8) .. controls +(1.5,0) and +(-4.0,-1.5) .. (0,1.8) .. controls +(-1.5,-3) and +(0,2.5) .. (-2.8,-5.0) -- cycle;
      \fill [fun, pattern=grid] (-2.8,-5.0) .. controls +(0,2.5) and +(-1.5,-3) .. (0,1.8) .. controls +(1.5,-3) and +(0,2.5) .. (2.8,-5.0) -- cycle;
      \fill [fun, pattern=crosshatch] (5.0,-5.0) -- (5.0,-2.8) .. controls +(-1.5,0) and +(4.0,-1.5) .. (0,1.8) .. controls +(1.5,-3) and +(0,2.5) .. (2.8,-5.0) -- cycle;
      \draw [nat] (-2.8,5.0) .. controls +(0,-1.5) and +(-1.25,2.5) .. (0,1.8);
      \draw [nat] (2.8,5.0) .. controls +(0,-1.5) and +(1.25,2.5) .. (0,1.8);
      \draw [nat] (5.0,2.8) .. controls +(-1.5,0) and +(2.0,1.0) .. (0,1.8);
      \draw [nat] (5.0,-2.8) .. controls +(-1.5,0) and +(4.0,-1.5) .. (0,1.8);
      \draw [nat] (-5.0,2.8) .. controls +(1.5,0) and +(-2.0,1.0) .. (0,1.8);
      \draw [nat] (-5.0,-2.8) .. controls +(1.5,0) and +(-4.0,-1.5) .. (0,1.8);
      \draw [nat] (-2.8,-5.0) .. controls +(0,2.5) and +(-1.5,-3) .. (0,1.8);
      \draw [nat] (2.8,-5.0) .. controls +(0,2.5) and +(1.5,-3) .. (0,1.8);
      \node [modo,minimum size=0pt,inner sep=1pt] at (0,1.8) {\footnotesize$\Gamma$};
      \draw [rect] (-5.0,-5.0) rectangle (5.0,5.0);
    \end{tikzpicture}
    \qquad\quad
    \begin{tikzpicture}[scale=.19]
      \draw (5.0,0) -- (-5.0,0);
      \fill [fun, pattern=north east lines] (-5.0,5.0) -- (-2.8,5.0) .. controls +(0,-1.5) and +(-1.25,2.5) .. (0,1.8).. controls +(-2.0,1.0) and +(1.5,0) .. (-5.0,2.8) -- cycle;
      \fill [fun, pattern=north west lines] (-2.8,5.0) .. controls +(0,-1.5) and +(-1.25,2.5) .. (0,1.8) .. controls +(1.25,2.5) and +(0,-1.5) .. (2.8,5.0) -- cycle;
      \fill [fun, pattern=horizontal lines] (5.0,5.0) -- (2.8,5.0) .. controls +(0,-1.5) and +(1.25,2.5) .. (0,1.8) .. controls +(2.0,1.0) and +(-1.5,0) .. (5.0,2.8) -- cycle;
      \fill [fun, pattern=vertical lines] (5.0,2.8) .. controls +(-1.5,0) and +(2.0,1.0) .. (0,1.8) .. controls +(4.0,-1.5) and +(-1.5,0) .. (5.0,-2.8) -- cycle; 
      \fill [fun, pattern=grid] (-5.0,2.8) .. controls +(1.5,0) and +(-2.0,1.0) .. (0,1.8) .. controls +(-4.0,-1.5) and +(1.5,0) .. (-5.0,-2.8) -- cycle;
      \fill [fun, pattern=crosshatch] (-5.0,-5.0) -- (-5.0,-2.8) .. controls +(1.5,0) and +(-4.0,-1.5) .. (0,1.8) .. controls +(-1.5,-3) and +(0,2.5) .. (-2.8,-5.0) -- cycle;
      \fill [fun, pattern=dots] (-2.8,-5.0) .. controls +(0,2.5) and +(-1.5,-3) .. (0,1.8) .. controls +(1.5,-3) and +(0,2.5) .. (2.8,-5.0) -- cycle;
      \fill [fun, pattern=crosshatch dots] (5.0,-5.0) -- (5.0,-2.8) .. controls +(-1.5,0) and +(4.0,-1.5) .. (0,1.8) .. controls +(1.5,-3) and +(0,2.5) .. (2.8,-5.0) -- cycle;
      \draw [nat] (-2.8,5.0) .. controls +(0,-1.5) and +(-1.25,2.5) .. (0,1.8);
      \draw [nat] (2.8,5.0) .. controls +(0,-1.5) and +(1.25,2.5) .. (0,1.8);
      \draw [nat] (5.0,2.8) .. controls +(-1.5,0) and +(2.0,1.0) .. (0,1.8);
      \draw [nat] (5.0,-2.8) .. controls +(-1.5,0) and +(4.0,-1.5) .. (0,1.8);
      \draw [nat] (-5.0,2.8) .. controls +(1.5,0) and +(-2.0,1.0) .. (0,1.8);
      \draw [nat] (-5.0,-2.8) .. controls +(1.5,0) and +(-4.0,-1.5) .. (0,1.8);
      \draw [nat] (-2.8,-5.0) .. controls +(0,2.5) and +(-1.5,-3) .. (0,1.8);
      \draw [nat] (2.8,-5.0) .. controls +(0,2.5) and +(1.5,-3) .. (0,1.8);
      \node [modo,minimum size=0pt,inner sep=1pt] at (0,1.8) {\footnotesize$\Gamma$};
      \draw [rect] (-5.0,-5.0) rectangle (5.0,5.0);
    \end{tikzpicture}
    \; = \;
    \begin{tikzpicture}[scale=-.19]
      \draw (5.0,0) -- (-5.0,0);
      \fill [fun, pattern=crosshatch dots] (-5.0,5.0) -- (-2.8,5.0) .. controls +(0,-1.5) and +(-1.25,2.5) .. (0,1.8).. controls +(-2.0,1.0) and +(1.5,0) .. (-5.0,2.8) -- cycle;
      \fill [fun, pattern=dots] (-2.8,5.0) .. controls +(0,-1.5) and +(-1.25,2.5) .. (0,1.8) .. controls +(1.25,2.5) and +(0,-1.5) .. (2.8,5.0) -- cycle;
      \fill [fun, pattern=crosshatch] (5.0,5.0) -- (2.8,5.0) .. controls +(0,-1.5) and +(1.25,2.5) .. (0,1.8) .. controls +(2.0,1.0) and +(-1.5,0) .. (5.0,2.8) -- cycle;
      \fill [fun, pattern=grid] (5.0,2.8) .. controls +(-1.5,0) and +(2.0,1.0) .. (0,1.8) .. controls +(4.0,-1.5) and +(-1.5,0) .. (5.0,-2.8) -- cycle; 
      \fill [fun, pattern=vertical lines] (-5.0,2.8) .. controls +(1.5,0) and +(-2.0,1.0) .. (0,1.8) .. controls +(-4.0,-1.5) and +(1.5,0) .. (-5.0,-2.8) -- cycle;
      \fill [fun, pattern=horizontal lines] (-5.0,-5.0) -- (-5.0,-2.8) .. controls +(1.5,0) and +(-4.0,-1.5) .. (0,1.8) .. controls +(-1.5,-3) and +(0,2.5) .. (-2.8,-5.0) -- cycle;
      \fill [fun, pattern=north west lines] (-2.8,-5.0) .. controls +(0,2.5) and +(-1.5,-3) .. (0,1.8) .. controls +(1.5,-3) and +(0,2.5) .. (2.8,-5.0) -- cycle;
      \fill [fun, pattern=north east lines] (5.0,-5.0) -- (5.0,-2.8) .. controls +(-1.5,0) and +(4.0,-1.5) .. (0,1.8) .. controls +(1.5,-3) and +(0,2.5) .. (2.8,-5.0) -- cycle;
      \draw [nat] (-2.8,5.0) .. controls +(0,-1.5) and +(-1.25,2.5) .. (0,1.8);
      \draw [nat] (2.8,5.0) .. controls +(0,-1.5) and +(1.25,2.5) .. (0,1.8);
      \draw [nat] (5.0,2.8) .. controls +(-1.5,0) and +(2.0,1.0) .. (0,1.8);
      \draw [nat] (5.0,-2.8) .. controls +(-1.5,0) and +(4.0,-1.5) .. (0,1.8);
      \draw [nat] (-5.0,2.8) .. controls +(1.5,0) and +(-2.0,1.0) .. (0,1.8);
      \draw [nat] (-5.0,-2.8) .. controls +(1.5,0) and +(-4.0,-1.5) .. (0,1.8);
      \draw [nat] (-2.8,-5.0) .. controls +(0,2.5) and +(-1.5,-3) .. (0,1.8);
      \draw [nat] (2.8,-5.0) .. controls +(0,2.5) and +(1.5,-3) .. (0,1.8);
      \node [modo,minimum size=0pt,inner sep=1pt] at (0,1.8) {\footnotesize$\Gamma$};
      \draw [rect] (-5.0,-5.0) rectangle (5.0,5.0);
    \end{tikzpicture}
  \]
  
  One then expects to assemble some two-dimensional categorical
  structure, analogous to $\Homl(\cat{C}, \cat{D})$, in which 0-cells
  are functors, 1-cells are lax and colax transformations, and 2-cells
  are these generalized modifications. But here there are four
  different sorts of 1-cells, apparently requiring an analogue of a
  ({\adjective}) double category with octagon-shaped rather than
  square 2-cells.
\end{rmk}

\begin{rmk}
  There is a relationship between double categories and (co)lax
  transformations of 2-categories. Let $H\cat{C}$ denote the
  vertically trivial (\adjective) double category with horizontal
  (\adjective) 2-category $\cat{C}$, let $V\cat{D}$ denote the
  horizontally trivial (\adjective) double category with vertical
  (\adjective) 2-category $\cat{D}$, and let $Q\cat{X}$ denote the
  (\adjective) double category of ``quintets'' of (\adjective)
  2-category $\cat{X}$.

  By comparing presentations, we can see that a (\adjective)
  2-category functor from the \emph{lax} Gray tensor product
  (\cref{rmk:laxgray-two}) of $\cat{C}$ and $\cat{D}$ into $\cat{X}$
  is the same as a (\adjective) double category functor
  $H\cat{C} \otimes V\cat{D} \to Q\cat{X}$. (Here the
  double-categorical Gray tensor product $H \cat{C} \otimes V\cat{D}$
  simply agrees with the cartesian product of strict double
  categories, due to lack of nontrivial 1-cells of each type in some
  factor. This is (the transpose of) the ``external product'' of
  2-categories from~\cite[Definition 2.6]{fiore-paoli-pronk}.) In
  other words, the lax Gray tensor product of (\adjective)
  2-categories is given by $F(H(-) \otimes V(-))$, where $F$ is the
  left adjoint to $Q$.

  In particular, as can also be seen directly, lax and colax
  transformations valued in a (\adjective) 2-category can be described
  as horizontal and vertical transformations valued in its associated
  (\adjective) double category.
\end{rmk}

\bibliographystyle{alpha}
\bibliography{doublyweak}

\end{document}